\patchcmd{\@setref}{\bfseries ??}{\bfseries\color{red} OWE A COFFEE/BEER}{}{}
\newcommand\ostar{\stackMath\mathbin{\stackinset{c}{0ex}{c}{0ex}{\star}{\odot}}}
\newcommand{\TFT}{\mathcal{Z}}
\newcommand{\BNA}{\bn}
\newcommand{\gpm}{\Gamma_{\!\pm}}
\newcommand{\diff}{\mathrm{Diff}^+}
\newcommand{\PB}{\mathbb{P}}
\newcommand{\Dst}{\mathbb{D}}
\newcommand{\Ast}{\mathbb{A}}
\renewcommand{\MM}{\EuScript{F}}
\newcommand{\PMS}{C}
\newcommand{\PMStwo}{C'}
\newcommand{\sCat}{\CS}
\newcommand{\std}{\mathrm{std}}
\newcommand{\nn}{\mathbf{n}}
\newcommand{\mm}{\mathbf{m}}
\newcommand{\LO}{\operatorname{LinR}}
\newcommand{\supp}{\operatorname{supp}}
\newcommand{\TG}{\tilde{\Gamma}}
\newcommand{\leftangle}{\begin{tikzpicture}[anchorbase,scale=.15]
	\draw
   (0,1) to (0,0) \lu(-1,1);
	\end{tikzpicture}}
	\newcommand{\rightangle}{\begin{tikzpicture}[anchorbase,scale=-.15]
		\draw[white] 
		(0.2,0) to (0,0);
		\draw 
	(0,1) to (0,0) \lu(-1,1);
		\end{tikzpicture}}
\begin{document}

\author{Matthew Hogancamp}
\address{Department of Mathematics, Northeastern University, 360 Huntington Ave, Boston,
MA 02115, USA}
\email{m.hogancamp@northeastern.edu}

\author{David~E.~V.~Rose}
\address{Department of Mathematics, University of North Carolina, 
Phillips Hall, CB \#3250, UNC-CH, 
Chapel Hill, NC 27599-3250, USA
\href{https://davidev.web.unc.edu/}{davidev.web.unc.edu}}
\email{davidrose@unc.edu}

\author{Paul Wedrich}
\address{Fachbereich Mathematik, Universit\"at Hamburg, 
Bundesstra{\ss}e 55, 
20146 Hamburg, Germany
\href{http://paul.wedrich.at}{paul.wedrich.at}}
\email{paul.wedrich@uni-hamburg.de}

\title{Bordered invariants from Khovanov homology}

\begin{abstract} 
To every compact oriented surface that is composed entirely out of
$2$-dimensional $0$- and $1$-handles, we construct a dg category using
structures arising in Khovanov homology. These dg categories form part of the
$2$-dimensional layer (a.k.a.~modular functor) of a categorified version of the
$\mathfrak{sl}(2)$ Turaev--Viro topological field theory. As a byproduct, we
obtain a unified perspective on several hitherto disparate constructions in
categorified quantum topology, including the Rozansky--Willis invariants,
Asaeda--Przytycki--Sikora homologies for links in thickened surfaces,
categorified Jones--Wenzl projectors and associated spin networks, and dg
horizontal traces.
\end{abstract}

\maketitle

\setcounter{tocdepth}{1}

\tableofcontents

\section{Introduction}
	\label{s:intro}
Since its inception \cite{MR1295461}, one of the central aims of the
categorification program has been the construction of 4-dimensional analogues of
the Reshetikhin--Turaev \cite{MR1091619} and Turaev--Viro \cite{TuraevViro}
topological field theories (TFTs). While the original vision involved
building such theories using the notion of ``Hopf category'', Khovanov's
categorification of the Jones polynomial \cite{Kho} opened the door for an
approach based on structures arising from link homology theories. 

Developments in this direction include categorifications of the Kauffman bracket
skein theory on (thickened) surfaces \cite{MR2113902,MR2475122} and the
invariants of smooth $4$-manifolds from \cite{2019arXiv190712194M}. While such
constructions have proven surprisingly powerful (e.g.~detecting exotic smooth
structure \cite{ren2024khovanov}), the full potential of TFTs based on link
homology theories can only be harnessed by working in a homotopy coherent
setting. This can be seen, for example, in the context of the dg horizontal
trace \S \ref{sss:dgtr}.

In this paper, the homotopy coherent perspective enables us to take the first
steps toward a categorification of the $\mathfrak{sl}_2$ Turaev--Viro TFT, by
providing dg categorified skein invariants of surfaces $\Sigma$ with non-empty
boundary. Our approach utilizes a novel perspective on various structures
arising in the Khovanov theory, and recovers (and gives a fresh perspective on)
the aforementioned dg horizontal trace in the case that $\Sigma$ is an annulus.
As evidence for the relation with the Turaev--Viro theory, we find that the
$\Hom$-pairing on our dg categories yields a categorified version of the
hermitian inner product on the associated TFT Hilbert spaces.

\subsection{The category associated to a surface}
\label{ss:intro summary}
We now proceed with a summary of our main construction.  
Let $\Sigma$ be a compact, connected, oriented surface with $\del \Sigma \neq \emptyset$. 
Fix a collection $\Pi$ of boundary intervals 
and a set of points $\pp=\{p_1,\ldots,p_{2r}\}\subset \partial\Sigma$ 
contained in the union of those intervals (we allow $\pp=\emptyset$). 
Choose also a finite set $\Gamma$ of properly embedded arcs 
(called \emph{seams}) 
that do not meet the boundary intervals and which cut 
$\Sigma$ into a disjoint union of disks. 
An example is provided on the left side
of Figure~\ref{fig:surfaceandtangle} below; 
here the boundary arcs are depicted in blue and the seams in purple.

\begin{figure}[ht]
	\[
	\begin{tikzpicture}[anchorbase,scale=.7]
		\draw[line width=.8mm, seam] (-1.05,-1.05) to (-.37,-.37);
		\draw[line width=.8mm, seam] (3.55,-1.05) to (2.87,-.37);
		\draw[line width=.5mm, gray, fill=gray, fill opacity=.2] 
			(-1.5,0) \ur (0,1.5) \pr (1.25,1.25) \pr (2.5,1.5) \rd (4,0) \dl (2.5,-1.5) \pl (1.25,-1.25) \pl (0,-1.5) \lu (-1.5,0);
		\draw[line width=.5mm, gray, fill=white] (0,0) circle (.5);
		\draw[line width=.5mm, gray, fill=white] (2.5,0) circle (.5);
		\draw[line width=.5mm, blue] (0,1.5) \pr (1.25,1.25) \pr (2.5,1.5);
		\draw[line width=.5mm, blue] (0,-1.5) \pr (1.25,-1.25) \pr (2.5,-1.5);
		\draw[line width=.5mm, blue] (0,0.5) \rd (0.5,0) \dl (0,-0.5);
		\draw[line width=.5mm, blue] (2.5,0.5) \ld (2,0) \dr (2.5,-0.5);
		\end{tikzpicture}
		\quad,\quad
	\begin{tikzpicture}[anchorbase,scale=.7]
		\draw[line width=.8mm, seam] (-1.05,-1.05) to (-.37,-.37);
		\draw[line width=.8mm, seam] (3.55,-1.05) to (2.87,-.37);
		\draw[thick,double] (0.5,0) to (2,0);
		\draw[line width=.5mm, gray] (0,0) circle (.5);
		\draw[line width=.5mm, gray] (0,0) circle (1.5);
		\draw[line width=.5mm, gray] (2.5,0) circle (.5);
		\draw[line width=.5mm, gray] (2.5,0) circle (1.5);
		\fill[white] (1,-1.5) rectangle (1.5,1.5);
		\fill[white] (0,1) rectangle (2.5,1.6);
		\fill[white] (0,-1) rectangle (2.5,-1.6);
		\draw[thick,double] (1.25,-1.25) to (1.25,1.25);
		\draw[line width=.5mm, gray, fill=gray, fill opacity=.2] 
			(-1.5,0) \ur (0,1.5) \pr (1.25,1.25) \pr (2.5,1.5) \rd (4,0) \dl (2.5,-1.5) \pl (1.25,-1.25) \pl (0,-1.5) \lu (-1.5,0);
		\draw[line width=.5mm, gray, fill=white] (0,0) circle (.5);
		\draw[line width=.5mm, gray, fill=white] (2.5,0) circle (.5);
		\draw[line width=.5mm, blue] (0,1.5) \pr (1.25,1.25) \pr (2.5,1.5);
		\draw[line width=.5mm, blue] (0,-1.5) \pr (1.25,-1.25) \pr (2.5,-1.5);
		\draw[line width=.5mm, blue] (0,0.5) \rd (0.5,0) \dl (0,-0.5);
		\draw[line width=.5mm, blue] (2.5,0.5) \ld (2,0) \dr (2.5,-0.5);
		\draw[thick,double] (0,0) circle (1);
		\draw[thick,double] (2.5,0) circle (1);
		\filldraw[white] (1.25,0) circle (1.3em);
		\draw[thick] (1.25,0) circle (1.3em);
		\node  at (1.25,0) {$T$};
		\end{tikzpicture}
\]
	\caption{
		\label{fig:surfaceandtangle}
	Left: a surface with seams (purple) and intervals in the boundary
	(blue). Right: a tangle in the surface with ends on the boundary intervals.  
	The doubled strands indicate an arbitrary finite number of parallel copies of that strand.  
	}
\end{figure}
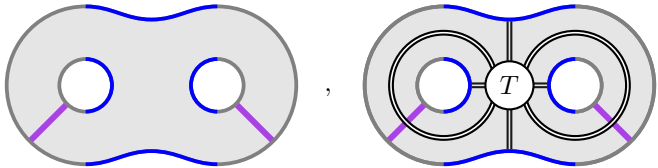

Alternatively, the data of $\Gamma$ may be considered as a
presentation of $\Sigma$ as a union of 2-dimensional 0- and 1-handles, 
with the seams appearing as co-cores of the 1-handles. 
We construct\footnote{We notationally suppress the dependence on $\Gamma$ for the introduction; 
later, we prove that our construction does not depend on this choice, up to quasi-equivalence.} 
a dg category $\sCat(\Sigma,\mathbf{p})$ as follows.
The objects of $\sCat(\Sigma,\mathbf{p})$ are unoriented
	tangles $T$, neatly embedded in $\Sigma$, with boundary $\partial T =
	\mathbf{p}$ and transversely intersecting the seams $T \pitchfork \gamma$
	for all $\gamma\in \Gamma$. 
The right side of Figure~\ref{fig:surfaceandtangle} gives a graphical depiction of such 
	a tangle.
	
	Given two such objects $S,T$, the chain complex of morphisms from
	$S$ to $T$ in $\sCat(\Sigma,\mathbf{p})$ is computed as follows.
	We first proceed to the doubled surface 
	$D(\Sigma):=\Sigma \cup_{\partial\Sigma} \Sigma^\vee$, which is obtained by gluing on its orientation reversal
	$\Sigma^\vee$ along the boundary, and which contains the embedded link
	$D(T,S):=T\cup_{\pp} S^\vee$. We then embed $D(\Sigma)$, together with $D(T,S)$, 
	in $\R^3$ as the boundary of the $3$-dimensional handlebody with compression disks
	specified by the circles in $D(\Sigma)$ determined by the seams $\gamma\in \Gamma$. 
	Finally, at each compression disk we insert a copy of Rozansky's 
	(through-degree zero) \emph{bottom projector} $\Bproj$ from \cite{rozansky2010categorification} and
	compute the Khovanov chain complex of the result; see Figure~\ref{fig:cuthandlebody}.

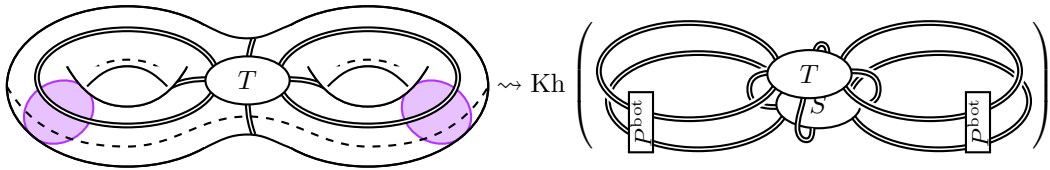
\begin{figure}[ht]
	\[
		\begin{tikzpicture}[anchorbase,scale=.8]
			\toruscdisk{-1.2}{.2}{.6}{.5}
			\begin{scope}[xscale=-1,shift={(-4,0)}]
			\toruscdisk{-1.2}{.2}{.6}{.5}
			\end{scope}
			\draw[thick] (0,0) ellipse (2 and 4/3);
			\draw[thick] (4,0) ellipse (2 and 4/3);
			\torusnoeq{0}{0}{2}{thick}
			\torusnoeq{4}{0}{2}{thick}
			\fill[white] (1,-1) rectangle (3,1);
			\draw[thick,double] (2.1,.8) to [out=250,in=90] (2,0) to [out=270,in=110] (2.1,-.8);
			\draw[thick,double] (0.5,-.15) to [out=40,in=180] (2,0.1) to [out=0,in=140] (3.5,-.15);
			\torushole{0}{0}{2}{thick}
			\torushole{4}{0}{2}{thick}
			\draw[thick, dashed] (-2,0) to [out=300,in=180] (0,-2/2) \pr (2,-.5) \pr (4,-2/2) to [out=0,in=240] (6,0);
			\draw[thick, fill=white] (1.3,-1.015) to [out=26,in=180] (2,-.8) to [out=0,in=160] (2.7,-1.015);
			\draw[thick, fill=white] (1.3,1.015) to [out=-26,in=180] (2,0.8) to [out=0,in=-160] (2.7,1.015);
			\draw[thick,double] (0,.15) ellipse (1.5 and .8);
			\draw[thick,double] (4,.15) ellipse (1.5 and .8);
			\draw[thick, fill=white] (2,.1) ellipse (0.7 and .4);
			\node  at (2,.1) {$T$};
			\end{tikzpicture}
			\rightsquigarrow
			\KhEval{
			\begin{tikzpicture}[anchorbase,scale=.8]
			\begin{scope}[shift={(.15,-.5)}]
				\draw[thick,double] (0,.15) ellipse (1.5 and .8);
				\draw[thick,double] (4,.15) ellipse (1.5 and .8);
			\end{scope}
			\draw[white, line width=1.5mm] (2.15,-.4) \lu (1,-.15) \ur (2,0.1) \rd (3.15,-.15) \dl (2.15,-.4);
			\draw[thick,double] (2.15,-.4) \lu (1,-.15) \ur (2,0.1) \rd (3.15,-.15) \dl (2.15,-.4);
			\begin{scope}[shift={(.15,-.5)}]
				\draw[thick, fill=white] (2,.1) ellipse (0.7 and .4);
				\node  at (2,.1) {$S$};
			\end{scope}
			\draw[thick,double] (1.9,-.3) to [out=260,in=90] (1.8,-.8) \dr (1.95,-1) to [out=0,in=270] (2.05,-.8) (2.1,.5) to [out=70,in=180] (2.2,.6) \rd (2.3,.3);
			\draw[white, line width=1.5mm] (0,.15) ellipse (1.5 and .8);
			\draw[white, line width=1.5mm] (4,.15) ellipse (1.5 and .8);
			\draw[thick,double] (0,.15) ellipse (1.5 and .8);
			\draw[thick,double] (4,.15) ellipse (1.5 and .8);
				\draw[thick, fill=white] (2,.1) ellipse (0.7 and .4);
				\node  at (2,.1) {$T$};
			\draw[thick, fill=white] (-1,-1.2) rectangle  (-0.6,-.2);
			\node[rotate=90] at (-.8,-.7) {\small $\Bproj$};
			\draw[thick, fill=white] (5,-1.2) rectangle  (4.6,-.2);
			\node[rotate=90] at (4.8,-.7) {\small $\Bproj$};
				\end{tikzpicture}
			}
	\]
	\caption{
		\label{fig:cuthandlebody}
		Left: the doubled surface embedded in $\R^3$ with compression disks
	specified by the seams. Right: the Khovanov chain complex of the link $D(T,S)$ with
	bottom projectors inserted at the compression disks. }
\end{figure}

The operation of composing morphisms in $\sCat(\Sigma,\mathbf{p})$ is subtle, 
and will only be fully developed in the body of the paper.
It follows the geometric intuition of stacking the $3$-dimensional handlebodies shown on
the left of Figure~\ref{fig:cuthandlebody}. 
The main technical ingredient needed to make this precise
is a new ``sideways'' composition operation 
\[
q^{-m} \begin{tikzpicture}[anchorbase]
	\draw[thick, double] (.75,.25) to (.75,.75);
	\draw[thick, double] (-.75,.25) to (-.75,.75);
	\draw[thick, double] (.75,-.25) to (.75,-.75);
	\draw[thick, double] (-.75,-.25) to (-.75,-.75);
	\draw[thick, double] (-.35,.25) to[out=90,in=180]  (0,.55) to[out=0,in=90] (.35,.25);
	\draw[thick, double] (-.35,-.25) to[out=-90,in=180]  (0,-.55) to[out=0,in=-90] (.35,-.25);
	\draw[thick] (.1,-.25) rectangle (.9,.25); 
	\draw[thick] (-.1,-.25) rectangle (-.9,.25); 
	\node at (.5,0) {\scs$\Bproj$};
	\node at (-.5,0) {\scs$\Bproj$};
	\node at (0,.70) {\scriptsize $m$};
	\node at (0,-.70) {\scriptsize $m$};
	\node at (-.95,-.65) {\scriptsize $k$};
	\node at (.95,-.65) {\scriptsize $n$};
	\end{tikzpicture}
	\xrightarrow{\mu_m}
	\begin{tikzpicture}[anchorbase]
	\draw[thick, double] (.25,.25) to (.25,.75);
	\draw[thick, double] (-.25,.25) to (-.25,.75);
	\draw[thick, double] (.25,-.25) to (.25,-.75);
	\draw[thick, double] (-.25,-.25) to (-.25,-.75);
	\draw[thick] (-.4,-.25) rectangle (.4,.25); 
	\node at (0,0) {\scs$\Bproj$};
	\node at (-.5,-.65) {\scriptsize $k$};
	\node at (.5,-.65) {\scriptsize $n$};
	\end{tikzpicture}
\]
that we introduce in \S\ref{s:Rozanskybar}, 
and which is defined via the Eilenberg--Zilber shuffle product.

If $\Sigma$ is a disk, 
then (up to quasi-equivalence) the above construction produces the usual Bar-Natan categories \cite{BN2}, 
which categorify the Temperley--Lieb skein theory and feature in many constructions of Khovanov homology 
(e.g.~they are closely related to Khovanov's arc rings \cite{MR1928174}). 
More generally, as explained in greater detail in \S \ref{sec:surfacelinkhom}, 
our dg category $\CS(\Sigma,\pp)$ is a derived version of the Bar-Natan category $\bn(\Sigma,\pp)$ 
and both versions categorify the Temperley--Lieb skein module of crossingless tangles on $\Sigma$ with boundary $\pp$, 
i.e.~the 2-dimensional layer of the Turaev--Viro TFT for quantum $\mathfrak{sl}_2$ \cite{SN-TV}. 

In \S \ref{ss:spin-networks} we develop the parallels with the Turaev--Viro
theory further by showing that (a natural completion of) 
$\CS(\Sigma,\pp)$ is generated by certain infinite complexes indexed 
by quantum spin networks on $\Sigma$. 
Only in the derived setting of $\CS(\Sigma,\pp)$ will these objects be
orthogonal under the (appropriately symmetrized) $\Hom$-pairing.
Moreover, as mentioned above, the self-pairing categorifies 
a well-known formula in Turaev--Viro theory; 
see Theorem~\ref{thm:pairing} for the precise result.

\subsection{Higher categorical interpretation}

A modern perspective on knot homology theories and their corresponding tangle
invariants is to view them as morphisms in certain \emph{braided monoidal
2-categories}\footnote{Or, more accurately, locally graded-linear
$\mathbb{E}_2$-monoidal $(\infty, 2)$-categories.}
\cite{2019arXiv190712194M,stroppel2022categorification,liu2024braided}, in analogy to how many
\emph{decategorified} quantum invariants appear as morphisms in \emph{braided
monoidal 1-categories}, e.g.~representation categories of quantum groups. 
The Baez--Dolan--Lurie cobordism hypothesis \cite{MR1355899, MR2555928, MR2994995} and
its generalization provide a close connection between (braided) monoidal categories 
(and their higher analogues) and local topological field theories (TFTs). 
Applying these tools to higher categories originating in link homology
theory is one possibly strategy for constructing new TFTs 
and associated invariants of smooth manifolds.

From this perspective,
our present work is concerned with the local TFT arising from the monoidal
$2$-category underlying Khovanov homology \cite{Kho}, 
with the braiding arising from Khovanov's ``cube of resolutions'' having been forgotten. 
The analogous decategorified setting is the monoidal Temperley--Lieb category 
(or, its idempotent completion, $\Rep(U_q(\slnn{2}))$) with its braiding forgotten,
which gives rise to TFTs of Turaev--Viro type.
Because we have forgotten the braiding (at least for now), 
the construction does not depend on the specifics of Khovanov's link homology theory, 
but only on the \emph{graded} commutative Frobenius algebra that appears as the invariant of the unknot. 
As a somewhat simplified illustration of what this means, 
we are considering categorified skein modules of surfaces 
in which elements are \emph{embedded} 1-manifolds in the surface itself, 
hence have crossingless diagrams. 

\begin{table}[h]
	\begin{center}
	\begin{tabular}{|c||l c|l c|}\hline
		 && monoidal && braided monoidal  \\ \hline\hline 
		1-category & 
		$n{=}2$:& Turaev--Viro family \cite{TuraevViro} 
		& 
		$n{=}3$:& Crane--Yetter family \cite{CraneYetter}  
		\\
		(classical)
		&&
		&&
		(Jones polynomial)
		\\\hline 
		2-category &
		$n{=}3$:&Asaeda--Frohman--Kaiser \cite{MR2370224,MR2503518}
		
		& 
		$n{=}4$:& Morrison--Walker--W. \cite{2019arXiv190712194M} 
	 \\ 
	 (categorified, underived)
		  && 
		  Douglas--Reutter \cite{douglas2018fusion}
		  
		&&
		(Khovanov homology)
	 \\  
	 \hdashline
	 2-category &
		$n{=}3$:&  first steps here
		
		& 
		$n{=}4$: & ???
	 \\ 
	 (categorified, derived)
		  && 
		 
		&&
		(Khovanov complexes)
	 \\
	 \hline
	\end{tabular}
	\caption{\label{table:landscape}Some types of local (partially defined) TFTs
	in $n+\epsilon$ dimensions (and related invariants) by input data: these
	assign vector spaces or chain complexes to closed $n$-manifolds, linear
	categories or dg categories to closed $(n-1)$-manifolds, as well as higher
	categories to manifolds of higher codimension.	
	}
	\end{center}
	\end{table}

Table~\ref{table:landscape} outlines the positioning of our work relative to
some reference points in the landscape of link homologies and local TFTs.  
Here we distinguish between underived and derived versions of the categorified TFTs.
As indicated there,
we expect that our work in this paper is a part of a local
$(3+\epsilon)$-dimensional TFT for oriented manifolds. 
While the full development of this TFT, henceforth denoted $\TFT$, 
goes beyond the scope of this paper, it is useful to have the following outline in mind:

Our construction of $\TFT$ starts with the graded commutative Frobenius algebra
$H^*(\C P^1)=\cring[X]/(X^2)$---the Khovanov homology of the unknot 
over a commutative ring $\cring$---and proceeds 
by building a locally graded $\cring$-linear monoidal bicategory $\BNA$. 
The monoidal bicategory $\BNA$ can be described in terms of Bar-Natan's dotted cobordisms 
from \cite[Section 11.2]{BN2} (see also \cite[\S 4.2]{HRW3}) 
and it should be the value of $\TFT$ on the positively oriented point. 
To a compact, connected oriented $1$-manifold (an interval) the TFT
$\TFT$ should assign the regular $\BNA$-bimodule, i.e.~$\BNA$, 
acted upon by itself on both sides via the monoidal structure, 
but with its own monoidal structure forgotten.

The actual work in this paper starts at the $2$-dimensional layer, specifically
for \emph{marked surfaces}. 
To this end, we consider pairs $(\Sigma,\Pi)$, where
$\Sigma$ is a compact, oriented surface with boundary, 
which has no closed components, 
and $\Pi$ is a set of finitely many oriented intervals, 
disjointly embedded in $\partial \Sigma$.
The complement of the arcs of $\Pi$ in $\partial \Sigma$ is then considered as the \emph{marking}. 

A main open question about the construction of $\TFT$, 
which we plan to pursue in future work, 
is the appropriate symmetric monoidal higher category to use as its target. 
In any case, $\TFT$ should assign to a marked surface $(\Sigma, \Pi)$ an algebraic
gadget that carries actions of the bicategories $\BNA$, 
namely one for each of the intervals from $\Pi$.
Informed by this, we are able to package our dg categories $\sCat(\Sigma,\pp)$
as the values (on objects) of $2$-functors with domain an appropriate 
tensor product $\bn(\Pi)$ of Bar--Natan bicategories.

\begin{thm}[Theorems~\ref{thm:final2} and \ref{thm:final3}]
	\label{thm:main}
Let $(\Sigma,\Pi)$ be a marked surface 
and let $\bn(\Pi)$ denote the tensor product of Bar-Natan bicategories (and their opposites) 
associated to the collection $\Pi$ of oriented boundary intervals (see Definition \ref{def:bn Pi}). 
Then the assignment $\pp \mapsto \sCat(\Sigma,\pp)$ extends to a 2-functor
\[
\MM \colon \bn(\Pi) \Rightarrow \MorDGCat
\]
where $\MorDGCat$ denotes the Morita bicategory of 
differential $(\Z \times \Z)$-graded $\cring$-linear categories, 
bimodules of such dg categories, and bimodule homomorphisms
(see Definition~\ref{def:morita}).
\end{thm}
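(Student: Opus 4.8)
The strategy is to build the $2$-functor $\MM$ cell by cell, using the geometric operation of gluing crossingless tangles into collar neighbourhoods of the intervals of $\Pi$, and then to check the coherence data that makes $\MM$ a $2$-functor; the bulk of the work lies in the horizontal composition constraint. On a $0$-cell $\pp = (\pp_1, \dots, \pp_k)$ of $\bn(\Pi)$ we set $\MM(\pp) := \sCat(\Sigma, \pp)$, with the $\pp_i$ distributed on the corresponding intervals. A $1$-cell $\tau \colon \pp \to \pp'$ is a tuple of flat crossingless tangles living in collars of the intervals, and we let $\MM(\tau)$ be the $(\sCat(\Sigma,\pp),\sCat(\Sigma,\pp'))$-bimodule whose value on objects $S, T$ is the Khovanov chain complex produced by the doubled-surface-with-bottom-projectors recipe of Figure~\ref{fig:cuthandlebody}, now applied to $D(T,S)$ with $\tau$ glued into the relevant collars; since each component of $\tau$ is supported away from the seams $\Gamma$, transversality with $\Gamma$ and compatibility with the bottom projectors at the compression disks is automatic. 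Finally, a $2$-cell of $\bn(\Pi)$ is a tuple of Bar-Natan dotted cobordisms, and it acts on these complexes through functoriality of the Khovanov complex under cobordisms, producing the required bimodule homomorphisms.

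\textbf{Bimodule structure.} The two one-sided dg actions on $\MM(\tau)$ are assembled from the ordinary composition $\mu$ in $\sCat(\Sigma,-)$, which operates on the part of the picture interior to $\Sigma$, together with the sideways shuffle composition $\mu_m$ of \S\ref{s:Rozanskybar}, which merges the bottom projectors on the boundary circles of the glued collar with those coming from composition. Associativity and unitality of these actions, up to coherent homotopy, are precisely the $\Ai$-type statements for $\mu$ and $\mu_m$ established earlier, combined with the homotopy idempotency $\Bproj \otimes \Bproj \simeq \Bproj$ of Rozansky's bottom projector. Taking $\tau = \id_\pp$ recovers the diagonal bimodule, which supplies the unit constraint of $\MM$.

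\textbf{Horizontal composition, the crux.} Given composable $1$-cells $\pp \xrightarrow{\tau} \pp' \xrightarrow{\tau'} \pp''$, one must exhibit a coherent bimodule equivalence
\[
\MM(\tau') \otimes_{\sCat(\Sigma,\pp')} \MM(\tau) \;\simeq\; \MM(\tau' \circ \tau).
\]
Geometrically $\tau' \circ \tau$ is the stacking of the two collars, and the left-hand side is the bar complex over the middle dg category; the content of the constraint is that this bar complex is quasi-isomorphic to the stacked picture. Identifying the bar differential with the differential that arises from inserting and then absorbing a column of bottom projectors along the shared boundary circles, this reduces to a Gaussian elimination / homotopy colimit argument of the same flavour as Rozansky's projector-absorption lemma, now required to be compatible with the Eilenberg--Zilber shuffle maps $\mu_m$. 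I expect this to be the main obstacle: one must control the size of the bar resolution, check that the contracting homotopies assemble coherently, and keep track of the $\Z \times \Z$ grading (homological and quantum) together with the shuffle signs throughout.

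\textbf{Coherence, interchange, and independence of $\Gamma$.} The associativity pentagon for $\MM$ reduces to the coherent associativity of $\mu$ and $\mu_m$ together with the coherence of the projector-absorption homotopies, and the unit triangles follow from the corresponding unit axioms. Because the intervals of $\Pi$ are pairwise disjoint, the collar-gluing operations for distinct intervals commute strictly, which supplies the interchange constraint promoting $\MM$ to a $2$-functor on the tensor product $\bn(\Pi)$. Finally, the assertion of Theorem~\ref{thm:final3} that $\MM$ is independent of the auxiliary seam system $\Gamma$ up to quasi-equivalence is obtained by producing, for each elementary seam move (ambient isotopy, handle slide, and creation or cancellation of a pair of cancelling seams), an explicit quasi-equivalence intertwining the resulting $2$-functors, once again using the idempotency of $\Bproj$.
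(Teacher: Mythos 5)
Your overall architecture (prism/collar modules, bottom projectors at the seams, shuffle product for composition) matches the paper's, but there are two genuine gaps, both at the points you yourself flag as the hard ones.

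First, the horizontal composition constraint. You model $\MM(\tau')\otimes_{\sCat(\Sigma,\pp')}\MM(\tau)$ as a bar complex over the middle category and propose to compare it to $\MM(\tau'\circ\tau)$ by a ``Gaussian elimination / homotopy colimit argument'' compatible with the shuffle maps; you then admit this is the main unresolved obstacle. The paper avoids this entirely: the tensor product in $\MorDGCat$ is the \emph{underived} coequalizer, and the bimodules $\MM(\tau)$ are sweet (finitely generated projective from both sides), so the constraint is an exact \emph{isomorphism}, not merely a quasi-isomorphism. The key observation you are missing is the tangle-slide identity ${}_T\MM(V)\cong{}_{T\star V}\MM(\one_\pp)$ (absorb the collar tangle $V$ into the object $T$ of the surface category), which reduces $\MM(W)\otimes_{\MM(\qq)}\MM(V)$ to tensoring with the diagonal bimodule and immediately yields $\MM(W\star V)$. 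No bar resolution over the middle category, no control of contracting homotopies, and no invocation of $\Bproj\otimes\Bproj\simeq\Bproj$ is needed here. Relatedly, the actions and the composition maps are \emph{strictly} associative and unital — the Eilenberg--Zilber shuffle product is strictly associative by Eilenberg--MacLane — so your appeal to ``$A_\infty$-type statements'' and coherence up to homotopy overcomplicates a step that is exact.

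Second, independence of the seam system $\Gamma$. Producing a quasi-equivalence for each ``elementary seam move'' only shows that any two choices of $\Gamma$ give quasi-equivalent categories; it does not make $\sCat(\Sigma,\pp)$ \emph{canonical}, because different sequences of moves connecting $\Gamma$ to $\Gamma'$ could a priori induce different equivalences. The missing ingredient is a coherence statement: the paper restricts to tessellations, shows the elementary coarsening maps (counits of $\Bproj$) form \emph{strictly} commuting squares, and then invokes Harer's theorem that the tessellation poset of $(\Sigma,\Pi)$ has contractible nerve, so that the homotopy colimit over this poset is canonically quasi-equivalent to any single $\sCat(\Sigma,\pp;\Gamma)$. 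Without contractibility of some such complex of seam systems (or an equivalent coherence argument), your last paragraph does not establish the well-definedness of $\pp\mapsto\sCat(\Sigma,\pp)$ asserted in the theorem.
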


\begin{remark}
Strictly speaking, the 2-functor from Theorem \ref{thm:main} requires some additional choices, 
most significantly the aforementioned decomposition of $\Sigma$ into $0$- and $1$-handles. 
We address the (in)dependence on this choice, 
considering questions of both invariance and coherence, 
in \S \ref{s:coarsening} and \S \ref{s:coherence}. This approach is inspired by \cite{DyKa,HKK}. 
\end{remark}

The invariants from Theorem~\ref{thm:main} carry mapping class group actions
(Construction~\ref{constr:diffeos}) and can be glued along a pair of intervals
from $\Pi$ (Construction \ref{const:gluing}). 
In fact, the invariants of general surfaces are constructed by gluing together the invariants
of polygons. 
We expect that this amounts to the structure of a categorified version of an \emph{open modular functor} 
or a \emph{topological conformal field theory}; see e.g.~\cite[Definition 2.1]{MR4696498}, 
\cite{MR2298823,costello2004ainfinity}, and \cite[\S 4.2]{MR2555928}.

\subsection{Relation to other work}
\label{s:relotherwork}

In this section, we relate our work to previous constructions that 
extend Khovanov homology beyond links in $\R^3$.
Our invariants can be seen as the unification, and the derived generalization, 
of much of this work.

\subsubsection{Rozansky--Willis invariants} 
Rozansky first used bottom projectors to define a version of Khovanov homology for links embedded in
$S^1\times S^2$ \cite{rozansky2010categorification}, which was subsequently
generalized by Willis to links in connected sums of several copies of $S^1\times
S^2$ \cite{MR4332675}. 
The Rozansky--Willis invariant can be computed by taking
standard Heegaard splittings of these $3$-manifolds, pushing the relevant link
into one of the two handlebodies, and then performing a Khovanov homology
computation in $\R^3$ after inserting bottom projectors. 
In particular, the morphism complexes in our dg categories 
$\sCat(\Sigma,\mathbf{p})$
can be interpreted as Rozansky--Willis chain complexes. 
In \cite[Theorem 6.7]{rozansky2010categorification}, 
Rozansky shows that the bottom projectors
can be approximated by (suitably normalized) chain complexes of high powers of
full twist braids.
Consequently, Rozansky--Willis invariants are locally finitely generated
(i.e.~finitely generated in each bidegree) and can be approximated as a stable
limit of Khovanov homologies of a suitable family of links in $\R^3$; 
see \cite[Corollary 3.8]{MR4332675}. 
Hence, we obtain:

	\begin{corollary}
		\label{cor:fg}
		The cohomology of the morphism complexes in 
		$\sCat(\Sigma,\mathbf{p})$ is finitely generated in
		each bidegree and, for a fixed bidegree, it can be effectively computed
		by a finite Khovanov homology computation. 
	\end{corollary}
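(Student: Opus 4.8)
The plan is to identify the morphism complexes of $\sCat(\Sigma,\mathbf{p})$ with Rozansky--Willis chain complexes and then invoke the known local finiteness and stabilization properties of the latter. First I would recall, from the construction summarized around Figure~\ref{fig:cuthandlebody}, that for objects $S,T$ of $\sCat(\Sigma,\mathbf{p})$ the complex of morphisms from $S$ to $T$ is the Khovanov complex of the link $D(T,S)=T\cup_{\pp} S^\vee$ in the doubled surface $D(\Sigma)$, realized in $\R^3$ as the boundary of the handlebody whose compression disks are the circles cut out by the finitely many seams $\Gamma=\{\gamma_1,\dots,\gamma_\ell\}$, with a copy of Rozansky's bottom projector $\Bproj$ inserted along each compression disk. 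This is precisely the recipe computing the Rozansky--Willis invariant: a standard Heegaard splitting presents a connected sum of $\ell$ copies of $S^1\times S^2$, the link $D(T,S)$ is pushed into one handlebody, and bottom projectors are inserted at the meridians before a Khovanov computation in $\R^3$.

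Next I would apply \cite[Theorem 6.7]{rozansky2010categorification}, according to which the bottom projector $\Bproj$ on any fixed number of strands is the stable limit of the suitably renormalized complexes $q^{a(n)}t^{b(n)}\FT^{n}$ of powers of the full twist braid, in the sense that the connectivity of the transition maps (equivalently, of the maps $q^{a(n)}t^{b(n)}\FT^{n}\to\Bproj$) tends to $\infty$ with $n$. Replacing every $\Bproj$ in Figure~\ref{fig:cuthandlebody} simultaneously by the renormalized $\FT^{n}$ --- using the same $n$ at all $\ell$ compression disks --- turns $D(T,S)$ into an honest link $L_n\subset\R^3$. Since tensoring with a bounded complex shifts connectivity ranges only by a bounded amount, iterating the estimate over the $\ell$ insertions yields, for each fixed bidegree $(i,j)$, an isomorphism
\[
H^{i,j}\!\big(\Hom_{\sCat(\Sigma,\mathbf{p})}(S,T)\big)\cong \Kh^{\,i-c(n),\,j-d(n)}(L_n),
\]
for all $n$ sufficiently large, where $c(n),d(n)$ are the accumulated homological and quantum grading shifts; this is the effective stabilization of \cite[Corollary 3.8]{MR4332675}. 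Since the Khovanov complex of a link in $\R^3$ is a bounded complex of finitely generated free $\cring$-modules, its homology is finitely generated in each bidegree, and the displayed isomorphism transports this conclusion to $\Hom_{\sCat(\Sigma,\mathbf{p})}(S,T)$. For the effectivity claim, the gap estimates in \cite{rozansky2010categorification} and \cite{MR4332675} are explicit enough to produce a threshold $n_0=n_0(i,j,S,T,\Gamma)$ beyond which the isomorphism holds, and then $\Kh^{\,i-c(n_0),\,j-d(n_0)}(L_{n_0})$ is the output of a finite computation from the Khovanov cube of $L_{n_0}$.

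I expect the main obstacle to be the bookkeeping of the simultaneous approximation at several compression disks together with the attendant grading shifts: one must verify that using a single power $n$ at all $\ell$ disks still produces the advertised connectivity, rather than having to push the different meridians to different powers, and that the cumulative shifts $c(n),d(n)$ vary monotonically, so that a fixed bidegree downstairs matches a single bidegree upstairs for $n$ large. Both are consequences of the connectivity lemmas underlying \cite[Theorem 6.7]{rozansky2010categorification}, combined with the observation that in our setting the complex into which each projector is inserted is bounded and controlled by $S$, $T$, and the fixed seam data $\Gamma$; carefully formulating this bounded-tensor-factor argument is where the real content of the proof lies.
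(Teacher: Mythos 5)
Your argument is correct and coincides with the justification the paper itself gives where the corollary is stated in the introduction: identify the morphism complexes with Rozansky--Willis complexes, approximate each bottom projector by renormalized powers of the full twist via \cite[Theorem 6.7]{rozansky2010categorification}, and invoke the stabilization of \cite[Corollary 3.8]{MR4332675} to reduce a fixed bidegree to the Khovanov homology of an honest link in $\R^3$. You are also right that the only real work is the bookkeeping for simultaneous approximation at several compression disks, which is exactly what Willis's result for connected sums of copies of $S^1\times S^2$ supplies.

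It is worth noting, however, that the paper's \emph{formal} proof of the finite-generation clause, given in the body (Proposition~\ref{prop:cxangle}), takes a different and more intrinsic route: it shows that the bar-complex model of the bottom projector is ``angle-shaped'' (finite-rank free chain modules in each bidegree, with support in the class $O_{\leftangle}$ of Lemma~\ref{lem:Oproperties}), and that planar composition of tangles and cobordisms preserves angle-shapedness (Proposition~\ref{prop:canopolisAngle}). Local finite generation of cohomology is then immediate from finiteness of the chain groups in each bidegree, with no stabilization argument and no grading-shift bookkeeping at all. The trade-off is that your route, while requiring the connectivity estimates, is what actually delivers the second clause of the corollary in its most natural form --- an explicit finite link diagram $L_{n_0}$ whose Khovanov homology computes the answer --- whereas the angle-shaped argument yields effectivity only in the weaker sense that each bidegree of the bar-complex model is a finite computation. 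Both ingredients appear in the paper; your proposal develops the first.
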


\subsubsection{Link homology in thickened surfaces}
\label{sec:surfacelinkhom} The
Asaeda--Przytzcki--Sikora homology theories \cite{MR2113902} for links in a
thickened surface $\Sigma \times [0,1]$ factor through a construction \cite{MR2475122}
involving the bounded homotopy category of chain complex over the graded, linear
category $\bn(\Sigma)$ of Bar-Natan's (dotted) cobordisms in $\Sigma\times[0,1]$
(for more details, see the next item below). 
Our dg categories $\sCat(\Sigma,\mathbf{p})$ appear as a derived version of this
construction:

\begin{thm}[Theorem~\ref{thm:surfaceBN}]
	The zeroth cohomology category of $\sCat(\Sigma,\emptyset)$ is
	equivalent to $\bn(\Sigma)$.
\end{thm}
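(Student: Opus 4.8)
The plan is to exhibit a $\cring$-linear functor $F\colon \bn(\Sigma)\to H^0(\sCat(\Sigma,\emptyset))$ which is bijective on isomorphism classes of objects and an isomorphism on every morphism space. On objects, any crossingless $1$-manifold in $\Sigma$ can be isotoped into general position with respect to the seams $\Gamma$, so each object of $\bn(\Sigma)$ is represented by a $\Gamma$-transverse $1$-manifold, i.e.\ an object of $\sCat(\Sigma,\emptyset)$; conversely, isotopic $1$-manifolds are isomorphic in both categories, in $H^0(\sCat)$ via the invertible trace-of-isotopy cobordism. So $F$ is the identity on objects up to isomorphism.

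On morphisms, $F$ is defined by Bar--Natan functoriality. Given a dotted cobordism $W\subset\Sigma\times[0,1]$ from $S$ to $T$, fold it across the boundary to produce a dotted cobordism $\hat W\subset D(\Sigma)\times[0,1]$ from $\emptyset$ to $D(T,S)=T\sqcup S^\vee$; pushing $D(\Sigma)$ and $\hat W$ into $\R^3$ as in the definition of $\sCat$ and observing that $\hat W$ meets each compression disk only in a collar of parallel strands, so that it factors through the homological-degree-zero term $\one=\Bproj^0$ of each inserted copy of $\Bproj$, functoriality of Khovanov homology turns $\hat W$ into a cocycle in the degree-zero part of $\Hom_{\sCat}(S,T)$ (the complex is supported in non-negative homological degrees, since $\Bproj$ is, so $H^0=\ker d^0$). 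One checks that this descends to a well-defined functor: the Bar--Natan relations hold on the nose in Khovanov chain complexes, and composition is respected because stacking cobordisms in $\Sigma\times[0,1]$ corresponds, under the geometric stacking of handlebodies, to the composition law in $\sCat$. When $\Sigma$ is a disk, $F$ is precisely the identification of $H^0(\sCat(D,\emptyset))$ with the (closed) Bar--Natan category recalled in \S\ref{ss:intro summary}.

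It remains to prove $F$ is full and faithful, which I would do by induction on the number of seams using the handle structure. Write $\Sigma=\Sigma'\cup h$, where $h$ is a $1$-handle whose co-core is a seam $\gamma$, so $\Sigma'$ carries one fewer seam. By Theorem~\ref{thm:main} and Construction~\ref{const:gluing}, $\sCat(\Sigma,\emptyset)$ is obtained from the $\MM$-invariant of $\Sigma'$ by a gluing operation which, on morphism complexes, is a bimodule tensor product followed by insertion of a single copy of $\Bproj$ at the new seam circle, assembled via the sideways composition $\mu$ of \S\ref{s:Rozanskybar}. The category $\bn(\Sigma)$ admits an entirely parallel description: cutting cobordisms in $\Sigma\times[0,1]$ along the vertical disk over the co-core of $h$ exhibits $\bn(\Sigma)$ as the corresponding gluing of $\bn(\Sigma')$ along the boundary intervals created by the handle (the surface-cobordism mechanism underlying \cite{MR2475122}; compare the horizontal-trace description of the annular Bar--Natan category). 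The inductive step thus reduces to the single statement that passing to $H^0$ intertwines these two gluings: concretely, $H^0$ of the mapping complex formed by inserting $\Bproj$ between two copies of a Bar--Natan-type morphism module equals the coequalizer defining the cobordism gluing. The relevant input is the structure of Rozansky's bottom projector as $\one$ together with higher turnback terms (equivalently, the approximation $\Bproj\simeq\lim_N\FT^{N}$ of \cite[Theorem~6.7]{rozansky2010categorification}): after neck-cutting, the turnback terms contribute exactly the relations identifying two cobordisms that agree up to sliding dots and closed components across the seam, and nothing more survives in cohomological degree $0$. Granting this, $F$ is full and faithful, and being bijective on isomorphism classes it is an equivalence.

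The main obstacle is precisely this last computation: controlling the cohomology of the bottom projector after the sideways composition, and showing its higher turnback terms neither annihilate classical cobordism classes nor create new degree-zero classes. I expect to handle it via the filtration of $\Bproj$ by through-degree (or the homological filtration from $\Bproj\simeq\lim_N\FT^N$), whose associated spectral sequence should degenerate onto the classical picture near cohomological degree $0$ because contributions of positive through-degree are pushed into strictly positive cohomological degree by the structure of $\Bproj$; the finite generation of Corollary~\ref{cor:fg} ensures these spectral sequences converge and pinpoints the graded rank of $H^0(\Hom_{\sCat}(S,T))$, which one compares with the Bar--Natan normal-form basis of $\Hom_{\bn(\Sigma)}(S,T)$ to upgrade surjectivity to an isomorphism.
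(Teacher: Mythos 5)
There is a genuine gap, and it starts with the homological conventions. All morphism complexes here live in $\Ch^-$: the bar complex $\Bar^n_m$ is supported in cohomological degrees $\leq 0$, so $\Bproj$ is as well, and $H^0$ of $\PMS(T,S)$ is the \emph{cokernel} of $d\colon \PMS^{-1}(T,S)\to\PMS^0(T,S)$, not $\ker d^0$ of a non-negatively supported complex as you assert. This is not a harmless sign issue. The degree-zero chains are (sums of) pairs of cut cobordisms whose cross-sections at each seam agree (the degree-zero part of $\Bar^n_m$ is $\bigoplus_a a\otimes a$, so in particular $\Bproj^0\neq\one$), and the image of the differential from degree $-1$ is exactly what imposes the relation that two gluable configurations differing by an isotopy across a seam become equal in $H^0$. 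Without that quotient your functor $F\colon \bn(\Sigma)\to H^0(\sCat)$ is not even well defined on morphisms (a dot pushed from one side of a seam to the other gives distinct zero-chains), and your through-degree spectral sequence heuristic --- that higher turnback terms are ``pushed into strictly positive cohomological degree'' --- points the wrong way: they sit in negative degrees and contribute relations to $H^0$ rather than classes above it.

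The second gap is that the crucial step (fullness and faithfulness) is only conjectured: you explicitly write ``Granting this...'' for the statement that $H^0$ of the seam-insertion equals the cobordism-gluing coequalizer, and propose to prove it by induction on seams, full-twist approximations, and a rank comparison against a normal-form basis. None of that machinery is needed, and the induction would anyway require re-proving the identification of $\bn(\Sigma)$ as an iterated gluing of $\bn(\Sigma')$, which you only gesture at. The paper's argument is direct and works for all seams at once: a zero-chain is a tensor of prism cobordisms that glue along the seams (because the degree-zero bar terms are diagonal), producing a map $\phi\colon\PMS^0(T,S)\to\Hom_{\bn(\Sigma,\pp)}(S,T)$; the zero-boundaries are precisely differences of cut cobordisms related by isotopy in a neighborhood of $\Gamma\times[0,1]$, so $\phi$ descends to $\overline{\phi}$ on $H^0$; and the inverse is given by isotoping a cobordism in $\Sigma\times[0,1]$ to be transverse to $\Gamma\times[0,1]$ and cutting. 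If you want to salvage your outline, the essential missing lemma is exactly this identification of $\operatorname{im}(d^{-1})$ with the seam-isotopy relations, which is where the content of the bar complex enters.
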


\subsubsection{The Asaeda--Frohman--Kaiser $(3+\epsilon)$-TFT}
Given a compact oriented $3$-manifold $M$, one can consider the 
\emph{Bar-Natan skein module} spanned by embedded (and dotted) surfaces, 
modulo ambient isotopy and certain local skein relations \eqref{eq:BNrels}. 
Given a ``boundary condition'' taking the form of an embedded $1$-manifold in $\partial M$, 
one analogously considers a relative skein module spanned by properly
embedded surfaces with this specified boundary; 
see e.g.~\cite{MR2370224,Russell,MR3220480,MR2503518,thesis-Fadali}. 
For $M=\Sigma\times [0,1]$, 
these skein modules recover the morphism spaces of the category $\bn(\Sigma)$. 
There are few computations of Bar-Natan skein modules of other $3$-manifolds, 
but it is known that they are sensitive to compressibility of
surfaces \cite{MR2370224,kaiser2022barnatan}. 

In the landscape of Table~\ref{table:landscape}, the Bar-Natan skein modules
form the $3$-dimensional layer of a $(3+\epsilon)$-dimensional
\emph{Asaeda--Frohman--Kaiser TFT}, which is based on the monoidal bicategory $\bn$. 
Unlike the Douglas--Reutter TFTs \cite{douglas2018fusion}, which are also
constructed from monoidal bicategories, 
this theory is not expected to extend to
all oriented $4$-manifolds, due to the non-semisimplicity of $\bn$. 
On the positive side, however, such non-semisimplicity may enable greater topological
sensitivity of the associated TFT, e.g.~towards smooth structure \cite{reutter2020semisimple}, 
provided the theory extends to \emph{some} $4$-manifolds. 
As with all extensions of TFTs upwards in dimension, this would
require finiteness properties, which tend to be difficult to achieve with skein modules (c.f.~\S\ref{sss:lasagna}), 
but which can be relaxed in a derived setting based on chain complexes. 

As an intermediate step, we conjecture the existence of a partial $3$-dimensional extension of
our surface invariants, which assigns graded chain complexes to (certain)
oriented $3$-manifolds, with zeroth cohomology recovering the Bar-Natan skein
module.

\subsubsection{The dg horizontal trace}
\label{sss:dgtr}
Arguably, the most important case of the Asaeda--Przytzcki--Sikora link homology
theories arises for the annulus $\Sigma=S^1\times [0,1]$. The corresponding
category $\bn(S^1\times [0,1])$ can be described as the horizontal trace
\cite{BHLZ,QR2} of the bicategory $\bn$. Over the last decade, this
higher-categorical tracing perspective has been highly influential in link
homology theory and beyond, 
e.g.~featuring in the Gorsky--Negut--Rasmussen conjecture \cite{GNR}. 
One reason for its importance is that the annular category 
for a link homology theory should control all possible versions of \emph{colored} link homology, 
as well as the behavior of the link homology under
cabling operations \cite[Section 6.4]{2019arXiv190404481G}. 
Unfortunately, the ordinary horizontal trace is unable 
to capture the rich homotopical data required for such applications. 
As mentioned above, 
the appropriate derived replacement, the \emph{dg horizontal trace} 
was introduced by Gorsky and two of the authors \cite{2002.06110} to remedy this issue.
In the present paper, we recover the dg horizontal trace as the dg category
associated to the annulus with one seam and no boundary points.

\begin{thm}[Theorem~\ref{thm:dgtrace}]
	\label{thm:introtrace}
The dg category $\sCat(S^1\times [0,1],\emptyset)$ of the annulus 
	(with one radial seam) is canonically equivalent to the dg horizontal trace of the bicategory $\bn$.
\end{thm}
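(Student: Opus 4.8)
\emph{Proof proposal.} The plan is to exhibit both sides as one and the same homotopy coend, namely the cyclic two-sided bar construction of the regular $\bn$-bimodule over itself. On the trace side this is, up to conventions, the definition from \cite{2002.06110}: objects of the dg horizontal trace $\hTr(\bn)$ are pairs $(a,X)$ with $X\colon a\to a$ a $1$-morphism of $\bn$, and
\[
\Hom_{\hTr(\bn)}\big((a,X),(b,Y)\big)\;\simeq\;\int^{Z\in\bn(a,b)}\Hom_{\bn}(ZX,\,YZ)\;=\;\bigoplus_{p\ge 0}\ \bigoplus_{Z_0,\dots,Z_p\in\bn(a,b)}\Hom_{\bn}(Z_pX,\,YZ_0)\otimes\Hom_{\bn(a,b)}(Z_{p-1},Z_p)\otimes\cdots\otimes\Hom_{\bn(a,b)}(Z_0,Z_1),
\]
with differential assembled from the internal dg differentials and the simplicial face maps, and with the $\Z\times\Z$-grading recording homological and quantum degree. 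On the other side, a single radial seam cuts the annulus into one $0$-handle (a square) with the unique $1$-handle glued to two opposite sides; hence $\sCat(S^1\times[0,1],\emptyset)$ is the value of the gluing construction (Construction~\ref{const:gluing}) applied to this single self-glued $1$-handle with $0$-handle datum the Bar-Natan bicategory $\bn$. The goal is to match these two descriptions.

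The object-level match is straightforward. An object of $\sCat(S^1\times[0,1],\emptyset)$ is an unoriented crossingless tangle $T\subset S^1\times[0,1]$ transverse to the seam; cutting $T$ open along the seam yields a crossingless tangle in the square with equal numbers of endpoints on the two seam-edges, i.e.\ an endomorphism $\hat T\colon a\to a$ in $\bn$ (inessential closed components becoming, up to isomorphism, direct sums of shifts of the empty diagram). Conversely every endomorphism $X$ of $a$ in $\bn$ closes up around the annulus to such a tangle. This sets up the bijection $(a,X)\leftrightarrow\hat X$ that will be the object map of the comparison functor.

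The crux is the morphism-level match. By construction $\Hom_{\sCat}(S,T)$ is the Khovanov complex of the link $D(T,S)$ in the doubled annulus, the latter embedded in $\R^3$ as the boundary of a solid torus, with Rozansky's through-degree-zero bottom projector $\Bproj$ inserted at the meridian compression disk (Figure~\ref{fig:cuthandlebody}). Cutting the solid torus along this compression disk turns the computation into the Khovanov complex of a flat tangle in a cylinder which is, on the two halves of the picture, the composites $Z\hat S$ and $\hat T Z$, glued up through $\Bproj$. The key input is \S\ref{s:Rozanskybar}: the bottom projector admits a ``Rozansky bar'' model, built from cup--cap $1$-morphisms, whose dg/bar structure is controlled by the Eilenberg--Zilber shuffle product, and the sideways composition $\mu_m$ is exactly the multiplication that turns a chain of bottom projectors into the requisite simplicial/bar object. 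Substituting this model for $\Bproj$ and invoking invariance of Khovanov homology under the local moves, the closure with $\Bproj$ inserted unwinds term-by-term into precisely the simplicial bar complex displayed above for $\int^{Z}\Hom_{\bn}(Z\hat S,\hat T Z)$ (keeping track of the shifts $q^{-m}$ and those in $\mu_m$, which must be checked to match the normalization in \cite{2002.06110}). This produces a dg functor $\Phi\colon\hTr(\bn)\to\sCat(S^1\times[0,1],\emptyset)$ which is the object bijection above and, after this identification, the identity on Hom-complexes.

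Finally one checks that $\Phi$ respects composition and units --- here the ``stacking of handlebodies'' description of composition in $\sCat$, again mediated by $\mu_m$ and the Eilenberg--Zilber map, must be matched with the composition of annuli that defines composition in $\hTr(\bn)$ --- so that $\Phi$ is a quasi-equivalence: essential surjectivity is the object bijection, and quasi-full-faithfulness is the term-wise identification of bar complexes (passing to a cofibrant replacement if a strict chain isomorphism is not available). Canonicity amounts to independence of the single choice made for the annulus, the seam; since any two radial seams are isotopic, this follows from the coherence and invariance results of \S\ref{s:coarsening}--\S\ref{s:coherence}. The main obstacle is the identification in the third paragraph: proving that inserting the bottom projector and taking Khovanov homology computes the homotopy coend, i.e.\ verifying that the Eilenberg--Zilber shuffle product defining $\mu_m$ reproduces, with correct signs and degree shifts, the simplicial structure of the bar construction used to define $\hTr(\bn)$ in \cite{2002.06110}; reconciling grading and handedness conventions ($\op$'s, which side the bimodule acts, shifts) between the two constructions is a secondary but unavoidable nuisance.
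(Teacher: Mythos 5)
Your proposal is correct and follows essentially the same route as the paper: identify objects by cutting/closing along the seam, identify the $\Hom$-complexes as bar complexes contracted against the planar closure diagram, and match compositions via the Eilenberg--Zilber shuffle product. The one simplification you are missing is that the ``main obstacle'' you flag largely dissolves: in this paper $\Bproj_{m+n}$ is \emph{defined} as the planar evaluation of $\Bar^n_m$ (Definition~\ref{def:bottom proj}), so no substitution of a bar model for an abstractly-given projector is needed --- the term-by-term identification of the two $\Hom$-complexes is essentially a matter of unpacking the graphical notation and the symmetries of Proposition~\ref{prop:symmetries of B}.
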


In particular, our results on the independence of the category $\sCat$ from the system of
seams $\Gamma$ give an a posteriori justification of the construction of 
the dg horizontal trace, which uses only one seam.

If we additionally allow boundary points for tangles on the boundary of the annulus, 
the resulting categories form the $\Hom$-categories of what could be called the 
\emph{monoidal trace}\footnote{Or, possibly, the factorization homology over the circle; 
\emph{monoidal} trace as opposed to the \emph{horizontal} trace.} of the
monoidal bicategory $\bn$. We discuss the additional
compositional structure on these $\Hom$-categories in \S\ref{ss:composing in affbn}.

\subsubsection{Categorification of skein algebras}
The category $\bn(\Sigma)$, and thus also $\sCat(\Sigma,\emptyset)$,
descends on the level of K-theory to the Temperley--Lieb skein module of $\Sigma$, 
a variant of the underlying module of the Kauffman bracket skein algebra of $\Sigma$. 
It is a natural question to ask whether the multiplication on skein algebras 
can also be categorified by means of link homology \cite{MR3934851,1806.03416}. 
However, existing approaches that use the homotopy category
of chain complexes over $\bn(\Sigma)$ immediately run into problems caused by a
lack of homotopy coherence\footnote{This is why the work in \cite{1806.03416,2209.08794}
relies on an additional technical semisimplification step; 
see e.g.~the discussion following Conjecture 1.9 in \cite{1806.03416}.}. 

We expect that the dg categories $\sCat(\Sigma,\emptyset;\Gamma)$ 
(or, more precisely, their pretriangulated hulls) form a more natural setting for a
categorified skein algebra multiplication. 
Since this will involve the braiding on $\bn$, it belongs to the $(4+\epsilon)$-sector of 
Table~\ref{table:landscape} and will be left for future work.

\subsubsection{Skein lasagna modules}
\label{sss:lasagna}

We were led to the developments in this paper by thinking about the
$(4+\epsilon)$-dimensional TFT associated to Khovanov
homology~\cite{2019arXiv190712194M}. 
As this theory will play a minor role here, 
we only mention three important facts: 
it is constructed based on Khovanov \emph{homology} (i.e. not on the level of chain complexes), 
it is capable of detecting exotic pairs of oriented $4$-manifolds \cite{ren2024khovanov}, 
and the associated manifold invariants are difficult to compute and can fail to be locally finitely generated, 
see \cite[Theorem 1.4]{2206.04616}.

One may speculate that a \emph{derived} analogue of the $4$-manifold invariant
from \cite{2019arXiv190712194M}, which is constructed on the level of
\emph{chain complexes}, may have better finiteness properties. 
Moreover, the invariants for $4$-dimensional $1$-handlebodies in such theory 
may already be known: the Rozansky--Willis invariants, in which $1$-handles are modeled by
bottom projectors, see \cite[Section 4.7]{2206.04616}. 

A next step in the exploration of such a theory would be to search for a similar
model for $2$-handles. The (underived) theory from \cite{2019arXiv190712194M}
admits such a model, namely the \emph{Kirby color for Khovanov homology}
constructed in previous work of the authors \cite{HRW3}; 
see also \cite{2020arXiv200908520M} for the inspiration for this construction and 
\cite{sullivan2024kirby} for a recent application.
This Kirby color is an object of an appropriate completion of $\bn(S^1\times [0,1])$;
interestingly, this object, which controls $2$-handle attachments in a $(4+\epsilon)$-dimensional theory, 
can already be constructed in the $(3+\epsilon)$-dimensional theory based on the same monoidal bicategory,
but with the braiding forgotten. 
This parallels how the Kirby color for the Crane-Yetter theory agrees with the Kirby color of the corresponding
Turaev--Viro theory, when forgetting the braiding; 
see the first row of Table~\ref{table:landscape}. 
In follow-up work, we pursue a construction of a \emph{derived} Kirby colored for Khovanov homology, 
which will be an object of (a completion of) the derived version of $\bn(S^1\times [0,1])$, namely the dg
category from Theorem~\ref{thm:introtrace}.

\subsubsection{Bordered Heegaard--Floer theory and Fukaya categories of symmetric products}

Our constructions have structural similarities with the bordered Heegaard--Floer homology
of Lipshitz--Ozsvath--Thurston~\cite{MR3827056} and the bordered sutured Floer
homology of Zarev~\cite{MR2941830}; see also \cite{lipshitz2023floer} for a
recent survey. The dg category\footnote{A dg algebra with a distinguished
collection of idempotents is nothing but a dg category.} that bordered
Heegaard--Floer theory assigns to a surface depends on a handle decomposition,
with $2$-handles playing a special role, and $3$-manifolds with boundary yield
$A_\infty$-bimodules between these dg algebras. 
The invariants developed here thus appear
located at the same categorical level as bordered Heegaard--Floer
theory\footnote{Unlike Khovanov homology, which appears one level
higher when incarnated in a braided monoidal bicategory.}. Related candidates
for interesting comparisons are the (partially wrapped versions of) Fukaya
categories of symmetric products of the surface $\Sigma$; see~\cite{MR2827825}.
We plan to study such relations in future work. 
Here we would like to point out one conceptual difference: 
bordered and cornered Heegaard--Floer theories
\cite{MR3180613,MR4045965, manion2020higher} appear as \emph{extensions
downward} from the Heegaard--Floer invariants of $3$-manifolds (and even from
the Seiberg--Witten invariants of $4$-manifolds), whereas our theory is
\emph{extended upwards} from the point, and thus is manifestly local.

\subsection{A generalization beyond Khovanov homology and TFT for stratified manifolds}
In the final stages of this project, we observed\footnote{For this reason,
and for ease of exposition, we have restricted to the archetypical case of $H^*(\C P^1)$.} 
that all of our constructions work in a much more general setting than that of
Khovanov homology. 
Namely, we may start from any non-negatively graded commutative
Frobenius algebra or, equivalently, the associated $2$-dimensional graded TFT.
Indeed, Bar-Natan skein modules have been constructed for any commutative
Frobenius algebra \cite{MR2503518} and the grading serves to make the associated
analogues of bottom projectors locally finitely generated, 
which is then inherited by all morphism complexes. 

This striking observation raises the question: how can such a simple algebraic
structure (a graded commutative Frobenius algebra) give rise to the
complexity of higher-dimensional TFTs exemplified by the many relations
discussed in \S\ref{s:relotherwork}. The answer is necessarily related to the
interaction of TFTs with stratifications on manifolds, as we outline in \S \ref{ss:stratified}. 

Another starting point for analogous constructions as in this paper are the
categories of foams that appear in combinatorial constructions of
Khovanov--Rozansky $\glnn{N}$ link homology theories \cite{Kho3,MSV,QR,RoW}. 
We expect the resulting
invariants to be essentially equivalent to those based on the graded commutative
Frobenius algebras $H^*(\C P^{N-1})=\cring[\mathsf{x}]/(\mathsf{x}^N)$.

\subsection*{Acknowledgements}
We thank
Christian Blanchet, 
Jesse Cohen,
Tobias Dyckerhoff, 
Jonte G\"odicke, 
Mikhail Kapranov,
Slava Krushkal,
Jake Rasmussen,
and Lev Rozansky
for helpful discussions, references to the literature, 
and encouragement.

\subsection*{Funding}

D.R. was partially supported by 
NSF CAREER grant DMS-2144463 
and Simons Collaboration Grant 523992. 
P.W. acknowledges support from the Deutsche
Forschungsgemeinschaft (DFG, German Research Foundation) under Germany's
Excellence Strategy - EXC 2121 ``Quantum Universe'' - 390833306 and the
Collaborative Research Center - SFB 1624 ``Higher structures, moduli spaces and
integrability'' - 506632645.

\section{Background}
\label{sec:background}

\subsection{Categorical setup and conventions}
\label{ss:setup}

All results in this paper, 
with the exception of
a few decategorification statements in \S \ref{sec:spin}, 
hold over an arbitrary commutative ring of scalars $\cring$, which will be fixed throughout.  
Tensor products are always taken over this ring, unless otherwise specified.

We will consider (differential) graded $\cring$-modules and categories, with gradings valued in 
various abelian groups $\Grp$ (in reality, we will only need $\Grp=\{0\}$, $\Z$, or $\Z\times \Z$).  To 
disambiguate, we will always explicitly include the group $\Grp$ in our notation, for example by writing 
$\dgMod{\Grp}{\cring}$ for the category of differential $\k$-modules with gradings valued 
in $\Grp$. In more detail, 
to discuss differential $\k$-modules and the like, we must choose a 
\emph{grading datum} $(\Grp,\ip{\ ,\ },\iota)$, consisting of
\begin{itemize}
\item an abelian group $\Grp$, where the gradings take values,
\item a symmetric bilinear pairing $\ip{\ , \ }\colon \Grp\times \Grp \to \Z/2$, 
used in the Koszul sign convention,
\item a distinguished element $\iota\in \Grp$ with $\ip{\iota,\iota}=1$, which is 
to be the degree of all differentials.
\end{itemize}
Note that the element $\iota$ is only relevant when we want to consider complexes,
i.e.~the choice of $(\Grp,\ip{\ , \ })$ alone determines a symmetric monoidal category 
$\gMod{\Grp}{\cring}$ of $\Grp$-graded $\cring$-modules and degree zero 
$\k$-linear morphisms.  This category is monoidal in the standard way, with 
symmetric braiding $M\otimes N\rightarrow N\otimes M$ determined by the Koszul sign rule
\[
m\otimes n \mapsto (-1)^{\ip{\deg(m),\deg(n)}} n\otimes m.
\]

A $\Grp$-graded $\cring$-linear category is defined to be a category enriched in
$\gMod{\Grp}{\cring}$.  The category $\gMod{\Grp}{\cring}$ admits an internal
$\Hom$, consisting of homogeneous $\cring$-linear morphisms of arbitrary degree.
The notation $\Hom_\cring(-,-)$ will \emph{always} mean this internal $\Hom$;
the space of morphisms in $\gMod{\Grp}{\cring}$ is recovered as its
degree-zero component.  The internal hom defines a lift of $\gMod{\Grp}{\cring}$ 
to a $\Grp$-graded $\cring$-linear category that we denote $\gModen{\Grp}{\cring}$.

Let $\dgMod{\Grp}{\cring}$ be the symmetric monoidal category of differential 
$\Grp$-graded $\cring$-modules and degree zero chain maps.  

\begin{example}
The usual category of complexes of $\cring$-modules would be denoted using the above notation as 
$\dgMod{\Z}{\cring}$, 
with $\ip{i,j}\equiv ij$ (mod 2), and $\iota= 1$ (or $\iota=-1$ if one prefers).
\end{example}

A \emph{differential $\Grp$-graded $\cring$-linear category}
(or simply \emph{dg category}, for short) will mean a category enriched in $\dgMod{\Grp}{\cring}$.
A dg functor between dg categories will, as usual, mean a functor in the enriched sense, 
thus its components are degree-zero chain maps.
We will denote the category of dg categories and dg functors by $\DGCat$.

As was the case for graded modules, 
$\dgMod{\Grp}{\cring}$ possesses an internal $\Hom$ given by
\[
\Hom_\cring\big((M,\d_M),(N,\d_N) \big)  
	= \big(\Hom_\cring(M,N)\,  , \  f\mapsto \d_N\circ f - (-1)^{\ip{\iota,\deg(f)}} f\circ \d_M \big) \, .
\]
The space of morphisms in $\dgMod{\Grp}{\cring}$ is obtained from this by taking 
the closed morphisms in degree-zero.  The internal hom defines a lift of the 
ordinary (unenriched) category $\dgMod{\Grp}{\cring}$ to a dg category, 
which we denote $\dgModen{\Grp}{\cring}$.
If $\CS$ is a dg category, 
then we write 
$H^0(\CS)$ for the $\cring$-linear categories 
with the same objects as $\CS$ 
and morphisms given by 
degree-zero closed morphisms 
modulo homotopy.

\begin{conv}
An ordinary $\cring$-linear category can and frequently will 
be regarded as a dg category (for any choice of grading datum $(\Grp,\ip{\ , \ },\iota)$) with trivial grading and differentials.
\end{conv}

\begin{conv}
In this paper, 
all categories will graded by $\Z\times \Z$ or its subgroup $\{0\}\times \Z$, 
with differentials of degree $\iota=(1,0)$, and 
sign rule $\ip{(i,j),(i',j')}=ii'$ (mod 2).  In particular, ordinary graded categories are 
$(\{0\}\times \Z)$-graded, while their categories of complexes are $(\Z\times \Z)$-graded.  
We let $t$ and $q$ denote the corresponding grading shift autoequivalences, 
so that for $M\in \gMod{\Z\times\Z}{\cring}$, the homogeneous pieces of $t^iq^j M$ are given by $(t^iq^j M)^{k,l} 
= M^{k-i,l-j}$.
\end{conv}

If $\CS$ is a dg category, then we write $\Ch^b(\CS)$ (resp.~$\Ch^-(\CS)$) for the dg categories of
bounded (resp.~bounded above) one-sided twisted complexes over 
the additive completion of $\CS$. 
When $\CS$ is $(\{0\}\times \Z)$- or $(\Z \times \Z)$-graded, 
we will additionally formally adjoin $q$-shifts of our objects before forming twisted complexes.
If $\CS$ is simply a $\cring$-linear category, 
this gives the usual notions of bounded (resp.~bounded above) chain complexes. 
If $(X,\d)$ is a chain complex with differential $\d$ 
and $\alpha$ is an endomorphism of $X$ with $(\alpha+\d)^2=0$, 
then we write $\tw_\alpha(X):=(X,\d+\alpha)$ for the twist of $(X,\d)$ by $\alpha$; 
see e.g.~\cite[Definition 2.15]{HRW1}.

If $\CS$ and $\DS$ are dg categories, then $\CS\otimes \DS$ is the dg category with objects given by pairs 
$(X,Y)$ with $X\in \CS$, $Y\in \DS$, $\Hom$-complexes given by
\[
\Hom_{\CS\otimes \DS}\Big((X,Y),(X',Y')\Big):=\Hom_\CS(X,X')\otimes \Hom_\DS(Y,Y'),
\]
and composition defined using the symmetric monoidal structure on $\dgMod{\Grp}{\cring}$ 
(i.e.~the Koszul sign rule):
\[
(f\otimes g)\circ (f'\otimes g') = (-1)^{\ip{\deg(g),\deg(f')}} (f\circ f')\otimes (g\circ g') \, .
\]
There is a canonical \emph{totalization} functor 
\begin{equation}
	\label{eq:totalization}
\Ch(\CS) \otimes \Ch(\DS) \to \Ch(\CS \otimes \DS)
\end{equation}
given akin to the usual tensor product of chain complexes of abelian groups.
We will tacitly use \eqref{eq:totalization} to view objects $(X,Y) \in \Ch(\CS) \otimes \Ch(\DS)$ 
as objects in $\Ch(\CS \otimes \DS)$.

We will also consider dg bicategories, wherein the $1$-morphism categories are dg categories 
(with some fixed data $(\Grp, \ip{\ , \ }, \iota)$).  
For example, there is a dg bicategory of dg categories
with objects dg categories, $1$-morphisms dg functors, and $2$-morphisms dg natural transformations. 
By abuse of notation, we will also denote this bicategory by $\DGCat$.
For many constructions in this paper, 
we will also utilize various instances of Morita bicategories, whose objects are dg categories, 
1-morphisms are dg bimodules over these categories, and 2-morphisms are dg maps of bimodules.  
See \S \ref{ss:BNmm} and \S \ref{ss:MM of surface} for more.

Given dg categories $\CS$ and $\DS$, there is an invertible dg functor
$\CS\otimes \DS \xrightarrow{\cong} \DS\otimes \CS$ sending $(X,Y) \mapsto
(Y,X)$ and $f\otimes g\mapsto (-1)^{\ip{\deg(f),\deg(g)}} g\otimes f$. 
In this way, the dg bicategory of dg categories inherits a symmetric monoidal structure
from the symmetric monoidal structure on $\dgMod{\Grp}{\cring}$. 
By a further step of enrichment, we also obtain a symmetric monoidal structure on dg bicategories: 
if $\mathbf{C}$ and $\mathbf{D}$ are dg bicategories 
(in particular, they may be ordinary $\cring$-linear bicategories),
then we let $\mathbf{C}\otimes \mathbf{D}$ be the dg bicategory wherein
\begin{itemize}
\item objects are pairs $(p,q)$ where $p$ is an object of $\mathbf{C}$ and $q$ is an object of $\mathbf{D}$,
\item the 1-morphism dg category from $(p,q)$ to $(p',q')$ is
\[
(\mathbf{C}\otimes \mathbf{D})_{(p,q)}^{(p',q')} :=\mathbf{C}_p^{p'}\otimes \mathbf{D}_q^{q'},
\]
where $\mathbf{C}_p^{p'}$ denotes the 1-morphism dg category from $p$ to $p'$, 
and similarly for $\mathbf{D}_q^{q'}$, and
\item the horizontal composition $\hComp$ of $1$-morphisms in $\mathbf{C}\otimes \mathbf{D}$ 
is given component-wise, i.e. $(X',Y') \hComp (X,Y) = (X' \hComp X , Y' \hComp Y)$.

\end{itemize}

\subsection{Bar-Natan categories as Temperley--Lieb categorification}
	\label{ss:BN}

We recall the definition of the (pre-additive) Bar-Natan categories for surfaces
from \cite[Definition 3.6]{HRW3}. 
See \cite{BN2} for the original construction, 
which here corresponds to the case where $\Sigma$ is a disk.

\begin{definition}
	\label{def:BN}
Let $\Sigma$ be an orientable surface with (possibly empty) boundary and let
$\mathbf{p} \subset \partial \Sigma$ be finite. 
The (pre-additive) \emph{Bar-Natan category} is the
$\Z$-graded $\cring$-linear category $\bn(\Sigma ; \mathbf{p})$ defined as follows.
Objects in $\bn(\Sigma ; \mathbf{p})$ are
smoothly embedded $1$-manifolds $C \subset \Sigma$ with boundary
$\partial C = \mathbf{p}$ meeting $\partial \Sigma$ transversely. Given objects
$C_1,C_2$, $\Hom_{\bn}(C_1,C_2)$ is the $\Z$-graded $\cring$-module spanned by
embedded orientable cobordisms $W \subset \Sigma \times [0,1]$ 
with corners (when $\mathbf{p} \neq \emptyset$) from
$C_1$ to $C_2$, modulo the following local relations:
\begin{equation}
	\label{eq:BNrels}
\begin{tikzpicture} [fill opacity=0.2,anchorbase, scale=.375]
	\path[fill=red, opacity=.2] (1,0) arc[start angle=0, end angle=180,x radius=1,y radius=.5] 
		to (-1,4) arc[start angle=180, end angle=0,x radius=1,y radius=.5] to (1,0);
	\path[fill=red, opacity=.2] (1,0) arc[start angle=360, end angle=180,x radius=1,y radius=.5] 
		to (-1,4) arc[start angle=180, end angle=360,x radius=1,y radius=.5] to (1,0);
	\draw [very thick] (0,4) ellipse (1 and 0.5);
	\draw [very thick] (0,0) ellipse (1 and 0.5);
	\draw[very thick] (1,4) -- (1,0);
	\draw[very thick] (-1,4) -- (-1,0);
\end{tikzpicture}
\, = \,
\begin{tikzpicture} [fill opacity=0.2,anchorbase, scale=.375,rotate=180]
	\path[fill=red,opacity=.2] (1,4) arc[start angle=0, end angle=180,x radius=1,y radius=.5] 
		to [out=270,in=180] (0,2.5) to [out=0,in=270] (1,4);
	\path[fill=red,opacity=.2] (1,4) arc[start angle=360, end angle=180,x radius=1,y radius=.5] 
		to [out=270,in=180] (0,2.5) to [out=0,in=270] (1,4);
	\draw[very thick] (0,4) ellipse (1 and 0.5);
	\draw[very thick] (-1,4) to [out=270,in=180] (0,2.5) to [out=0,in=270] (1,4);
	\path[fill=red,opacity=.2] (1,0) arc[start angle=0, end angle=180,x radius=1,y radius=.5] 
		to [out=90,in=180] (0,1.5) to [out=0,in=90] (1,0);
	\path[fill=red,opacity=.2] (1,0) arc[start angle=360, end angle=180,x radius=1,y radius=.5] 
		to [out=90,in=180] (0,1.5) to [out=0,in=90] (1,0);
	\draw[very thick] (0,0) ellipse (1 and 0.5);
	\draw[very thick] (-1,0) to [out=90,in=180] (0,1.5) to [out=0,in=90] (1,0);
	\node[opacity=1] at (0,1) {\footnotesize$\bullet$};
\end{tikzpicture}
+
\begin{tikzpicture} [fill opacity=0.2,anchorbase, scale=.375]
	\path[fill=red,opacity=.2] (1,4) arc[start angle=0, end angle=180,x radius=1,y radius=.5] 
		to [out=270,in=180] (0,2.5) to [out=0,in=270] (1,4);
	\path[fill=red,opacity=.2] (1,4) arc[start angle=360, end angle=180,x radius=1,y radius=.5] 
		to [out=270,in=180] (0,2.5) to [out=0,in=270] (1,4);
	\draw[very thick] (0,4) ellipse (1 and 0.5);
	\draw[very thick] (-1,4) to [out=270,in=180] (0,2.5) to [out=0,in=270] (1,4);
	\path[fill=red,opacity=.2] (1,0) arc[start angle=0, end angle=180,x radius=1,y radius=.5] 
		to [out=90,in=180] (0,1.5) to [out=0,in=90] (1,0);
	\path[fill=red,opacity=.2] (1,0) arc[start angle=360, end angle=180,x radius=1,y radius=.5] 
		to [out=90,in=180] (0,1.5) to [out=0,in=90] (1,0);
	\draw[very thick] (0,0) ellipse (1 and 0.5);
	\draw[very thick] (-1,0) to [out=90,in=180] (0,1.5) to [out=0,in=90] (1,0);
	\node[opacity=1] at (0,1) {\footnotesize$\bullet$};
\end{tikzpicture}
\, , \quad
\begin{tikzpicture}[anchorbase, scale=.375]
	\path [fill=red,opacity=0.3] (0,0) circle (1);
	\draw (-1,0) .. controls (-1,-.4) and (1,-.4) .. (1,0);
	\draw[dashed] (-1,0) .. controls (-1,.4) and (1,.4) .. (1,0);
	\draw[very thick] (0,0) circle (1);
\end{tikzpicture}
= 0
\, , \quad
\begin{tikzpicture}[anchorbase, scale=.375]
	\path [fill=red,opacity=0.3] (0,0) circle (1);
	\draw (-1,0) .. controls (-1,-.4) and (1,-.4) .. (1,0);
	\draw[dashed] (-1,0) .. controls (-1,.4) and (1,.4) .. (1,0);
	\draw[very thick] (0,0) circle (1);
	\node at (0,0.6) {\footnotesize$\bullet$};
\end{tikzpicture}
= 1
\, , \quad
\begin{tikzpicture}[fill opacity=.3, scale=.5, anchorbase]
	\filldraw [very thick,fill=red] (-1,-1) rectangle (1,1);
	\node [opacity=1] at (0,-.25) {$\bullet$};
	\node [opacity=1] at (0,.25) {$\bullet$};
	\end{tikzpicture}
= 0 \, .
\end{equation}
 The degree of a cobordism with corners $W \colon C_1 \to C_2$ is given
by $\deg(W) = \frac{1}{2}|\mathbf{p}| - \chi(W)$, and a dot on a surface is used
as shorthand for taking a connect sum with a torus at that point and multiplying
by $\frac{1}{2}$. For example, 
\begin{equation}
	\label{eq:dotdef}
\begin{tikzpicture} [fill opacity=0.2,anchorbase, scale=.375]
	\path[fill=red,opacity=.2] (1,4) arc[start angle=0, end angle=180,x radius=1,y radius=.5] 
		to [out=270,in=180] (0,2.5) to [out=0,in=270] (1,4);
	\path[fill=red,opacity=.2] (1,4) arc[start angle=360, end angle=180,x radius=1,y radius=.5] 
		to [out=270,in=180] (0,2.5) to [out=0,in=270] (1,4);
	\draw[very thick] (0,4) ellipse (1 and 0.5);
	\draw[very thick] (-1,4) to [out=270,in=180] (0,2.5) to [out=0,in=270] (1,4);
	\node[opacity=1] at (0,3) {\footnotesize$\bullet$};
\end{tikzpicture}
:=
\frac{1}{2}
\begin{tikzpicture}[scale=.5,anchorbase]
\begin{scope}
    \clip (-.65,4) arc[start angle=180, end angle=360,x radius=.65,y radius=.25]
    	to (1.25,4) to (1.25,.4) to (-1.25,.4) to  (-1.25,4) to (-.65,4);
\fill[red,opacity=.3] (0,2.5) ellipse (1 and 2);
\draw[very thick] (0,2.5) ellipse (1 and 2);
\fill[white] (0,3) to [out=300,in=60] (0,2) to [out=120,in=240] (0,3);
\draw[very thick] (0,3) to [out=300,in=60] (0,2);
\draw[very thick] (0.1,1.8) to [out=125,in=235] (0.1,3.2);
\end{scope}
\fill[red,opacity=.2] (0,4) ellipse (.65 and .25);
\draw[very thick] (0,4) ellipse (.66 and .25);
\end{tikzpicture} \, .
\end{equation}
The (additive) \emph{Bar-Natan category} is the graded additive completion
$\BN(\Sigma ; \mathbf{p}) := \Mat\big(\bn(\Sigma ; \mathbf{p})\big)$.
\end{definition}

The following categories will play a special role.
For $n\in \N$, 
let $\mathbf{p}^n \subset (0,1)$ denote a chosen set of $n$ distinct points 
(that is symmetric about $1/2$)
and consider
\begin{equation}
	\label{eq:BNmn}
\begin{aligned}
\bn_m^n &:= \bn \big( [0,1]^2 ; \mathbf{p}^m \times \{0\} \cup \mathbf{p}^n \times \{1\} \big)\quad, \quad 
\BN_m^n &= \BN \big( [0,1]^2 ; \mathbf{p}^m \times \{0\} \cup \mathbf{p}^n \times \{1\} \big) 
\end{aligned}
\end{equation}
the (pre-additive and additive) Bar-Natan categories of planar $(m,n)$-tangles
in $[0,1]^2$ with (dotted) cobordisms in $[0,1]^3$ as morphisms between them.
The categories $\bn_m^n$ possess the following structure, which is then induced
on $\BN_m^n$ as well: 
\begin{itemize}

\item A covariant functor  $r_x\colon \bn^n_m\rightarrow \bn^n_m$ 
	which reflects tangles and cobordisms in the first coordinate.
	\item A covariant functor $r_y \colon \bn^n_m \rightarrow \bn^m_n$ 
	which reflects tangles and cobordisms in the second coordinate.
\item A contravariant functor $r_z \colon \bn^n_m \rightarrow \bn^n_m$ which is
	the identity on tangles in $\bn^n_m$ and reflects cobordisms in the third
	coordinate. On $\BN_m^n$, it also inverts grading shifts on objects.
\item A horizontal composition functor $\star\colon \bn^n_m\otimes \bn^m_k\rightarrow \bn^n_k$ 
	induced by composition of tangles.
\item An identity tangle $\one_n$ in $\bn^n_n$ for each $n$, the units for the horizontal composition.
\item An external tensor product $\boxtimes \colon \bn^n_m \otimes \bn_k^\ell \rightarrow \bn_{m+k}^{n+\ell}$ 
	induced by placing tangles side-by-side. 
\end{itemize}
The reflection functors commute and we will write $r_{xy}$, $r_{xz}$, $r_{yz}$, and $r_{xyz}$ for their composites.
For additional details, see \cite[Section 4.2]{HRW3}.
The following is folklore.

\begin{prop}\label{prop:BN2cat}
The operations $\star$ and $\boxtimes$ endow 
$\bn := \coprod_{m,n \geq 0} \bn_m^n$  and 
$\BN := \coprod_{m,n \geq 0} \BN_m^n$ 
with the structure of a monoidal bicategory. \qed
\end{prop}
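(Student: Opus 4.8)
The plan is to unwind the definition of a monoidal bicategory — a bicategory $\mathcal{B}$ together with a pseudofunctor $\boxtimes \colon \mathcal{B} \times \mathcal{B} \to \mathcal{B}$, a unit object, pseudonatural adjoint equivalences implementing associativity and the two unit laws, and invertible modifications (the pentagonator and the unit modifications) subject to the usual two coherence equations — and to produce all of this data geometrically, observing that every structure cell is either an identity or the isotopy class of a canonical reparametrization of a product cobordism. Since the statement is recorded as folklore and the paper gives no proof, I will describe the shape of the verification rather than carry it out; the only genuine content is the coherence bookkeeping, which I expect to be the main obstacle (and the reason the result is ``folklore'').

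First I would record the underlying bicategory: objects are the natural numbers, the hom-category from $m$ to $n$ is $\bn_m^n$ of \eqref{eq:BNmn}, horizontal composition is $\star$ (stacking of tangles in $[0,1]^2$ and of cobordisms in $[0,1]^3$ along the second coordinate, followed by the standard affine rescaling), and the units are the identity tangles $\one_n$ described above. Because the objects of $\bn_m^n$ are genuine embedded $1$-manifolds rather than isotopy classes, the associativity and unit $2$-cells are not identities but are the canonical invertible cobordisms realizing the affine reparametrizations that reconcile the competing placements; the bicategory pentagon and triangle then hold because affine reparametrizations of $[0,1]$ compose strictly associatively and unitally. The identical discussion applies to $\BN = \Mat(\bn)$, as the graded additive completion $\Mat(-)$ is a $2$-functor carrying all of this structure along.

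Next I would upgrade $\boxtimes$ to a pseudofunctor $\bn \times \bn \to \bn$: on objects it sends $(m,k) \mapsto m+k$, and on hom-categories it is the external tensor product $\boxtimes \colon \bn_m^n \otimes \bn_k^\ell \to \bn_{m+k}^{n+\ell}$ described above, obtained by placing diagrams side by side in the first coordinate (and reparametrizing the first coordinate so that boundary points land in their standard positions). It strictly preserves identities, $\one_m \boxtimes \one_k = \one_{m+k}$, while the interchange law relating $\boxtimes$ and $\star$,
\[
(W_1 \star W_2) \boxtimes (W_1' \star W_2') \;\cong\; (W_1 \boxtimes W_1') \star (W_2 \boxtimes W_2') \, ,
\]
holds up to a canonical ``straightening'' isotopy of the first-coordinate reparametrizations (an invertible cobordism); this straightening is the compositor $2$-cell of $\boxtimes$, and the pseudofunctor axioms reduce to the statement that these straightenings compose coherently, which is again a matter of how the reparametrizations interpolate.

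Finally I would assemble the monoidal data: the unit object is the empty diagram $\emptyset \in \bn_0^0$, with $\emptyset \boxtimes (-) = (-) = (-) \boxtimes \emptyset$; the associativity pseudonatural equivalence has components the canonical reparametrization cobordisms $(T_1 \boxtimes T_2) \boxtimes T_3 \xrightarrow{\;\sim\;} T_1 \boxtimes (T_2 \boxtimes T_3)$, and the unit pseudonatural equivalences are defined analogously. Pseudonaturality is clear since first-coordinate reparametrization commutes with $\star$, with $\boxtimes$, and with every cobordism. The pentagonator and the unit modifications are then $3$-cells between composites of these canonical isotopies, and because affine reparametrizations compose strictly, they are identities; with all structure cells either identities or canonical reparametrizations, the two coherence axioms of a monoidal bicategory collapse to evident identities among reparametrizations of $[0,1]^2$ and $[0,1]^3$. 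I expect the real difficulty to be entirely organizational — tracking which reparametrization and which straightening lives on which cell and confirming that the coherence $3$-cells are as claimed — so a cleaner route would be to invoke the coherence theorem for monoidal bicategories (every monoidal bicategory is biequivalent to a Gray monoid, hence it suffices to exhibit the above data up to the evident equivalences) or to realize $\bn$ as a full monoidal sub-bicategory of a manifestly monoidal bicategory of tangles-with-corners, cobordisms, and isotopies.
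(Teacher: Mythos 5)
The paper offers no proof at all---it records the statement as folklore and closes it with \qed---so your sketch is already more detailed than the paper's treatment, and its approach (exhibiting all structure cells as canonical reparametrization cobordisms and checking that the coherence $3$-cells collapse) is exactly the standard verification the authors are taking for granted. One small slip: the monoidal unit is the \emph{object} $\pp^0$ of the bicategory, not the $1$-morphism $\emptyset\in\bn_0^0$ (which is merely the identity $1$-morphism $\one_0$ on that unit object); otherwise the outline is consistent with what the paper assumes.
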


The endomorphism category $\BN^0_0$ of the tensor unit $\mathbf{p}^0 \in \BN$
is naturally a braided monoidal category. In fact it is symmetric. The following is well-known.

\begin{lemma}
	\label{BN:equiv}
	The representable functor 
	\[\BN^0_0 \to \gMod{\Z}{\cring} \, , \quad X \mapsto \Hom_{\BN^0_0}(\emptyset,X) \]
	is an equivalence of braided monoidal categories. As a consequence, the natural braiding on $\BN^0_0$ is symmetric.
\end{lemma}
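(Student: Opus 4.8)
The plan is to show that the representable functor is fully faithful and (braided) monoidal with essential image a symmetric monoidal subcategory of $\gMod{\Z}{\cring}$, so that symmetry of the braiding on $\BN^0_0$ follows formally from faithfulness.

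\smallskip

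First I would simplify $\BN^0_0=\Mat(\bn^0_0)$. Objects of $\bn^0_0$ are closed $1$-manifolds smoothly embedded in $[0,1]^2$; mapping-cylinder cobordisms identify isotopic ones, so up to isomorphism an object is a finite (possibly nested) family of circles. Bar-Natan's delooping isomorphism---a formal consequence of the relations \eqref{eq:BNrels}, with structure maps the (dotted) cup and cap---rewrites a single circle as a direct sum of two grading shifts of the empty diagram $\emptyset$; applying this to innermost circles first and inducting shows that in $\BN^0_0$ every object is isomorphic to a finite direct sum $\bigoplus_i q^{a_i}\emptyset$. A short computation with \eqref{eq:BNrels} (neck-cutting reduces any closed cobordism to a $\cring$-combination of dotted spheres, which the sphere relations then collapse to scalar multiples of the empty cobordism) gives $\End_{\bn^0_0}(\emptyset)\cong\cring$, concentrated in degree $0$ and generated by the empty cobordism---nonzero since the $2$-dimensional TQFT of the unknot Frobenius algebra sends it to $1\in\cring$. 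Hence the full subcategory of $\BN^0_0$ on the objects $\{q^a\emptyset\}_{a\in\Z}$ is $\cring$-linearly equivalent, via $q^a\emptyset\mapsto q^a\cring$, to the category of free graded $\cring$-modules of rank one, and $\BN^0_0$ is its additive completion.

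\smallskip

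Next I would promote this to a monoidal statement. The functor $h:=\Hom_{\BN^0_0}(\emptyset,-)$ sends $q^a\emptyset\mapsto q^a\cring$, is additive, and by the previous paragraph is therefore fully faithful, with essential image the full subcategory of $\gMod{\Z}{\cring}$ on the finite-rank free modules; thus $h$ is an equivalence onto that subcategory. Regarded as the endomorphism category $\Endcat_\BN(\mathbf{p}^0)$, the category $\BN^0_0$ is monoidal under horizontal composition $\star$; since $\mathbf{p}^0$ is the monoidal unit of $\BN$ (Proposition~\ref{prop:BN2cat}), the product $\boxtimes$ gives a second monoidal structure, and the interchange law makes the two coincide up to coherent isomorphism, equipping $\BN^0_0$ with its braiding $\beta$. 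On objects, both $\star$ and $\boxtimes$ are disjoint union of diagrams in the disk, and the ``disjoint union of cobordisms'' maps $h(C)\otimes h(C')\to h(C\star C')$ are isomorphisms: this holds for $C=C'=\emptyset$ by inspection, and the general case follows by delooping and additivity, compatibly with the associativity and unit constraints. So $h$ is a monoidal equivalence onto its essential image, which inherits from $\gMod{\Z}{\cring}$ the structure of a symmetric monoidal category (the Koszul pairing on the $q$-grading being trivial).

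\smallskip

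Finally, for the braiding, the key point is that $\beta_{C,C'}\colon C\star C'\to C'\star C$, as produced by the interchange data of $\BN$, is concretely the (invertible) mapping-cylinder cobordism of the isotopy of $[0,1]^2$ carrying ``$C$ stacked below $C'$'' to ``$C'$ stacked below $C$''. Tracking it through the identification above, $h(\beta_{C,C'})$ is the transposition $w\otimes w'\mapsto w'\otimes w$; since every morphism of $\bn^0_0$ lies in homological degree zero the Koszul rule contributes no sign, so this transposition is exactly the symmetry of the essential image. Hence $h$ is an equivalence of braided monoidal categories onto a symmetric monoidal category, and, $h$ being faithful, $\beta_{C',C}\circ\beta_{C,C'}=\id$---the braiding on $\BN^0_0$ is symmetric. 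I expect the real obstacle to be this last paragraph: pinning down which Bar-Natan cobordism represents $\beta$, checking it is a product cobordism, and matching $h(\beta)$ with the sign-free transposition---i.e.\ reconciling the Eckmann--Hilton interchange data of the monoidal bicategory $\BN$ with the graphical calculus. An alternative is to prove symmetry directly, by exhibiting an isotopy rel boundary of $[0,1]^2\times[0,1]$ between $\beta_{C',C}\circ\beta_{C,C'}$ and the identity cobordism (using contractibility of the rel-boundary diffeomorphism group of the disk); the compatibility bookkeeping is comparably delicate there.
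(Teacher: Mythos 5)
The paper offers no proof of this lemma---it is stated as folklore---so there is nothing to compare against line by line; your argument is a correct and standard way to establish it. The three ingredients (delooping every circle into $q^{\pm1}\emptyset$, evaluating closed surfaces via neck-cutting and the sphere relations to get $\End(\emptyset)\cong\cring$, and identifying $h(\beta_{C,C'})$ with the unsigned transposition) are exactly what is needed, and the sign bookkeeping is right because the grading group here is $\{0\}\times\Z$, on which the paper's Koszul pairing vanishes. Two small remarks. First, as you implicitly note, the functor is an equivalence onto the full subcategory of finitely generated free graded modules rather than onto all of $\gMod{\Z}{\cring}$; the lemma's statement is loose on this point and your reading is the intended one. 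Second, your ``real obstacle'' in the last paragraph is less delicate than you fear: the paper's standard basis of $\KhEval{L}$, given by (dotted) cup cobordisms indexed by $\{1,\mathsf{x}\}^{\pi_0(L)}$, is visibly permuted by any mapping-cylinder cobordism of an ambient isotopy, so $h(\beta_{C,C'})$ is the transposition of basis elements by inspection, and faithfulness then gives $\beta_{C',C}\circ\beta_{C,C'}=\id$ without any rel-boundary isotopy argument in $[0,1]^2\times[0,1]$.
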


\begin{definition}
	\label{def:Kh}
	From now onwards, we will use the notation $\KhEval{-}$ for the representable functor from Lemma~\ref{BN:equiv} 
	and also for its pre-composition with the inclusion $\bn_0^0\hookrightarrow \BN_0^0$.
\end{definition}

On the level of objects $L\in \bn^0_0$, i.e.~planar $1$-manifolds, $\KhEval{L}$
indeed computes Khovanov homology, as suggested by the notation. Moreover, the
graded $\cring$-module $\KhEval{L}$ is free with a \emph{standard basis} parametrized by
$\{1,\mathsf{x}\}^{\pi_0(L)}$, i.e. labelings of the connected components of $L$ by
symbols $1$ or $\mathsf{x}$. These basis elements are realised by dotted cobordisms in
$\Hom_{\bn^0_0}(\emptyset,L)$ consisting of a disjoint union of cup cobordisms
(for components labeled $1$) and dotted cup cobordisms (for components labeled $\mathsf{x}$). 
By passing to the graded additive closure, we also have standard bases of
$\KhEval{X}$ for every $X\in \BN^0_0$

\begin{lemma} For any $m\in \N_0$, there is a canonical natural
equivalence $\mathrm{sph}\colon \mathrm{close}_R \To \mathrm{close}_L \colon \BN^m_m \to \BN^0_0$ between
the right- and left-closure functors
\[
	\begin{tikzcd}
		&& \begin{tikzpicture}[anchorbase,xscale=-1]
			\draw[thick,double] (0,-.25) to (0,.25) \ul (-.4,.5) \ld (-.8,.25) to (-.8,-.25) \dr (-.4,-.5) \ru (0,-.25); 
			\draw[thick, fill=white] (-.55,-.25) rectangle (.55,.25);
			\node at (0,-.1) {$T$};
		\end{tikzpicture}
		\arrow[dd, "\mathrm{sph}_T"]
		\\[-30pt]
		\begin{tikzpicture}[anchorbase,xscale=-1]
			\draw[thick,double] (0,-.5) to (0,.5); 
			\draw[thick, fill=white] (-.55,-.25) rectangle (.55,.25);
			\node at (0,-.1) {$T$};
		\end{tikzpicture}
		\arrow[urr, "\mathrm{close}_R "]
		\arrow[drr, "\mathrm{close}_L",swap]&&
	\\[-30pt]
	&&
	\begin{tikzpicture}[anchorbase,xscale=1]
		\draw[thick,double] (0,-.25) to (0,.25) \ul (-.4,.5) \ld (-.8,.25) to (-.8,-.25) \dr (-.4,-.5) \ru (0,-.25); 
		\draw[thick, fill=white] (-.55,-.25) rectangle (.55,.25);
		\node at (0,-.1) {$T$};
	\end{tikzpicture}
	\end{tikzcd}	
		\]
whose components are the isomorphisms, which under the equivalence $\KhEval{-}$, are induced by the evident canonical bijection of standard basis elements.
\end{lemma}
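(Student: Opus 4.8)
The plan is to reduce the statement to the equivalence $\KhEval{-}$ of Lemma~\ref{BN:equiv} together with the standard fact that the evaluation of a closed dotted surface in $\bn^0_0$ depends only on its homeomorphism type and the placement of its dots (equivalently, it is computed by the $(1+1)$-dimensional TFT attached to the Frobenius algebra $\cring[X]/(X^2)$; see \cite{BN2}). First I would treat an object $T$ of the pre-additive category $\bn^m_m$ --- a single crossingless $(m,m)$-tangle --- and then extend to $\BN^m_m = \Mat(\bn^m_m)$ by additivity, which is forced since all functors and transformations involved are $\cring$-linear.

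To build the component $\mathrm{sph}_T$: both $\mathrm{close}_R(T)$ and $\mathrm{close}_L(T)$ are obtained from the $1$-manifold underlying $T$ by gluing on, for $j = 1,\dots,m$, an arc joining the $j$-th top endpoint to the $j$-th bottom endpoint; the two closures differ only in how these arcs are embedded in the plane, not in the abstract gluing recipe. Hence there is an evident canonical bijection $\pi_0(\mathrm{close}_R(T)) \cong \pi_0(\mathrm{close}_L(T))$, and therefore a canonical bijection between the standard bases of $\KhEval{\mathrm{close}_R(T)}$ and $\KhEval{\mathrm{close}_L(T)}$, which induces a degree-$0$ isomorphism of graded $\cring$-modules. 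Since $\KhEval{-}\colon \BN^0_0 \to \gMod{\Z}{\cring}$ is an equivalence, I define $\mathrm{sph}_T$ to be the morphism transported along $\KhEval{-}$ from this isomorphism; it is automatically an isomorphism because equivalences reflect isomorphisms.

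For naturality, let $\alpha\colon T \to T'$ be a morphism of $\bn^m_m$; by $\cring$-bilinearity it suffices to take $\alpha$ a single dotted cobordism $W$. Applying the monoidal equivalence $\KhEval{-}$, the required commutativity becomes the claim that $\KhEval{\mathrm{close}_R(W)}$ and $\KhEval{\mathrm{close}_L(W)}$ have the same matrix in the standard bases, after identifying bases via the canonical bijection. A matrix coefficient of such a closed-off cobordism between standard basis vectors is computed by capping $\mathrm{close}_{R/L}(W)$ above and below with the dotted cup and cap cobordisms realising the relevant labellings and evaluating the resulting closed dotted surface using \eqref{eq:BNrels}. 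But exactly as on the level of $1$-manifolds, $\mathrm{close}_R(W)$ and $\mathrm{close}_L(W)$ are the \emph{same} abstract cobordism --- namely $W$ with $m$ bands attached by a side-independent recipe --- compatibly with the identification of their boundary $1$-manifolds; capping off by the same cobordisms therefore produces homeomorphic closed dotted surfaces with dots in corresponding positions, hence equal evaluations. This proves naturality, and passing to the additive completion $\BN^m_m$ yields the asserted natural equivalence.

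The only non-formal point, and hence the main obstacle, is the claim inside the naturality argument that sweeping the closure arcs (resp.\ bands) from one side of the diagram to the other does not change the underlying abstract $1$-manifold (resp.\ dotted surface), even though this cannot be realised by an ambient isotopy inside $[0,1]^3$. Making this airtight requires careful bookkeeping of how the closure bands glue to the corner structure of $W$, but it is a purely combinatorial matter once one works with abstract cobordisms and their boundary identifications rather than with embeddings; after that, invariance of the closed-surface evaluation under homeomorphism does the rest.
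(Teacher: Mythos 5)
The paper gives no separate proof of this lemma: the construction is contained in the statement itself (the components of $\mathrm{sph}$ are, by definition, the morphisms corresponding under the equivalence $\KhEval{-}$ to the canonical identification of standard bases coming from the canonical bijection $\pi_0(\mathrm{close}_R(T))\cong\pi_0(\mathrm{close}_L(T))$), and your write-up is exactly a careful expansion of that construction, with naturality checked via the homeomorphism invariance of closed dotted-surface evaluations. Your argument is correct and takes essentially the same route the paper intends.
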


After having discussed the sphericality, we consider aspects of pivotality.

\begin{lemma}
	\label{lem:fullrotation}
	Let $N\in \N_0$, then the operation that sends a cap tangle $T\in \BN^0_N$
	to its rotation $\mathrm{rot}(T)$ by one click 
	\[
		\begin{tikzpicture}[anchorbase]
		\draw[thick] 	
		(-.4,-.25) to (-.4,-.5) 
		(.2,-.25) to (.2,-.5)
		(.4,-.25) to (.4,-.5);
		\draw[thick, fill=white] (-.55,-.25) rectangle (.55,.25);
		\node at (0,0) {$T$};
		\node  at (0-.1,-.5) {$\cdots$};
		\end{tikzpicture}
		\;\;\mapsto\;\;
		\begin{tikzpicture}[anchorbase]
		\draw[thick] 	
		(-.4,-.25) to (-.4,-.5) 
		(.2,-.25) to (.2,-.5)
		(.4,-.25) \dr (.6,-.45) \ru (.8,-.25) to (.8,.2) \ul (.6,.4) to (-.5,.4) \ld (-.7,.2)  to (-.7,-.5) ;
		\draw[thick, fill=white] (-.55,-.25) rectangle (.55,.25);
		\node at (0,0) {$T$};
		\node  at (0-.1,-.5) {$\cdots$};
		\end{tikzpicture}
		\;\;=:\;\;
		\begin{tikzpicture}[anchorbase]
		\draw[thick] 	
		(-.4,-.25) to (-.4,-.5) 
		(-.2,-.25) to (-.2,-.5)
		(.4,-.25) to (.4,-.5);
		\draw[thick, fill=white] (-.55,-.25) rectangle (.55,.25);
		\node at (0,0) {$\mathrm{rot}(T)$};
		\node  at (0+.1,-.5) {$\cdots$};
		\end{tikzpicture}
		\]
	extends to an
	endofunctor $\mathrm{rot}$ on $\BN^0_N$. Moreover, there exists a canonical natural isomorphism 
	\[
		\id \xRightarrow{\mathrm{twist}_+ }\mathrm{rot}^N
		\]
		of endofunctors of $\BN^0_N$, whose target can be interpreted as the
		$2\pi$-rotation. In particular, $\mathrm{rot}$ is an auto-equivalence.
		Analogous results hold for the opposite rotation $\mathrm{rot}_-$ and an associated canonical natural isomorphism 
		\[
			\id \xRightarrow{\mathrm{twist}_- }\mathrm{rot}_-^N
			\]
		as well as for rotations of objects in any $\BN^M_N$ for $M,N\in \N_0$.
\end{lemma}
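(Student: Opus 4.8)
The plan is to realize $\mathrm{rot}$ as an endofunctor induced by an ambient diffeomorphism, and then obtain $\mathrm{twist}_+$ as the trace of a canonical isotopy. Concretely, I would pass to the disk model, identifying the square $[0,1]^2$ of Definition~\ref{def:BN} with $D^2$ so that the bottom edge becomes an arc of $\partial D^2$ containing $\mathbf{p}^N$, and fix an orientation-preserving diffeomorphism $\psi \colon D^2 \to D^2$, supported in a collar of $\partial D^2$, that pushes $\mathbf{p}^N$ one click around the boundary; this is precisely the ambient incarnation of the move in the statement, where the last strand is dragged around. Since $\psi$ preserves the set $\mathbf{p}^N$, setting $\mathrm{rot}(C) := \psi(C)$ on objects and $\mathrm{rot}(W) := (\psi \times \id_{[0,1]})(W)$ on (dotted) cobordisms with corners defines an endofunctor of $\bn^0_N$: the map $\psi \times \id$ takes cobordisms with corners to cobordisms with corners, commutes with vertical stacking (hence is compatible with composition and identities), carries each local picture in \eqref{eq:BNrels} to one of the same type (so the relations are respected), and preserves both $\chi(W)$ and $|\mathbf{p}|$, hence $\deg(W) = \tfrac{1}{2}|\mathbf{p}| - \chi(W)$. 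Acting entrywise on matrices and compatibly with $q$-shifts then extends $\mathrm{rot}$ to the additive closure $\BN^0_N = \Mat(\bn^0_N)$, and $\mathrm{rot}_-$ is defined identically using $\psi^{-1}$ in place of $\psi$.

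For the natural isomorphism, I would observe that $\mathrm{rot}^N$ is the endofunctor induced by $\psi^N$, which drags every marked point once around $\partial D^2$; after a small boundary isotopy we may assume $\psi^N$ restricts to the identity on $\partial D^2$, so that $\psi^N \in \mathrm{Diff}(D^2,\partial D^2)$ is isotopic to $\id_{D^2}$ rel $\partial D^2$. The component $\mathrm{twist}_+(C)$ is then the trace $\bigcup_t \psi_t(C) \times \{t\} \subset D^2 \times [0,1]$ of a choice of such isotopy $(\psi_t)_{t \in [0,1]}$, a morphism $C \to \mathrm{rot}^N(C)$; it is an isomorphism because composing it with the trace of the reverse isotopy gives the trace of a loop at $\id_{D^2}$, which is a product cobordism up to isotopy rel boundary (properly embedded disks in the $3$-ball are isotopic rel boundary, and any closed components are straightened using the neck-cutting relation in \eqref{eq:BNrels}). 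Naturality in $C$ is the usual ``sweep the isotopy past a cobordism'' argument: for $W \colon C_1 \to C_2$, both composites $\mathrm{rot}^N(W) \circ \mathrm{twist}_+(C_1)$ and $\mathrm{twist}_+(C_2) \circ W$ are represented by $(\psi_t \times \id)(W)$ and so differ by an ambient isotopy, hence agree in $\bn^0_N$. Canonicity of $\mathrm{twist}_+$ — independence of the chosen isotopy, which is what lets us call its target ``the $2\pi$-rotation'' — follows because the space of isotopies from $\id_{D^2}$ to $\psi^N$ rel $\partial D^2$ is a torsor over the based loop space of $\mathrm{Diff}(D^2,\partial D^2)$, which is contractible (Smale's theorem; equivalently the Alexander trick, which also kills the higher homotopy), so any two such isotopies are homotopic rel endpoints and thus have traces equal in $\bn^0_N$.

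Finally, since $\mathrm{twist}_+$ exhibits $\mathrm{rot}^N \cong \id$, the functor $\mathrm{rot}^{N-1}$ — equipped with the structure $2$-isomorphisms assembled from $\mathrm{twist}_+$ — is a quasi-inverse to $\mathrm{rot}$, so $\mathrm{rot}$ is an auto-equivalence. Running the identical argument with $\psi^{-1}$ in place of $\psi$ produces $\mathrm{rot}_-$ together with its canonical isomorphism $\id \To \mathrm{rot}_-^N$, and replacing $(D^2,\mathbf{p}^N)$ by the square with its $M+N$ boundary marked points arranged cyclically around $\partial([0,1]^2)$, and $N$ by $M+N$, gives the stated generalization to $\BN^M_N$. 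The step I expect to be the main obstacle is precisely the isomorphism-and-canonicity claim for $\mathrm{twist}_+$: it rests on the two inputs above — contractibility of $\mathrm{Diff}(D^2,\partial D^2)$, which rigidifies the isotopy from the $2\pi$-rotation down to the identity, and the triviality of the resulting trace cobordism in $\bn^0_N$, which is automatic on arc components (disks in a $3$-ball are unknotted rel boundary) but on closed components requires a short computation with the neck-cutting and dot relations of \eqref{eq:BNrels}. This triviality is the Bar-Natan-categorical shadow of the fact that, as an unframed cobordism theory, $\bn$ has no nontrivial ribbon twist.
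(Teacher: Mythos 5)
The paper states Lemma~\ref{lem:fullrotation} without proof, treating it as folklore, so there is no argument of record to compare yours against; your write-up is a correct way to supply the missing proof. Your route is geometric: you realize $\mathrm{rot}$ as the functor induced by an ambient one-click diffeomorphism $\psi$ of the disk and obtain $\mathrm{twist}_+$ as the trace of an isotopy $\id \simeq \psi^N$ rel boundary, with canonicity coming from contractibility of $\mathrm{Diff}(D^2,\partial D^2)$ and invertibility from the fact that the trace of a null-homotopic loop of diffeomorphisms is ambiently isotopic rel boundary to the product cobordism. This is sound; the only points worth flagging are cosmetic. First, the lemma defines $\mathrm{rot}(T)$ by composition with a fixed bent-strand tangle rather than by a diffeomorphism, so strictly you should record the (immediate, isotopy-invariance) identification of the two before transporting functoriality and naturality across it. Second, your caution about closed components needing the relations of \eqref{eq:BNrels} is unnecessary: the trace of any ambient isotopy is carried to the product by the level-preserving diffeomorphism $(x,t)\mapsto(\psi_t(x),t)$, which for a loop based at the identity is rel boundary and hence itself isotopic to the identity, so no skein relation is consumed. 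The more common alternative proof is purely diagrammatic -- define $\mathrm{rot}$ as composition with the fixed cap--cup tangle and build $\mathrm{twist}_+$ from the zig-zag (cusp) isomorphisms of the pivotal structure on $\bn$, verifying coherence by explicit cobordism isotopies in the cube -- which is more elementary and is what the sphericality computation in Proposition~\ref{prop:sphericality} implicitly uses, but your global argument buys the independence-of-choices statement in one stroke via Smale's theorem rather than by a case-by-case check of cusp relations.
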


\begin{proposition}
	\label{prop:sphericality} 
For $N\in \N_0$ and a pair of
objects $S, T\in \BN^0_N$ we consider the composite isomorphism:
\[\phi\colon T\star r_y(S) =
			\begin{tikzpicture}[anchorbase]
		\draw[thick] 	(-.4,-.25) to (-.4,-.75) 
						(.2,-.25) to (.2,-.75)
						(.4,-.25) to (.4,-.75); 
		\draw[thick, fill=white] (-.55,-.25) rectangle (.55,.25);
			\node at (0,0) {$T$};
			\draw[thick, fill=white] (-.55,-.75) rectangle (.55,-1.25);
			\node at (0,-1) {$r_y(S)$};
		\node  at (0-.1,-.5) {$\cdots$};
	\end{tikzpicture}
	\;
	\cong 
	\;
	\begin{tikzpicture}[anchorbase]
						\draw[thick] 	
						(-.4,-.25) to (-.4,-.75) 
						(.2,-.25) to (.2,-.75)
						(.4,-.25)\dr (.6,-.45) \ru (.8,-.25) to (.8,.2) \ur (1,.4) \rd (1.2,.2)  to (1.2,-1.2) \dl (1,-1.4) \lu (0.8,-1.2) to (0.8,-0.75) \ul (.6,-.55)  \ld (.4,-.75);
		\draw[thick, fill=white] (-.55,-.25) rectangle (.55,.25);
			\node at (0,0) {$T$};
			\draw[thick, fill=white] (-.55,-.75) rectangle (.55,-1.25);
			\node at (0,-1) {$r_y(S)$};
		\node  at (0-.1,-.5) {$\cdots$};
	\end{tikzpicture}
\xrightarrow{\mathrm{sph}}
\begin{tikzpicture}[anchorbase]
	\draw[thick] 	
	(-.4,-.25) to (-.4,-.75) 
	(.2,-.25) to (.2,-.75)
	(.4,-.25) \dr (.6,-.45) \ru (.8,-.25) to (.8,.2) \ul (.6,.4) to (-.5,.4) \ld (-.7,.2)  to (-.7,-1.2) \dr (-.5,-1.4) to (.6,-1.4) \ru (0.8,-1.2) to (0.8,-0.75) \ul (.6,-.55)  \ld (.4,-.75);
\draw[thick, fill=white] (-.55,-.25) rectangle (.55,.25);
\node at (0,0) {$T$};
\draw[thick, fill=white] (-.55,-.75) rectangle (.55,-1.25);
\node at (0,-1) {$r_y(S)$};
\node  at (0-.1,-.5) {$\cdots$};
\end{tikzpicture}
= \mathrm{rot}(T)\star r_y(\mathrm{rot}(S)) \] 
Upon iterating such morphisms $N$ times and using the natural isomorphisms from Lemma~\ref{lem:fullrotation} we have:
		\[(\mathrm{twist}^{-1}_+(T)\star r_y(\mathrm{twist}^{-1}_+(S)))\circ \phi^N = \id_{T\star S}.\] 
	\end{proposition}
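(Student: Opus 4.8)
The plan is to verify the asserted identity---an equation between two endomorphisms of $T\star r_y(S)$ in $\BN^0_0$---after applying the representable equivalence $\KhEval{-}\colon \BN^0_0 \to \gMod{\Z}{\cring}$ of Lemma~\ref{BN:equiv}, which being an equivalence is faithful. Under $\KhEval{-}$ the module $\KhEval{T\star r_y(S)}$ acquires a standard basis indexed by the labellings $\{1,\mathsf{x}\}^{\pi_0(T\star r_y(S))}$ of its connected components---realised by disjoint unions of (possibly dotted) cup cobordisms---and likewise for $\mathrm{rot}^N(T)\star r_y(\mathrm{rot}^N(S))$. The structural point is that each isomorphism occurring in the statement sends standard basis vectors to standard basis vectors, each with coefficient $1$: the planar isotopies implicit in $\phi$ by the canonical identification of standard bases; the single instance of $\mathrm{sph}$ inside each copy of $\phi$ by the evident canonical bijection of standard basis elements, which is precisely the content of the lemma providing $\mathrm{sph}$; and $\mathrm{twist}_+(T)$, $\mathrm{twist}_+(S)$ by the component bijection induced by the $2\pi$-rotation, by Lemma~\ref{lem:fullrotation}. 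Since all of these are degree-zero, feeding them into the functor $\star$ introduces no Koszul signs. Thus $\KhEval{\phi^N}$ and $\KhEval{\mathrm{twist}_+(T)\star r_y(\mathrm{twist}_+(S))}$ each send standard basis vectors to standard basis vectors with coefficient $1$, via some bijections $\pi_0(T\star r_y(S)) \to \pi_0(\mathrm{rot}^N(T)\star r_y(\mathrm{rot}^N(S)))$, and it remains to identify these two bijections.

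The heart of the argument is that both bijections are the one induced by the $2\pi$-rotation of the closed diagram. For $\mathrm{twist}_+(T)\star r_y(\mathrm{twist}_+(S))$ this holds by construction: $\mathrm{twist}_+(T)$ realises the $2\pi$-rotation of the cap $T$ and $r_y(\mathrm{twist}_+(S))$ that of the cup $r_y(S)$, and these two rotations assemble into an ambient isotopy of $T\star r_y(S)$ realising its $2\pi$-rotation, inducing the corresponding bijection on components. For $\phi^N$: a single application of $\phi$ pulls the rightmost connecting strand out to the right (a planar isotopy), carries it back around the left via $\mathrm{sph}$, and reinterprets the outcome as $\mathrm{rot}(T)\star r_y(\mathrm{rot}(S))$, thereby cyclically relabelling the connecting strands by one click; performing this $N$ times winds every connecting strand once around the diagram, and, keeping track of the cyclic relabellings, the resulting bijection on components is precisely that of the $2\pi$-rotation. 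Hence $\KhEval{\phi^N} = \KhEval{\mathrm{twist}_+(T)\star r_y(\mathrm{twist}_+(S))}$, and composing with the inverse gives $\bigl(\mathrm{twist}^{-1}_+(T)\star r_y(\mathrm{twist}^{-1}_+(S))\bigr)\circ\phi^N = \id_{T\star r_y(S)}$, which is the claim.

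I expect the main obstacle to be making the coefficient-$1$ assertions airtight and checking that the component bijections produced by $\mathrm{sph}$ and by $\mathrm{twist}_+$ match up around a full rotation; for many crossingless diagrams the two bijections above are, under the obvious identification of the diagram with its $2\pi$-rotate, the identity, so the entire content of the statement is then that no sign accumulates. For the isotopies and for $\mathrm{sph}$ the coefficient-$1$ statement is delivered by the sphericality lemma; for $\mathrm{twist}_+$ one must descend into the construction behind Lemma~\ref{lem:fullrotation} and check that its canonical natural isomorphism $\id \Rightarrow \mathrm{rot}^N$ acts on $\KhEval{-}$ of closures as the coefficient-$1$ relabelling of standard basis elements. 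A convenient way to organise this is to observe first that $\phi$ (built from planar isotopies and the natural transformation $\mathrm{sph}$) and $\mathrm{twist}_+$ are natural, so that $\phi^N$ and $\mathrm{twist}_+(T)\star r_y(\mathrm{twist}_+(S))$ underlie natural transformations between two additive $\cring$-linear functors $\BN^0_N \times \BN^0_N \to \BN^0_0$; since every object of $\BN^0_N = \Mat(\bn^0_N)$ is a direct sum of grading shifts of crossingless matchings, one reduces to the case of such $T$ and $S$, where $T\star r_y(S)$ is a union of circles and $\mathrm{sph}$ and the one-click map can be represented by explicit dot-free cobordisms and composed with the cup cobordisms by hand. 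Alternatively, an induction on $N$ using $\mathrm{rot}^{k+1} = \mathrm{rot}\circ\mathrm{rot}^{k}$, the naturality of $\mathrm{sph}$, and the defining property of $\mathrm{twist}_+$ identifies $\phi^N$ with $\mathrm{twist}_+(T)\star r_y(\mathrm{twist}_+(S))$ one click at a time, the base case $N=0$ being immediate.
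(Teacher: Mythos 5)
The paper does not actually prove Proposition~\ref{prop:sphericality}: it is stated without proof, as are Lemma~\ref{lem:fullrotation} and the unnamed sphericality lemma it relies on, with the whole cluster treated as a standard consequence of the pivotal/spherical structure on the monoidal bicategory $\BN$ (the authors point the reader to the literature on monoidal bicategories with duals for the general coherence theory). Your direct verification is therefore doing work the paper chose not to display, and the strategy is sound: $\KhEval{-}$ is an equivalence by Lemma~\ref{BN:equiv}, hence faithful, so it suffices to compare the two morphisms on standard bases; every constituent map (the planar-isotopy identifications, each instance of $\mathrm{sph}$, and $\mathrm{twist}_{\pm}$) permutes standard basis vectors with coefficient $1$; and the induced permutations of $\pi_0$ on both sides are the one coming from the $2\pi$-rotation, so the composite with $\mathrm{twist}_+^{-1}(T)\star r_y(\mathrm{twist}_+^{-1}(S))$ is the identity. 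Your reduction to crossingless matchings (or the click-by-click induction on $N$ using naturality of $\mathrm{sph}$) is exactly the right way to make the component-bijection claim checkable by hand.

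The one genuine soft spot is the same one you flag yourself: the coefficient-$1$ behaviour of $\mathrm{twist}_+$ on closures is not something you can cite, because the paper never constructs $\mathrm{twist}_+$ --- Lemma~\ref{lem:fullrotation} only asserts its existence and that its target ``can be interpreted as the $2\pi$-rotation.'' Your argument is therefore conditional on $\mathrm{twist}_+$ being the geometric isotopy cobordism (equivalently, on it being built from the same duality data as $\mathrm{sph}$), which is surely the intended reading but should be stated as a hypothesis rather than derived. A second, minor point worth a sentence in a polished write-up: when you assert that the composite of $N$ one-click isotopies ``is'' the $2\pi$-rotation, what you need is that the two traces are isotopic cobordisms rel boundary in $\R^2\times[0,1]$, which holds here because the relevant annuli are unknotted and dotless, so no ambiguity survives the Bar-Natan relations. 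Neither issue is a gap in the logic so much as a place where the paper's own omissions force you to fix conventions.
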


\begin{rem}\label{rem:2pirot} In the following, we will consistently
	suppress the difference between objects of $\bn^n_m$ or $\BN^n_m$ that only
	differ by the natural isomorphisms $\mathrm{twist}_{\pm}$. 
\end{rem}

For a careful study of monoidal bicategories with duals, including pivotality and sphericality structures, we refer to \cite{barrett2018gray}.

\subsection{Khovanov's rings}

Consider the hom-functor of $\bn^n_m$
 \begin{align*}
 	\bn^n_m \otimes (\bn^n_m)^\op &\longrightarrow \gMod{\Z}{\cring}\\
 		(a,b) & \longmapsto \Hom_{\bn^n_m}(b,a) 
 \end{align*}
where $a,b \in \bn^n_m$. By duality, this hom functor is naturally isomorphic to the composite
\[
	\begin{tikzcd}[row sep=.5cm]
		\bn^n_m \otimes (\bn^n_m)^\op 
		\arrow[r, "\id \otimes r_{xz}"] 
		& 
		\bn^n_m \otimes (\bn^n_m)^{x\op}
		\arrow[r] 
		&
		\bn^0_0
		\arrow[r, "q^{\frac{1}{2}(n+m)}\KhEval{-}"] 
		&
		\gMod{\Z}{\cring}
		\\
		(a,b)
		\arrow[mapsto,r] 
		& 
		(a, r_x(b))
		\arrow[mapsto,r] 
		&
		\begin{tikzpicture}[anchorbase]
			\draw[thick] (.1,-.25) rectangle (.9,.25); 
			\draw[thick] (-.1,-.25) rectangle (-.9,.25); 
			\node at (.5,-.1) {\small$r_x(b)$};
			\node at (-.5,-.1) {$a$};
			\draw[thick, double] (-.5,.25) to[out=90,in=180]  (0,.55) to[out=0,in=90] (.5,.25);
			\draw[thick, double] (-.5,-.25) to[out=-90,in=180]  (0,-.55) to[out=0,in=-90] (.5,-.25);
			\node at (0,.70) {\scriptsize $n$};
			\node at (0,-.75) {\scriptsize $m$};
			\end{tikzpicture}
		\arrow[mapsto,r] 
		&
		q^{\frac{1}{2}(n+m)} \Kh\left(\begin{tikzpicture}[anchorbase]
			\draw[thick] (.1,-.25) rectangle (.9,.25); 
			\draw[thick] (-.1,-.25) rectangle (-.9,.25); 
			\node at (.5,-.1) {\small$r_x(b)$};
			\node at (-.5,-.1) {$a$};
			\draw[thick, double] (-.5,.25) to[out=90,in=180]  (0,.55) to[out=0,in=90] (.5,.25);
			\draw[thick, double] (-.5,-.25) to[out=-90,in=180]  (0,-.55) to[out=0,in=-90] (.5,-.25);
			\node at (0,.70) {\scriptsize $n$};
			\node at (0,-.75) {\scriptsize $m$};
			\end{tikzpicture}\right)
	\end{tikzcd}
\]
The latter planar expression is important and deserves separate notation. For $a,b\in \bn^n_m$ we thus set
\begin{equation}
	\label{eq:planarHom}
\Khring(a,b) := q^{\frac{1}{2}(n+m)} \Kh\left(\begin{tikzpicture}[anchorbase]
\draw[thick] (.1,-.25) rectangle (.9,.25); 
\draw[thick] (-.1,-.25) rectangle (-.9,.25); 
\node at (.5,0) {\small$r_x(b)$};
\node at (-.5,0) {$a$};
\draw[thick, double] (-.5,.25) to[out=90,in=180]  (0,.55) to[out=0,in=90] (.5,.25);
\draw[thick, double] (-.5,-.25) to[out=-90,in=180]  (0,-.55) to[out=0,in=-90] (.5,-.25);
\node at (0,.70) {\scriptsize $n$};
\node at (0,-.70) {\scriptsize $m$};
\end{tikzpicture}\right) \, .
\end{equation}
Since $\Khring(a,b) \cong \Hom_{\BN^n_m}(b,a)$, the former inherit a
multiplicative structure with unit from the composition and identity morphisms
in the latter. These admit a planar description as follows. Let $a\in \bn^n_m$
be given. There are canonical degree zero maps $\cring\rightarrow H(a,a)$ and
$H(a,b)\otimes H(b,c)\rightarrow H(a,c)$ (induced by saddle
cobordisms and tubes), which give $\bigoplus_{a,b}H(a,b)$ the
structure of a graded locally unital algebra. By construction, we then have:

\begin{lemma}
	Via the duality isomorphisms of the form $\Khring(a,b) \cong \Hom_{\BN^n_m}(b,a)$, the
	multiplication $\Khring(a,b)\otimes \Khring(b,c)\rightarrow \Khring(a,c)$
	agrees with composition of morphisms in $\bn^n_m$.\qed
	\end{lemma}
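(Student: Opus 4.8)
The plan is to make the duality isomorphism $\Khring(a,b)\cong\Hom_{\BN^n_m}(b,a)$ completely explicit at the level of cobordisms, then to unwind the definition of the multiplication on $\bigoplus_{a,b}\Khring(a,b)$ in the same terms, and finally to check that the two cobordisms one obtains represent the same class under $\KhEval{-}$ --- the identification being a parametrized version of the zig-zag (snake) identity for the duality cup/cap cobordisms, whose pivotal/spherical incarnation was recorded in Proposition~\ref{prop:sphericality}.

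First I would spell out the isomorphism $\Khring(a,b)\cong\Hom_{\BN^n_m}(b,a)$. Write $\widehat{ab}$ for the closed $1$-manifold in $[0,1]^2$ obtained by joining $a$ and $r_x(b)$ by the nested cups at top and bottom that appear in \eqref{eq:planarHom}, so that by definition $\Khring(a,b)=q^{\frac12(n+m)}\KhEval{\widehat{ab}}$. A morphism $b\to a$ in $\bn^n_m$ is represented by an embedded cobordism with corners $W\subset[0,1]^3$; using the cup/cap $1$-morphisms of the monoidal bicategory $\BN$ from Proposition~\ref{prop:BN2cat} together with their duality cobordisms, $W$ can be ``bent'' along its $m$- and $n$-coloured boundary strands into an embedded cobordism $\emptyset\to\widehat{ab}$, and this assignment is functorial in $\bn^n_m$. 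Precomposing the equivalence $\KhEval{-}=\Hom_{\BN^0_0}(\emptyset,-)$ of Lemma~\ref{BN:equiv} with this bending operation, and inserting the shift $q^{\frac12(n+m)}$, yields exactly the duality isomorphism implicitly used in \eqref{eq:planarHom}. The structural input here is the zig-zag identity for the cup/cap cobordisms, which holds already as an isotopy of embedded cobordisms (it does not invoke the relations \eqref{eq:BNrels}); I expect this step to be routine but to require care with the reflection $r_x$ and with which strands are bent in which direction, so that cups meet cups with matching co-orientations.

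Next I would unwind the multiplication $\Khring(a,b)\otimes\Khring(b,c)\to\Khring(a,c)$, which by construction is $\KhEval{-}$ applied to the cobordism from $\widehat{ab}\sqcup\widehat{bc}$ to $\widehat{ac}$ that fixes the $a$- and $r_x(c)$-strands and, on the $r_x(b)$-part of the first component and the $b$-part of the second, is assembled from the ``tubes'' (cylinders on the $b$-strands) together with the saddles coming from the adjacent cups. Reading off underlying $1$-manifolds, this merging cobordism is precisely the one that performs the snake move cancelling $r_x(b)$ against $b$; equivalently, under $\KhEval{-}$ it is the horizontal composite in $\BN^0_0$ of the relevant cup/cap cobordisms with the evaluation/counit of the duality on $b$.

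The comparison is then a direct cobordism manipulation: given $f\in\Hom_{\BN^n_m}(b,a)$ and $g\in\Hom_{\BN^n_m}(c,b)$ with bent representatives $\tilde f$ and $\tilde g$, pulling the $b$-coloured tube straight exhibits an isotopy of embedded cobordisms in $[0,1]^3$ carrying the bend of $f\circ g$ to ``(bend of $f$)$\sqcup$(bend of $g$), then merge'', so that $\widetilde{f\circ g}=\tilde f\cdot\tilde g$; taking $f=g=\id$ likewise identifies the unit $\cring\to\Khring(a,a)$ with the identity morphism. The degree shifts in \eqref{eq:planarHom} automatically render the multiplication degree-preserving: the bending operation of the first step, together with the shift $q^{\frac12(n+m)}$, is degree-preserving (a short Euler-characteristic count), so the whole algebra structure is transported from the graded category $\bn^n_m$, in which composition is degree zero. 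Thus the only real obstacle is bookkeeping --- tracking reflections, co-orientations of the duality cobordisms, and the nesting of the cups --- rather than anything conceptual, since all of the geometric content is carried by the single zig-zag identity applied repeatedly.
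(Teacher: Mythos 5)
Your argument is correct, and it is precisely the elaboration of what the paper leaves implicit: the lemma is stated with an immediate \qed, justified only by the phrase ``by construction'' following the definition of the multiplication via saddles and tubes. Your bending-via-duality description of the isomorphism $\Khring(a,b)\cong\Hom_{\BN^n_m}(b,a)$ and the zig-zag/isotopy comparison of ``bend then merge'' with ``compose then bend'' is exactly the standard verification the authors are relying on, so there is no divergence in approach --- you have simply written out the details they omit.
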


\begin{definition}
	\label{def:Khrings}
We refer to the graded locally unital algebra
$\Khring^n_m:=\bigoplus_{a,b}\Khring(a,b)$, where $a,b\in \bn^n_m$, as the
\emph{big Khovanov ring}.  Let $I^n_m$ be a set of representatives of isotopy
classes of planar $(n,m)$-tangles having no closed (circle) components, i.e.~$I^n_m$
is a set of representatives of isomorphism classes of indecomposable objects of
$\bn^n_m$.  Then the \emph{small (or usual)} Khovanov ring may be identified
with subalgebra $\bigoplus_{a,b\in I^n_m}\Khring(a,b)$.
\end{definition}

\begin{remark}
	The big Khovanov ring $\bigoplus_{a,b}\Khring(a,b)$ is just the graded
$\cring$-linear category $\bn^n_m$, regarded as a graded locally unital algebra. In
doing so, we forget that $\bn^n_m$ appears as a hom category in a bicategory.
The main reason we sometimes prefer to work with the big Khovanov ring instead
of the small one, however, is that the set of objects for the larger ring is
closed under tangle composition.
\end{remark}

\subsection{The bar complex and Rozansky's projector}
\label{s:Rozanskybar}
We now consider bimodules over the big Khovanov rings $\Khring^n_m$. 

\begin{definition}
Given integers $m_1,m_2,n_1,n_2 \in \Z_{\geq 0}$ with $n_i\equiv m_i$ (mod 2), 
let $\BS^{n_1,n_2}_{m_1,m_2}$ denote the category of finitely generated, graded $(\Khring^{n_1}_{m_1},\Khring^{n_2}_{m_2})$-bimodules 
which are projective from the right and left. 
Let $\BS$ be the 2-category with objects pairs of integers $(m,n)$ with $n\equiv m$ (mod 2), 
1-morphism categories given by $\BS^{n_1,n_2}_{m_1,m_2}$,
and with composition of 1-morphisms in $\BS$ is given by tensoring over the appropriate ring $\Khring^n_m$.
\end{definition}

The usual Yoneda embeddings give functors
\[
  (\bn^n_m)^{\op}\rightarrow \Khring^n_m\text{-mod} \, , \quad \bn^n_m\rightarrow \text{mod-}\Khring^n_m .
\]
sending $a \mapsto \Hom_{\bn^n_m}(a,-) \cong \Khring(-,a)$ and 
$a\mapsto \Hom_{\bn^n_m}(-,a) \cong \Khring(a,-)$, respectively. 
Using \eqref{eq:planarHom}, we obtain the following planar forms:
\[
	\Khring(-,a) := q^{\frac{1}{2}(n+m)} \Kh\left(\begin{tikzpicture}[anchorbase]
		\draw[thick] (.05,-.25) rectangle (.95,.25); 
		\draw[thick] (-.1,-.25) rectangle (-.9,.25); 
		\node at (.5,0) {$r_x(a)$};
		\node at (-.5,0) {$-$};
		\draw[thick, double] (-.5,.25) to[out=90,in=180]  (0,.55) to[out=0,in=90] (.5,.25);
		\draw[thick, double] (-.5,-.25) to[out=-90,in=180]  (0,-.55) to[out=0,in=-90] (.5,-.25);
		\node at (0,.70) {\scriptsize $n$};
		\node at (0,-.70) {\scriptsize $m$};
		\end{tikzpicture}\right)
\, , \quad
\Khring(a,-) := q^{\frac{1}{2}(n+m)} \Kh\left(\begin{tikzpicture}[anchorbase]
	\draw[thick] (.05,-.25) rectangle (.95,.25); 
	\draw[thick] (-.1,-.25) rectangle (-.9,.25); 
	\node at (.5,0) {$r_x({\!-\!})$};
	\node at (-.5,0) {$a$};
	\draw[thick, double] (-.5,.25) to[out=90,in=180]  (0,.55) to[out=0,in=90] (.5,.25);
	\draw[thick, double] (-.5,-.25) to[out=-90,in=180]  (0,-.55) to[out=0,in=-90] (.5,-.25);
	\node at (0,.70) {\scriptsize $n$};
	\node at (0,-.70) {\scriptsize $m$};
	\end{tikzpicture}\right) \, .
\]
We may regard these Yoneda modules as bimodules, i.e.~as 1-morphisms in $\BS$:
\[
\Khring(-,a)\in \BS^{n,0}_{m,0} \, ,\quad \Khring(a,-)\in \BS^{0,n}_{0,m}.
\]

\begin{definition}
	\label{def:bar}
Let $\Bar^n_m(-,-) \in \Ch^-(\BS^{n,n}_{m,m})$ denote the bar complex of $\Khring^n_m$.  
Precisely, given objects $b,b' \in \bn_m^n$, $\Bar^n_m(b,b')$ is the complex defined as follows:
\[
\Bar^n_m(b,b') := \tw_\d\left(\bigoplus_{r\geq 0} t^{-r} \bigoplus_{a_0,a_1,\ldots,a_{r}} 
	\Khring(b,a_0)\otimes \Big(H(a_0,a_1)\otimes\cdots\otimes H(a_{r-1},a_r)\Big)\otimes \Khring(a_r,b')\right).
\]
where $\d$ is the usual ``bar'' differential.
\end{definition}

An element of $\Bar^n_m(b,b')$ in cohomological degree $-r$ is a linear combination of elements of the form 
\[
g\otimes f_1\otimes \cdots \otimes f_r\otimes g'
	\in \Khring(b,a_0)\otimes \Khring(a_0,a_1)\otimes \cdots \otimes \Khring(a_{r-1},a_r)\otimes \Khring(a_r,b') \, .
	\] 
Following standard conventions, we will denote such elements by $g(f_1|\cdots | f_r)g'$.
We will also use the notation $\Bar^n_m$ (without the arguments)
to denote the Yoneda preimage: 
\begin{equation}
	\label{eq:BarY}
	\Bar^n_m := \tw_\d\left(\bigoplus_{r\geq 0} t^{-r} \bigoplus_{a_0,a_1,\ldots,a_{r}}  
		a_0
		\otimes \Big(H(a_0,a_1)\otimes\cdots\otimes H(a_{r-1},a_r)\Big)\otimes a_r\right)  \, ,
	\end{equation}
which is an object of $\Ch^-((\bn_m^n)^{\op}\otimes \bn^n_m)$.

The composition of tangles (i.e.~the composition $\hComp$ of 1-morphisms in $\bn$)
endows the categories $\BS^{n_1,n_2}_{n_1,n_2}$ with an additional composition, 
denoted $\ostar$, which we study next.  First, consider the composition $\bn^n_m\otimes 
\bn^m_k\rightarrow \bn^n_k$ sending $(a,b) \mapsto a\star b$.  
Under the correspondence between $\cring$-linear categories and $\cring$-algebras,
this functor corresponds to an algebra map $\Khring^n_m\otimes \Khring^m_k \rightarrow \Khring^n_k$.
Thus, we can view $\Khring^n_k$ as a (left or right) module over $\Khring^n_m\otimes \Khring^m_k$.

\begin{definition}
Given $M\in \BS^{n,n'}_{m,m'}$ and $N\in \BS^{m,m'}_{k,k'}$, let
\[
M \ostar N := \Khring_{k}^{n} \otimes_{\Khring_{m}^{n} \otimes \Khring_{k}^{m}} 
	(M\otimes N)\otimes_{\Khring_{m'}^{n'} \otimes \Khring_{k'}^{m'}} \Khring_{k'}^{n'}
	\in \BS^{n,n'}_{k,k'} \, .
\]
\end{definition}

A mild extension of \cite[\S 6.1]{2002.06110}, 
which itself is an adaptation of \cite[\S 5]{EML} to the categorical setting, 
defines a chain map
\begin{equation}
	\label{eq:EZ}
\big(\Bar^n_m(-,-) \ostar \Bar^m_k(-,-)\big) \longrightarrow \Bar^n_k(-,-)
\end{equation}
called the \emph{Eilenberg--Zilber shuffle product}. 
Explicitly, when evaluated at the pair $(b,b')$ of objects in $\Khring^n_k$, 
elements of the domain of \eqref{eq:EZ} are sums of terms of the form
$F(f_1|\cdots | f_r) \otimes (g_1|\cdots | g_s) G$ for
$F \in H(b, a_0 \hComp a_0')$, 
$f_i \in H(a_{i-1},a_i)$, 
$g_i \in H(a_{i-1}',a_i')$,
and $G \in H(a_r \hComp a_s',b')$.
The shuffle product of such an element
is a sum of terms of the form $\pm F (e_1 | \cdots | e_{r+s}) G$
where $\{e_1,\ldots,e_r\}$ is a shuffle of 
$\{f_1 \hComp \id,\ldots,f_r \hComp \id\}$ and $\{\id \hComp g_1,\ldots,\id \hComp g_s\}$ 
for appropriate identity morphisms.
The shuffle product is clearly unital, 
and \cite[Theorem 5.2]{EML} implies that it is (strictly) associative.
Taking the Yoneda preimage yields the following.

\begin{proposition}\label{prop:Bar is algebra}
The Eilenberg--Zilber shuffle product defines a chain map 
$\mu_m\colon \Bar^n_m \hComp \Bar^m_k \rightarrow \Bar^n_k$
in $\Ch^{-}((\bn_k^n)^{\op} \otimes \bn_k^n)$
which is strictly associative and unital. \qed
\end{proposition}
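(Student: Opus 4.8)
The plan is to obtain $\mu_m$, together with all of its asserted properties, as the Yoneda preimage of the Eilenberg--Zilber shuffle product \eqref{eq:EZ}, whose chain-map, associativity, and unitality properties are already available on the level of $\Khring$-bimodules. The first step is to set up the relevant fully faithful dg functor
\[
\mathbb{Y}\colon \Ch^-\big((\bn^n_m)^{\op}\otimes \bn^n_m\big)\longrightarrow \Ch^-\big(\BS^{n,n}_{m,m}\big)
\]
obtained by applying the two Yoneda embeddings $(\bn^n_m)^{\op}\to \Khring^n_m\text{-mod}$ and $\bn^n_m\to \text{mod-}\Khring^n_m$ to the two tensor factors, forming the associated $\Khring^n_m$-bimodule (external tensor over $\cring$, with the left and right actions on the outer factors), and extending along $\Mat(-)$ and $\Ch^-(-)$. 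By construction $\mathbb{Y}(\Bar^n_m)$ is precisely the bar bimodule $\Bar^n_m(-,-)$ of Definition~\ref{def:bar}, and, since the Yoneda embedding is fully faithful and full faithfulness is inherited by $\Mat(-)$ and $\Ch^-(-)$, so is $\mathbb{Y}$. (Representability also ensures the bimodules in question are finitely generated and projective on both sides, so that $\mathbb{Y}$ indeed takes values in $\Ch^-(\BS^{n,n}_{m,m})$.)

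The crucial step is to check that $\mathbb{Y}$ intertwines the horizontal composition $\hComp$ on preimages --- induced by $\hComp\colon \bn^n_m\otimes\bn^m_k\to\bn^n_k$ --- with the product $\ostar$ on bimodules. On representables this is the statement that $\Khring(-,a\hComp a')$ is the $\ostar$-product of $\Khring(-,a)$ and $\Khring(-,a')$; this holds essentially by the definition of $\ostar$ as base change along the algebra map $\Khring^n_m\otimes\Khring^m_k\to\Khring^n_k$, since that map is induced by the functor $\hComp$ and hence carries the representable $\Khring(-,a)\otimes\Khring(-,a')$ to the representable $\Khring(-,a\hComp a')$, following $\hComp$ on objects. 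Because $\ostar$ is given by tensoring over the Khovanov rings, it commutes with the direct sums and the twist $\tw_\d$ appearing in \eqref{eq:BarY}; the identification therefore promotes to a natural isomorphism $\mathbb{Y}(Z\hComp W)\cong \mathbb{Y}(Z)\ostar \mathbb{Y}(W)$, and in particular $\mathbb{Y}(\Bar^n_m\hComp\Bar^m_k)\cong \Bar^n_m(-,-)\ostar \Bar^m_k(-,-)$.

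Granting this, the map $\mu_m$ is defined on the preimages \eqref{eq:BarY} by the expected formula: on the summand $a_0\otimes\big(H(a_0,a_1)\otimes\cdots\otimes H(a_{r-1},a_r)\big)\otimes a_r$ of $\Bar^n_m$ tensored with the analogous summand of $\Bar^m_k$ carrying $(g_1|\cdots|g_s)$, it leaves the outer objects $a_0\hComp a_0'$ and $a_r\hComp a_s'$ unchanged and sends the middle factor to the signed shuffle sum of $\{f_i\hComp\id\}_i$ and $\{\id\hComp g_j\}_j$, viewed as elements of the $H$-factors of $\Bar^n_k$ built from the $\hComp$-composed tangles. A degree count shows $\mu_m$ is a morphism of degree $(0,0)$ in $\Ch^-((\bn^n_k)^{\op}\otimes\bn^n_k)$: the cohomological degrees add ($t^{-r}\otimes t^{-s}\mapsto t^{-(r+s)}$, matching a shuffle of length $r+s$), $\hComp$ respects the internal gradings, and boundedness above is preserved. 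By construction $\mathbb{Y}(\mu_m)$ is exactly \eqref{eq:EZ} under the identification of the previous paragraph. Since \eqref{eq:EZ} is a chain map, strictly associative by \cite[Theorem~5.2]{EML}, and unital, and since $\mathbb{Y}$ is faithful and carries the associators and unitors of $\hComp$ (inherited from those of the bicategory $\bn$) to those of $\ostar$, each of these three identities --- an equality of morphisms in $\Ch^-(\BS^{n,n}_{k,k})$ --- pulls back to the corresponding identity for $\mu_m$ in $\Ch^-((\bn^n_k)^{\op}\otimes\bn^n_k)$, which gives Proposition~\ref{prop:Bar is algebra}.

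The main obstacle is the monoidality statement of the second paragraph: checking that $\mathbb{Y}(Z\hComp W)\cong \mathbb{Y}(Z)\ostar \mathbb{Y}(W)$ is natural and coherent with associators and unitors at the level of one-sided twisted complexes, keeping the $t$-shifts and Koszul signs straight, and in particular confirming that base change along $\Khring^n_m\otimes\Khring^m_k\to\Khring^n_k$ really does send the external tensor of the relevant Yoneda bimodules to the Yoneda bimodule of the $\hComp$-composite tangle. One could instead bypass $\mathbb{Y}$ altogether and verify the chain-map, associativity, and unitality identities for the explicit $\mu_m$ by hand; this is the Eilenberg--Zilber computation of \cite[\S 5]{EML} transplanted to the present categorical setting, and the obstacle is then the same sign-and-degree bookkeeping, merely distributed over the verification rather than localized in the monoidality lemma.
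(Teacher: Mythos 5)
Your proposal is correct and follows essentially the same route as the paper: the paper defines the Eilenberg--Zilber shuffle product \eqref{eq:EZ} on the bimodule level (citing \cite[\S 6.1]{2002.06110} and \cite[\S 5]{EML}), observes unitality and invokes \cite[Theorem 5.2]{EML} for strict associativity, and then obtains $\mu_m$ by ``taking the Yoneda preimage,'' exactly as you do. You simply make explicit the monoidality and faithfulness of the Yoneda functor and the degree bookkeeping, which the paper leaves implicit.
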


Later, we will use the following compatibilities of $\Bar_m^n$ with the 
symmetries of the Bar-Natan category $\bn_m^n$ described in \S \ref{ss:BN}.
All follow immediately from the definition of $\Bar^n_m$.

\begin{proposition}[Symmetries of $\Bar^n_m$]
	\label{prop:symmetries of B}
We have:
\begin{enumerate}
\item \label{BS1} $(r_x\otimes r_x)(\Bar^n_m)\cong \Bar^n_m$.
\item \label{BS2} $(r_y\otimes r_y)(\Bar^m_n)\cong \Bar^n_m$.
\item \label{BS3} $\sigma \circ (r_z \otimes r_z)(\Bar^n_m)\cong \Bar^n_m$, 
where $\sigma$ is (the functor on complexes induced from) the
transposition of tensor factors $(\bn^n_m)^{\op}\otimes \bn^n_m \to \bn^n_m \otimes (\bn^n_m)^{\op}$.
\end{enumerate}
Moreover, each of the above is compatible with the multiplication maps $\mu_m$ from Proposition~\ref{prop:Bar is algebra}. \qed
\end{proposition}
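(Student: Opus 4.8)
The plan is to recognize $\Bar^n_m$ as a construction that is functorial in the graded $\cring$-linear category $\bn^n_m$ equipped with its saddle/tube multiplication, and then to feed each of the three symmetries of $\bn^n_m$ through it. Recall from Definition~\ref{def:Khrings} that $\Khring^n_m$ is just $\bn^n_m$ viewed as a locally unital graded algebra, with product $H(a,b)\otimes H(b,c)\to H(a,c)$ induced by saddle and tube cobordisms, and that the complex $\Bar^n_m\in\Ch^-((\bn^n_m)^{\op}\otimes\bn^n_m)$ of \eqref{eq:BarY} is the bar resolution built from precisely this data, with differential assembled solely from the product. Consequently, any grading-preserving $\cring$-linear equivalence $F\colon\bn^n_m\to\bn^{n'}_{m'}$ that intertwines the saddle/tube multiplications induces, by termwise application to every tensor slot, an isomorphism $(F^{\op}\otimes F)(\Bar^n_m)\cong\Bar^{n'}_{m'}$; if instead $F$ is contravariant, one gets an isomorphism onto the bar complex of the opposite algebra, obtained by reversing the order of the bar factors (with Koszul signs governed by $\ip{\ ,\ }$) and exchanging the two outer bimodule slots, i.e.\ onto $\sigma$ of the bar complex. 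Since every such $F$ acts within each fixed cohomological degree $t^{-r}$, the $t$-grading is automatically respected as well.

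With this in hand, \eqref{BS1} and \eqref{BS2} are immediate: $r_x$ is a covariant autoequivalence of $\bn^n_m$, and $r_y$ a covariant equivalence $\bn^m_n\to\bn^n_m$; both carry saddle and tube cobordisms to saddle and tube cobordisms (reflecting the square does not interfere with the ``height'' direction of the cobordism cylinder) and preserve the $q$-grading, so termwise application to \eqref{eq:BarY} yields $(r_x\otimes r_x)(\Bar^n_m)\cong\Bar^n_m$ and $(r_y\otimes r_y)(\Bar^m_n)\cong\Bar^n_m$. For \eqref{BS3}, $r_z$ is the identity on objects and contravariant on cobordisms, hence a grading-preserving algebra anti-automorphism of $\Khring^n_m$; applying it to each factor of a bar element $g(f_1|\cdots|f_r)g'$ and reversing the word produces a bar element for $\Khring^n_m$ with the outer slots interchanged, which by the general remark above is precisely $\sigma\circ(r_z\otimes r_z)(\Bar^n_m)$.

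Compatibility with the multiplication maps $\mu_m$ of Proposition~\ref{prop:Bar is algebra} then follows because the Eilenberg--Zilber shuffle product is expressed entirely through the horizontal composition $\hComp$ of $\bn$ and the shuffle combinatorics. As $r_x$ and $r_z$ commute with $\hComp$, a shuffle of $\{f_i\hComp\id\}$ with $\{\id\hComp g_j\}$ is sent to a shuffle of the same shape; as $r_y$ interchanges $\hComp$ with its opposite, which is exactly compensated by the swap $\sigma$ of the two tensor factors in the source of $\mu_m$, a shuffle is sent to the reversed shuffle, again of the correct shape. In each case the signs are the routine ones dictated by the symmetric monoidal structure on $\dgMod{\Z\times\Z}{\cring}$.

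I expect the only genuinely fiddly point, and the one I would write out most carefully, to be \eqref{BS3}: checking that reversing the bar word and swapping the outer slots, carried out with the Koszul rule for $\ip{(i,j),(i',j')}=ii'$, really does convert the bar differential of $\Khring^n_m$ into that of its opposite, compatibly with the contravariance of $r_z$. This is the familiar identification of the bar resolutions of $A$ and $A^{\op}$, here adapted to the locally unital, $(\Z\times\Z)$-graded setting---bookkeeping rather than mathematics, but where all the care resides.
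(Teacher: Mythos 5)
Your proposal is correct, and it takes the same route the paper intends: the paper simply asserts that all three symmetries "follow immediately from the definition of $\Bar^n_m$" and omits the proof, and your argument is exactly that unwinding — termwise application of $r_x$, $r_y$, $r_z$ to \eqref{eq:BarY}, using that each is bijective on objects and (anti-)multiplicative for the saddle/tube product, with word reversal plus the outer swap $\sigma$ handling the contravariant case, and compatibility with $\mu_m$ reduced to the behaviour of $\hComp$ under the reflections. The only detail worth keeping explicit in a written-up version is the sign bookkeeping for the word reversal in \eqref{BS3}, which you have correctly flagged as the one nontrivial verification.
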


We now use \eqref{eq:planarHom} to give a graphical interpretation of $\Bar_m^n$ 
and relate the latter to the Rozansky's {bottom projector}, 
defined in \cite{rozansky2010categorification}.
For integers $m,n$, consider the functor 
$\iota_m^n \colon (\bn^n_m)^\op \otimes \bn^n_m \rightarrow \bn_{m+n}^{m+n}$ 
defined as the composition:
\begin{equation}
	\label{eq:iota}
	\begin{tikzcd}[row sep=.5cm]
		(\bn^n_m)^\op \otimes \bn^n_m 
		\arrow[r, "r_{xz}\otimes \id "] 
		& 
		\bn^n_m \otimes \bn^n_m 
		\arrow[r] 
		&
		\bn_{m+n}^{m+n}
		&
		\\
		(a,b)
		\arrow[mapsto,r] 
		& 
		(r_x(a),b)
		\arrow[mapsto,r] 
		&
		\begin{tikzpicture}[anchorbase]
			\draw[thick, double] (-.75,-1) \ur  (-.35,-.5) to (.35,-.5)  \rd (.75,-1);
			\draw[thick, double] (-.75,1) \dr  (-.35,.5) to (.35,.5) \ru (.75,1);
			\filldraw[white] (-.35,.2) rectangle (.35, .8);
			\filldraw[white ] (-.35,-.1) rectangle (.35, -1);
			\draw[thick] (-.35,-.2) rectangle (.35, -.8);
			\draw[thick] (-.35,.1) rectangle (.35, 1);
			\node at (.9,.75) {\scriptsize $n$};
			\node at (.9,-.75) {\scriptsize $n$};
			\node at (-.9,.75) {\scriptsize $m$};
			\node at (-.9,-.75) {\scriptsize $m$};
			\node[rotate = 270] at (-0.1,.55) {\small$r_x(a)$};
			\node[rotate = 270] at (-0.1,-.5) {$b$};
			\end{tikzpicture}
	\end{tikzcd}
\end{equation}
As usual, we will denote the extension of $\iota_m^n$ to the category of complexes 
$\Ch^-((\bn^n_m)^\op\otimes \bn^n_m) \rightarrow \Ch^-(\bn_{m+n}^{m+n})$ similarly.

\begin{definition}\label{def:bottom proj}
Set $\Bproj_{m+n}:= q^{\frac{1}{2}(m+n)} \iota_m^n(\Bar^n_m)\in \Ch^-(\bn_{m+n}^{m+n})$.
\end{definition}

Comparing with Definition \ref{def:bar}, we see that
\begin{equation}
	\label{eq:Bar-visual}
	\Bar^n_m(b,b')	
	=
	q^{\frac{1}{2}(m+n)} 
	\KhEval{
		\begin{tikzpicture}[anchorbase]
		\draw[thick] (1.1,-.25) rectangle (1.9,.25); 
		\draw[thick] (-.1,-.25) rectangle (-.9,.25); 
		\node at (1.5,0) {\scs$r_x(b')$};
		\node at (-.5,0) {$b$};
		\draw[thick, double] (-.5,.25) to[out=90,in=180]  (0.5,.65) to[out=0,in=90] (1.5,.25);
		\draw[thick, double] (-.5,-.25) to[out=-90,in=180]  (0.5,-.65) to[out=0,in=-90] (1.5,-.25);
		\rtallbox{0.2}{0}{m+n}
		\node at (-.5,.70) {\scriptsize $n$};
		\node at (-.5,-.70) {\scriptsize $m$};
		\node at (1.5,.70) {\scriptsize $n$};
		\node at (1.5,-.70) {\scriptsize $m$};
		\end{tikzpicture}} \, .
\end{equation}
It is straightforward to see that, as the notation suggests, 
$\Bproj_{m+n}$ depends only on the sum $m+n \in \Z_{\geq 0}$ 
up to isomorphism.
The object $(r_{xz}\otimes \id)(\Bar_m^n) \in \Ch^-(\bn^n_m\otimes \bn^n_m)$ appearing in the 
definition of $\Bproj_{m+n}$ will occur sufficiently often that we 
introduce the following shorthand and graphical notation.

\begin{definition}\label{def:tbar}
Set $\tBar^n_m:= (r_{xz}\otimes\id)(\Bar^n_m)$,
which we depict graphically as:
\[
\tBar^n_m =
		\left(
			\begin{tikzpicture}[anchorbase,xscale=-1]
				\draw[thick, double] (0,-0.6) node[below=-2pt]{\scs $m$} 
					to (0,0.6) node[above=-2pt]{\scs $n$};
				\ljf{0}{0}{}
				\end{tikzpicture}	
\, , \,
		\begin{tikzpicture}[anchorbase,scale=1]
			\draw[thick, double] (0,-0.6) node[below=-2pt]{\scs $m$} 
				to (0,0.6) node[above=-2pt]{\scs $n$};
			\ljf{0}{0}{}
		\end{tikzpicture}	
		\right)
\]
\end{definition}

\begin{remark}\label{rmk:einstein}
The above graphical notation is used in the spirit of a
\emph{categorified Einstein summation convention}: 
when appearing in a ``planar evaluation'' 
(e.g.~after applying $\boxtimes \colon \Ch^-(\bn^n_m\otimes \bn^n_m) \to \Ch^-(\bn^{2n}_{2m})$),
paired boxes represent a complex of split tangles, 
i.e.~a \emph{direct sum} of (homological and quantum grading shifts) of pairs of objects from $\bn^n_m$, 
one in each box, related only by the differential. 
\end{remark}

\begin{remark}\label{rmk:purple boxes merge to roz}
Definitions \ref{def:bottom proj} and \ref{def:tbar} give that
\begin{equation}
	\label{eq:purpleprojector}
\tBar^n_m \xmapsto{\boxtimes}
\begin{tikzpicture}[anchorbase,xscale=-1]
				\draw[thick, double] (0,-0.6) node[below=-2pt]{\scs$m$} 
					to (0,0.6) node[above=-2pt]{\scs$n$};
				\ljf{0}{0}{}
				\end{tikzpicture}	
		\quad
		\begin{tikzpicture}[anchorbase,scale=1]
			\draw[thick, double] (0,-0.6) node[below=-2pt]{\scs$m$} 
				to (0,0.6) node[above=-2pt]{\scs$n$};
			\ljf{0}{0}{}
		\end{tikzpicture}
= q^{-\frac{m+n}{2}}
			\begin{tikzpicture}[anchorbase,rotate=-90,scale=.75]
				\draw[thick, double] (-1.25,-.75) node[above=-2pt]{\scs$n$} to [out=0,in=270] (-.5,-.25);
				\draw[thick, double] (-1.25,.75) node[above=-2pt]{\scs$n$} to [out=0,in=90] (-.5,.25);
				\draw[thick, double] (1.25,-.75) node[below=-2pt]{\scs$m$} to [out=180,in=270] (.5,-.25);			
				\draw[thick, double] (1.25,.75) node[below=-2pt]{\scs$m$} to [out=180,in=90] (.5,.25);	
				\draw[thick, fill=white] (-1,.25) rectangle (1, -.25);
				\node[rotate=90] at (0,0) {\scs$\Bproj_{m+n}$};
				\end{tikzpicture} \, .
\end{equation}
i.e.~adjacent, appropriately oriented purple boxes can be replaced by $q^{-\frac{m+n}{2}}\Bproj_{m+n}$.
Note, however, that more general planar evaluations involving $\tBar_m^n$ need not result in nearby purple boxes.
\end{remark}

In fact, there is another relative orientation of purple boxes 
that again yields $q^{-\frac{m+n}{2}} \Bproj_{m+n}$, 
a fact that will be important in \S \ref{s:coarsening}.

\begin{lem}
	\label{lem:revpurple}
For $m,n \geq 0$,
\begin{equation}
	\label{eq:inpurple}
\begin{tikzpicture}[anchorbase,xscale=-1]
				\draw[thick, double] (0,-0.6) node[below=-2pt]{\scs$m$} 
					to (0,0.6) node[above=-2pt]{\scs$n$};
				\rjf{0}{0}{}
				\end{tikzpicture}	
		\quad
		\begin{tikzpicture}[anchorbase,scale=1]
			\draw[thick, double] (0,-0.6) node[below=-2pt]{\scs$m$} 
				to (0,0.6) node[above=-2pt]{\scs$n$};
			\rjf{0}{0}{}
		\end{tikzpicture}
\cong q^{-\frac{m+n}{2}}
			\begin{tikzpicture}[anchorbase,rotate=-90,scale=.75]
				\draw[thick, double] (-1.25,-.75) node[above=-2pt]{\scs$n$} to [out=0,in=270] (-.5,-.25);
				\draw[thick, double] (-1.25,.75) node[above=-2pt]{\scs$n$} to [out=0,in=90] (-.5,.25);
				\draw[thick, double] (1.25,-.75) node[below=-2pt]{\scs$m$} to [out=180,in=270] (.5,-.25);			
				\draw[thick, double] (1.25,.75) node[below=-2pt]{\scs$m$} to [out=180,in=90] (.5,.25);	
				\draw[thick, fill=white] (-1,.25) rectangle (1, -.25);
				\node[rotate=90] at (0,0) {\scs$\Bproj_{m+n}$};
				\end{tikzpicture} \, .
\end{equation}
\end{lem}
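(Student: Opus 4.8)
The plan is to deduce \eqref{eq:inpurple} from the relation \eqref{eq:purpleprojector} of Remark~\ref{rmk:purple boxes merge to roz}, by combining it with the reflection invariance of the bar complex recorded in Proposition~\ref{prop:symmetries of B}\eqref{BS1}. The underlying idea is that the pair of purple boxes with the reversed seam bend is nothing but $(r_x\otimes r_x)(\tBar^n_m)$, and $\tBar^n_m$ is invariant under $r_x\otimes r_x$ up to isomorphism.

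The first step is to match up diagrams. Unwinding the graphical conventions of Definition~\ref{def:tbar}, the left-hand side of \eqref{eq:purpleprojector} is the planar evaluation, under $\boxtimes$, of the object $\tBar^n_m = (r_{xz}\otimes\id)(\Bar^n_m)$. Passing from each $\ljf$ box to an $\rjf$ box reflects the corresponding tensor factor in the first coordinate, so the left-hand side of \eqref{eq:inpurple} is the $\boxtimes$-evaluation of $(r_x\otimes r_x)(\tBar^n_m)$. Since $r_x$ is a grading-preserving covariant involution---it alters neither Euler characteristics nor boundary-point counts, and it leaves homological and $q$-shifts untouched---this reflection introduces no extra grading shift.

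Next, using that the reflection functors commute, that $r_x^2=\id$, and Proposition~\ref{prop:symmetries of B}\eqref{BS1}, one computes
\begin{align*}
(r_x\otimes r_x)(\tBar^n_m)
 &= (r_x\otimes r_x)(r_{xz}\otimes\id)(\Bar^n_m)
  = (r_{xz}\otimes\id)(r_x\otimes r_x)(\Bar^n_m) \\
 &\cong (r_{xz}\otimes\id)(\Bar^n_m)
  = \tBar^n_m .
\end{align*}
Applying the functor $\boxtimes$ to this isomorphism and then invoking \eqref{eq:purpleprojector} shows that the left-hand side of \eqref{eq:inpurple} is isomorphic to $q^{-\frac{m+n}{2}}\Bproj_{m+n}$, as claimed. (A purely diagrammatic alternative---sliding the two seam arcs around the projector using sphericality as in Proposition~\ref{prop:sphericality}---should also work, but routing the argument through Proposition~\ref{prop:symmetries of B} seems cleanest.)

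The step I expect to be the main obstacle is the diagram-matching of the second paragraph: one must confirm that reversing the seam bend is implemented by exactly $r_x\otimes r_x$ and not by some other composite of reflections---in particular, checking its interaction with the left-to-right reflection already built into the depiction of the first tensor factor of $\tBar^n_m$---and that the isomorphism supplied by Proposition~\ref{prop:symmetries of B}\eqref{BS1} is natural enough to be carried through $\boxtimes$ and the subsequent merging of adjacent boxes into $\Bproj_{m+n}$. Once that bookkeeping is settled, the remainder of the argument is formal.
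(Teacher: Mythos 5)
Your overall strategy---reduce \eqref{eq:inpurple} to \eqref{eq:purpleprojector} by exhibiting the left-hand side as a symmetry of $\tBar^n_m$ and invoking Proposition~\ref{prop:symmetries of B}---is exactly the paper's, but the diagram-matching step that you yourself flagged as the main obstacle is where the argument goes wrong. In the paper's graphical conventions (Definition~\ref{def:tbar} together with \S\ref{sec:graphicalmodel}), the direction of the notch on a purple box does \emph{not} encode an application of $r_x$ to that box's contents; it encodes \emph{which tensor factor} of $\tBar^n_m$ the box represents (equivalently, whether it is attached to the cut seam $(\gamma,-)$ or $(\gamma,+)$). Thus passing from \eqref{eq:purpleprojector} to the left-hand side of \eqref{eq:inpurple} swaps which factor sits in which planar position: the left-hand side is $\boxtimes\circ\sigma(\tBar^n_m)$, where $\sigma$ is the transposition of tensor factors, not $\boxtimes\circ(r_x\otimes r_x)(\tBar^n_m)$. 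These are isomorphic but genuinely different objects of $\Ch^-(\bn^{2n}_{2m})$, so your computation, while internally consistent, establishes the claim for the wrong complex and never engages with the transposition that is the actual content of the lemma.

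The correct reduction requires one more symmetry than you use. Writing the left-hand side as $\boxtimes\circ\sigma\circ(r_{xz}\otimes\id)(\Bar^n_m)$ and commuting reflections past $\sigma$, one finds
\[
\sigma \circ (r_{xz}\otimes\id) = (r_{xz}\otimes\id)\circ(r_x\otimes r_x)\circ\sigma\circ(r_z\otimes r_z),
\]
so that both part \eqref{BS1} \emph{and} part \eqref{BS3} of Proposition~\ref{prop:symmetries of B} (the latter being precisely the compatibility of $\Bar^n_m$ with $\sigma\circ(r_z\otimes r_z)$) are needed to conclude $\sigma(\tBar^n_m)\cong\tBar^n_m$. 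Your argument only invokes \eqref{BS1} and so cannot absorb the transposition. If you repair the first step by identifying the left-hand side as $\sigma(\tBar^n_m)$ and add the $r_z$-symmetry, the rest of your write-up (no grading shifts introduced, compatibility with $\boxtimes$ and the merging into $\Bproj_{m+n}$) goes through as stated.
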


\begin{proof}
The left-hand side of \eqref{eq:inpurple} is given by 
$\boxtimes \circ \sigma (\tBar) = \boxtimes \circ \sigma \circ (r_{xz}\otimes\id)(\Bar^n_m)$, 
where $\sigma \colon \Ch^-(\bn_m^n \otimes \bn_m^n) \to \Ch^{-}(\bn_m^n \otimes \bn_m^n)$ 
is the functor induced from the transposition of tensor factors.
We compute
\[
\sigma \circ (r_{xz}\otimes\id) = (\id \otimes r_{xz}) \circ \sigma 
	= (r_{xz}\otimes\id) \circ (r_{xz} \otimes r_{xz}) \circ \sigma 
	= (r_{xz}\otimes\id) \circ (r_{x} \otimes r_{x}) \circ \sigma \circ (r_{z} \otimes r_{z})
\]
and the result follows from Proposition \ref{prop:symmetries of B}.
\end{proof}

We next establish further structure on $\Bproj_{m+n}$.
Recall that the bar complex from Definition \ref{def:bar} possesses a canonical counit,
i.e.~a degree zero chain map $\tilde{\e}_{m}^n \colon \Bar^n_m(b,b') \rightarrow H(b,b')$
given by
\[
g(f_1|\cdots|f_r)g'\mapsto 
\begin{cases} 
	gg'\in H(b,b') & \text{ if $r=0$}\\ 
	0 & \text{else}.
		\end{cases}
	\]
This map is clearly natural in $b$ and $b'$, 
and realizes $\Bar_m^n(-,-)$ as a projective resolution 
of the identity $(\Khring_m^n,\Khring_m^n)$-bimodule.

\begin{definition}\label{def:bproj counit}
The \emph{counit} $\e_{m+n}\colon\Bproj_{m+n}\rightarrow \one_{m+n}$ 
is the map induced from $\tilde{\e}_{m}^n$ by taking Yoneda preimages and applying $\iota_{m}^n$. 
Explicitly, $\e_{m+n}$ is the composition
\begin{equation}
	\label{eq:counitcomp}
\Bproj_{m+n} \longrightarrow q^{\frac{n+m}{2}}\bigoplus_{a} \begin{tikzpicture}[anchorbase]
					\draw[thick] (-.5,-.25) rectangle (.5,.25); 
					\node at (0,0) {\small$r_y(a)$};
					\draw[thick, double] (.25,.25) to (.25,.75);
					\draw[thick, double] (-.25,.25) to (-.25,.75);
					\node at (-.5,.65) {\scriptsize $m$};
					\node at (.5,.65) {\scriptsize $n$};
					\begin{scope}[shift={(0,-.75)}] 
						\draw[thick] (-.5,-.25) rectangle (.5,.25); 
						\node at (0,0) {\small$a$};
						\node at (-.5,-.65) {\scriptsize $m$};
						\node at (.5,-.65) {\scriptsize $n$};
						\draw[thick, double] (.25,-.25) to (.25,-.75);
						\draw[thick, double] (-.25,-.25) to (-.25,-.75);
						\end{scope}
					\end{tikzpicture}
\longrightarrow \one_{m+n}
\end{equation}
where the first map projects onto the degree zero chain object and the second map is the canonical cobordism.  
\end{definition}

Observe that, although projecting onto the degree zero object in $\Bproj_{m+n}$
is not a chain map, the composition in \eqref{eq:counitcomp} is indeed chain
map. The map $\e_{m+n}$ allows us to give the following abstract
characterization of $\Bproj_{m+n}$, which follows from \cite[\S
5.2]{hogancamp2020constructing}; see also \cite{Cooper_2015}.

\begin{prop}
	\label{prop:Punique}
The following properties uniquely characterize $(\Bproj_{m+n},\e_{m+n})$, up to homotopy equivalence.
\begin{enumerate}
\item The chain objects of $\Bproj_{n+m}$ have through-degree zero, 
meaning that they are all direct sums of objects of the form $b \hComp b'$ 
for $b \in \bn_0^{m+n}$ and $b' \in \bn_{m+n}^0$.
\item $\Cone(\e_{m+n}) \star a \simeq 0 \simeq a \star \Cone(\e_{m+n})$ 
for every through-degree zero $a \in \bn^{n+m}_{n+m}$.
\end{enumerate}
Explicitly, if $(P',\e')$ is another pair satisfying (1) and (2), 
then there is a unique-up-to-homotopy chain map $\nu\colon \Bproj_{m+n}\rightarrow P'$ 
such that $\e'\circ \nu\simeq \e_{m+n}$, and this $\nu$ is a homotopy equivalence.
\end{prop}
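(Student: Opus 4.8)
The plan is to run the standard ``uniqueness of categorified projectors'' argument, in the style of \cite{Cooper_2015} and \cite[\S 5.2]{hogancamp2020constructing}, adapted to the through-degree zero / bottom-projector setting. Properties (1) and (2) hold for $(\Bproj_{m+n},\e_{m+n})$ itself: (1) is built into the construction $\Bproj_{m+n}=q^{\frac{m+n}{2}}\iota_m^n(\Bar^n_m)$, since the chain objects of $\iota_m^n(\Bar^n_m)$ are doubly-capped planar tangles and hence have no through-strands; and (2) follows from the fact, noted before Definition~\ref{def:bproj counit}, that $\Bar^n_m$ is a bounded-above projective resolution of the identity bimodule. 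The substance of the statement is therefore the ``explicit'' clause, which I now sketch. Write $N:=m+n$, work in $\Ch^-(\bn^N_N)$, and set $(P,\e):=(\Bproj_{N},\e_{N})$; let $(P',\e')$ be any other pair satisfying (1) and (2). By (2), the complex $Q:=\Cone(\e')$ satisfies $a\star Q\simeq 0\simeq Q\star a$ for every through-degree zero $a\in\bn^N_N$, while by (1) the complex $P$ is a bounded-above one-sided twisted complex built from through-degree zero objects.

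Two observations carry the rest. \textbf{Absorption:} for $Q$ as above and any $P$ built from through-degree zero objects, $P\star Q\simeq 0\simeq Q\star P$; indeed, writing $P$ as a convolution of its chain objects $a_i$ (through-degree zero, up to grading shift) one has $a_i\star Q\simeq 0$, and a bounded-above convolution of contractible complexes is contractible. \textbf{Hom-vanishing:} for such $Q$ and $P$, $\Hom(P,Q)\simeq 0$; indeed, since the bicategory $\bn$ is pivotal (\S\ref{ss:BN}), every through-degree zero object $a\in\bn^N_N$ is dualizable with through-degree zero dual $a^\vee$, so $\Hom_{\bn^N_N}(a,Q)\cong\Hom_{\bn^N_N}(\one_N,a^\vee\star Q)\simeq 0$, and one extends this over the complex $P$ via the same convolution argument (now bounded below in the opposite indexing).

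Now for the map $\nu$. \emph{Existence:} the cone of $\id_P\star\e'\colon P\star P'\to P\star\one_N\cong P$ is $P\star Q$, which is contractible by Absorption, so $\id_P\star\e'$ is a homotopy equivalence; choosing a homotopy inverse $s$, put $\nu:=(\e\star\id_{P'})\circ s\colon P\to P'$. The interchange law for $\star$ then gives $\e'\circ\nu=(\e\star\e')\circ s=\e\circ(\id_P\star\e')\circ s\simeq\e$. \emph{Uniqueness:} if $\nu_1,\nu_2\colon P\to P'$ both satisfy $\e'\circ\nu_i\simeq\e$, then applying $\Hom(P,-)$ to the exact triangle determined by $\e'$ and invoking Hom-vanishing ($\Hom(P,\Cone(\e'))\simeq 0$) shows that $\e'\circ(-)$ is injective on homotopy classes of maps $P\to P'$; hence $\nu_1\simeq\nu_2$. \emph{Equivalence:} the symmetric construction produces $\nu'\colon P'\to P$ with $\e\circ\nu'\simeq\e'$, whence $\e\circ(\nu'\circ\nu)\simeq\e$; since $\id_P$ also lies over $\e$ while $(P,\e)$ itself satisfies (1) and (2), uniqueness applied to self-maps of $(P,\e)$ forces $\nu'\circ\nu\simeq\id_P$, and symmetrically $\nu\circ\nu'\simeq\id_{P'}$, so $\nu$ is a homotopy equivalence.

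The main obstacle is not in the argument just sketched, which is essentially formal once Absorption and Hom-vanishing are in hand, but rather upstream: deducing property (2) for $\Bproj_{m+n}$ from the bar resolution of the identity bimodule. This is precisely the content of \cite[\S 5.2]{hogancamp2020constructing} (with \cite{Cooper_2015} as the model in the Temperley--Lieb case), and it is where the structure of the Eilenberg--Zilber shuffle algebra and the through-degree estimates genuinely enter; I would simply cite it. In the remaining, formal part, the only point requiring mild care is checking that the relevant convolutions of contractible complexes (bounded above for $P\star Q$, bounded below for the $\Hom$-complex) are contractible, which is routine homological algebra for one-sided twisted complexes in $\Ch^-$.
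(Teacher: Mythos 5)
Your proposal is correct and follows essentially the same route as the paper: verify (1) from the construction and (2) from the bar resolution, then build $\nu$ as $(\e\star\id_{P'})$ composed with a homotopy inverse of $\id_{\Bproj}\star\e'$, using that both maps in the interchange square have contractible cones by absorption. The paper's proof is terser and leaves the uniqueness clause implicit, whereas you supply the standard $\Hom$-vanishing argument via duality of through-degree zero objects; this is a welcome completion rather than a different approach.
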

\begin{proof}
	It is clear from \eqref{eq:iota} that $\Bproj_{m+n}$ satisfies (1), 
	while (2) is simply a reformulation 
	of the fact that $\tilde{\e}_{m}^n$ is a projective resolution.
	The uniqueness follows by considering the diagram
	\[
	\begin{tikzcd}
	\Bproj_{m+n} \hComp P' \ar[rr,"\e_{m+n}\hComp \id"] \ar[d,"\id \hComp \e' "] & & \one_{m+n} \hComp P' \ar[d,"\id \hComp \e' "] \\
	\Bproj_{m+n} \hComp \one_{m+n} \ar[rr,"\e_{m+n}\hComp \id"] & & \one_{m+n}
	\end{tikzcd} \, .
	\]
	Indeed, since all complexes involved are bounded above,
	property (2) implies that the cones of the top and left-most maps are contractible.
	Hence, these maps are homotopy equivalences and 
	we can let $\nu$ be the composition of $\e_{m+n}\hComp \id$ with a (homotopy) inverse to $\id \hComp \e'$.
	\end{proof}

Since Rozansky's bottom projector from \cite{rozansky2010categorification} satisfies the conditions of 
Proposition \ref{prop:Punique}, this immediately gives the following.

\begin{cor}\label{cor:bar is roz}
$\Bproj_{m+n}$ agrees with Rozansky's bottom projector, up to homotopy equivalence. \qed
\end{cor}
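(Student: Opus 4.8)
The plan is to obtain this as an immediate consequence of the uniqueness clause in Proposition~\ref{prop:Punique}: the only thing to verify is that Rozansky's bottom projector, as constructed in \cite{rozansky2010categorification}, satisfies the two characterizing conditions stated there. First I would recall Rozansky's construction and its defining properties. For a cable of $N = m+n$ parallel strands he produces a bounded-above complex together with a map to the identity tangle $\one_{m+n}$, and shows that this complex has chain objects of through-degree zero --- i.e.~direct sums of diagrams factoring through $0$ parallel strands --- and that the map to $\one_{m+n}$ becomes a homotopy equivalence after horizontally composing with any turnback. Modulo reconciling normalizations --- in particular the overall shift $q^{\frac{1}{2}(m+n)}$ appearing in Definition~\ref{def:bottom proj} and Rozansky's homological conventions --- these are precisely conditions (1) and (2) of Proposition~\ref{prop:Punique}, once one notes that a complex tensored with an arbitrary through-degree zero tangle is, up to homotopy equivalence and grading shifts, built out of turnbacks.

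With the hypotheses matched, I would then simply invoke the uniqueness statement of Proposition~\ref{prop:Punique} applied to the pair $(\Bproj_{m+n},\e_{m+n})$ and to the pair consisting of Rozansky's projector together with its associated map to the identity: this produces an essentially unique chain map between the two objects that is compatible with the counits, and asserts that this map is a homotopy equivalence $\Bproj_{m+n} \simeq$ (Rozansky's bottom projector). That gives the claim.

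The step I expect to require the most care is the first one, namely verifying that Rozansky's characterization of his projector --- which is classically phrased via categorified Jones--Wenzl-type relations, i.e.~annihilation by turnbacks --- is genuinely equivalent to conditions (1)--(2) as formulated here. The equivalence of these two styles of characterization is essentially the content of \cite[\S 5.2]{hogancamp2020constructing} (see also \cite{Cooper_2015}), which is already cited in the discussion preceding Proposition~\ref{prop:Punique}; I would appeal to that, and otherwise the corollary follows formally with no further computation.
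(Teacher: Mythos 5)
Your proposal matches the paper's argument exactly: the paper also deduces the corollary immediately from Proposition~\ref{prop:Punique} by noting that Rozansky's bottom projector satisfies its two characterizing conditions. Your additional care about reconciling the turnback-annihilation formulation with condition (2) is sensible but not spelled out in the paper either, which treats it as immediate.
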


\begin{remark}\label{rmk:roz is coalg}
	A similar argument to that in the proof of Proposition \ref{prop:Punique} shows that 
	$\e_{m+n} \hComp \id \colon \Bproj_{m+n} \hComp \Bproj_{m+n} \to \Bproj_{m+n}$ 
	is a homotopy equivalence. 
	Its homotopy inverse and the counit equip $\Bproj_{m+n}$ with the structure of a counital coalgebra:
	\begin{equation}
		\label{eq:coalgebra}
				\begin{tikzpicture}[anchorbase]
					\draw[thick, double] (.25,-.75) node[below=-2pt]{\scriptsize $n$} to (.25,.75);
					\draw[thick, double] (-.25,-.75) node[below=-2pt]{\scriptsize $m$} to (-.25,.75);
					\draw[thick, fill=white] (-.4,-.25) rectangle (.4,.25); 
					\node at (0,0) {\scs$\Bproj_{m+n}$};
					\end{tikzpicture}
					\xrightarrow{(\e_{m+n} \hComp \id)^{-1}} 
					\begin{tikzpicture}[anchorbase]
						\draw[thick, double] (.25,-.75) node[below=-2pt]{\scriptsize $n$} to (.25,.75);
						\draw[thick, double] (-.25,-.75) node[below=-2pt]{\scriptsize $m$} to (-.25,.75);
						\draw[thick, fill=white] (-.4,.1) rectangle (.4,.6); 
						\node at (0,.35) {\scs$\Bproj_{m+n}$};
						\draw[thick, fill=white] (-.4,-.1) rectangle (.4,-.6); 
						\node at (0,-.35) {\scs$\Bproj_{m+n}$};
						\end{tikzpicture} 
	\quad ,\quad
		\begin{tikzpicture}[anchorbase]
			\draw[thick, double] (.25,-.5) node[below=-2pt]{\scriptsize $n$} to (.25,.5);
			\draw[thick, double] (-.25,-.5) node[below=-2pt]{\scriptsize $m$} to (-.25,.5);
			\draw[thick, fill=white] (-.4,-.25) rectangle (.4,.25); 
			\node at (0,0) {\scs$\Bproj_{m+n}$};
			\end{tikzpicture}
			\xrightarrow{\e_{m+n}}
			\begin{tikzpicture}[anchorbase]
			\draw[thick, double] (.25,-.5) node[below=-2pt]{\scriptsize $n$} to (.25,.5);
			\draw[thick, double] (-.25,-.5) node[below=-2pt]{\scriptsize $m$} to (-.25,.5);
				\end{tikzpicture} \, .
		\end{equation}
	The algebra structure $\mu_m
	\colon \Bar^n_m \hComp \Bar^m_k \rightarrow \Bar^n_k$
	from Proposition \ref{prop:Bar is algebra} now gives rise to a locally
	unital algebra structure on $\bigoplus_{m,n} \Bproj_{m+n}$ with respect to a
	``sideways'' (note: \emph{not} $\hComp$) composition of tangles:
	\begin{equation}
		\label{eq:algebra}
	q^{-m} \begin{tikzpicture}[anchorbase]
	\draw[thick, double] (.75,.25) to (.75,.75);
	\draw[thick, double] (-.75,.25) to (-.75,.75);
	\draw[thick, double] (.75,-.25) to (.75,-.75);
	\draw[thick, double] (-.75,-.25) to (-.75,-.75);
	\draw[thick, double] (-.35,.25) to[out=90,in=180]  (0,.55) to[out=0,in=90] (.35,.25);
	\draw[thick, double] (-.35,-.25) to[out=-90,in=180]  (0,-.55) to[out=0,in=-90] (.35,-.25);
	\draw[thick] (.1,-.25) rectangle (.9,.25); 
	\draw[thick] (-.1,-.25) rectangle (-.9,.25); 
	\node at (.5,0) {\scs$\Bproj_{m+n}$};
	\node at (-.5,0) {\scs$\Bproj_{k+m}$};
	\node at (0,.70) {\scriptsize $m$};
	\node at (0,-.70) {\scriptsize $m$};
	\node at (-.95,-.65) {\scriptsize $k$};
	\node at (.95,-.65) {\scriptsize $n$};
	\end{tikzpicture}
	\xrightarrow{\mu_m}
	\begin{tikzpicture}[anchorbase]
	\draw[thick, double] (.25,.25) to (.25,.75);
	\draw[thick, double] (-.25,.25) to (-.25,.75);
	\draw[thick, double] (.25,-.25) to (.25,-.75);
	\draw[thick, double] (-.25,-.25) to (-.25,-.75);
	\draw[thick] (-.4,-.25) rectangle (.4,.25); 
	\node at (0,0) {\scs$\Bproj_{k+n}$};
	\node at (-.5,-.65) {\scriptsize $k$};
	\node at (.5,-.65) {\scriptsize $n$};
	\end{tikzpicture}
	\quad , \quad
	\begin{tikzpicture}[anchorbase]
	\draw[thick, double] (-.25,-.40) to[out=90,in=180]  (0,-.15) to[out=0,in=90] (.25,-.40);
	\draw[thick, double] (-.25,.40) to[out=-90,in=180]  (0,.15) to[out=0,in=-90] (.25,.40);
	\node at (.55,.3) {\scriptsize $m$};
	\node at (.55,-.3) {\scriptsize $m$};
	\end{tikzpicture}
	\xrightarrow{\nu_m}
	q^{-m} 
	\begin{tikzpicture}[anchorbase]
	\draw[thick, double] (.25,.25) to (.25,.75);
	\draw[thick, double] (-.25,.25) to (-.25,.75);
	\draw[thick, double] (.25,-.25) to (.25,-.75);
	\draw[thick, double] (-.25,-.25) to (-.25,-.75);
	\draw[thick] (-.4,-.25) rectangle (.4,.25); 
	\node at (0,0) {\scs$\Bproj_{m+m}$};
	\node at (-.5,-.65) {\scriptsize $m$};
	\node at (.5,-.65) {\scriptsize $m$};
	\end{tikzpicture} \, .
	\end{equation}
	Here, the unit map $\nu_m$ is given by inclusion of the indicated object of $\bn_{2m}^{2m}$ 
	into the homological degree zero term of $\Bproj_{m+m}$.
	More generally, we can consider the composition induced by $\ostar$ on $\bn^{2m}_{2m}$, 
	which is given analogously to the first diagram in \eqref{eq:algebra}; 
	together with the 
	composition $\hComp$, it endows this category with the structure of a \emph{duoidal category}.
	(We omit the definition here, since it will not be needed in the present work.)
	In this setting, $\Bproj_{m+m}$ is a dg bialgebra with respect to the maps in \eqref{eq:algebra} and \eqref{eq:coalgebra}.
	\end{remark}

The counit maps from Definition \ref{def:bproj counit} satisfy the following compatibility 
with the multiplication maps $\mu_m$.

\begin{lemma}
	\label{lem:counitsquare}
The counit maps from Definition \ref{def:bproj counit} form the (strictly)
commutative square depicted on the left in \eqref{fig:counit}.
\begin{equation}
	\label{fig:counit}
	\begin{tikzcd}
	q^{-m} \begin{tikzpicture}[anchorbase]
			\draw[thick, double] (.75,.25) to (.75,.75);
			\draw[thick, double] (-.75,.25) to (-.75,.75);
			\draw[thick, double] (.75,-.25) to (.75,-.75);
			\draw[thick, double] (-.75,-.25) to (-.75,-.75);
			\draw[thick, double] (-.35,.25) to[out=90,in=180]  (0,.55) to[out=0,in=90] (.35,.25);
			\draw[thick, double] (-.35,-.25) to[out=-90,in=180]  (0,-.55) to[out=0,in=-90] (.35,-.25);
			\draw[thick] (.1,-.25) rectangle (.9,.25); 
			\draw[thick] (-.1,-.25) rectangle (-.9,.25); 
			\node at (.5,-.05) {\scs$\Bproj_{m+n}$};
			\node at (-.5,-.05) {\scs$\Bproj_{k+m}$};
			\node at (0,.70) {\scriptsize $m$};
			\node at (0,-.80) {\scriptsize $m$};
			\node at (-.95,-.65) {\scriptsize $k$};
			\node at (.95,-.65) {\scriptsize $n$};
				\end{tikzpicture}
	\arrow[rr,"\mu_m"] \arrow[d,"\e \ostar \e"] 
	& &
		\begin{tikzpicture}[anchorbase]
			\draw[thick, double] (.25,.25) to (.25,.75);
			\draw[thick, double] (-.25,.25) to (-.25,.75);
			\draw[thick, double] (.25,-.25) to (.25,-.75);
			\draw[thick, double] (-.25,-.25) to (-.25,-.75);
			\draw[thick] (-.4,-.25) rectangle (.4,.25); 
			\node at (0,-.05) {\scs$\Bproj_{k+n}$};
			\node at (-.5,-.65) {\scriptsize $k$};
			\node at (.5,-.65) {\scriptsize $n$};
			\end{tikzpicture}
		\arrow[d,"\e"]
		\\
			q^{m} \begin{tikzpicture}[anchorbase,scale=.75]
					\draw[thick, double] (.75,.-.75) to (.75,.75);
					\draw[thick, double] (-.75,-.75) to (-.75,.75);
					\draw[thick, double] (-.35,-.25)to (-.35,.25) to[out=90,in=180]  (0,.55) to[out=0,in=90] (.35,.25) to (.35,-.25);
					\draw[thick, double] (-.35,-.25) to[out=-90,in=180]  (0,-.55) to[out=0,in=-90] (.35,-.25);
					\node at (0,.65) {\scriptsize $m$};
					\node at (-.95,-.55) {\scriptsize $k$};
					\node at (.95,-.55) {\scriptsize $n$};
					\end{tikzpicture}
			\arrow[rr, "\mathrm{cap}","\mathrm{cobordisms}"']& &				 
				\begin{tikzpicture}[anchorbase,scale=.75]
					\draw[thick, double] (.25,-.75) to (.25,.75);
					\draw[thick, double] (-.25,-.75) to (-.25,.75);
					\node at (-.55,-.65) {\scriptsize $k$};
					\node at (.55,-.65) {\scriptsize $n$};
					\end{tikzpicture}
		\end{tikzcd}
	\quad , \quad
\begin{tikzcd}
			q^{-m} \begin{tikzpicture}[anchorbase,scale=.875]
				\draw[thick] (.1,-.25) rectangle (1,.25); 
					\draw[thick] (-.1,-.25) rectangle (-1,.25); 
				\node at (.55,-.1) {\scs$r_y(b)$};
				\node at (-.55,-.1) {\scs$r_y(a)$};
				\draw[thick, double] (.75,.25) to (.75,.75);
				\draw[thick, double] (-.75,.25) to (-.75,.75);
				\draw[thick, double] (-.35,.25) to[out=90,in=180]  (0,.55) to[out=0,in=90] (.35,.25);
				\node at (0,.65) {\scriptsize $m$};	
				\begin{scope}[shift={(0,-.75)}] 
					\draw[thick] (.1,-.25) rectangle (1,.25); 
					\draw[thick] (-.1,-.25) rectangle (-1,.25); 
					\node at (.55,-.1) {\small$b$};
					\node at (-.55,-.1) {\small$a$};
					\draw[thick, double] (.75,-.25) to (.75,-.75);
					\draw[thick, double] (-.75,-.25) to (-.75,-.75);
					\draw[thick, double] (-.35,-.25) to[out=-90,in=180]  (0,-.55) to[out=0,in=-90] (.35,-.25);
					\node at (0,-.75) {\scriptsize $m$};
					\node at (-.95,-.65) {\scriptsize $k$};
					\node at (.95,-.65) {\scriptsize $n$};
					\end{scope}
				\end{tikzpicture}
				 \arrow[rr, "\id"] \arrow[d] 
				 & &
				 \begin{tikzpicture}[anchorbase,scale=.875]
					\draw[thick] (-.5,-.25) rectangle (.5,.25); 
					\node at (0,-.1) {\scs$r_y(c)$};
					\draw[thick, double] (.25,.25) to (.25,.75);
					\draw[thick, double] (-.25,.25) to (-.25,.75);
					\begin{scope}[shift={(0,-.75)}] 
						\draw[thick] (-.5,-.25) rectangle (.5,.25); 
						\node at (0,-.1) {\small$c$};
						\draw[thick, double] (.25,-.25) to (.25,-.75);
						\draw[thick, double] (-.25,-.25) to (-.25,-.75);
						\node at (-.5,-.65) {\scriptsize $k$};
						\node at (.5,-.65) {\scriptsize $n$};
						\end{scope}
					\end{tikzpicture}
				   \arrow[d] \\
				   q^{m} \begin{tikzpicture}[anchorbase,scale=.75]
					\draw[thick, double] (.75,.-.75) to (.75,.75);
					\draw[thick, double] (-.75,-.75) to (-.75,.75);
					\draw[thick, double] (-.35,-.25)to (-.35,.25) to[out=90,in=180]  (0,.55) to[out=0,in=90] (.35,.25) to (.35,-.25);
					\draw[thick, double] (-.35,-.25) to[out=-90,in=180]  (0,-.55) to[out=0,in=-90] (.35,-.25);
					\node at (0,.65) {\scriptsize $m$};
					\node at (-.95,-.55) {\scriptsize $k$};
					\node at (.95,-.55) {\scriptsize $n$};
					\end{tikzpicture}
			\arrow[rr, "\mathrm{cap}","\mathrm{cobordisms}"']& &				 
				\begin{tikzpicture}[anchorbase,scale=.75]
					\draw[thick, double] (.25,-.75) to (.25,.75);
					\draw[thick, double] (-.25,-.75) to (-.25,.75);
					\node at (-.55,-.65) {\scriptsize $k$};
					\node at (.55,-.65) {\scriptsize $n$};
					\end{tikzpicture}
				\end{tikzcd}
	\end{equation}
	\end{lemma}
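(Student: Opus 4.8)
The plan is to reduce the strict commutativity of the left square in \eqref{fig:counit} to an elementary comparison of cobordisms in homological degree zero. The starting observation is that every object and every arrow in that square is the image, under the additive exact functors $\iota^\bullet_\bullet$ and $\boxtimes$ together with grading shifts, of data living at the level of bar complexes: the top arrow is the Eilenberg--Zilber product $\mu_m$ from Proposition~\ref{prop:Bar is algebra}, and, by construction (Definition~\ref{def:bproj counit}, equation~\eqref{eq:counitcomp}), the right arrow $\e_{k+n}$ factors as the projection of $\Bproj_{k+n}$ onto its homological-degree-zero chain object followed by a canonical ``evaluation'' cobordism to $\one_{k+n}$; likewise $\e\ostar\e$ is the $\ostar$-composite of two such maps. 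Since each counit annihilates all chain objects in nonzero homological degree, and since $\mu_m$ and $\e\ostar\e$ are degree-zero chain maps, both composites around the square vanish identically outside homological degree zero. Hence it suffices to verify the equality after restricting to the homological-degree-zero chain object of $q^{-m}(\Bproj_{k+m}\ostar\Bproj_{m+n})$, which is the $\ostar$-product of the degree-zero parts $q^{\frac{k+m}{2}}\bigoplus_a\iota^m_k(a,a)$ and $q^{\frac{m+n}{2}}\bigoplus_b\iota^n_m(b,b)$ --- a direct sum, over pairs $(a,b)$, of objects of $\bn^{k+n}_{k+n}$. This reduction is exactly what the companion square on the right of \eqref{fig:counit} records, with its top arrow $\id$ encoding the fact that $\mu_m$ has no shuffling to do in homological degree zero.

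Next I would unwind the two routes on a single summand. Going right then down: the degree-zero component of $\mu_m$ is the evident identification fusing the two pictures along the paired $m$ strands onto the summand $\iota^n_k(c,c)$ of the degree-zero part of $\Bproj_{k+n}$, where $c=a\hComp b$ (this is where $q^{-m}q^{\frac{k+m}{2}}q^{\frac{m+n}{2}}=q^{\frac{k+n}{2}}$ is used), and composing with $\e_{k+n}$ applies the canonical evaluation cobordism capping this picture off to $\one_{k+n}$. Going down then right: $\e\ostar\e$ caps off $\iota^m_k(a,a)$ and $\iota^n_m(b,b)$ \emph{separately}, exhibiting the summand of $\one_{k+m}\ostar\one_{m+n}$ consisting of $\one_{k+n}$ together with $m$ nested circles in the middle, after which the bottom ``cap cobordisms'' map removes these $m$ circles by $m$ disjoint disk cobordisms. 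The claim thus becomes the equality of two cobordisms $\iota^m_k(a,a)\ostar\iota^n_m(b,b)\to\one_{k+n}$: ``merge each strand family, then cap the $m$ middle circles'' versus ``merge everything through at once''. Both are (isotopic to) the standard minimal evaluation cobordism, and the identity between them --- capping a circle just before it merges into a sheet equals merging through it --- is a short movie argument obtained from an isotopy together with, if needed, the neck-cutting and sphere relations in \eqref{eq:BNrels}; this also makes the square \emph{strictly} commutative, not merely commutative up to homotopy. The verification that the $m$ disk cobordisms together with the $q^m$ prefactor on the bottom-left corner produce a degree-zero map is a routine grading count, which I would only sketch.

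The main obstacle I anticipate is not any single computation but the careful handling of the $\ostar$-identifications: the tensor products over $\Khring^n_m\otimes\Khring^m_k$ (and their mirror) built into the definition of $\ostar$ on bimodules, and the need to check that the cobordisms above are well defined on the relevant equivalence classes and that passing to Yoneda preimages commutes with $\iota^\bullet_\bullet$, $\boxtimes$, $\mu_m$, and the counits. All of this follows formally from the explicit descriptions of $\Bar^n_m$ and of $\mu_m$ recalled before Proposition~\ref{prop:Bar is algebra}, and the right-hand square of \eqref{fig:counit} is precisely the device that makes this bookkeeping transparent: once it commutes summand-by-summand, the left square commutes after taking direct sums and applying $\boxtimes$, which completes the argument.
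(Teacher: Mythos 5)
Your argument is correct and follows the same route as the paper's proof: reduce to the homological-degree-zero chain objects (where the counits are supported and the shuffle product has nothing to shuffle, so the top map is the identity on the summand indexed by $c=a\hComp b$), then observe that capping the $m$ middle circles after pairing $a$ with $r_y(a)$ and $b$ with $r_y(b)$ yields the cobordism pairing $c$ with $r_y(c)$. The only cosmetic difference is that you invoke the relations \eqref{eq:BNrels} as a possible fallback, whereas an isotopy of the (disjoint-union-of-disks) cobordisms already suffices.
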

\begin{proof}
Since the counit morphisms are supported in homological degree zero, 
so are both of the relevant compositions.
The right square in \eqref{fig:counit} shows the restriction of the square to a summand of the degree-zero 
object of the domain.
As indicated, the top horizontal map is simply the identity
(where we denote by $c$ the cap tangle obtained by joining $a$ and $b$ as indicated).
This matches the shuffle product from Proposition \ref{prop:Bar is algebra}, 
since in degree zero there are no morphisms to shuffle.
The left vertical map is the cobordism realizing the pairing of $a$ with $r_y(a)$ and $b$ with $r_y(b)$. 
Post-composition with the cap cobordisms yields the cobordism pairing $c$ with $r_y(c)$. 
\end{proof}

We need one more result for later:

\begin{proposition}
\label{prop:centrality}
Let $m,n\in \N_0$ and $D\in \Ch^-(\bn^{n+m}_{n'+m'})$.
\begin{enumerate}
\item If $D$ has through-degree zero, then the maps
\[
\e_{n,m}\star \id \colon \Bproj_{n,m}\star D \rightarrow D
\]
and
\[
\id \star \e_{n,m}\colon  D\star \Bproj_{n',m'}\rightarrow D
\]
are homotopy equivalences.
\item 
For any $D$, the maps
\[
\e_{n,m}\star D \star \id\colon \Bproj_{n,m}\star D\star \Bproj_{n',m'}\rightarrow D\star \Bproj_{n',m'}
\]
and
\[
\id\star D \star \e_{n,m}\colon \Bproj_{n,m}\star D\star \Bproj_{n',m'}\rightarrow \Bproj_{n,m}\star D
\]
are homotopy equivalences.
\end{enumerate}
\end{proposition}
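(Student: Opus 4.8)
The plan is to deduce everything from the abstract characterization of $\Bproj$ in Proposition~\ref{prop:Punique}; throughout, $\Bproj_{n,m}$ and $\e_{n,m}$ denote $\Bproj_{n+m}$ and $\e_{n+m}$ in the notation of Definition~\ref{def:bproj counit} and Proposition~\ref{prop:Punique}. For part~(1), observe that $\e_{n,m}\star\id_D$ is a homotopy equivalence if and only if its cone is contractible, and since $(-)\star D$ is an exact dg functor this cone is $\Cone(\e_{n,m})\star D$. As $\Cone(\e_{n,m})$ and $D$ are both bounded above, a standard convolution argument (see the conventions on one-sided twisted complexes in \S\ref{ss:setup} and \cite{HRW1}) reduces us to proving $\Cone(\e_{n,m})\star a\simeq 0$ for each chain object $a$ of $D$. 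The through-degree-zero hypothesis means each such $a$ is a direct summand of some $b\star b'$ with $b\in\bn^{n+m}_0$ and $b'\in\bn^0_{n'+m'}$, and since $\Cone(\e_{n,m})\star(b\star b')=(\Cone(\e_{n,m})\star b)\star b'$, it suffices to establish the following claim: for every $b\in\bn^{n+m}_0$, one has $\Cone(\e_{n,m})\star b\simeq 0$.

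To prove the claim, set $k=(n+m)/2$ and $X:=\Cone(\e_{n,m})\star b\in\Ch^-(\bn^{n+m}_0)$. The tangle $b\star r_y(b)\in\bn^{n+m}_{n+m}$ has through-degree zero, so Proposition~\ref{prop:Punique}(2) gives $X\star r_y(b)=\Cone(\e_{n,m})\star(b\star r_y(b))\simeq 0$. Composing on the right with $b$ and using associativity of $\star$ together with the identification $r_y(b)\star b\cong(q+q^{-1})^{k}\one_0$ in $\BN^0_0$ --- the $k$ circles formed when a cup diagram meets its mirror image --- we obtain $(q+q^{-1})^{k}X\simeq X\star r_y(b)\star b\simeq 0$. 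Since $q^{-k}X$ is a direct summand of $(q+q^{-1})^{k}X$, it too is contractible, and hence so is $X$. This proves the claim, and hence that $\e_{n,m}\star\id_D$ is a homotopy equivalence. The assertion for the second map follows by applying the case just proven to the complex $r_y(D)$, which again has through-degree-zero chain objects, and using that $r_y$ reverses $\star$ and carries $\Bproj$ and its counit to themselves (Proposition~\ref{prop:symmetries of B}(2)); equivalently, one repeats the argument above with the roles of left and right interchanged.

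Part~(2) is then formal. By Proposition~\ref{prop:Punique}(1), $\Bproj_{n',m'}$ has through-degree-zero chain objects, and $\star$-composing such a complex on the right with an arbitrary $D$ again yields a complex with through-degree-zero chain objects: if $c\star c'$ is a chain object of $\Bproj_{n',m'}$ with $c\in\bn^{n'+m'}_0$, then $D\star(c\star c')=(D\star c)\star c'$ with $D\star c\in\Ch^-(\bn^{n+m}_0)$. Hence $D':=D\star\Bproj_{n',m'}$ satisfies the hypothesis of part~(1), and the first map $\e_{n,m}\star D\star\id=\e_{n,m}\star\id_{D'}$ is a homotopy equivalence. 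Dually, $D'':=\Bproj_{n,m}\star D$ has through-degree-zero chain objects, so applying the second assertion of part~(1) to $D''$ shows the second map $\id\star D\star\e_{n',m'}=\id_{D''}\star\e_{n',m'}$ is a homotopy equivalence.

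The main obstacle is the claim in part~(1): Proposition~\ref{prop:Punique}(2) is stated only for honest tangles in $\bn^{n+m}_{n+m}$, and the new content is its extension to the ``half'' objects $b\in\bn^{n+m}_0$, which the retract trick with $b\star r_y(b)$ handles. The other point that requires care --- and where the boundedness hypotheses on all complexes involved are genuinely used --- is ensuring that contractibility of each $\Cone(\e_{n,m})\star a$ propagates through the (in general infinite, bounded-above) convolution computing $\Cone(\e_{n,m})\star D$.
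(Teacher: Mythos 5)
Your proof is correct and follows essentially the same route as the paper's (two-sentence) argument: part (2) is obtained from part (1) by substituting $D\star\Bproj_{n',m'}$ or $\Bproj_{n,m}\star D$ for $D$, and part (1) rests on the fact that $\Cone(\e_{n,m})$ annihilates through-degree-zero objects, which is Proposition~\ref{prop:Punique}(2). The extra steps you supply --- the retract trick with $b\star r_y(b)$ to extend the annihilation from endomorphism tangles to cap tangles $b\in\bn^{n+m}_0$, and the bounded-above convolution argument --- are exactly the details the paper leaves implicit, and they are handled correctly.
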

\begin{proof}
Statement (1) implies (2) by taking $D= \Bproj_{n,m}\star D$ or $D= D\star
\Bproj_{n',m'}$.  Statement (1) holds since $\Cone(\e_{n,m})$ is annihilated by
through-degree zero objects on the left and right.
\end{proof}

\subsection{Two-dimensional TFT and stratifications}
\label{ss:stratified}
In this section, we discuss how the Bar-Natan bicategory and related structures
arise from evaluating $2$-dimensional TFTs on embedded submanifolds constrained
by a stratification of the ambient manifold.

It is well-known that commutative Frobenius algebras correspond to
$2$-dimensional topological field theories, i.e.~symmetric monoidal
functors out of the 2-dimensional oriented cobordism category. For a graded
commutative Frobenius algebra, such as $A=\cring[\mathsf{x}]/(\mathsf{x}^2)$ 
with $\mathsf{x}$ in degree $2$, we adopt
the convention that the associated invariant of $S^1$ is the shifted downward
version $A$, which is concentrated in degrees in degrees symmetric around zero. This comes at
the expense of having cobordisms act by homogeneous morphisms of a degree
proportional to their Euler characteristic. Additionally we will assume that $A$
carries the structure of an extended commutative Frobenius algebra in the sense
of \cite{MR2253441}, which determines an unoriented $2$-dimensional TFT
$\mathcal{F}_A$. Thus we may evaluate the TFT on unoriented $1$-manifolds and
cobordisms between them, although we will only need orientable cobordisms at
present. (Similarly, one can also work with an oriented version with appropriate modifications.)

In particular, we may evaluate $\mathcal{F}_A$ on any $1$-submanifold $X$ of
$S^2$, by first forgetting the embedding. The value can be interpreted, by a
\emph{holographic} change of perspective, as the Bar-Natan skein module for the
Frobenius algebra $A$ of the manifold $B^3$ with boundary condition $X$, for which we write $\bn_A(B^3,X)$.

Now we consider a Whitney stratification $F_\bullet=\left(F_0 \subset F_1
\subset F_2=S^2\right)$ of the $2$-sphere and the set $\mathrm{Lin}(F_\bullet)$
of compact $1$-submanifolds of $S^2$ that are transverse to the strata of
$F_\bullet$. The transversality conditions imply these $1$-manifolds are
disjoint from the $0$-dimensional strata in $F_0$ and cross the $1$-dimensional
strata in $F_1\setminus F_0$ in isolated points. 

The stratifications $F_\bullet$ of $S^2$ serve to organize gluing maps relating
the various Bar-Natan skein modules $\bn_A(B^3,X)$ under boundary connected
sums. More specifically, suppose we are given two stratifications $F_\bullet$,
$G_\bullet$ of $S^2$, each of which contains a simply connected $2$-dimensional
stratum that is identified with the other one by an orientation reversing
diffeomorphism. Then, for every pair of elements $X\in \mathrm{Lin}(F_\bullet)$
and $Y\in \mathrm{Lin}(G_\bullet)$ that agree when restricted to the paired
strata, there exists a distinguished boundary connected sum gluing map

\[\bn_A(B^3,X) \otimes \bn_A(B^3,Y) \to \bn_A(B^3,\mathrm{glue}(X,Y)) \] where
$\mathrm{glue}(X,Y)$ results from gluing the restrictions of $X$ and $Y$ to the
complement of the paired strata. These gluing maps, for suitably chosen
stratifications on $S^2$, give rise to the following structures when $A=\cring[\mathsf{x}]/(\mathsf{x}^2)$. We set $I:=[0,1]$.

\begin{itemize}
	\item $S^2= \partial (D^2 \times I)$ with exclusively vertical intersection
	in $J\times I \subset (\partial D^2)\times I$ for an interval $J$: Khovanov's arc rings \cite{MR1928174}.
	\item $S^2= \partial (D^2 \times I)$ with vertical intersection with
	$(\partial D^2)\times I$: a planar algebra valued in linear categories
	(canopolis) \cite{BN2}.
	\item $S^2= \partial I^3$ with empty intersection with $(\partial I)\times
	I^2$ and vertical intersection with $I \times (\partial I)\times I$: the
	monoidal bicategory $\bn$.
	\item $S^2= \partial (k\mathrm{-gon} \times I)$ without intersection with
	$\mathrm{corners}\times I$: a functor from the $k$-fold tensor product of
	bicategories $\bn$ to the Morita bicategory of linear categories, see
	\S\ref{ss:BNmm}. 
\end{itemize}

\section{Bar-Natan modules for disks}

In this section, we define the basic ingredients needed in our construction 
of the $2$-functors from Theorem \ref{thm:main}.
Namely, we associate a free graded $\cring$-module to each disk depending on boundary data associated with its thickening.
By appropriately partitioning this boundary data, 
this allows us to associate a module over the tensor product of Bar-Natan bicategories 
to a disk with distinguished proper $1$-submanifold of its boundary.

\subsection{Bar-Natan skein spaces of prisms}
\label{ss:BNprism}

Set $I = [0,1]$. An \emph{arc} in $\Sigma$ is an oriented $1$-submanifold with boundary 
given as the image of an embedding $\gamma \colon I \hookrightarrow \Sigma$.
(We will sometimes use the symbol $\gamma$ to also denote the corresponding submanifold.)
We begin by establishing conventions for disks with certain distinguished arcs in their boundary. 
Fix a choice of $k \in \N$.

\begin{conv}\label{conv:disk conventions}
A \emph{disk with $k$ arcs in its boundary} is a pair $(D,A)$
consisting of a compact, oriented surface $D$ with boundary 
 that is homeomorphic to the unit disk in $\R^2$ and a compact 1-submanifold 
$A \subsetneq \partial D$ equipped with the induced orientation such that 
$|\pi_0(A)| = k$.
	\end{conv}

We will refer to $(D,A)$ simply as a \emph{disk}, when $k$ is implicit (as in much of the following).
We can picture the pair $(D,A)$ as an oriented $2k$-gon whose boundary alternates between 
arcs in $A$ (depicted in {\color{seam2} green}) and its complement (in black), 
or equivalently as a truncated $k$-gon: 

\begin{equation}
	\label{eq:polygon}
\begin{tikzpicture}[anchorbase,scale=1]
\draw[very thick] (1,0) to (.5,.866);
\draw[very thick,seam2,directed=.5] (.5,.866) to (-.5,.866);
\draw[very thick] (-.5,.866) to (-1,0);
\draw[very thick,seam2,directed=.5] (-1,0) to (-.5,-.866);
\draw[very thick] (-.5,-.866) to (.5,-.866);
\draw[very thick,seam2,directed=.5] (.5,-.866) to (1,0);
\node at (0,0) {$\circlearrowleft$};
\end{tikzpicture}
\quad
\sim
\quad
\begin{tikzpicture}[anchorbase,scale=1]
%
	\draw[dashed] (-1,1.732) circle (.5);
	\draw[dashed] (1,1.732) circle (.5);
	\draw[dashed] (0,0) circle (.5);
\begin{scope}
	\clip (1,1.732) to (-1,1.732) to (0,0) to (1,1.732);
	\draw[very thick,fill,fill opacity=.2] (-1,1.732) circle (.5);
	\draw[very thick,fill,fill opacity=.2] (1,1.732) circle (.5);
	\draw[very thick,fill,fill opacity=.2] (0,0) circle (.5);
\end{scope}
\draw[very thick,seam2,directed=.5] (1,1.732) to (-1,1.732);
\draw[very thick,seam2,rdirected=.5] (1,1.732) to (0,0);
\draw[very thick,seam2,rdirected=.5] (0,0) to (-1,1.732);
\node at (0,.866) {$\circlearrowleft$};
\end{tikzpicture} \, .
\end{equation}

Next, recall that a $1$-submanifold $C \subset \Sigma$ is \emph{neatly embedded}
provided $C \hookrightarrow \Sigma$ is a proper embedding 
and $\partial C = C \pitchfork \partial \Sigma$.
Thus, an arc $\gamma$ is neatly embedded if and only if $\del\gamma = \gamma \pitchfork \del\Sigma$
and, in particular, the $1$-submanifold $A$ in Convention \ref{conv:disk conventions} is not neatly embedded.
We will refer to neatly embedded tangles $T$ in $D$ such that $\del T \subset A$ as \emph{tangles in $(D,A)$}.

\begin{definition}\label{def:boundary condition on prism}
Let $(D,A)$ be as in Convention \ref{conv:disk conventions} and let 
$\pi_0(A) = \{A_1,\ldots,A_k\}$.
A \emph{boundary condition} for $(D\times I, A\times I)$ consists of a tuple $(T|V_1,\ldots,V_k|S)$ 
in which $T,S$ 
are tangles in $(D,A)$, 
each $V_i$ 
is a tangle in $(A_i\times I, A_i\times \partial I)$, 
and 
\[
\textstyle
(T\times\{1\}) \cup \big( \bigcup_{i=1}^k V_i \big) \cup (S\times\{0\})
\subset (\del D \times I) \cup (D \times \del I)
\] 
is a closed $1$-manifold.
\end{definition}

The pair $(D \times I, A \times I)$ will be referred to as a \emph{prism}, 
and we will visualize prisms and their boundary conditions as follows:
\begin{equation}
	\label{eq:prismpic1}
\begin{tikzpicture}[anchorbase, scale=.5]
\draw[ultra thick, gray] (0,.5) to [out=60,in=120] (5,.5);
\draw[ultra thick, gray] (1,0) to [out=90,in=90] (2,0);
\draw[ultra thick, gray] (3,0) to [out=90,in=90] (4,0);
\node[gray] at (2.5,0) {$\mydots$};
\draw[ultra thick,seam2,->] (0,.5) to node {\scs$\bullet$} node[below] (a1) {\tiny$m_1$} (1,0); 
\draw[ultra thick,seam2,->] (4,0) to node{\scs$\bullet$} node[below] (ak) {\tiny$m_k$} (5,.5);
\node (Stangle) at (2.5,1) {};
\draw (a1.north) to [out=60,in=180] (Stangle);
\draw (Stangle) to [out=0,in=120] (ak.north);
\node at (Stangle) {$\NB{S}$};
\draw[ultra thick,seam2] (0,.5) to (0,2.5);
\draw[ultra thick,seam2] (1,0) to (1,2);
\draw[ultra thick,seam2] (2,0) to (2,2);
\draw[ultra thick,seam2] (3,0) to (3,2);
\draw[ultra thick,seam2] (4,0) to (4,2);
\draw[ultra thick,seam2] (5,.5) to (5,2.5);
\draw[ultra thick, gray] (0,2.5) to [out=60,in=120] (5,2.5);
\draw[ultra thick, gray] (1,2) to [out=90,in=90] (2,2);
\draw[ultra thick, gray] (3,2) to [out=90,in=90] (4,2);
\node[gray] at (2.5,2) {$\mydots$};
\draw[ultra thick,seam2,->] (0,2.5) to node {\scs$\bullet$} node[above] (b1) {\tiny$n_1$} (1,2); 
\draw[ultra thick,seam2,->] (4,2) to node{\scs$\bullet$} node[above] (bk) {\tiny$n_k$} (5,2.5);
\draw (a1) to node{$\NB{V_1}$} (b1);
\draw (ak) to node{$\NB{V_k}$} (bk);
\node (Ttangle) at (2.5,3) {};
\draw (b1.south) to [out=60,in=180] (Ttangle);
\draw (Ttangle) to [out=0,in=120] (bk.south);
\node at (Ttangle) {$\NB{T}$};
\end{tikzpicture}
\end{equation}
We wish to consider the ``Bar-Natan skein module'' of the prism $(D\times I, A\times I)$ 
with a given boundary condition,
i.e.~the graded $\cring$-module spanned by embedded dotted surfaces with corners 
having prescribed boundary \eqref{eq:prismpic1},
modulo the Bar-Natan relations in \eqref{eq:BNrels}.
Here, the orientation of the arcs $A_i$ corresponds to the positive $x$-axis 
in \eqref{eq:BNmn}.

For precision and rigor, 
we will work with the following planar incarnation of this Bar-Natan skein module,
though we will return to the above geometric picture to motivate and elucidate our constructions.

\begin{definition}
	\label{def:partitioned cap tangles}
For each $k$-tuple $(n_1,\ldots,n_k) \in \N^k$, 
let us denote $\bn_{n_1|\cdots|n_k}:=\bn_{n_1+\cdots+n_k}^{0}$.  
Objects of this category will be referred to as \emph{$k$-partitioned planar cap tangles}.
Given $\apS \in \bn_{m_1|\cdots|m_k}$, $\apT \in \bn_{n_1|\cdots|n_k}$,
and objects $V_j\in \bn_{m_j}^{n_j}$ for $1\leq j \leq k$, we set $M:=m_1+\cdots+m_k$ and $N:=n_1+\cdots+n_k$,
and
\begin{equation}
	\label{eqn:BNprismlinear} 
\PMS_{\std}(\apT |V_1,\dots, V_k| \apS):= q^{(N+M)/4}  
\KhEval{ \
\begin{tikzpicture}[anchorbase,scale=1]
	\draw[thick, double] (-.625,-.5) to (-.625,.5);
	\draw[thick, double] (.625,-.5) to (.625,.5);
	\filldraw[white] (-1,.5) rectangle (1, 1);
	\draw[thick] (-1,.5) rectangle (1, 1);
	\node at (0,.75) {$\apT$};
	\tangleboxn{-.625}{0}{V_1}
	\node at (0,0) {$\mydots$};
	\tangleboxn{.625}{0}{V_{k}};
	\filldraw[white] (-1,-.5) rectangle (1, -1);
	\draw[thick] (-1,-.5) rectangle (1, -1);
	\node at (0,-.75) {$r_y(\apS)$};
\end{tikzpicture} \ } \, .
\end{equation}
\end{definition}

We refer to the $\Z$-graded $\cring$-module in \eqref{eqn:BNprismlinear}
as the \emph{prism space} (with boundary condition $(T|V_1,\ldots,V_k|S)$) 
and will sometimes abbreviate as 
$\PMS_{\std}(\apT | V_\bullet | \apS) := \PMS_{\std}(\apT|V_1,\dots, V_k| \apS)$.
The $q$-shift in \eqref{eqn:BNprismlinear} guarantees that for indecomposable 
$T\in \bn_{n_1|\dots|n_k}$, 
the free graded $\cring$-module $\PMS_{\std}(\apT|\one_{n_1},\ldots,\one_{n_k}|\apT)$ 
is supported in non-negative $q$-degrees and is free of rank 1 in $q$-degree zero. 

\begin{remark}
	\label{rem:BNhoms}	
We have a canonical isomorphism
\[
\PMS_{\std}(\apT | \one_{n_1},\ldots, \one_{n_k} | \apS) \cong H(\apT,\apS)\cong \Hom_{\bn}(\apS , \apT).
	\]
with $H(\apT,\apS)$ as in \eqref{eq:planarHom}.
\end{remark}

The prism spaces admits various actions and operations.
We first record the following, which is an immediate consequence of 
functoriality of $\KhEval{-} = \Hom_{\BN_0^0}(\emptyset,-)$.
\begin{lem}
	\label{lem:prismfunctor}
Given $\apS \in \bn_{m_1|\cdots|m_k}$, $\apT \in \bn_{n_1|\cdots|n_k}$,
the assignment \eqref{eqn:BNprismlinear} extends to a graded $\cring$-linear functor
\[
\PMS_{\std}(\apT| - |\apS) \colon \bn^{n_1}_{m_1}\otimes \cdots \otimes \bn^{n_k}_{m_k}
	\to \gMod{\Z}{\cring} \, . 
\]
\qed
\end{lem}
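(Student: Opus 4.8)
The plan is to recognize the assignment \eqref{eqn:BNprismlinear} as a composite of graded $\cring$-linear functors that have already been introduced, so that functoriality is inherited. Write $N = n_1+\cdots+n_k$ and $M = m_1+\cdots+m_k$ as in Definition~\ref{def:partitioned cap tangles}. The first step is to identify the planar diagram appearing inside $\KhEval{-}$ in \eqref{eqn:BNprismlinear}: reading the picture from the middle outward, it is built by placing $V_1,\ldots,V_k$ side by side and then capping off the top with $\apT$ and the bottom with $r_y(\apS)$. In the language of \S\ref{ss:BN} this is exactly
\[
\apT \star \big( V_1 \boxtimes \cdots \boxtimes V_k \big) \star r_y(\apS) \;\in\; \bn_0^0,
\]
where $\apT \in \bn_N^0$, the iterated external tensor product lands in $\bn_M^N$, and $r_y(\apS) \in \bn_0^M$ (here I use that $r_y\colon \bn_m^n \to \bn_n^m$, applied to $\apS \in \bn_{m_1|\cdots|m_k}=\bn_M^0$). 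Checking this identification amounts to unwinding the definitions of $\star$ and $\boxtimes$ against the schematic, matching the $N$ strands between $\apT$ and the $V_i$ and the $M$ strands between the $V_i$ and $r_y(\apS)$; no skein relations are used at this stage.

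The second step is to observe that each constituent operation is a graded $\cring$-linear functor. By Proposition~\ref{prop:BN2cat} (equivalently, from the list of structures on $\bn$ in \S\ref{ss:BN}), iterating the binary external tensor product gives a functor $\boxtimes \colon \bn_{m_1}^{n_1}\otimes\cdots\otimes\bn_{m_k}^{n_k}\to \bn_M^N$, and horizontal composition with the fixed objects $\apT$ and $r_y(\apS)$ gives a functor $\apT\star(-)\star r_y(\apS)\colon \bn_M^N\to\bn_0^0$. Post-composing with the representable functor $\KhEval{-}\colon \bn_0^0\to \gMod{\Z}{\cring}$ of Definition~\ref{def:Kh} — which is a (braided monoidal, hence $\cring$-linear) functor by Lemma~\ref{BN:equiv} — and with the grading-shift autoequivalence $q^{(N+M)/4}$ yields
\[
\PMS_{\std}(\apT|-|\apS) \;=\; q^{(N+M)/4}\,\KhEval{-}\circ\big(\apT\star(-)\star r_y(\apS)\big)\circ \boxtimes ,
\]
which agrees on objects with \eqref{eqn:BNprismlinear} and on a tuple of dotted cobordisms produces the induced map of prism spaces.

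Since all categories involved already have the Bar-Natan relations \eqref{eq:BNrels} imposed on their $\Hom$-spaces, well-definedness on isotopy-and-relation classes of cobordisms is automatic; and the Koszul signs in the source $\bn_{m_1}^{n_1}\otimes\cdots\otimes\bn_{m_k}^{n_k}$ are precisely the ones built into the functor $\boxtimes$, so compatibility with composition and units is inherited from the bicategory structure of $\bn$ (Proposition~\ref{prop:BN2cat}). I do not expect any genuine obstacle here: the only point requiring a little care is the bookkeeping in the first step — reconciling the schematic in \eqref{eqn:BNprismlinear} with the algebraic expression $\apT\star(V_1\boxtimes\cdots\boxtimes V_k)\star r_y(\apS)$, including the conventional ordering in $\star$ and the orientation of the arcs $A_i$ along the positive $x$-axis noted before Definition~\ref{def:partitioned cap tangles}, so that the $V_i$ are inserted with the correct source/target on each side.
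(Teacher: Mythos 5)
Your proof is correct and takes essentially the same approach as the paper, which simply records the lemma as an immediate consequence of the functoriality of $\KhEval{-}=\Hom_{\BN_0^0}(\emptyset,-)$: your factorization $\PMS_{\std}(\apT|-|\apS)=q^{(N+M)/4}\,\KhEval{-}\circ\big(\apT\star(-)\star r_y(\apS)\big)\circ\boxtimes$ just makes that explicit, with functoriality inherited from $\boxtimes$, $\star$, and $\KhEval{-}$. No gaps.
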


Note that the space of morphisms from $V_\bullet = (V_1,\ldots,V_k)$ to $W_\bullet = (W_1,\ldots,W_k)$ is
\[
\Hom_{\bigotimes_{i=1}^k \bn_{m_i}^{n_i}}(V_\bullet,W_\bullet) 
	:= \bigotimes_{i=1}^k \Hom_{\bn_{m_i}^{n_i}}(V_i,W_i) \, .
\]
By slight abuse of notation, we will sometimes denote elements in this $\Hom$-space by 
$f_\bullet$, although a general such morphism is a $\cring$-linear combinations of pure tensors 
$f_1\otimes \cdots \otimes f_k$ where $f_i \in \Hom_{\bn_{m_i}^{n_i}}(V_i,W_i)$.

\begin{remark}
Given $k$-partitioned cap tangles 
$\apS \in \bn_{m_1|\cdots|m_k}$ and $\apT \in \bn_{n_1|\cdots|n_k}$, 
we will refer to the functor
\begin{equation}
	\label{eq:stprismfunctor}
\PMS_{\std}(\apT| - |\apS) \colon \bigotimes_{i=1}^k \bn_{m_i}^{n_i} \to \gMod{\Z}{\cring}
\end{equation}
from Lemma \ref{lem:prismfunctor} as the \emph{standard prism module} 
of $\apS$ and $\apT$.
\end{remark}

\begin{remark}
	\label{rem:Pfunc}
Let us continue the geometric picture from \eqref{eq:prismpic1}. A generic cobordism 
with boundary condition $(\apT | V_\bullet | \apS)$ will be denoted schematically as:
\begin{equation}
	\label{eq:gencob}
\begin{tikzpicture}[anchorbase, scale=.5]
\draw[ultra thick, gray] (0,.5) to [out=60,in=120] (5,.5);
\draw[ultra thick, gray] (1,0) to [out=90,in=90] (2,0);
\draw[ultra thick, gray] (3,0) to [out=90,in=90] (4,0);
\node[gray] at (2.5,0) {$\mydots$};
\draw[ultra thick,seam2] (0,.5) to node {\scs$\bullet$} node[below] (a1) {\tiny$m_1$} (1,0); 
\draw[ultra thick,seam2] (4,0) to node{\scs$\bullet$} node[below] (ak) {\tiny$m_k$} (5,.5);
\node (Stangle) at (2.5,1) {};
\draw (a1.north) to [out=60,in=180] (Stangle);
\draw (Stangle) to [out=0,in=120] (ak.north);
\node at (Stangle) {$\NB{\apS}$};
\draw[ultra thick,seam2] (0,.5) to (0,2.5);
\draw[ultra thick,seam2] (1,0) to (1,2);
\draw[ultra thick,seam2] (2,0) to (2,2);
\draw[ultra thick,seam2] (3,0) to (3,2);
\draw[ultra thick,seam2] (4,0) to (4,2);
\draw[ultra thick,seam2] (5,.5) to (5,2.5);
\draw[ultra thick, gray] (0,2.5) to [out=60,in=120] (5,2.5);
\draw[ultra thick, gray] (1,2) to [out=90,in=90] (2,2);
\draw[ultra thick, gray] (3,2) to [out=90,in=90] (4,2);
\node[gray] at (2.5,2) {$\mydots$};
\draw[ultra thick,seam2] (0,2.5) to node {\scs$\bullet$} node[above] (b1) {\tiny$n_1$} (1,2); 
\draw[ultra thick,seam2] (4,2) to node{\scs$\bullet$} node[above] (bk) {\tiny$n_k$} (5,2.5);
\node (Ttangle) at (2.5,3) {};
\draw (b1.south) to [out=60,in=180] (Ttangle) to [out=0,in=120] (bk.south);
\fill[black, opacity=.3] (b1.south) to [out=60,in=180] (2.5,3) to [out=0,in=120] (bk.south) 
	to (ak.north) to [out=120,in=0] (2.5,1) to [out=180,in=60] (a1.north) to (b1.south);
\draw (a1) to node{$\NB{V_1}$} (b1);
\draw (ak) to node{$\NB{V_k}$} (bk);
\node at (Ttangle) {$\NB{\apT}$};
\end{tikzpicture}.
\end{equation}
Such a cobordism may (after the appropriate dualization) be regarded as a cobordism from the empty 
diagram to the diagram depicted in \eqref{eq:prismpic1}, and thus an element of the prism space 
$\PMS_{\std}(\apT|V_\bullet|\apS)$ 
(in fact, the images of these cobordisms span $\PMS_{\std}(\apT|V_\bullet|\apS)$).  

Using the same schematic, a generic pure tensor 
$f_1 \otimes \cdots \otimes f_k
\in \Hom_{\bigotimes_{i=1}^k \bn_{m_i}^{n_i}}(V_\bullet,W_\bullet) 
$ and its action $\PMS_{\std}(\apT | f_1 \otimes \cdots \otimes f_k | \apS)$ 
from Lemma \ref{lem:prismfunctor} may be visualized as:
\begin{equation}
	\label{eq:2morprism}
\begin{tikzpicture}[anchorbase, yscale=.75]
\draw[ultra thick,seam2] (0,0) to node {\scs$\bullet$} node[below] (ai) {\tiny$m_1$} (1,0);
\draw[ultra thick,seam2] (-.25,.75) to node {\scs$\bullet$} node[above] (bi) {\tiny$n_1 \ \ $} (.75,.75);
\draw[ultra thick,seam2] (0,0) to (-.25,.75);
\draw[ultra thick,seam2] (1,0) to (.75,.75);
\draw (ai.north) to node{$\NB{V_1}$} (bi.south);
\draw[ultra thick,seam2] (0,0) to (0,1.5);
\draw[ultra thick,seam2] (1,0) to (1,1.5);
\draw[ultra thick,seam2] (-.25,.75) to (-.25,2.25);
\draw[ultra thick,seam2] (.75,.75) to (.75,2.25);
\draw[ultra thick,seam2] (-.25,2.25) to node {\scs$\bullet$} node[above] (b) {\tiny$n_1$} (.75,2.25);
\draw (bi.south) to (b.south);
\draw[ultra thick,seam2] (0,1.5) to node {\scs$\bullet$} node[below] (a) {\tiny$\ \ m_1$} (1,1.5);
\draw[ultra thick,seam2] (0,1.5) to (-.25,2.25);
\draw[ultra thick,seam2] (1,1.5) to (.75,2.25);
\fill[black, opacity=.3] (ai.north) to (bi.south) to (b.south) to (a.north) to (ai.north);
\draw (a.north) to (ai.north);
\draw (a.north) to node{$\NB{W_1}$} (b.south);
\end{tikzpicture}
\otimes
\cdots
\otimes
\begin{tikzpicture}[anchorbase, yscale=.75]
\draw[ultra thick,seam2] (0,0) to node {\scs$\bullet$} node[below] (ai) {\tiny$m_k$} (1,0);
\draw[ultra thick,seam2] (-.25,.75) to node {\scs$\bullet$} node[above] (bi) {\tiny$n_k \ \ $} (.75,.75);
\draw[ultra thick,seam2] (0,0) to (-.25,.75);
\draw[ultra thick,seam2] (1,0) to (.75,.75);
\draw (ai.north) to node{$\NB{V_k}$} (bi.south);
\draw[ultra thick,seam2] (0,0) to (0,1.5);
\draw[ultra thick,seam2] (1,0) to (1,1.5);
\draw[ultra thick,seam2] (-.25,.75) to (-.25,2.25);
\draw[ultra thick,seam2] (.75,.75) to (.75,2.25);
\draw[ultra thick,seam2] (-.25,2.25) to node {\scs$\bullet$} node[above] (b) {\tiny$n_k$} (.75,2.25);
\draw (bi.south) to (b.south);
\draw[ultra thick,seam2] (0,1.5) to node {\scs$\bullet$} node[below] (a) {\tiny$\ \ m_k$} (1,1.5);
\draw[ultra thick,seam2] (0,1.5) to (-.25,2.25);
\draw[ultra thick,seam2] (1,1.5) to (.75,2.25);
\fill[black, opacity=.3] (ai.north) to (bi.south) to (b.south) to (a.north) to (ai.north);
\draw (a.north) to (ai.north);
\draw (a.north) to node{$\NB{W_k}$} (b.south);
\end{tikzpicture} \quad \cdot \quad
\begin{tikzpicture}[anchorbase, scale=.5]
\draw[ultra thick, gray] (0,.5) to [out=60,in=120] (5,.5);
\draw[ultra thick, gray] (1,0) to [out=90,in=90] (2,0);
\draw[ultra thick, gray] (3,0) to [out=90,in=90] (4,0);
\node[gray] at (2.5,0) {$\mydots$};
\draw[ultra thick,seam2] (0,.5) to node {\scs$\bullet$} node[below] (a1) {\tiny$m_1$} (1,0); 
\draw[ultra thick,seam2] (4,0) to node{\scs$\bullet$} node[below] (ak) {\tiny$m_k$} (5,.5);
\node (Stangle) at (2.5,1) {};
\draw (a1.north) to [out=60,in=180] (Stangle);
\draw (Stangle) to [out=0,in=120] (ak.north);
\node at (Stangle) {$\NB{\apS}$};
\draw[ultra thick,seam2] (0,.5) to (0,2.5);
\draw[ultra thick,seam2] (1,0) to (1,2);
\draw[ultra thick,seam2] (2,0) to (2,2);
\draw[ultra thick,seam2] (3,0) to (3,2);
\draw[ultra thick,seam2] (4,0) to (4,2);
\draw[ultra thick,seam2] (5,.5) to (5,2.5);
\draw[ultra thick, gray] (0,2.5) to [out=60,in=120] (5,2.5);
\draw[ultra thick, gray] (1,2) to [out=90,in=90] (2,2);
\draw[ultra thick, gray] (3,2) to [out=90,in=90] (4,2);
\node[gray] at (2.5,2) {$\mydots$};
\draw[ultra thick,seam2] (0,2.5) to node {\scs$\bullet$} node[above] (b1) {\tiny$n_1$} (1,2); 
\draw[ultra thick,seam2] (4,2) to node{\scs$\bullet$} node[above] (bk) {\tiny$n_k$} (5,2.5);
\node (Ttangle) at (2.5,3) {};
\draw (b1.south) to [out=60,in=180] (Ttangle) to [out=0,in=120] (bk.south);
\fill[black, opacity=.3] (b1.south) to [out=60,in=180] (2.5,3) to [out=0,in=120] (bk.south) 
	to (ak.north) to [out=120,in=0] (2.5,1) to [out=180,in=60] (a1.north) to (b1.south);
\draw (a1) to node{$\NB{V_1}$} (b1);
\draw (ak) to node{$\NB{V_k}$} (bk);
\node at (Ttangle) {$\NB{\apT}$};
\end{tikzpicture}
=
\begin{tikzpicture}[anchorbase, scale=.5]
\draw[ultra thick, gray] (0,.5) to [out=60,in=120] (5,.5);
\draw[ultra thick, gray] (1,0) to [out=90,in=90] (2,0);
\draw[ultra thick, gray] (3,0) to [out=90,in=90] (4,0);
\node[gray] at (2.5,0) {$\mydots$};
\draw[ultra thick,seam2] (0,.5) to node {\scs$\bullet$} node[below] (a1) {\tiny$m_1$} (1,0); 
\draw[ultra thick,seam2] (4,0) to node{\scs$\bullet$} node[below] (ak) {\tiny$m_k$} (5,.5);
\node (Stangle) at (2.5,1) {};
\draw (a1.north) to [out=60,in=180] (Stangle);
\draw (Stangle) to [out=0,in=120] (ak.north);
\node at (Stangle) {$\NB{\apS}$};
\draw[ultra thick,seam2] (0,.5) to (0,2.5);
\draw[ultra thick,seam2] (1,0) to (1,2);
\draw[ultra thick,seam2] (2,0) to (2,2);
\draw[ultra thick,seam2] (3,0) to (3,2);
\draw[ultra thick,seam2] (4,0) to (4,2);
\draw[ultra thick,seam2] (5,.5) to (5,2.5);
\draw[ultra thick, gray] (0,2.5) to [out=60,in=120] (5,2.5);
\draw[ultra thick, gray] (1,2) to [out=90,in=90] (2,2);
\draw[ultra thick, gray] (3,2) to [out=90,in=90] (4,2);
\node[gray] at (2.5,2) {$\mydots$};
\draw[ultra thick,seam2] (0,2.5) to node {\scs$\bullet$} node[above] (b1) {\tiny$\ n_1$} (1,2); 
\draw[ultra thick,seam2] (4,2) to node{\scs$\bullet$} node[above] (bk) {\tiny$n_k$} (5,2.5);
\node (Ttangle) at (2.5,3) {};
\draw (b1.south) to [out=60,in=180] (Ttangle) to [out=0,in=120] (bk.south);
\fill[black, opacity=.3] (b1.south) to [out=60,in=180] (2.5,3) to [out=0,in=120] (bk.south) 
	to (ak.north) to [out=120,in=0] (2.5,1) to [out=180,in=60] (a1.north) to (b1.south);
\draw (a1) to node{$\NB{V_1}$} (b1);
\draw (ak) to node{$\NB{V_k}$} (bk);
\node at (Ttangle) {$\NB{\apT}$};
\draw[ultra thick, seam2] (0,0.5) to (-1.25,-1) to node {\scs$\bullet$} node[below] (al) {\tiny$m_1$} (-.25,-1.5) to (1,0);
\draw (a1.north) to (al.north);
\draw[ultra thick, seam2] (-1.25,-1) to (-1.25,1);
\draw[ultra thick, seam2] (-.25,-1.5) to (-.25,.5);
\draw[ultra thick, seam2] (0,2.5) to (-1.25,1) to node {\scs$\bullet$} node[above] (bl) {\tiny$\ n_1$} (-.25,.5) to (1,2);
\draw (b1.south) to (bl.south);
\fill[black, opacity=.3] (b1.south) to (bl.south) to (al.north) to (a1.north) to (b1.south);
\draw (al.north) to node{$\NB{W_1}$} (bl.south);
\draw[ultra thick, seam2] (5,0.5) to (6.25,-1) to node {\scs$\bullet$} node[below] (ar) {\tiny$n_1$} (5.25,-1.5) to (4,0);
\draw (ak.north) to (ar.north);
\draw[ultra thick, seam2] (6.25,-1) to (6.25,1);
\draw[ultra thick, seam2] (5.25,-1.5) to (5.25,.5);
\draw[ultra thick, seam2] (5,2.5) to (6.25,1) to node {\scs$\bullet$} node[above] (br) {\tiny$\ n_1$} (5.25,.5);
\draw (bk.south) to (br.south);
\fill[black, opacity=.3] (bk.south) to (br.south) to (ar.north) to (ak.north) to (bk.south);
\draw (ar.north) to node{$\NB{W_k}$} (br.south);
\end{tikzpicture} \quad .
\end{equation}
\end{remark}

The prism spaces $\PMS_{\std}(\apT | V_1,\ldots,V_k | \apS)$ 
are also functorial in $\apT$ and $\apS$. 
More generally, the interpretation of prism spaces from Remark \ref{rem:Pfunc}
suggests that they may be endowed with composition maps
that correspond to the stacking of cobordisms in the prisms \eqref{eq:prismpic1}
as follows:
\begin{equation}
	\label{eq:prismstack}
\begin{tikzpicture}[anchorbase, scale=.5]
\draw[ultra thick, gray] (0,.5) to [out=60,in=120] (5,.5);
\draw[ultra thick, gray] (1,0) to [out=90,in=90] (2,0);
\draw[ultra thick, gray] (3,0) to [out=90,in=90] (4,0);
\node[gray] at (2.5,0) {$\mydots$};
\draw[ultra thick,seam2] (0,.5) to node {\scs$\bullet$} node[below] (a1) {\tiny$m_1$} (1,0); 
\draw[ultra thick,seam2] (4,0) to node{\scs$\bullet$} node[below] (ak) {\tiny$m_k$} (5,.5);
\node (Stangle) at (2.5,1) {};
\draw (a1.north) to [out=60,in=180] (Stangle);
\draw (Stangle) to [out=0,in=120] (ak.north);
\node at (Stangle) {$\NB{\apS}$};
\draw[ultra thick,seam2] (0,.5) to (0,2.5);
\draw[ultra thick,seam2] (1,0) to (1,2);
\draw[ultra thick,seam2] (2,0) to (2,2);
\draw[ultra thick,seam2] (3,0) to (3,2);
\draw[ultra thick,seam2] (4,0) to (4,2);
\draw[ultra thick,seam2] (5,.5) to (5,2.5);
\draw[ultra thick, gray] (0,2.5) to [out=60,in=120] (5,2.5);
\draw[ultra thick, gray] (1,2) to [out=90,in=90] (2,2);
\draw[ultra thick, gray] (3,2) to [out=90,in=90] (4,2);
\node[gray] at (2.5,2) {$\mydots$};
\draw[ultra thick,seam2] (0,2.5) to node {\scs$\bullet$} node[above] (b1) {\tiny$n_1$} (1,2); 
\draw[ultra thick,seam2] (4,2) to node{\scs$\bullet$} node[above] (bk) {\tiny$n_k$} (5,2.5);
\node (Ttangle) at (2.5,3) {};
\draw (b1.south) to [out=60,in=180] (Ttangle) to [out=0,in=120] (bk.south);
\fill[black, opacity=.3] (b1.south) to [out=60,in=180] (2.5,3) to [out=0,in=120] (bk.south) 
	to (ak.north) to [out=120,in=0] (2.5,1) to [out=180,in=60] (a1.north) to (b1.south);
\draw (a1) to node{$\NB{W_1}$} (b1);
\draw (ak) to node{$\NB{W_k}$} (bk);
\node at (Ttangle) {$\NB{\apT}$};
\end{tikzpicture}
\otimes
\begin{tikzpicture}[anchorbase, scale=.5]
\draw[ultra thick, gray] (0,.5) to [out=60,in=120] (5,.5);
\draw[ultra thick, gray] (1,0) to [out=90,in=90] (2,0);
\draw[ultra thick, gray] (3,0) to [out=90,in=90] (4,0);
\node[gray] at (2.5,0) {$\mydots$};
\draw[ultra thick,seam2] (0,.5) to node {\scs$\bullet$} node[below] (a1) {\tiny$l_1$} (1,0); 
\draw[ultra thick,seam2] (4,0) to node{\scs$\bullet$} node[below] (ak) {\tiny$l_k$} (5,.5);
\node (Stangle) at (2.5,1) {};
\draw (a1.north) to [out=60,in=180] (Stangle);
\draw (Stangle) to [out=0,in=120] (ak.north);
\node at (Stangle) {$\NB{\apR}$};
\draw[ultra thick,seam2] (0,.5) to (0,2.5);
\draw[ultra thick,seam2] (1,0) to (1,2);
\draw[ultra thick,seam2] (2,0) to (2,2);
\draw[ultra thick,seam2] (3,0) to (3,2);
\draw[ultra thick,seam2] (4,0) to (4,2);
\draw[ultra thick,seam2] (5,.5) to (5,2.5);
\draw[ultra thick, gray] (0,2.5) to [out=60,in=120] (5,2.5);
\draw[ultra thick, gray] (1,2) to [out=90,in=90] (2,2);
\draw[ultra thick, gray] (3,2) to [out=90,in=90] (4,2);
\node[gray] at (2.5,2) {$\mydots$};
\draw[ultra thick,seam2] (0,2.5) to node {\scs$\bullet$} node[above] (b1) {\tiny$m_1$} (1,2); 
\draw[ultra thick,seam2] (4,2) to node{\scs$\bullet$} node[above] (bk) {\tiny$m_k$} (5,2.5);
\node (Ttangle) at (2.5,3) {};
\draw (b1.south) to [out=60,in=180] (Ttangle) to [out=0,in=120] (bk.south);
\fill[black, opacity=.3] (b1.south) to [out=60,in=180] (2.5,3) to [out=0,in=120] (bk.south) 
	to (ak.north) to [out=120,in=0] (2.5,1) to [out=180,in=60] (a1.north) to (b1.south);
\draw (a1) to node{$\NB{V_1}$} (b1);
\draw (ak) to node{$\NB{V_k}$} (bk);
\node at (Ttangle) {$\NB{\apS}$};
\end{tikzpicture}
\mapsto
\begin{tikzpicture}[anchorbase, scale=.5]
\draw[ultra thick, gray] (0,.5) to [out=60,in=120] (5,.5);
\draw[ultra thick, gray] (1,0) to [out=90,in=90] (2,0);
\draw[ultra thick, gray] (3,0) to [out=90,in=90] (4,0);
\node[gray] at (2.5,0) {$\mydots$};
\draw[ultra thick,seam2] (0,.5) to node {\scs$\bullet$} node[below] (a1) {\tiny$l_1$} (1,0); 
\draw[ultra thick,seam2] (4,0) to node{\scs$\bullet$} node[below] (ak) {\tiny$l_k$} (5,.5);
\node (Stangle) at (2.5,1) {};
\draw (a1.north) to [out=60,in=180] (Stangle);
\draw (Stangle) to [out=0,in=120] (ak.north);
\node at (Stangle) {$\NB{\apR}$};
\draw[ultra thick,seam2] (0,.5) to (0,2.5);
\draw[ultra thick,seam2] (1,0) to (1,2);
\draw[ultra thick,seam2] (2,0) to (2,2);
\draw[ultra thick,seam2] (3,0) to (3,2);
\draw[ultra thick,seam2] (4,0) to (4,2);
\draw[ultra thick,seam2] (5,.5) to (5,2.5);
\draw[ultra thick, gray] (0,2.5) to [out=60,in=120] (5,2.5);
\draw[ultra thick, gray] (1,2) to [out=90,in=90] (2,2);
\draw[ultra thick, gray] (3,2) to [out=90,in=90] (4,2);
\node[gray] at (2.5,2) {$\mydots$};
\draw[ultra thick,seam2] (0,2.5) to node {\scs$\bullet$} node[above] (b1) {} (1,2); 
\draw[ultra thick,seam2] (4,2) to node{\scs$\bullet$} node[above] (bk) {} (5,2.5);
\node (Ttangle) at (2.5,3) {};
\draw (b1.south) to [out=60,in=180] (Ttangle) to [out=0,in=120] (bk.south);
\fill[black, opacity=.3] (b1.south) to [out=60,in=180] (2.5,3) to [out=0,in=120] (bk.south) 
	to (ak.north) to [out=120,in=0] (2.5,1) to [out=180,in=60] (a1.north) to (b1.south);
\draw (a1) to node{$\NB{V_1}$} (b1);
\draw (ak) to node{$\NB{V_k}$} (bk);
\node at (Ttangle) {$\NB{\apS}$};
\begin{scope}[shift={(0,2)}] 
\path[ultra thick, gray] (0,.5) to [out=60,in=120] (5,.5);
\path[ultra thick, gray] (1,0) to [out=90,in=90] (2,0);
\path[ultra thick, gray] (3,0) to [out=90,in=90] (4,0);
\path[ultra thick,seam2] (0,.5) to node {\scs$\bullet$} node[below] (a1) {} (1,0); 
\path[ultra thick,seam2] (4,0) to node{\scs$\bullet$} node[below] (ak) {} (5,.5);
\node (Stangle) at (2.5,1) {};
\path (a1.north) to [out=60,in=180] (Stangle);
\path (Stangle) to [out=0,in=120] (ak.north);
\draw[ultra thick,seam2] (0,.5) to (0,2.5);
\draw[ultra thick,seam2] (1,0) to (1,2);
\draw[ultra thick,seam2] (2,0) to (2,2);
\draw[ultra thick,seam2] (3,0) to (3,2);
\draw[ultra thick,seam2] (4,0) to (4,2);
\draw[ultra thick,seam2] (5,.5) to (5,2.5);
\draw[ultra thick, gray] (0,2.5) to [out=60,in=120] (5,2.5);
\draw[ultra thick, gray] (1,2) to [out=90,in=90] (2,2);
\draw[ultra thick, gray] (3,2) to [out=90,in=90] (4,2);
\node[gray] at (2.5,2) {$\mydots$};
\draw[ultra thick,seam2] (0,2.5) to node {\scs$\bullet$} node[above] (b1) {\tiny$n_1$} (1,2); 
\draw[ultra thick,seam2] (4,2) to node{\scs$\bullet$} node[above] (bk) {\tiny$n_k$} (5,2.5);
\node (Ttangle) at (2.5,3) {};
\draw (b1.south) to [out=60,in=180] (Ttangle) to [out=0,in=120] (bk.south);
\fill[black, opacity=.3] (b1.south) to [out=60,in=180] (2.5,3) to [out=0,in=120] (bk.south) 
	to (ak.north) to [out=120,in=0] (2.5,1) to [out=180,in=60] (a1.north) to (b1.south);
\draw (a1) to node{$\NB{W_1}$} (b1);
\draw (ak) to node{$\NB{W_k}$} (bk);
\node at (Ttangle) {$\NB{\apT}$};
\end{scope}
\end{tikzpicture} \, .
\end{equation}

In the planar setup, this is given as follows.

\begin{definition}[Prism unit and stacking]
	\label{def:prismstackingmaps}
Fix $k\in \N$. 
For $\apT\in \bn_{n_1|\cdots|n_k}$, let
\[
e_{\apT} \colon \cring \to \PMS_{\std}( \apT | \one,\dots, \one | \apT)
\]
denote the graded $\cring$-linear map 
that sends $1\in \cring$ to $\id_{\apT} \in \End_{\bn}(\apT)$ 
under the isomorphism from Remark~\ref{rem:BNhoms}.

Further, given $\apR \in \bn_{l_1|\cdots | l_k}$, $\apS \in \bn_{m_1|\cdots|m_k}$ 
and objects $V_i\in \bn_{l_i}^{m_i}$ and $W_i \in \bn_{m_i}^{n_i}$ for $1\leq i \leq k$, 
let
\begin{equation}
	\label{eq:Pcomp}
	m_{\apS} \colon  
	\PMS_{\std}(\apT |W_1,\dots, W_k| \apS)	\otimes \PMS_{\std}(\apS |V_1,\dots, V_k| \apR) 
		\to \PMS_{\std}(\apT|W_1\star V_1,\dots, W_k\star V_k|\apR)
\end{equation}
denote the graded $\cring$-linear map:
\begin{equation}
	\label{eq:prismstackingpicture}
	q^{|\del \apS\cup \del \apT|/4+|\del \apR\cup \del \apS|/4}  
	\KhEval{ \
	\begin{tikzpicture}[anchorbase,scale=.8]
		\draw[thick, double] (-.625,-.5) to (-.625,.5);
		\draw[thick, double] (.625,-.5) to (.625,.5);
		\filldraw[white] (-1,.5) rectangle (1, 1);
		\draw[thick] (-1,.5) rectangle (1, 1);
		\node at (0,.75) {\small$\apT$};
		\tangleboxw{-.625}{0}{W_1}
		\node at (0,0) {$\mydots$};
		\tangleboxw{.625}{0}{W_{k}};
		\filldraw[white] (-1,-.5) rectangle (1, -1);
		\draw[thick] (-1,-.5) rectangle (1, -1);
		\node at (0,-.75) {\small$r_y(\apS)$};
		\begin{scope}[shift={(0,-2.25)}]
			\draw[thick, double] (-.625,-.5) to (-.625,.5);
			\draw[thick, double] (.625,-.5) to (.625,.5);
			\filldraw[white] (-1,.5) rectangle (1, 1);
			\draw[thick] (-1,.5) rectangle (1, 1);
			\node at (0,.75) {\small$\apS$};
			\tangleboxn{-.625}{0}{V_1}
			\node at (0,0) {$\mydots$};
			\tangleboxn{.625}{0}{V_{k}};
			\filldraw[white] (-1,-.5) rectangle (1, -1);
			\draw[thick] (-1,-.5) rectangle (1, -1);
			\node at (0,-.75) {\small$r_y(\apR)$};
		\end{scope}
	\end{tikzpicture} \ }
	\xrightarrow{m_{\apS}}
	q^{|\del \apR\cup \del \apT|/4}  
	\KhEval{ \
	\begin{tikzpicture}[anchorbase,scale=.8]
		\draw[thick, double] (-.625,-2.5) to (-.625,.5);
		\draw[thick, double] (.625,-2.5) to (.625,.5);
		\filldraw[white] (-1,.5) rectangle (1, 1);
		\draw[thick] (-1,.5) rectangle (1, 1);
		\node at (0,.75) {\small$\apT$};
		\tangleboxw{-.625}{0}{W_1}
		\node at (0,0) {$\mydots$};
		\tangleboxw{.625}{0}{W_{k}};
		\begin{scope}[shift={(0,-2.25)}]
			\draw[thick, double] (-.625,-.5) to (-.625,.5);
			\draw[thick, double] (.625,-.5) to (.625,.5);
			\tangleboxn{-.625}{0}{V_1}
			\node at (0,0) {$\mydots$};
			\tangleboxn{.625}{0}{V_{k}};
			\filldraw[white] (-1,-.5) rectangle (1, -1);
			\draw[thick] (-1,-.5) rectangle (1, -1);
			\node at (0,-.75) {\small$r_y(\apR)$};
		\end{scope}
	\end{tikzpicture} \ }		
\end{equation}
induced by pairing $\apS$ and $r_y(\apS)$ via saddle and tube cobordisms.
\end{definition}

In \eqref{eq:prismstackingpicture},
we have used the symmetric monoidality of $\Kh$ for the identification of the domain
with $\PMS_{\std}(\apT|W_1,\dots, W_k| \apS) \otimes \PMS_{\std}(\apS | V_1,\dots, V_k | \apR)$.
Further, we notationally suppress the dependence of $m_{\apS}$ 
on the objects $W_i$ and $V_i$.
We refer to the maps in \eqref{eq:Pcomp} as \emph{prism stacking maps}, 
due to their interpretation as in \eqref{eq:prismstack}.

The following expresses the naturality of the prism stacking maps.
\begin{lem}\label{lem:prismstacking-star} 
Given $k$-partitioned cap tangles
$\apR \in \bn_{\ell_1 | \cdots | \ell_k}$,  
$\apS \in \bn_{m_1 | \cdots | m_k}$,
$\apT \in \bn_{n_1 | \cdots | n_k}$ and morphisms 
$f_\bullet \colon V_\bullet \to V'_\bullet$ in $\bigotimes_{i=1}^k \bn_{\ell_i}^{m_i}$ 
and $g_\bullet \colon W_\bullet \to W_\bullet'$ in $\bigotimes_{i=1}^k \bn_{m_i}^{n_i}$, 
there is a commutative square of graded linear maps:
	\[
		\begin{tikzcd}
			\PMS_{\std}(\apT|W_\bullet |\apS) \otimes \PMS_{\std}(\apS | V_\bullet | \apR)
			\arrow[r,"m_{\apS}"] 
			 \arrow[d,"\PMS_{\std}(\apT|g_\bullet|\apS)\otimes \PMS_{\std}(\apS|f_\bullet|\apR)"'] 
			&  
			\PMS_{\std}(\apT|W_\bullet \star V_\bullet|\apR)
			\arrow[d,"\PMS_{\std}(\apT|g_\bullet \star f_\bullet|\apR)"]
			\\
			\PMS_{\std}(\apT|W_\bullet'|\apS) \otimes \PMS_{\std}(\apS|V_\bullet'|\apR)
			\arrow[r,"m_{\apS}"] 
			&  
			\PMS_{\std}(\apT|W_\bullet' \star V_\bullet'|\apR)
		\end{tikzcd}
	\]
\end{lem}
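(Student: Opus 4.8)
## Proof plan for Lemma~\ref{lem:prismstacking-star}

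The plan is to reduce the statement to the functoriality and symmetric monoidality of the Khovanov functor $\KhEval{-}$, together with the bifunctoriality of horizontal composition $\star$ in the Bar-Natan bicategory. First I would unwind all the objects in sight into their planar pictures via Definition~\ref{def:partitioned cap tangles} and the stacking picture~\eqref{eq:prismstackingpicture}. Each prism space $\PMS_{\std}(\apT|V_\bullet|\apS)$ is, by definition, the value of $\KhEval{-}$ on an explicit planar $1$-manifold built out of $\apT$, the $V_i$, and $r_y(\apS)$; under the symmetric monoidality isomorphism the tensor product $\PMS_{\std}(\apT|W_\bullet|\apS) \otimes \PMS_{\std}(\apS|V_\bullet|\apR)$ becomes $\KhEval{-}$ of the disjoint union of the two corresponding pictures. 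The map $m_{\apS}$ is then $\KhEval{-}$ applied to the fixed cobordism that pairs $\apS$ with $r_y(\apS)$ via saddles and tubes, and the action maps $\PMS_{\std}(\apT|g_\bullet|\apS)$, $\PMS_{\std}(\apS|f_\bullet|\apR)$ are $\KhEval{-}$ applied to the cobordisms determined by $g_\bullet$, $f_\bullet$ (glued trivially onto the rest of the picture), exactly as in~\eqref{eq:2morprism}.

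With everything expressed as $\KhEval{-}$ of cobordisms, commutativity of the square becomes the assertion that two composite cobordisms in the relevant $\bn_0^0$ are equal (equivalently, isotopic rel boundary, so that $\KhEval{-}$ sends them to the same map). The key geometric observation is that the pairing cobordism defining $m_{\apS}$ only touches the $\apS$/$r_y(\apS)$ region, whereas the cobordisms realizing $g_\bullet$ and $f_\bullet$ are supported in disjoint collar neighborhoods of the $W_i$ and $V_i$ strands respectively. Hence the two cobordisms ``applying $m_{\apS}$ then $g_\bullet \star f_\bullet$'' and ``applying $g_\bullet$ and $f_\bullet$ then $m_{\apS}$'' differ only by sliding these disjointly-supported pieces past one another, which is an isotopy rel boundary. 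The one point requiring care is that after applying $m_{\apS}$ the $i$-th slots $W_i$ and $V_i$ have been joined along $r_y(\apS)$ into the single tangle $W_i \star V_i$, and the composite morphism that acts is $g_\bullet \star f_\bullet = (g_1 \star f_1)\otimes \cdots \otimes (g_k \star f_k)$; checking that the cobordism realizing $g_i \star f_i$ is precisely the result of stacking the cobordism for $g_i$ above that for $f_i$ and then composing with the $\apS$-pairing is exactly the compatibility between horizontal composition $\star$ of cobordisms and the saddle/tube maps used to build $m_{\apS}$.

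I would organize the argument as: (i) rewrite all four maps in the square as $\KhEval{-}$ of explicit cobordisms using Definition~\ref{def:partitioned cap tangles}, the symmetric monoidality of $\KhEval{-}$ (Lemma~\ref{BN:equiv}), and Definition~\ref{def:prismstackingmaps}; (ii) observe that both composites are $\KhEval{-}$ of a cobordism of the schematic form~\eqref{eq:prismstack} with the $g_i$- and $f_i$-decorations inserted, and that these two cobordisms are related by an ambient isotopy rel boundary supported in a neighborhood of the $\apS$-seam; (iii) invoke functoriality of $\KhEval{-}$ to conclude the two maps agree. The main obstacle is purely bookkeeping: keeping track of the $q$-degree shifts $q^{|\del\apS\cup\del\apT|/4 + |\del\apR\cup\del\apS|/4}$ versus $q^{|\del\apR\cup\del\apT|/4}$ and verifying they match on both routes around the square (they do, since $|\del\apS| = |\del\apS|$ cancels in the pairing), and making the ``disjoint support'' isotopy precise given that a general $f_\bullet$ (resp.~$g_\bullet$) is only a $\cring$-linear combination of pure tensors — but by linearity it suffices to check the square on pure tensors $f_1\otimes\cdots\otimes f_k$ and $g_1\otimes\cdots\otimes g_k$, where the Koszul signs from the symmetric monoidal structure on $\dgMod{\Grp}{\cring}$ are all trivial because the relevant morphisms are in degree zero with respect to the homological grading.
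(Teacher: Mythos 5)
Your proposal is correct and follows essentially the same route as the paper, which simply invokes the far-commutativity of the cobordisms supported over the $V_i$- and $W_i$-boxes with the saddle and tube cobordisms implementing $m_{\apS}$ (the paper's proof is a one-sentence version of your steps (i)--(iii)). Your additional bookkeeping on the $q$-shifts and the reduction to pure tensors is a harmless elaboration of the same disjoint-support argument.
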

\begin{proof}
This follows from the far-commutativity of morphisms induced by cobordisms
supported over the boxes labeled $V_i$ and $W_i$ in \eqref{eq:prismstackingpicture}
and the saddle and tube cobordisms implementing $m_{\apS}$.
\end{proof}

Next we show that the prism stacking maps enjoy associativity and unitality properties.
\begin{lemma}
	\label{lem:assocunit-prismstacking}
	Fix $k\in \N$. 
	Let $\apQ$, $\apR$, $\apS$, $\apT$ be $k$-partitioned planar cap tangles 
	and let $W_i$, $V_i$, $U_i$ be objects for $1 \leq i \leq k$ 
	such that $\PMS_{\std}(\apT | W_\bullet | \apS)$, 
			$\PMS_{\std}(\apS | V_\bullet | \apR)$, 
			and $\PMS_{\std}(\apR | U_\bullet | \apQ)$ 
	are defined as in \eqref{eqn:BNprismlinear}.
	Then, the prism stacking maps from \eqref{eq:Pcomp} are associative, i.e.~the composites
	\[
			m_{\apR}\circ (m_{\apS}\otimes \id) \colon 
			\PMS_{\std}(\apT|W_\bullet|\apS)\otimes \PMS_{\std}(\apS|V_\bullet|\apR)
				\otimes \PMS_{\std}(\apR|U_\bullet|\apQ) \to 
			\PMS_{\std}(\apT|(W_\bullet \star V_\bullet)\star U_\bullet| \apQ)
			\]
	and
	\[
			m_{\apS}\circ (\id\otimes m_{\apR}) \colon
			\PMS_{\std}(\apT|W_\bullet|\apS)\otimes \PMS_{\std}(\apS|V_\bullet|\apR)
				\otimes \PMS_{\std}(\apR|U_\bullet|\apQ) \to 
			\PMS_{\std}(\apT|W_\bullet\star (V_\bullet\star U_\bullet)|\apQ)
			\]
	are equal (up to the coherent isomorphism induced by 
	$(W_\bullet\star V_\bullet)\star U_\bullet\cong W_\bullet\star (V_\bullet\star U_\bullet)$). 
	Additionally, the prism stacking maps are also unital in the sense that the composites
	\[
			m_{\apS}\circ (\id \otimes e_{\apS}) \colon 
			\PMS_{\std}(\apT|W_\bullet|\apS)\otimes \cring \to 
			\PMS_{\std}(\apT|W_\bullet\star \one_\bullet|\apS)
			\]
	and
	\[
			m_{\apT}\circ (e_{\apT} \otimes \id) \colon
			\cring \otimes \PMS_{\std}(\apT|W_\bullet|\apS) \to 
			\PMS_{\std}(\apT|\one_\bullet \star W_\bullet|\apS)
			\]
	are both identity maps (after applying the isomorphisms induced by 
	$W_\bullet\star \one_\bullet \cong W_\bullet \cong \one_\bullet\star W_\bullet$).
\end{lemma}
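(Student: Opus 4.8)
The plan is to deduce both assertions from the associativity and unitality of composition in the Bar-Natan category $\bn$ — equivalently, of the graded locally unital algebra $\bigoplus_{a,b}H(a,b)$ — together with the far-commutativity of cobordisms supported in disjoint regions, the same mechanism already used in the proof of Lemma~\ref{lem:prismstacking-star}.

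First I would make precise the cobordism-theoretic description of the stacking maps sketched in Remark~\ref{rem:Pfunc}. By functoriality and symmetric monoidality of $\KhEval{-}=\Hom_{\BN^0_0}(\emptyset,-)$ (see Lemma~\ref{BN:equiv} and Definition~\ref{def:Kh}), the domain of \eqref{eq:Pcomp} is identified with $\KhEval{-}$ applied to the disjoint union of the two closed planar diagrams defining the two prism spaces, and $m_{\apS}$ is induced by a single cobordism which is a product cobordism over each of the boxes $W_i$, $V_i$ and over the cap tangles $\apT$, $r_y(\apR)$, and which over the region where $r_y(\apS)$ lies directly above $\apS$ is the pairing cobordism built from saddles and tubes. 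The point is that, upon restriction to the degree-zero summands $\PMS_{\std}(\apT|\one_\bullet|\apS)\otimes\PMS_{\std}(\apS|\one_\bullet|\apR)$ and using the identifications $\PMS_{\std}(\apT|\one_\bullet|\apS)\cong H(\apT,\apS)\cong\Hom_{\bn}(\apS,\apT)$ of Remark~\ref{rem:BNhoms}, this pairing cobordism is exactly the multiplication $H(\apT,\apS)\otimes H(\apS,\apR)\to H(\apT,\apR)$, i.e.\ composition in $\bn$; the general $m_{\apS}$ differs only by the inserted product cobordisms over the $W_i$ and $V_i$.

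Granting this dictionary, associativity is immediate: in $m_{\apR}\circ(m_{\apS}\otimes\id)$ the $\apS$-pairing cobordism is supported between the $W_\bullet$ and $V_\bullet$ boxes while the $\apR$-pairing cobordism is supported between the $V_\bullet$ and $U_\bullet$ boxes, so the two far-commute and $m_{\apR}\circ(m_{\apS}\otimes\id)$ and $m_{\apS}\circ(\id\otimes m_{\apR})$ are induced by the same cobordism into $\PMS_{\std}(\apT|W_\bullet\star V_\bullet\star U_\bullet|\apQ)$; the reassociation $(W_\bullet\star V_\bullet)\star U_\bullet\cong W_\bullet\star(V_\bullet\star U_\bullet)$ is the one coming from associativity of $\star$ in $\bn$, and the equality of cobordisms on the $W$--$V$--$U$ part is the (strict) associativity of the algebra $\bigoplus_{a,b}H(a,b)$. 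Unitality is the same argument: $e_{\apS}$ picks out the class of $\id_{\apS}\in\End_{\bn}(\apS)=H(\apS,\apS)$, represented by the cylinder, so $m_{\apS}\circ(\id\otimes e_{\apS})$ is induced by the cobordism that is a cylinder over the glued copy of $\apS$; unitality of $\bigoplus_{a,b}H(a,b)$ makes this the identity after the isomorphism $W_\bullet\star\one_\bullet\cong W_\bullet$, and symmetrically for $m_{\apT}\circ(e_{\apT}\otimes\id)$.

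The only step requiring genuine care — and hence the main obstacle — is the first one: pinning down this planar-to-cobordism dictionary rigorously, in particular checking that the $q$-shifts in \eqref{eqn:BNprismlinear} and \eqref{eq:prismstackingpicture} add up correctly under gluing (pairing $\apS$ with $r_y(\apS)$ contributes no net shift, so $|\del\apS\cup\del\apT|/4+|\del\apR\cup\del\apS|/4$ becomes $|\del\apR\cup\del\apT|/4$) and that the saddle/tube cobordism defining $m_{\apS}$ agrees on the nose with the multiplication cobordism of the big Khovanov ring. Once that is in place, associativity and unitality are formal, with no further computation.
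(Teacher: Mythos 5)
Your proposal is correct and follows essentially the same route as the paper, which simply declares the lemma immediate from the definition of $\KhEval{-}$ and the associativity/unitality of composition in the Bar-Natan bicategory (offering a check on standard bases as an alternative); you have merely spelled out the cobordism-level details and the $q$-shift bookkeeping that the paper leaves implicit.
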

\begin{proof}
This is immediate from the definition of $\KhEval{-}$ and the corresponding 
properties of the Bar-Natan $2$-category.
(Alternatively, it can be explicitly checked using the 
standard bases of the free graded $\cring$-modules $\KhEval{L}$ discussed in \S\ref{ss:BN}.)
\end{proof}

\subsection{Prism $2$-modules}
	\label{ss:BNmm}
	
Note that prism stacking is inherently $2$-categorical since 
it makes use of the horizontal composition $\star$ in the bicategory $\bn$ 
(or, rather, $\bn^{\otimes k}$).
We now establish further compatibilities between the
prism stacking maps and functoriality of the prism spaces.
These compatibilities allow us to ultimately
assemble the prism spaces into a 2-functor from $\bn^{\otimes k}$ 
to an appropriate Morita bicategory of bimodules.

\begin{definition}
\label{def:bimodules}
Let $\AS$ and $\BS$ be $\Z$-graded $\cring$-linear categories. 
An \emph{$(\AS,\BS)$-bimodule} is a graded, $\cring$-linear
functor $\AS\otimes \BS^{\op} \rightarrow \gModen{\Z}{\cring}$.  
For fixed $\AS,\BS$, the $(\AS,\BS)$-bimodules form a 
category, denoted $\Bim_{\AS,\BS}$, 
with morphisms given by natural transformations.  
Given $M\in \Bim_{\AS,\BS}$ and $N\in \Bim_{\BS,\CS}$, 
we define their (underived) tensor product as coequalizer of the left- and right-action of $\BS$, namely
\begin{equation}\label{eq:tensor product}
	(M\otimes_\BS N)(a,c):=  
	\mathrm{CoKer}\left( \bigoplus_{b,b'\in \BS} M(a,b')\otimes_{\cring} \Hom_{\BS}(b,b') \otimes N(b,c) 
		\to \bigoplus_{b\in \BS} M(a,b)\otimes N(b,c) \right)
\end{equation}
where the map sends 
$f\otimes g \otimes h \mapsto (f g) \otimes h - f \otimes (g h)$. 
See e.g.~\cite[\S 2.1.5]{MR3692883} or \cite[\S 2.4]{2002.06110}.
\end{definition}

Note that the usual categories of left and right $\AS$-modules 
are obtained as $\Bim_{\AS,\cring}$ and $\Bim_{\cring,\AS}$,
respectively, where here $\cring$ is regarded as a category with one object.
When we want to emphasize the categories acting on $M\in \Bim_{\AS,\BS}$, 
we will sometimes write ${}_\AS M_{\BS}$. 
Similarly, given objects $X\in \Obj(\AS)$, $Y\in \Obj(\BS)$ we introduce notation
\begin{equation}
	\label{eq:bimshorthand}
{}_X M_Y:=M(X,Y) \, , \quad  {}_X M:= {}_X M_\BS := M(X,-) \, , \quad M_Y := {}_\AS M_Y:= M(-,Y) \, .
\end{equation}
For fixed $X$ and $Y$, the latter two give right- and left-modules, respectively.

\begin{conv}\label{conv:A(x,y)}
A graded $\cring$-linear category $\AS$ can be viewed as an $(\AS,\AS)$-bimodule 
defined by $\AS(X,X'):=\Hom_\AS(X',X)$. (Note the reversal of order.) 
This is the left- and right-unit for the tensor product $\otimes_{\AS}$ from \eqref{eq:tensor product}.
\end{conv}

\begin{remark}\label{rmk:left action}
Suppose $M\in \Bim_{\AS,\BS}$ is a bimodule. 
The left action of $\AS$ is encoded in a collection of $\cring$-linear maps
\[
\Hom_{\AS}(X,X') \rightarrow \Hom_\cring\left(M(X,Y),M(X',Y)\right)
\]
satisfying appropriate unit and associativity properties. 
Using tensor-hom adjunction and Convention \ref{conv:A(x,y)}, 
we may write this action in the more traditional fashion, 
i.e.~as a family of $\cring$-linear maps
\[
\AS(X',X)\otimes M(X,Y)\rightarrow M(X',Y) \, .
\]
Similar remarks apply to the right $\BS$-action.
\end{remark}

\begin{definition}
\label{def:morita}
Let $\PF[\Z]$ denote the \emph{Morita bicategory} of graded $\cring$-linear categories wherein
\begin{itemize}
	\item objects are $\Z$-graded $\cring$-linear categories $\AS$, and
	\item the $1$-morphism category $\AS \leftarrow \BS$ is the graded 
		$\cring$-linear category $\Bim_{\AS,\BS}$.
\end{itemize}
The (vertical) composition of $2$-morphisms is simply the usual composition of natural transformations, 
and the (horizontal) composition $\Bim_{\AS,\BS} \otimes \Bim_{\BS,\CS} \to \Bim_{\AS,\CS}$
is given by tensor product $\otimes_{\BS}$.
\end{definition}

We now extend the prism module construction to a $2$-functor valued in $\PF[\Z]$. 
The relevant domain is the tensor product $\bn^{\otimes k}$ of Bar-Natan bicategories.

\begin{conv}\label{conv:bnk}
Given $\nn=(n_1,\ldots,n_k)$ with $n_i\in \N$, 
we have the object $(\pp^{n_1},\ldots,\pp^{n_k})\in \bn^{\otimes k}$.
By abuse of notation, 
we will frequently denote this object simply by the tuple $\nn=(n_1,\ldots,n_k)$.
On the level of 1-morphisms in $\bn^{\otimes k}$, we will abbreviate by writing 
\[
(\bn^{\otimes k})_{\mm}^{\nn} 
	= \bigotimes_{i=1}^k \bn^{n_i}_{m_i} 
\]
A 1-morphism in $\bn^{\otimes k}$ is a tuple $V_\bullet=(V_1,\ldots,V_k)$; 
from Section \ref{s:surfaceinvt} onwards,
we will drop the bullet subscript for aesthetic reasons. 
The identity 1-morphism on an object $\nn$ will be denoted 
by $\one_\nn:=(\one_{n_1},\ldots,\one_{n_k})$.
\end{conv}

\begin{definition}\label{def:standard bimod}
For each object $\nn$ of $\bn^{\otimes k}$, let 
$\MM(\nn):=\bn_{n_1|\cdots|n_k}$.  For each 1-morphism 
$V_\bullet\in (\bn^{\otimes k})^{\nn}_{\mm}$, let $\MM(V)\in \Bim_{\MM(\nn),\MM(\mm)}$ 
denote the bimodule 
$\PMS_{\std}(-|V_\bullet|-)$, i.e.~in the notation of \eqref{eq:bimshorthand}:
\[
{}_{\apT}\MM(V_\bullet)_{\apS} := \PMS_{\std}(\apT|V_\bullet|\apS) \, .
\]
The bimodule structure is given 
(via Remark \ref{rmk:left action})
by prism stacking maps
\[
\PMS_{\std}(\apT'|\one_\nn|\apT) \otimes \PMS_{\std}(\apT|V_\bullet|\apS) 
	\otimes \PMS_{\std}(\apS|\one_\mm|\apS')
		\rightarrow \PMS_{\std}(\apT'|V_\bullet|\apS')
\]
using the identification of Remark \ref{rem:BNhoms}.
\end{definition}

We may also denote the $2$-functor from Definition \ref{def:standard bimod} by 
$\MM=\MM_{\std}$ when we want to distinguish it from similar constructions 
appearing e.g.~in \S \ref{ss:arbitrary disks} below.

\begin{example}\label{ex:KhRing as sCat}
In case $k=1$, 
we have that $\MM(\pp^{n})$ is the big Khovanov ring $H_n^0$ from Definition \ref{def:Khrings} 
(here viewed as a category).
The bimodule $\MM(V)$ is the analogue for this ring of the 
bimodule Khovanov associates to an $(n,m)$-tangle $V$ in \cite{MR1928174}.
\end{example}

\begin{prop}
	\label{prop:stprism2M}
Given $k \in \N$, the assignments $\nn \mapsto \MM(\nn)$ and $V\mapsto \MM(V)$ from 
	Definition \ref{def:standard bimod} extend to a $2$-functor $\bn^{\otimes k} \to \PF[\Z]$.
\end{prop}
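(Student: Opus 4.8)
The plan is to verify directly that $\MM$ satisfies the axioms of a (weak) $2$-functor $\bn^{\otimes k} \to \PF[\Z]$: beyond the assignments on objects and $1$-morphisms recorded in Definition~\ref{def:standard bimod}, one must (a) define the action of $\MM$ on $2$-morphisms and check it is functorial and lands in bimodule maps, (b) produce the unit constraint exhibiting $\MM(\one_\nn)$ as the identity $1$-morphism of $\MM(\nn)$, (c) produce the composition constraint $\MM(V_\bullet)\otimes_{\MM(\mm)}\MM(U_\bullet)\cong \MM(V_\bullet\star U_\bullet)$, natural in $V_\bullet$ and $U_\bullet$, and (d) check the pentagon and triangle coherence diagrams. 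Steps (a), (b), (d) are essentially formal consequences of the lemmas already in hand; the genuine content is the claim that the constraint in (c) is an \emph{isomorphism}.

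For (a): a $2$-morphism $f_\bullet\colon V_\bullet\to W_\bullet$ in $\bn^{\otimes k}$ is by definition a morphism in $\bigotimes_{i=1}^k \bn^{n_i}_{m_i}$, so Lemma~\ref{lem:prismfunctor} produces a natural transformation $\MM(f_\bullet):=\PMS_{\std}(-|f_\bullet|-)$ of functors $\MM(\nn)\otimes\MM(\mm)^{\op}\to\gModen{\Z}{\cring}$; that this is a morphism of bimodules, i.e.\ compatible with the left $\MM(\nn)$- and right $\MM(\mm)$-actions (which by Definition~\ref{def:standard bimod} are instances of prism stacking), is exactly the naturality of the stacking maps in Lemma~\ref{lem:prismstacking-star}. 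Functoriality $\MM(g_\bullet\circ f_\bullet)=\MM(g_\bullet)\circ\MM(f_\bullet)$, $\MM(\id)=\id$ is immediate from Lemma~\ref{lem:prismfunctor}. For (b): Remark~\ref{rem:BNhoms} gives a natural isomorphism $\PMS_{\std}(\apT|\one_\nn|\apS)\cong H(\apT,\apS)=\Hom_{\bn}(\apS,\apT)=\MM(\nn)(\apT,\apS)$ in the sense of Convention~\ref{conv:A(x,y)}, and under this identification the bimodule action on $\MM(\one_\nn)$ — again prism stacking — is carried to the tautological action by the unitality clause of Lemma~\ref{lem:assocunit-prismstacking}; hence $\MM(\one_\nn)$ is the unit $1$-morphism of $\MM(\nn)$ in $\PF[\Z]$.

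For (c): the prism stacking maps of Definition~\ref{def:prismstackingmaps} assemble, for each $\apT\in\MM(\nn)$ and $\apR\in\MM(\kk)$, into
\[
\bigoplus_{\apS\in\MM(\mm)}\PMS_{\std}(\apT|V_\bullet|\apS)\otimes\PMS_{\std}(\apS|U_\bullet|\apR)\xrightarrow{\ \sum_{\apS}m_{\apS}\ }\PMS_{\std}(\apT|V_\bullet\star U_\bullet|\apR),
\]
and since the two $\MM(\mm)$-actions being coequalized in $\otimes_{\MM(\mm)}$ are themselves prism stacking maps, the associativity clause of Lemma~\ref{lem:assocunit-prismstacking} shows $\sum_\apS m_\apS$ descends to a bimodule map $c_{V,U}\colon\MM(V_\bullet)\otimes_{\MM(\mm)}\MM(U_\bullet)\to\MM(V_\bullet\star U_\bullet)$, natural in $V_\bullet,U_\bullet,\apT,\apR$ by Lemma~\ref{lem:prismstacking-star}. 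To see $c_{V,U}$ is invertible, the key observation is that $\MM(U_\bullet)$ is projective as a left $\MM(\mm)$-module: unpacking \eqref{eqn:BNprismlinear}, the diagram defining $\PMS_{\std}(\apS|U_\bullet|\apR)$ is the closed $1$-manifold obtained by stacking the cap tangle $\apS$ on top of the cup tangle $C_\apR$ gotten by closing $U_\bullet$ below with $r_y(\apR)$; writing $C_\apR$ as $c(\apR)$ disjoint circles together with a crossingless cup $r_y(S_\apR)$, $S_\apR\in\MM(\mm)$, the monoidality of $\KhEval{-}$ and $\KhEval{\bigcirc}\cong A$ identify $\PMS_{\std}(-|U_\bullet|\apR)$ naturally in $\apS$ with a finite direct sum of grading shifts of the representable $\Hom_{\bn}(S_\apR',-)$ (for a suitable reflection $S_\apR'$ of $S_\apR$), hence a projective left module. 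A standard co-Yoneda computation then identifies $\bigl(\MM(V_\bullet)\otimes_{\MM(\mm)}\MM(U_\bullet)\bigr)(\apT,\apR)$ with $\PMS_{\std}(\apT|V_\bullet|S_\apR')\otimes_{\cring}A^{\otimes c(\apR)}$, and the same delooping identity applied to the diagram for $V_\bullet\star U_\bullet$ — whose circles split as those of the $S_\apR'$-capped $V_\bullet$ plus the $c(\apR)$ circles of $C_\apR$ — identifies this with $\PMS_{\std}(\apT|V_\bullet\star U_\bullet|\apR)$; chasing the cobordisms realizing these identifications against the definition of $m_\apS$ shows the composite is $c_{V,U}$. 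Equivalently, $c_{V,U}$ is the big-Khovanov-ring version of Khovanov's fact that the tangle bimodules of \cite{MR1928174} compose under tensor product (cf.\ Example~\ref{ex:KhRing as sCat}), and one may alternatively argue as there by cutting the planar diagram along the intermediate level and matching standard bases of $\KhEval{-}$.

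For (d): the pentagon identity for $c$ and the triangle relating $c$ to the unit constraint from (b) unwind, via the definitions of the bimodule actions and of $\otimes_{\MM(\mm)}$, into precisely the associativity and unitality of the prism stacking maps from Lemma~\ref{lem:assocunit-prismstacking}, together with functoriality of $\KhEval{-}$; this step is routine bookkeeping. I expect the only real obstacle to be the isomorphism statement in (c) — everything else follows formally from the stated lemmas — and I expect the projectivity/co-Yoneda route above, with the delooping identity doing the essential work, to give the cleanest write-up.
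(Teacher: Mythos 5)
Your proposal is correct, and its skeleton coincides with the paper's proof: 2-morphisms via Lemma~\ref{lem:prismfunctor} with bimodule-map-ness from Lemma~\ref{lem:prismstacking-star}, units via Remark~\ref{rem:BNhoms}, and all coherences reduced to Lemma~\ref{lem:assocunit-prismstacking}. The one place you go beyond the paper is step (c): the paper simply asserts that the isomorphism $\MM(V_\bullet)\otimes_{\MM(\qq)}\MM(W_\bullet)\cong\MM(V_\bullet\star W_\bullet)$ is ``formally very similar to \cite[Theorem 1]{MR1928174}'' and defers to the arguments of \S~\ref{ss:MM of surface}, where the analogous Proposition~\ref{prop:tensoring tangles} is proved by the tangle-slide identification ${}_T\MM(V)\cong{}_{T\star V}\MM(\one)$ of Lemma~\ref{lemma:tangle slide} followed by tensoring with the identity bimodule. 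Your route --- deloop the closed components of the capped-off diagram to exhibit $\MM(U_\bullet)(-,\apR)$ as a finite sum of shifted representables, then apply co-Yoneda --- is essentially the same argument unpacked one level further (representability after sliding \emph{is} the tangle-slide lemma, and tensoring against representables \emph{is} co-Yoneda), so it buys an explicit verification of the step the paper outsources, at the cost of some bookkeeping with reflections and circle factors that you rightly flag but do not fully pin down. No gap.
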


\begin{proof}
Let $\mm, \nn$ be objects of $\bn^{\otimes k}$, let $V_\bullet,W_\bullet\in (\bn^{\otimes k})_{\mm}^{\nn}$ 
be 1-morphisms, and let $f_\bullet\colon V_\bullet \rightarrow W_\bullet$ be a 2-morphism. 
Let ${}_\apT\MM(f_\bullet)_\apS$ be the linear map
\[
\PMS_{\std}(\apT | f_\bullet |\apS)
\colon \PMS_{\std}(\apT | V_\bullet |\apS) \to \PMS_{\std}(\apT | W_\bullet | \apS)
\]
from Lemma \ref{lem:prismfunctor}.  
That these maps define a bimodule map 
(i.e.~that they give the components of a natural transformation of functors) 
$\MM(V_\bullet)\rightarrow \MM(W_\bullet)$ follows from 
the naturality of the prism stacking maps in Lemma \ref{lem:prismstacking-star}.

Having now defined the $2$-functor on $0$-, $1$-, and $2$-morphisms in $\bn^{\otimes k}$, 
it remains to check the necessary coherences:
that it is functorial with respect to vertical composition 
and functorial with respect to horizontal composition, 
up to coherent $2$-isomorphism in the codomain.
Since vertical composition of natural transformations is given component-wise, 
the former is simply the functoriality of the standard prism module $\PMS_{\std}(\apT | - | \apS)$
provided by Lemma \ref{lem:prismfunctor}.

We thus turn out attention to functoriality with respect to horizontal composition.
Namely, given 1-morphisms $V_\bullet \in (\bn^{\otimes k})_{\mm}^{\nn}$ 
and $W_\bullet\in (\bn^{\otimes k})_{\mathbf{l}}^{\mm}$, 
we must argue that $\MM(V_\bullet)\otimes_{\MM(\qq)}\MM(W_\bullet) \cong \MM(V_\bullet\star W_\bullet)$.
Since this is formally very similar to \cite[Theorem 1]{MR1928174} 
(and we give similar arguments in \S \ref{ss:MM of surface} below), we omit the details here.
Further, our $2$-functor preserves identity $1$-morphisms, 
since Remark \ref{rem:BNhoms} implies that 
$\MM(\one_\nn)$ is precisely the identity bimodule of $\MM(\nn)$.

Lastly, the coherences for horizontal composition follows from 
the associativity and unitality of the prism stacking morphisms given 
in Lemma \ref{lem:assocunit-prismstacking}.
\end{proof}

\begin{remark}
	\label{rem:prismpictures}
To help digest Proposition \ref{prop:stprism2M}, 
we describe the $2$-functor $\MM$ in terms of the schematic of \eqref{eq:prismpic1}.
It is given on objects by
\[
(\pp^{n_1},\dots,\pp^{n_k}) \mapsto 
\bn \Bigg(
\begin{tikzpicture}[anchorbase, scale=.5]
\draw[ultra thick, gray] (0,.5) to [out=60,in=120] (5,.5);
\draw[ultra thick, gray] (1,0) to [out=90,in=90] (2,0);
\draw[ultra thick, gray] (3,0) to [out=90,in=90] (4,0);
\node[gray] at (2.5,0) {$\mydots$};
\draw[ultra thick,seam2] (0,.5) to node{\scs$\bullet$} node[below]{$n_1$} (1,0); 
\draw[ultra thick,seam2] (4,0) to node{\scs$\bullet$} node[below]{$n_k$} (5,.5);
\end{tikzpicture}
\Bigg) \, ,
\]
which we may identify with $\bn_{n_1|\cdots|n_k}$.
Given $V_\bullet\in (\bn^{\otimes k})_{\mm}^{\nn}$, 
the bimodule $\MM(V_\bullet)$ associates to a pair of tangles $\apT\in \MM(\nn)$, $\apS\in \MM(\mm)$ 
the prism space $\PMS_{\std}(\apT|V_\bullet|\apS)$, 
which we may interpret via Remark \ref{rem:Pfunc}.

The bimodule structure on $\MM(V_\bullet)$ is given by maps
\[
\PMS_{\std}(\apT_2|\one_{\nn}|\apT_1)\otimes \PMS_{\std}(\apT_1|V_\bullet|\apS_1)
	\otimes \PMS_{\std}(\apS_1|\one_\mm|\apS_2) 
		\rightarrow \PMS_{\std}(\apT_2|V_\bullet|\apS_2) \, ,
\]
which may visualized as
\[
\begin{tikzpicture}[anchorbase, scale=.35]
\draw[ultra thick, gray] (0,.5) to [out=60,in=120] (5,.5);
\draw[ultra thick, gray] (1,0) to [out=90,in=90] (2,0);
\draw[ultra thick, gray] (3,0) to [out=90,in=90] (4,0);
\node[gray] at (2.5,0) {$\mydots$};
\draw[ultra thick,seam2] (0,.5) to node {\scs$\bullet$} node[below] (a1) {\tiny$n_1$} (1,0); 
\draw[ultra thick,seam2] (4,0) to node{\scs$\bullet$} node[below] (ak) {\tiny$n_k$} (5,.5);
\node (Stangle) at (2.5,1) {};
\draw (a1.north) to [out=60,in=180] (Stangle);
\draw (Stangle) to [out=0,in=120] (ak.north);
\node at (Stangle) {$\NBt{\apT_1}$};
\draw[ultra thick,seam2] (0,.5) to (0,2.5);
\draw[ultra thick,seam2] (1,0) to (1,2);
\draw[ultra thick,seam2] (2,0) to (2,2);
\draw[ultra thick,seam2] (3,0) to (3,2);
\draw[ultra thick,seam2] (4,0) to (4,2);
\draw[ultra thick,seam2] (5,.5) to (5,2.5);
\draw[ultra thick, gray] (0,2.5) to [out=60,in=120] (5,2.5);
\draw[ultra thick, gray] (1,2) to [out=90,in=90] (2,2);
\draw[ultra thick, gray] (3,2) to [out=90,in=90] (4,2);
\node[gray] at (2.5,2) {$\mydots$};
\draw[ultra thick,seam2] (0,2.5) to node {\scs$\bullet$} node[above] (b1) {} (1,2); 
\draw[ultra thick,seam2] (4,2) to node{\scs$\bullet$} node[above] (bk) {} (5,2.5);
\node (Ttangle) at (2.5,3) {};
\draw (b1.south) to [out=60,in=180] (Ttangle) to [out=0,in=120] (bk.south);
\fill[black, opacity=.3] (b1.south) to [out=60,in=180] (2.5,3) to [out=0,in=120] (bk.south) 
	to (ak.north) to [out=120,in=0] (2.5,1) to [out=180,in=60] (a1.north) to (b1.south);
\draw (a1) to (b1);
\draw (ak) to (bk);
\node at (Ttangle) {$\NBt{\apT_2}$};
\node at (0,0) {$f$};
\end{tikzpicture}
\otimes
\begin{tikzpicture}[anchorbase, scale=.5]
\draw[ultra thick, gray] (0,.5) to [out=60,in=120] (5,.5);
\draw[ultra thick, gray] (1,0) to [out=90,in=90] (2,0);
\draw[ultra thick, gray] (3,0) to [out=90,in=90] (4,0);
\node[gray] at (2.5,0) {$\mydots$};
\draw[ultra thick,seam2] (0,.5) to node {\scs$\bullet$} node[below] (a1) {\tiny$m_1$} (1,0); 
\draw[ultra thick,seam2] (4,0) to node{\scs$\bullet$} node[below] (ak) {\tiny$m_k$} (5,.5);
\node (Stangle) at (2.5,1) {};
\draw (a1.north) to [out=60,in=180] (Stangle);
\draw (Stangle) to [out=0,in=120] (ak.north);
\node at (Stangle) {$\NB{\apS_1}$};
\draw[ultra thick,seam2] (0,.5) to (0,2.5);
\draw[ultra thick,seam2] (1,0) to (1,2);
\draw[ultra thick,seam2] (2,0) to (2,2);
\draw[ultra thick,seam2] (3,0) to (3,2);
\draw[ultra thick,seam2] (4,0) to (4,2);
\draw[ultra thick,seam2] (5,.5) to (5,2.5);
\draw[ultra thick, gray] (0,2.5) to [out=60,in=120] (5,2.5);
\draw[ultra thick, gray] (1,2) to [out=90,in=90] (2,2);
\draw[ultra thick, gray] (3,2) to [out=90,in=90] (4,2);
\node[gray] at (2.5,2) {$\mydots$};
\draw[ultra thick,seam2] (0,2.5) to node {\scs$\bullet$} node[above] (b1) {\tiny$n_1$} (1,2); 
\draw[ultra thick,seam2] (4,2) to node{\scs$\bullet$} node[above] (bk) {\tiny$n_k$} (5,2.5);
\node (Ttangle) at (2.5,3) {};
\draw (b1.south) to [out=60,in=180] (Ttangle) to [out=0,in=120] (bk.south);
\fill[black, opacity=.3] (b1.south) to [out=60,in=180] (2.5,3) to [out=0,in=120] (bk.south) 
	to (ak.north) to [out=120,in=0] (2.5,1) to [out=180,in=60] (a1.north) to (b1.south);
\draw (a1) to node{$\NBt{V_1}$} (b1);
\draw (ak) to node{$\NBt{V_k}$} (bk);
\node at (Ttangle) {$\NB{\apT_1}$};
\end{tikzpicture}
\otimes
\begin{tikzpicture}[anchorbase, scale=.35]
\draw[ultra thick, gray] (0,.5) to [out=60,in=120] (5,.5);
\draw[ultra thick, gray] (1,0) to [out=90,in=90] (2,0);
\draw[ultra thick, gray] (3,0) to [out=90,in=90] (4,0);
\node[gray] at (2.5,0) {$\mydots$};
\draw[ultra thick,seam2] (0,.5) to node {\scs$\bullet$} node[below] (a1) {\tiny$m_1$} (1,0); 
\draw[ultra thick,seam2] (4,0) to node{\scs$\bullet$} node[below] (ak) {\tiny$m_k$} (5,.5);
\node (Stangle) at (2.5,1) {};
\draw (a1.north) to [out=60,in=180] (Stangle);
\draw (Stangle) to [out=0,in=120] (ak.north);
\node at (Stangle) {$\NBt{\apS_2}$};
\draw[ultra thick,seam2] (0,.5) to (0,2.5);
\draw[ultra thick,seam2] (1,0) to (1,2);
\draw[ultra thick,seam2] (2,0) to (2,2);
\draw[ultra thick,seam2] (3,0) to (3,2);
\draw[ultra thick,seam2] (4,0) to (4,2);
\draw[ultra thick,seam2] (5,.5) to (5,2.5);
\draw[ultra thick, gray] (0,2.5) to [out=60,in=120] (5,2.5);
\draw[ultra thick, gray] (1,2) to [out=90,in=90] (2,2);
\draw[ultra thick, gray] (3,2) to [out=90,in=90] (4,2);
\node[gray] at (2.5,2) {$\mydots$};
\draw[ultra thick,seam2] (0,2.5) to node {\scs$\bullet$} node[above] (b1) {} (1,2); 
\draw[ultra thick,seam2] (4,2) to node{\scs$\bullet$} node[above] (bk) {} (5,2.5);
\node (Ttangle) at (2.5,3) {};
\draw (b1.south) to [out=60,in=180] (Ttangle) to [out=0,in=120] (bk.south);
\fill[black, opacity=.3] (b1.south) to [out=60,in=180] (2.5,3) to [out=0,in=120] (bk.south) 
	to (ak.north) to [out=120,in=0] (2.5,1) to [out=180,in=60] (a1.north) to (b1.south);
\draw (a1) to (b1);
\draw (ak) to (bk);
\node at (Ttangle) {$\NBt{\apS_1}$};
\end{tikzpicture}
\mapsto
\begin{tikzpicture}[anchorbase, scale=.45]
\begin{scope}[shift={(0,-2)}] 
\draw[ultra thick, gray] (0,.5) to [out=60,in=120] (5,.5);
\draw[ultra thick, gray] (1,0) to [out=90,in=90] (2,0);
\draw[ultra thick, gray] (3,0) to [out=90,in=90] (4,0);
\node[gray] at (2.5,0) {$\mydots$};
\draw[ultra thick,seam2] (0,.5) to node {\scs$\bullet$} node[below] (a1) {\tiny$m_1$} (1,0); 
\draw[ultra thick,seam2] (4,0) to node{\scs$\bullet$} node[below] (ak) {\tiny$m_k$} (5,.5);
\node (Stangle) at (2.5,1) {};
\draw (a1.north) to [out=60,in=180] (Stangle);
\draw (Stangle) to [out=0,in=120] (ak.north);
\node at (Stangle) {$\NB{\apS_2}$};
\draw[ultra thick,seam2] (0,.5) to (0,2.5);
\draw[ultra thick,seam2] (1,0) to (1,2);
\draw[ultra thick,seam2] (2,0) to (2,2);
\draw[ultra thick,seam2] (3,0) to (3,2);
\draw[ultra thick,seam2] (4,0) to (4,2);
\draw[ultra thick,seam2] (5,.5) to (5,2.5);
\path[ultra thick, gray] (0,2.5) to [out=60,in=120] (5,2.5);
\path[ultra thick, gray] (1,2) to [out=90,in=90] (2,2);
\path[ultra thick, gray] (3,2) to [out=90,in=90] (4,2);
\path[ultra thick,seam2] (0,2.5) to node {\scs$\bullet$} node[above] (b1) {} (1,2); 
\path[ultra thick,seam2] (4,2) to node{\scs$\bullet$} node[above] (bk) {} (5,2.5);
\fill[black, opacity=.3] (b1.south) to [out=60,in=180] (2.5,3) to [out=0,in=120] (bk.south) 
	to (ak.north) to [out=120,in=0] (2.5,1) to [out=180,in=60] (a1.north) to (b1.south);
\draw (a1) to (b1);
\draw (ak) to (bk);
\node at (Ttangle) {};
\end{scope}
\draw[ultra thick, gray] (0,.5) to [out=60,in=120] (5,.5);
\draw[ultra thick, gray] (1,0) to [out=90,in=90] (2,0);
\draw[ultra thick, gray] (3,0) to [out=90,in=90] (4,0);
\node[gray] at (2.5,0) {$\mydots$};
\draw[ultra thick,seam2] (0,.5) to node {\scs$\bullet$} node[below] (a1) {} (1,0); 
\draw[ultra thick,seam2] (4,0) to node{\scs$\bullet$} node[below] (ak) {} (5,.5);
\node (Stangle) at (2.5,1) {};
\draw (a1.north) to [out=60,in=180] (Stangle);
\draw (Stangle) to [out=0,in=120] (ak.north);
\node at (Stangle) {$\NB{\apS_1}$};
\draw[ultra thick,seam2] (0,.5) to (0,2.5);
\draw[ultra thick,seam2] (1,0) to (1,2);
\draw[ultra thick,seam2] (2,0) to (2,2);
\draw[ultra thick,seam2] (3,0) to (3,2);
\draw[ultra thick,seam2] (4,0) to (4,2);
\draw[ultra thick,seam2] (5,.5) to (5,2.5);
\draw[ultra thick, gray] (0,2.5) to [out=60,in=120] (5,2.5);
\draw[ultra thick, gray] (1,2) to [out=90,in=90] (2,2);
\draw[ultra thick, gray] (3,2) to [out=90,in=90] (4,2);
\node[gray] at (2.5,2) {$\mydots$};
\draw[ultra thick,seam2] (0,2.5) to node {\scs$\bullet$} node[above] (b1) {} (1,2); 
\draw[ultra thick,seam2] (4,2) to node{\scs$\bullet$} node[above] (bk) {} (5,2.5);
\node (Ttangle) at (2.5,3) {};
\draw (b1.south) to [out=60,in=180] (Ttangle) to [out=0,in=120] (bk.south);
\fill[black, opacity=.3] (b1.south) to [out=60,in=180] (2.5,3) to [out=0,in=120] (bk.south) 
	to (ak.north) to [out=120,in=0] (2.5,1) to [out=180,in=60] (a1.north) to (b1.south);
\draw (a1) to node{$\NB{V_1}$} (b1);
\draw (ak) to node{$\NB{V_k}$} (bk);
\node at (Ttangle) {$\NB{\apT_1}$};
\begin{scope}[shift={(0,2)}] 
\path[ultra thick, gray] (0,.5) to [out=60,in=120] (5,.5);
\path[ultra thick, gray] (1,0) to [out=90,in=90] (2,0);
\path[ultra thick, gray] (3,0) to [out=90,in=90] (4,0);
\path[ultra thick,seam2] (0,.5) to node {\scs$\bullet$} node[below] (a1) {} (1,0); 
\path[ultra thick,seam2] (4,0) to node{\scs$\bullet$} node[below] (ak) {} (5,.5);
\node (Stangle) at (2.5,1) {};
\path (a1.north) to [out=60,in=180] (Stangle);
\path (Stangle) to [out=0,in=120] (ak.north);
\draw[ultra thick,seam2] (0,.5) to (0,2.5);
\draw[ultra thick,seam2] (1,0) to (1,2);
\draw[ultra thick,seam2] (2,0) to (2,2);
\draw[ultra thick,seam2] (3,0) to (3,2);
\draw[ultra thick,seam2] (4,0) to (4,2);
\draw[ultra thick,seam2] (5,.5) to (5,2.5);
\draw[ultra thick, gray] (0,2.5) to [out=60,in=120] (5,2.5);
\draw[ultra thick, gray] (1,2) to [out=90,in=90] (2,2);
\draw[ultra thick, gray] (3,2) to [out=90,in=90] (4,2);
\node[gray] at (2.5,2) {$\mydots$};
\draw[ultra thick,seam2] (0,2.5) to node {\scs$\bullet$} node[above] (b1) {\tiny$n_1$} (1,2); 
\draw[ultra thick,seam2] (4,2) to node{\scs$\bullet$} node[above] (bk) {\tiny$n_k$} (5,2.5);
\node (Ttangle) at (2.5,3) {};
\draw (b1.south) to [out=60,in=180] (Ttangle) to [out=0,in=120] (bk.south);
\fill[black, opacity=.3] (b1.south) to [out=60,in=180] (2.5,3) to [out=0,in=120] (bk.south) 
	to (ak.north) to [out=120,in=0] (2.5,1) to [out=180,in=60] (a1.north) to (b1.south);
\draw (a1) to (b1);
\draw (ak) to (bk);
\node at (Ttangle) {$\NB{\apT_2}$};
\end{scope}
\end{tikzpicture} \, .
\]
Lastly, $\MM(f_\bullet)$ is given as in \eqref{eq:2morprism}.
\end{remark}

\subsection{The prism 2-module associated to an arbitrary disk}
\label{ss:arbitrary disks}

We now extend the constructions in this section to a general pair $(D,A)$ as in Convention 
\ref{conv:disk conventions}. 
This will depend on a choice of identification relating $(D,A)$ to a standard situation, described next.

\begin{definition}
	\label{def:stdDA}
The \emph{standard disk} with $k$ arcs in its boundary is the pair $(\Dst, \Ast)$ where
\[
\Dst := I \times I \, , \quad 
	\Ast := \big( {\textstyle \bigcup_{i=1}^k} \Ast_i  \big) \times \{0\}
\]
with $\Ast_i := \big[ \frac{2i-2}{2k-1} , \frac{2i-1}{2k-1} \big]$.
\end{definition}

We will take artistic license and depict the pair $(\Dst,\Ast)$ 
e.g.~as in the top boundary of \eqref{eq:prismpic1}.

\begin{conv}\label{conv:Dstvskcap}
Suppose we are given a finite subset $\pp\subset \Ast$ such that $|\pp\cap \Ast_i| = n_i$ for 
$1 \leq i \leq k$.  We will choose once and for all an identification $\bn(\Dst,\pp)=\bn_{n_1|\cdots|n_k}$, for 
instance by taking the horizontal composition with the straight-line tangle from $\pp$ to the 
standard boundary $\pp^{(n_1+\cdots+n_k)}$. 
Any two choices will be canonically isomorphic.
\end{conv}

\begin{definition}
	\label{def:std}
Given $(D,A)$ as in Convention \ref{conv:disk conventions}, 
a \emph{standardization} is an orientation preserving homeomorphism of pairs
\[
\pi \colon (D,A) \to (\Dst,\Ast) \, .
\]
\end{definition}

\begin{conv}\label{conv:disk boundary}
Given a standardization $\pi$ of $(D,A)$ and a finite subset $\pp\subset A$ with $|\pi(\pp)\cap \Ast_i|=n_i$, 
we have an induced isomorphism of categories 
$\bn(D,\pp) \xrightarrow{\cong} \bn(\Dst,\pi(\pp)) \cong \bn_{n_1|\cdots|n_k}$, 
where the first isomorphism is induced by $\pi$ 
and the second is as in Convention \ref{conv:Dstvskcap}.  
This isomorphism will frequently be denoted by $\pi$, by abuse of notation, 
and will be tacitly used in the sequel.
\end{conv}

A standardization allows us to extend constructions \eqref{eqn:BNprismlinear}  
and Definition \ref{def:standard bimod}
to arbitary disks $(D,A)$, as follows.

\begin{definition}\label{def:BNprismarbitrary}
Let $(D,A)$ be as in Convention \ref{conv:disk conventions}, 
and let $\pi \colon (D,A) \to (\Dst, \Ast)$ be a choice of standardization.  
For $\pp,\qq \subset \Ast$ with $|\qq\cap \Ast_i| = m_i$ and $|\pp\cap \Ast_i| = n_i$,
suppose we are given $T\in \bn(D,\pi\inv(\pp))$ and $S\in \bn(D,\pi\inv(\qq))$, 
and 1-morphisms $V_i\in \bn^{n_i}_{m_i}$. 
Associated to these data, 
we have the \emph{prism space}
\begin{equation}\label{eq:prism arbitrary}
\PMS_{D,A,\pi}(T|V_1,\ldots,V_k|S):= \PMS_{\std}(\pi T|V_1,\ldots,V_k|\pi S) \, .
\end{equation}
\end{definition}

The following generalizes Definition \ref{def:standard bimod} 
to an arbitrary disk $(D,A)$ equipped with a standardization. 

\begin{definition}\label{def:prism2M}
The \emph{prism $2$-module} associated to $(D,A,\pi)$ is the assignment
$\MM_{D,A,\pi}\colon \bn^{\otimes k}\rightarrow \PF[\Z]$ given as follows.
On the objects $\pp=(\pp^{n_1},\ldots,\pp^{n_k}) \in \bn^{\otimes k}$, 
we let 
\[
\MM_{D,A,\pi}(\pp) := \bn(D,\pi\inv(\pp))
\]
and on 1-morphisms $V_{\bullet} \in (\bn^{\otimes k})_{\mm}^{\nn}$ 
we let $\MM_{D,A,\pi}(V_\bullet)$ be the $(\MM_{D,A,\pi}(\pp), \MM_{D,A,\pi}(\qq))$-bimodule
\[
{}_T\MM_{D,A,\pi}(V_\bullet)_S:=\PMS_{D,A,\pi}(T|V_\bullet|S).
\]
The behavior of $\MM_{D,A,\pi}$ on 2-morphisms is defined as in the proof of Proposition \ref{prop:stprism2M}.
\end{definition}

The following is immediate from Proposition \ref{prop:stprism2M}.

\begin{corollary}
	\label{cor:prism2M1}
The construction in Definition \ref{def:prism2M} is a well-defined 2-functor 
$\MM_{D,A,\pi}\colon \bn^{\otimes k}\rightarrow \PF[\Z]$. \qed
\end{corollary}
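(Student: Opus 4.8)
The plan is to deduce the corollary immediately from Proposition~\ref{prop:stprism2M} by transporting the standard $2$-functor $\MM_{\std}$ along the category isomorphisms induced by the standardization $\pi$. By Convention~\ref{conv:disk boundary}, for every object $\nn=(n_1,\dots,n_k)$ of $\bn^{\otimes k}$ and every finite $\pp\subset\Ast$ with $|\pp\cap\Ast_i|=n_i$, the standardization $\pi$ furnishes an isomorphism of $\cring$-linear categories $\bn(D,\pi^{-1}(\pp))\xrightarrow{\cong}\bn_{n_1|\cdots|n_k}=\MM_{\std}(\nn)$, which (following the paper's abuse of notation) I will also denote by $\pi$. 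The first point I would record is that the $1$- and $2$-morphisms $V_\bullet$, $f_\bullet$ on which $\MM$ is evaluated are data of $\bn^{\otimes k}$ alone, carrying no dependence on the disk; the standardization enters only through the outer ``cap-tangle'' slots $T,S$. Comparing Definitions~\ref{def:BNprismarbitrary} and~\ref{def:prism2M} with Definition~\ref{def:standard bimod} then gives, for $T\in\bn(D,\pi^{-1}(\pp))$ and $S\in\bn(D,\pi^{-1}(\qq))$,
\[
{}_{T}\MM_{D,A,\pi}(V_\bullet)_{S}\;=\;\PMS_{\std}(\pi T\mid V_\bullet\mid \pi S)\;=\;{}_{\pi T}\MM_{\std}(V_\bullet)_{\pi S},
\]
with the analogous identity on $2$-morphisms. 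Thus $\MM_{D,A,\pi}(V_\bullet)$ is precisely the pullback of the $(\MM_{\std}(\nn),\MM_{\std}(\mm))$-bimodule $\MM_{\std}(V_\bullet)$ along the pair of isomorphisms $(\pi,\pi)$.

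Next I would phrase this structurally. An equivalence---here an isomorphism---of $\cring$-linear categories $F\colon\BS\to\AS$ determines an invertible $1$-morphism of the Morita bicategory $\PF[\Z]$, namely the bimodule $(X,Y)\mapsto\Hom_{\AS}(F(Y),X)$; conjugating a $2$-functor $G\colon\mathbf{C}\to\PF[\Z]$ by a chosen family of such invertible $1$-morphisms $\{u_c\colon\AS_c'\to G(c)\}$ yields a new $2$-functor $G'$ with $G'(c)=\AS_c'$, canonically equivalent to $G$, and in particular $G'$ inherits vertical functoriality, horizontal functoriality up to coherent $2$-isomorphism, and preservation of identity $1$-morphisms. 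Taking $G=\MM_{\std}$ and $u_{\nn}=\pi$, the identities above show that $\MM_{D,A,\pi}$ agrees with the conjugate $G'$ on $0$-, $1$-, and $2$-morphisms, hence is a well-defined $2$-functor. Equivalently, one may simply observe that Lemmas~\ref{lem:prismfunctor}, \ref{lem:prismstacking-star}, and \ref{lem:assocunit-prismstacking} are stated for arbitrary cap tangles, hence apply verbatim after relabeling $T,S$ via $\pi$, so the proof of Proposition~\ref{prop:stprism2M} goes through unchanged in this setting.

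There is no genuine obstacle here; the entire content is transport of structure. The only point demanding care is bookkeeping the identifications of Conventions~\ref{conv:Dstvskcap} and~\ref{conv:disk boundary}: one must check that the composite isomorphism $\bn(D,\pi^{-1}(\pp))\cong\bn_{n_1|\cdots|n_k}$ is used consistently in the bimodule structure maps of Definition~\ref{def:standard bimod}, so that the transported bimodule structure genuinely coincides with the one prescribed in Definition~\ref{def:prism2M}, and that the resulting $2$-functor is independent of those auxiliary choices up to canonical equivalence---which holds since any two such choices are canonically isomorphic.
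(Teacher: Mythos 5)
Your proof is correct and follows the same route as the paper, which simply notes that the corollary is immediate from Proposition~\ref{prop:stprism2M}: the construction for $(D,A,\pi)$ is the standard prism $2$-module transported along the isomorphisms $\bn(D,\pi^{-1}(\pp))\cong\bn_{n_1|\cdots|n_k}$ of Convention~\ref{conv:disk boundary}. Your more detailed bookkeeping of the transport of structure is a faithful expansion of that one-line argument.
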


\begin{remark}
In case $(D,A) = (\Dst,\Ast)$ is already standard, we have $\PMS_{\Dst,\Ast,\id}
(T|V_1,\ldots,V_k|S)=\PMS_{\std}(\apT|V_1,\ldots,V_k|\apS)$, where $\apT$ and 
$\apS$ are the $k$-partitioned planar cap tangles associated to $T$ and $S$ 
(as in Convention \ref{conv:Dstvskcap}). 
Hence in this case $\MM_{\Dst,\Ast,\id}=\MM_{\std}$.
\end{remark}

The 2-functor $\MM_{D,A,\pi}$ depends very coarsely on the choice of standardization. 
To make this precise, we use the following.

Given $(D,A)$ as in Convention \ref{conv:disk conventions}, 
the orientation of $D$ induces an action of the cyclic group 
$\ip{\zeta \mid \zeta^k=1}$ on the set of components $\pi_0(A)$. 
(Explicitly, $\zeta(\ma)$ occurs after $\ma$ according to the induced orientation on $\partial D$.)
In this way, $\pi_0(A)$ is a cyclically ordered set.

\begin{definition}\label{def:cyclic refinement}
Let $\LO(A)$ denote the set of linear refinements of the cyclic ordering on $\pi_0(A)$, 
i.e.~the set of bijections $\psi \colon \pi_0(A) \xrightarrow{\cong} \Z/k\Z$ 
such that $\psi(\zeta \ma) \equiv \psi(\ma)+1$ (mod $k$) for all $\ma\in \pi_0(A)$. 
If $\pi$ is a standardization, then we let $[\pi]\in \LO(A)$ denote the induced linear refinement, 
defined by  $\pi\inv(\Ast_i) \mapsto i$.
\end{definition}

\begin{lem}
	\label{lem:stiso}
If $\pi$ and $\pi'$ are standardizations of a disk $(D,A)$ with $[\pi]=[\pi']$
then there is a canonical isomorphism of 2-functors $\MM_{D,A,\pi}\cong \MM_{D,A,\pi'}$.
\end{lem}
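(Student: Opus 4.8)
The plan is to produce the comparison from a single homeomorphism relating the two standardizations and to observe that it transports all the defining data on the nose. First I would set $g := (\pi')^{-1}\circ\pi\colon (D,A)\to (D,A)$, an orientation-preserving self-homeomorphism of the pair. Unwinding Definition~\ref{def:cyclic refinement}, the hypothesis $[\pi]=[\pi']$ is precisely the statement that $\pi^{-1}(\Ast_i)=(\pi')^{-1}(\Ast_i)$ for every $i$; writing $A_i$ for this common component of $A$, it follows that $g(A_i)=A_i$, so $g$ preserves the ordered decomposition $A=A_1\sqcup\cdots\sqcup A_k$. Since $g(\pi^{-1}(\pp))=(\pi')^{-1}(\pp)$ for any finite $\pp\subset\Ast$, pushforward along $g$ — along a smooth (equivalently PL) representative, which is well defined up to canonical natural isomorphism because any two such representatives are isotopic and $\bn(D,-)$ is isotopy invariant — induces for each object $\pp$ of $\bn^{\otimes k}$ an isomorphism of graded $\cring$-linear categories
\[
g_*\colon \MM_{D,A,\pi}(\pp)=\bn(D,\pi^{-1}(\pp))\xrightarrow{\ \cong\ }\bn(D,(\pi')^{-1}(\pp))=\MM_{D,A,\pi'}(\pp).
\]

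The crucial identity is $\pi'\circ g_*=\pi$ as isomorphisms $\MM_{D,A,\pi}(\pp)\to\bn_{n_1|\cdots|n_k}$, where $\pi$ and $\pi'$ on the left now denote the induced category isomorphisms of Convention~\ref{conv:disk boundary}: writing each as ``pushforward followed by straightening $\mathrm{st}$'', functoriality of pushforward gives $\pi'\circ g_*=\mathrm{st}\circ(\pi')_*\circ g_*=\mathrm{st}\circ(\pi'\circ g)_*=\mathrm{st}\circ\pi_*=\pi$, whence also $\pi\circ g_*^{-1}=\pi'$. By Definitions~\ref{def:BNprismarbitrary} and~\ref{def:prism2M}, for $V_\bullet\in(\bn^{\otimes k})_{\mm}^{\nn}$ the bimodule $\MM_{D,A,\pi}(V_\bullet)$ has ${}_T\MM_{D,A,\pi}(V_\bullet)_S=\PMS_{\std}(\pi T\,|\,V_\bullet\,|\,\pi S)$, and likewise with $\pi'$; hence transporting $\MM_{D,A,\pi}(V_\bullet)$ along $g_*$ on both the left- and right-acting categories and applying $\pi\circ g_*^{-1}=\pi'$ yields precisely $\PMS_{\std}(\pi' T'\,|\,V_\bullet\,|\,\pi' S')=\MM_{D,A,\pi'}(V_\bullet)$. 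The same computation applies verbatim to the bimodule-structure maps and to the comparison $2$-cells for horizontal composition in $\bn^{\otimes k}$, since all of these are assembled from the prism stacking maps of Definition~\ref{def:prismstackingmaps}, which are defined entirely on the standard side. Thus $\{g_*\}_\pp$ together with identity $2$-cells is an isomorphism of $2$-functors $\MM_{D,A,\pi}\cong\MM_{D,A,\pi'}$; and since the homeomorphism $g=g_{\pi,\pi'}$ depends only on $\pi,\pi'$, with $g_{\pi,\pi}=\id$ and $g_{\pi',\pi''}\circ g_{\pi,\pi'}=g_{\pi,\pi''}$, this isomorphism is canonical and cocycle-compatible in $\pi,\pi'$.

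I expect the main obstacle to be not the construction but the bookkeeping: checking that $\{g_*\}$ is compatible with the full $2$-categorical coherence data of $\PF[\Z]$ (the associativity and unit comparisons for $\otimes_{\MM(\cdot)}$, and functoriality with respect to vertical and horizontal composition of $2$-morphisms). This should reduce cleanly to the identity $\pi'\circ g_*=\pi$ together with the observation that $\MM_{D,A,\pi}$ and $\MM_{D,A,\pi'}$ are each obtained from the single $2$-functor $\MM_{\std}$ of Proposition~\ref{prop:stprism2M} by conjugating with the respective standardization isomorphisms, so that the required coherences are inherited from those of $\MM_{\std}$. A secondary point needing care is the smoothing issue above — a standardization is only a homeomorphism — which one could instead sidestep by defining the comparison directly as the composite of category isomorphisms $(\pi')^{-1}\circ\pi$ furnished by Convention~\ref{conv:disk boundary}, at the cost of making the role of the hypothesis $[\pi]=[\pi']$ slightly less visible.
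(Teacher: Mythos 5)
Your proposal is correct, but it packages the isomorphism differently from the paper. The paper first normalizes: since $[\pi]=[\pi']$, one may assume WLOG that $\pi'\circ\pi^{-1}$ fixes each $\Ast_i$ pointwise, so that $\pi^{-1}(\pp)=(\pi')^{-1}(\pp)$ and the two $2$-functors have literally \emph{equal} values on objects; then $\pi$ and $\pi'$ are isotopic rel boundary, and the isotopy (applied to the tangles $\pi T$, $\pi' T$ in the standard disk) induces the canonical isomorphisms $\PMS_{\std}(\pi T|V_\bullet|\pi S)\cong\PMS_{\std}(\pi' T|V_\bullet|\pi' S)$, compatibly with prism stacking. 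So the paper's transformation is the identity on objects, with all the content in the $1$-morphism components. You instead put the content in the object components: the self-homeomorphism $g=(\pi')^{-1}\circ\pi$ of $(D,A)$ preserves each $A_i$ (this is exactly where $[\pi]=[\pi']$ enters, matching its use in the paper), and the tautology $\pi'\circ g_*=\pi$ makes the bimodules agree on the nose after transport, so the $2$-cells can be taken to be identities. Your route avoids invoking the isotopy $\pi\simeq\pi'$ (Alexander-type) at the level of the comparison itself, and makes the cocycle condition in $(\pi,\pi')$ transparent; the cost is that the components $\upsilon_\pp$ are nontrivial invertible bimodules rather than identities, and that you must (as you note) either smooth $g$ or, better, define $g_*$ directly as the composite $(\pi')^{-1}\circ\pi$ of the category isomorphisms from Convention~\ref{conv:disk boundary} — the same device the paper already uses to let homeomorphisms act on Bar-Natan categories. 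One small point of hygiene: in $\PF[\Z]$ the components of a pseudonatural equivalence are bimodules, so you should pass from the functors $g_*$ to the bimodules they represent before asserting the naturality squares commute; with that adjustment everything you wrote goes through.
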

\begin{proof}
This follows from the isotopy invariance of all constructions involved. 
Indeed, without loss of generality, we may assume that the homeomorphism 
$\pi' \circ \pi^{-1} \colon (\Dst,\Ast) \to (\Dst,\Ast)$ fixes each $\Ast_i$ point-wise.
It follows that this $\pi$ and $\pi'$ are isotopic rel boundary. 
From this, we obtain canonical isomorphisms of the prism spaces
\[
\PMS_{D,A,\pi}(T|V_\bullet|S)=\PMS_{\std}(\pi T|V_\bullet|\pi S)
	\cong \PMS_{\std}(\pi' T|V_\bullet|\pi' S)=\PMS_{D,A,\pi'}(T|V_\bullet|S),
\]
which are easily seen to be functorial in $V$ and compatible with the evident prism stacking maps.
\end{proof}

Next we investigate the dependence on the linear refinement $[\pi]$.  
The key is provided by the following.

\begin{prop}
	\label{prop:rot2functor}
Let $\rho \colon \bn^{\otimes k} \to \bn^{\otimes k}$ be the $2$-functor that
cyclically permutes the tensor factors. The rotation functor from
Lemma~\ref{lem:fullrotation} determines a pseudonatural equivalence
(natural isomorphism of $2$-functors) $\upsilon \colon
\MM_{\std}\xRightarrow{\cong} \MM_{\std} \circ \rho$ such that the $k$-fold
iteration is canonically identified with the identity equivalence.
\end{prop}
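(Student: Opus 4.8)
The plan is to construct $\upsilon$ directly from the rotation auto-equivalences of the cap-tangle categories $\bn^0_N$, to exhibit its pseudonaturality $2$-cells by applying the functor $\KhEval{-}$ to isotopies of the planar diagrams that define the prism spaces, and finally to identify the $k$-fold composite using the $2\pi$-rotation isomorphisms $\mathrm{twist}_\pm$ of Lemma~\ref{lem:fullrotation}.

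First I would fix the convention $\rho(n_1,\dots,n_k)=(n_2,\dots,n_k,n_1)$ and set $\upsilon_\nn$ to be the graph bimodule of the equivalence
\[
R_\nn \colon \bn_{n_1|n_2|\cdots|n_k} \xrightarrow{\ \cong\ } \bn_{n_2|\cdots|n_k|n_1} = \MM_{\std}(\rho\nn),
\]
obtained by iterating the rotation functor of Lemma~\ref{lem:fullrotation} enough clicks to carry the first partition block cyclically past the others. Concretely, $\upsilon_\nn$ is the identity bimodule of $\MM_{\std}(\rho\nn)$ with its right action restricted along $R_\nn$; since $R_\nn$ is an equivalence, $\upsilon_\nn$ is an invertible $1$-morphism of $\PF[\Z]$ with inverse the graph bimodule of $R_\nn^{-1}$, so $\upsilon$ will be an equivalence component-wise.

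Next, for the pseudonaturality $2$-cell attached to a $1$-morphism $V_\bullet\in(\bn^{\otimes k})^\nn_\mm$: tensoring with a graph bimodule is restriction of scalars along the corresponding functor, so, unwinding the Morita composites, the square comes down to a natural isomorphism
\[
\PMS_{\std}(\apT\,|\,V_\bullet\,|\,\apS) \;\cong\; \PMS_{\std}\big(R_\nn(\apT)\,|\,\rho V_\bullet\,|\,R_\mm(\apS)\big), \qquad \rho V_\bullet = (V_2,\dots,V_k,V_1).
\]
I would produce this by applying $\KhEval{-}$ to the isotopy of the closed planar diagram in \eqref{eqn:BNprismlinear} that drags the first block of strands, together with the box $V_1$, once around the cyclic arrangement of the boxes $V_1,\dots,V_k$: on the cap-tangle inputs this isotopy is precisely the rotation mechanism of Proposition~\ref{prop:sphericality}, on the $V_i$ it is the cyclic relabelling, and the $q$-prefactors in \eqref{eqn:BNprismlinear} are unchanged since the total number of endpoints is preserved. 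Functoriality of $\KhEval{-}$, together with the fact that this isotopy commutes with the saddle and tube cobordisms implementing the bimodule actions and the prism stacking maps of Definition~\ref{def:prismstackingmaps}, shows that these components assemble into a bimodule isomorphism, natural in $V_\bullet$ by naturality of $\mathrm{rot}$. The coherence axioms of a pseudonatural transformation then reduce, after applying $\KhEval{-}$, to further isotopies of such diagrams: compatibility with identities uses $R_\nn(\one_\nn)\cong\one_{\rho\nn}$ and the identification of Remark~\ref{rem:BNhoms}, while compatibility with horizontal composition uses that the rotation functors intertwine the tangle composition $\star$ with itself, matched against the isomorphism $\MM(V_\bullet)\otimes_{\MM(\mm)}\MM(W_\bullet)\cong\MM(V_\bullet\star W_\bullet)$ of Proposition~\ref{prop:stprism2M}. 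These checks are formally parallel to the change-of-standardization comparisons developed in \S\ref{ss:arbitrary disks}.

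Finally, for the $k$-fold iteration: since $\rho^k=\id_{\bn^{\otimes k}}$, the composite $\upsilon^{\circ k}$ is, on the object $\nn$, the graph bimodule of $R_{\rho^{k-1}\nn}\circ\cdots\circ R_{\rho\nn}\circ R_\nn=\mathrm{rot}^{\,n_1+\cdots+n_k}=\mathrm{rot}^N$ on $\bn^0_N$, and the natural isomorphism $\mathrm{twist}_+\colon\id\xRightarrow{\cong}\mathrm{rot}^N$ of Lemma~\ref{lem:fullrotation} identifies this graph bimodule with the identity; checking that this identification is compatible with the accumulated $2$-cells (once more an application of $\KhEval{-}$ to an isotopy) and suppressing the $2\pi$-rotation as in Remark~\ref{rem:2pirot} yields the canonical identification $\upsilon^{\circ k}\cong\id_{\MM_{\std}}$. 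I expect the main obstacle to be the middle step: verifying that the rotation data, which a priori live only at the level of cap tangles, descend to genuine bimodule isomorphisms compatible with all the prism-stacking and cobordism structure maps, and that the resulting $2$-cells satisfy the pseudonaturality coherence hexagons; a secondary nuisance is tracking the $\mathrm{twist}_\pm$ $2\pi$-rotations throughout, so that the identification in the last step is genuinely canonical rather than merely existing.
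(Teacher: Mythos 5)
Your proposal is correct and follows essentially the same route as the paper: the paper also takes $\upsilon_{\nn}$ to be the (graph) bimodule of the rotation equivalence $\mathrm{rot}^{n_k}$ from Lemma~\ref{lem:fullrotation}, obtains the $1$-morphism components from the isomorphism $\PMS_{\std}(\apT|V_\bullet|\apS)\cong\PMS_{\std}(\mathrm{rot}\,\apT|\rho V_\bullet|\mathrm{rot}\,\apS)$ of Proposition~\ref{prop:sphericality} (which is exactly the isotopy/sphericality mechanism you describe by hand), and trivializes the $k$-fold iterate via $\mathrm{twist}_+$. The only differences are cosmetic: you rotate in the opposite cyclic direction and you spell out the coherence checks that the paper declares straightforward.
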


\begin{proof}
Lemma~\ref{lem:fullrotation} gives an equivalence
\begin{equation}\label{eq:prism rotator on objects}
\mathrm{rot}^{n_k} \colon \bn_{n_1|n_2|\cdots|n_k} \rightarrow \bn_{n_k|n_1|\cdots|n_{k-1}}
\end{equation}
and Proposition \ref{prop:sphericality} gives an isomorphism
\begin{equation}
	\label{eq:prismonerotation}
\mathrm{r}_{m_k}^{n_k} \colon \PMS_{\std}(\apT|V_1,\dots, V_k|\apS) \xrightarrow{\cong} 
	\PMS_{\std}(\mathrm{rot}^{n_k} \apT|V_k, V_1,\dots, V_{k-1}| \mathrm{rot}^{m_k} \apS)
\end{equation}
that is functorial in the $V_j\in \bn_{m_j}^{n_j}$ and compatible with the 
prism stacking maps.
Further, Proposition \ref{prop:sphericality} shows that the composition
\[
\PMS_{\std}(\apT | V_1,\dots, V_k | \apS) 
	\xrightarrow{\mathrm{r}_{m_1}^{n_1} \circ \cdots \circ \mathrm{r}_{m_k}^{n_k}} 
	\PMS_{\std}(\mathrm{rot}^{N} \apT|V_1,\dots, V_{k}| \mathrm{rot}^{N} \apS)
\xrightarrow{\cong} \PMS_{\std}(\apT|V_1,\dots, V_k|\apS)
\]
is the identity.
Here, the last isomorphism is 
induced from the natural isomorphism 
$\mathrm{rot}^N \xrightarrow{(\mathrm{twist}_+)^{-1}} \id$ of Lemma~\ref{lem:fullrotation}
(as in the final statement of Proposition~\ref{prop:sphericality}).

Now we must assemble these isomorphisms into the 
pseudonatural equivalence $\upsilon$ as in the statement. 
For this, 
we must specify its $0$- and $1$-morphism components, 
and check the necessary naturality/compatibilities.

Let $\mathbf{n} = (\pp^{n_1},\ldots,\pp^{n_{k-1}},\pp^{n_k})$ be an object of $\bn^{\otimes k}$, 
and let $\mathbf{n}'=\rho(\mathbf{n}) = (\pp^{n_k},\pp^{n_1},\ldots,\pp^{n_{k-1}})$.
The component $\upsilon_{\mathbf{n}}$ is an invertible
$\big(\MM(\nn'),\MM(\nn)\big)$-bimodule.
Since $\MM(\mathbf{n}) = \bn_{n_1 | \cdots | n_k}$ 
and $\MM(\mathbf{n}') = \bn_{n_k | n_1 \cdots | n_{k-1}}$,
we let this be the bimodule incarnation of the functor \eqref{eq:prism rotator on objects},
i.e.~
\[
{}_{\apT}(\upsilon_{\nn})_{\apS} = \Hom_{\bn_{n_k | n_1 | \cdots | n_{k-1}}}\big( \mathrm{rot}^{n_k}(\apS),\apT \big).
\]
This is an invertible bimodule since $\mathrm{rot}^{n_k}$ is an equivalence of categories.

Given a $1$-morphism $V_\bullet = (V_1,\ldots,V_k) \colon \mathbf{m} \to
\mathbf{n}$, the corresponding component of $\upsilon$ is an invertible bimodule
morphism $\upsilon_{V_\bullet} \colon (\MM\circ\rho)(V_\bullet)
\otimes_{\MM(\mathbf{m}')} \upsilon_{\mathbf{m}} \to \upsilon_{\mathbf{n}}
\otimes_{\MM(\mathbf{n})} \MM(V_\bullet)$ that is natural in $V_\bullet$ and
compatible with identity $1$-morphisms and horizontal composition.  
Equivalently (since $\upsilon_{\mathbf{n}}$ is an invertible bimodule), 
it suffices to find such a bimodule morphism 
$\upsilon_{\mathbf{n}}^{-1} \otimes_{\MM(\mathbf{n}')} (\MM \circ \rho)(V_\bullet) 
	\otimes_{\MM(\mathbf{m}')} \upsilon_{\mathbf{m}} 
	\to \MM(V_\bullet)$.
The latter is simply given as the isomorphism
\[
\PMS_{\std} \big(\mathrm{rot}^{n_k}(-) | V_k,V_1,\ldots,V_{k-1} | \mathrm{rot}^{m_k}(-) \big)
	\xrightarrow{\cong} \PMS_{\std}( - | V_1,\ldots,V_k | - )
\]
of $(\bn_{n_1 | \cdots | n_k},\bn_{m_1 | \cdots | m_k})$-bimodules 
(inverse to the isomorphism)
from \eqref{eq:prismonerotation}, 
whose naturality and compatibility properties are straightforward.
Lastly, the $k$-fold iteration is naturally isomorphic to the identity 
via Proposition~\ref{prop:sphericality}.
\end{proof}

\begin{cor}
	\label{cor:prism2M2}
Let $(D,A)$ be a disk as in Convention \ref{conv:disk conventions}, with standardizations $\pi$ and $\pi'$.  
If $\ell\in \Z/k\Z$ is such that $[\pi'](\ma) = [\pi](\zeta^{\ell}\ma)$ for all $\ma\in \pi_0(A)$, 
then we have a canonical isomorphism $\MM_{D,A,\pi'}\circ \rho^{\ell} \cong \MM_{D,A,\pi}$. \qed
\end{cor}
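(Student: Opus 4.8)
The plan is to reduce the statement to the case where $\pi'$ is already
standardized so that the two linear refinements differ by a single cyclic
shift, and then iterate the pseudonatural equivalence $\upsilon$ from
Proposition \ref{prop:rot2functor}. First I would observe that
$\MM_{D,A,\pi} = \MM_{\std} \circ \pi$ and
$\MM_{D,A,\pi'} = \MM_{\std} \circ \pi'$, where $\pi$ and $\pi'$ are viewed as
isomorphisms from $\bn^{\otimes k}$ to itself via the identification of
$\bn(D,\cdot)$ with the standard partitioned cap-tangle categories as in
Convention \ref{conv:disk boundary}; the hypothesis
$[\pi'](\ma) = [\pi](\zeta^\ell \ma)$ says precisely that $\pi' \circ \pi^{-1}$,
as a standardization of $(\Dst,\Ast)$, induces on $\pi_0(\Ast)$ the cyclic
shift by $\ell$. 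By Lemma \ref{lem:stiso}, changing a standardization within a
fixed linear refinement only changes $\MM$ up to canonical isomorphism, so
without loss of generality we may assume $\pi' \circ \pi^{-1}$ is the
``rotation by $\ell$'' standardization, i.e.\ the one realizing the cyclic
permutation $2$-functor $\rho^\ell \colon \bn^{\otimes k} \to \bn^{\otimes k}$.

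Next I would invoke Proposition \ref{prop:rot2functor}, which provides a
pseudonatural equivalence $\upsilon \colon \MM_{\std} \xRightarrow{\cong}
\MM_{\std} \circ \rho$. Iterating $\ell$ times gives a pseudonatural
equivalence $\upsilon^{(\ell)} \colon \MM_{\std} \xRightarrow{\cong}
\MM_{\std} \circ \rho^{\ell}$, and (using the final clause of
Proposition \ref{prop:rot2functor}, that the $k$-fold iterate is canonically
trivial) this is well-defined independently of the representative of
$\ell \bmod k$. Precomposing everything with $\pi$ and using
$\MM_{\std} \circ \rho^\ell \circ \pi \cong \MM_{\std} \circ \pi' =
\MM_{D,A,\pi'}$ (the middle isomorphism is Lemma \ref{lem:stiso} applied to
$\pi'$ and $\rho^\ell \circ \pi$, which share a linear refinement by
construction) yields the desired canonical isomorphism
$\MM_{D,A,\pi} \cong \MM_{\std} \circ \pi \cong \MM_{\std} \circ \rho^\ell
\circ \pi \cong \MM_{D,A,\pi'} \circ \rho^{-\ell}$; reindexing (or running the
argument with $\ell$ replaced by $-\ell$, using invertibility of $\upsilon$)
gives the statement in the form $\MM_{D,A,\pi'} \circ \rho^{\ell} \cong
\MM_{D,A,\pi}$.

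The main obstacle is bookkeeping rather than conceptual: one must be careful
that the chain of identifications of $\bn(D,\pi^{-1}(\pp))$ with
$\bn_{n_1|\cdots|n_k}$ is consistent on both sides, so that the cyclic relabeling
of tensor factors induced by $\rho^\ell$ matches exactly the relabeling of the
arcs $A_i$ induced by passing from $[\pi]$ to $[\pi']$, and that the
$2$-isomorphism data (the bimodule components $\upsilon_{\nn}$ and the
bimodule-morphism components $\upsilon_{V_\bullet}$) compose correctly under
iteration. The only genuinely substantive input is the triviality of the
$k$-fold iterate of $\upsilon$, which is exactly where Proposition
\ref{prop:sphericality} was used in the proof of Proposition
\ref{prop:rot2functor}; given that, the corollary is a formal consequence, and
I would present it as such with the diagram chase left to the reader.
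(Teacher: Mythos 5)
Your proposal is correct and matches the paper's intended argument: the corollary is stated with no written proof precisely because it is the formal combination of Lemma \ref{lem:stiso} (independence within a fixed linear refinement) with the $\ell$-fold iterate of the pseudonatural equivalence $\upsilon$ from Proposition \ref{prop:rot2functor}, whose $k$-fold triviality makes the result depend only on $\ell \bmod k$. The bookkeeping caveats you flag (direction of the shift, consistency of the identifications from Convention \ref{conv:disk boundary}) are the only content beyond that, and you handle them correctly.
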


\section{The surface invariant}
\label{s:surfaceinvt}

Corollaries \ref{cor:prism2M1} and \ref{cor:prism2M2} give an invariant of
surfaces that are homeomorphic to disks (with a marked submanifold of the
boundary) that takes the form of a $2$-functor 
from the tensor product of Bar-Natan bicategories to the Morita bicategory.
In this section, we extend this invariant to marked surfaces $(\Sigma,\Pi)$ without
closed components. Recall the convention from the introduction that $\Pi$ is a
finite set of oriented intervals, disjointly embedded in $\partial \Sigma$. The
resulting invariant will take values in dg bimodule-valued modules over an
appropriate tensor product of Bar-Natan bicategories $\bni(\Pi)$, e.g.~for each
object of the latter, we associate a dg category to our marked surface. The $1$-
and $2$-morphisms of $\bni(\Pi)$ are then assigned dg bimodules and natural
transformations. When $\Sigma=D$, this construction recovers Definition
\ref{def:prism2M}, up to quasi-equivalence.

\subsection{Marked surfaces and seams}
\label{ss:marks and seams}

We begin by specifying our conventions for the types of surfaces we consider.

\begin{definition}\label{def:marked surface}
A \emph{marked surface} is a pair $(\Sigma,\Pi)$ where $\Sigma$ is a
compact, oriented surface and $\Pi=\{\ma_1,\ldots,\ma_\ell \}$ is a finite, 
totally ordered set of pairwise disjoint oriented arcs $\ma_i \subset \del \Sigma$. 
\end{definition}

Contrasting the setup of Convention \ref{conv:disk conventions}, 
in Definition \ref{def:marked surface} we do not require the orientation 
of the arcs $\ma_i$ to match that of $\del \Sigma$.
If $(\Sigma,\Pi)$ is such a marked surface, 
then we let $\sgn_\Sigma\colon\Pi\rightarrow \{\pm 1\}$ be defined by
\[
\sgn_{\Sigma}(\ma_i) = 
\begin{cases}
1 & \text{if the orientations on $\ma_i$ and $\del \Sigma$ agree} \\
-1 & \text{if the orientations on $\ma_i$ and $\del \Sigma$ are opposite}.
\end{cases}
\]
We say that $(\Sigma,\Pi)$ is \emph{positive} if $\sgn_\Sigma(\ma_i)=+1$ for all $i$.
Further, set $A(\Pi):=\bigcup_{i=1}^\ell \ma_i$,
which is regarded as an oriented 1-manifold equipped with a function 
$\sgn_{\Sigma} \colon \pi_0(A(\Pi))\rightarrow \{\pm1\}$. 

\begin{rem}
	\label{rem:truncation}
In the literature (e.g.~\cite{MR830043,DyKa}), 
one sometimes encounters a different notion of marked surface, 
namely a surface $X$ with a chosen finite collection of ``marked'' points, 
such that each boundary component contains at least one marked point.
It is possible to translate between this formulation and 
our notion of marked surface from Definition \ref{def:marked surface} as follows.

Suppose $X$ is a compact oriented surface, 
$\BB\subset \del X$ is a finite set of marked points intersecting each component of $\del X$, 
and $\CB\subset \operatorname{int} X$ is a finite set of marked points in the interior of $X$.  
Choose a total order on $\pi_0(\partial X \smallsetminus \BB)$, 
as well as an orientation on each of these components.
	
Given this data, 
we construct a marked surface in the sense of Definition \ref{def:marked surface} 
by choosing a tubular neighborhood $U$ of $\BB\cup \CB$ and setting
\[
\Sigma = \overline{X \smallsetminus U} \, , \quad
\Pi = \pi_0 \big(\overline{ (\del X) \smallsetminus U}\big)
\]
where here $\overline{(-)}$ denotes the closure. 
In other words, 
each marked point in $\BB$ 
contributes an interval to $\Pi$ 
(lying to one side of it on a component of $\del X$),
while each marked point in $\CB$ gives rise to an $S^1$ component of 
$\del \Sigma$ that is disjoint from $A(\Pi)$.

The ``inverse'' construction is provided by contracting all components of 
$\partial \Sigma \smallsetminus \operatorname{int}(A(\Pi))$ to (marked) points.
Hence, the connected components of $\partial\Sigma \smallsetminus \operatorname{int} A(\Pi)$ 
in our setup correspond to markings in the literature.
As an example, \eqref{eq:polygon} depicts a marked surface $(\Sigma,\Pi)$ 
on the left and the corresponding $(X,\BB,\CB=\emptyset)$ on the right.
	\end{rem}
	
We now formulate what it means to cut a surface $\Sigma$ along a neatly embedded arc $\gamma$. 
Such arcs meet the boundary of $\Sigma$ only at their boundary, 
hence we will refer to them as \emph{internal arcs}. 
(These should be viewed in contrast to the arcs $\beta \in \Pi$ from Definition \ref{def:marked surface}.)

\begin{construction}[Cutting along an arc]
		\label{const:cutting along arc}
Let $\gamma$ be an internal arc in a compact, oriented surface $\Sigma$. 
Let $\overline{\Sigma\smallsetminus \gamma}$ denote the result of cutting $\Sigma$ along $\gamma$, 
pictured as in:
		\[
		\begin{tikzpicture}
		[anchorbase,scale=.8,rotate=90]
		\fill [fill=gray, opacity=.1]
		(-1.5,1.5) to[out=-30,in=180] (0,1) to[out=0,in=210] (1.5,1.5) to 
		(1.5,-1.5) to[out=150,in=0] (0,-1) to[out=180,in=30] (-1.5,-1.5) to (-1.5,1.5);
		\draw[very thick, gray]  (-1.5,1.5) to[out=-30,in=180] (0,1) to[out=0,in=210] (1.5,1.5);
		\draw[very thick, gray] (1.5,-1.5) to[out=150,in=0] (0,-1) to[out=180,in=30] (-1.5,-1.5);
		\path[very thick,seam,<-] (0,-1) edge node[above] {$\scs{\gamma}$} (0,1);
		\node at (-1,0) {$\circlearrowleft$};
		\end{tikzpicture}
		\ \ \ \mapsto  \ \ \ 
		\begin{tikzpicture}
		[anchorbase,scale=.8,rotate=90]
		\begin{scope}[shift={(-.2,0)}]
		\fill [fill=gray, opacity=.1]
		(-1.5,1.5) to[out=-30,in=180] (0,1) to (0,-1) to[out=180,in=30] (-1.5,-1.5) to (-1.5,1.5);
		\draw[very thick, gray]
		(-1.5,1.5) to[out=-30,in=180] (0,1) to (0,-1) to[out=180,in=30] (-1.5,-1.5);
		\path[very thick,seam,<-] (0,-1) edge node[below] {$\scs{(\gamma,-)}$} (0,1);
		\end{scope}
		\begin{scope}[shift={(.2,0)}]
		\fill[fill=gray, opacity=.1]
		(1.5,-1.5) to[out=150,in=0] (0,-1) to  (0,1) to[out=0,in=210] (1.5,1.5) to (1.5,-1.5);
		\draw[very thick, gray]
		(1.5,-1.5) to[out=150,in=0] (0,-1) to  (0,1) to[out=0,in=210] (1.5,1.5);
		\path[very thick,seam,<-] (0,-1) edge node[above] {$\scs(\gamma,+)$} (0,1);
		\end{scope}
		\end{tikzpicture}
		\]
The net result of this construction is to delete (a tubular neighborhood of) $\gamma$ from $\Sigma$
and then close by gluing in two disjoint copies of $\gamma$, 
denoted by $(\gamma,\pm)$ according to their compatibility with the orientation induced from $\Sigma$. 
More generally, suppose $\Gamma$ is a finite set of pairwise disjoint internal arcs 
in an oriented surface $\Sigma$.
Iterating Construction \ref{const:cutting along arc} for all the arcs in 
$\Gamma$ defines $\overline{\Sigma \smallsetminus \Gamma}$.

		\end{construction}

\begin{definition}
	\label{def:seamed surface}
A \emph{seamed marked surface} is a tuple $(\Sigma,\Pi;\Gamma)$ 
where $(\Sigma,\Pi)$ is a marked surface 
and $\Gamma$ is a finite set of pairwise disjoint internal arcs in $\Sigma$, 
each disjoint from $A(\Pi)$, 
such that the \emph{cut surface} $\cutS:=\overline{\Sigma \smallsetminus \Gamma}$ 
is homeomorphic to a disjoint union of closed disks. 

The arcs in $\Gamma$ will be called \emph{seams}. 
The connected components of $\cutS$ will be called \emph{regions}, 
and we denote the set of regions by $\Reg(\Sigma;\Gamma)$.
	\end{definition}

Observe that a marked surface $\Sigma$ admits the structure of a seamed marked
surface if and only if $\Sigma$ admits a handle decomposition 
consisting only of $0$- and $1$-handles or, equivalently, 
if every connected component has non-empty boundary. 
Moving forward, \emph{we will only consider these marked surfaces}.
We will typically drop either $\Gamma$ or $\Pi$ 
from the notation in Definition \ref{def:seamed surface} when they are empty,
or when a given construction is agnostic to their presence.

\begin{construction} (Cutting a seamed marked surface)
	\label{constr:cutseamedsurf}
Let $(\Sigma,\Pi;\Gamma)$ be a seamed marked surface 
and write $\Pi=\{\b_1,\dots,\b_\ell\}$ in its given total order. 
The additional choice of a total order on $\Gamma=\{\gamma_1,\ldots,\gamma_g\}$ 
equips the cut surface $\cutS$ with the structure of 
a seamed marked surface $(\cutS,\cutP) = (\cutS,\cutP;\emptyset)$, 
where $\cutP:= \gpm\cup \Pi$ is ordered via
\begin{equation}
	\label{eq:totalorder}
(\gamma_1,-)<(\gamma_1,+)< (\gamma_2,-) < (\gamma_2,+)<
	\cdots<(\gamma_g,-)<(\gamma_g,+)<\b_1<\cdots<\b_\ell \, .
\end{equation}
The arcs in $\gpm$ will be referred to as \emph{cut seams}.
\end{construction}

Let $\reg\colon \cutP \to \Reg(\Sigma;\Gamma)$ 
denote the map sending an arc to the region it abuts.
For each $D\in \Reg(\Sigma;\Gamma)$, let
$\cutP|_D := \reg\inv(D)$, 
so $(D,\cutP|_D)$ is a marked surface 
using the total order of $\cutP|_D$ given by restricting \eqref{eq:totalorder}.

\begin{example}
Here we display $(\Sigma, \{\ma_1,\ma_2,\ma_3\}; \{\gamma\})$ and its (lone) region:
	\[
	\begin{tikzpicture}[anchorbase,scale=1.25]
	\fill [fill=gray, opacity=.1] (.25,0) arc[start angle=0,end angle=360,radius=.25] to (.75,0)
		arc[start angle=0,end angle=-360,radius=.75] to (.25,0);
	\draw[very thick, gray] (0,0) circle (.25);
	\draw[very thick, gray] (0,0) circle (.75);
	\draw[very thick,seam,<-] (.25,0) to node[above]{\scs$\gamma$} (.75,0);
	\draw [blue,ultra thick,domain=45:135,->] plot ({.75*cos(\x)}, {.75*sin(\x)});
	\draw [blue,ultra thick,domain=225:315,<-] plot ({.75*cos(\x)}, {.75*sin(\x)});
	\draw [blue,ultra thick,domain=90:270,<-] plot ({.25*cos(\x)}, {.25*sin(\x)});
	\node[blue] at (0,.875) {\scs$\ma_1$};
	\node[blue] at (0,-.875) {\scs$\ma_2$};
	\node[blue] at (-.425,0) {\scs$\ma_3$};
	\end{tikzpicture}
	\quad 
	\xmapsto{\text{cut along } \gamma} 
	\quad
	\begin{tikzpicture}[anchorbase,scale=1]
	\fill [fill=gray, opacity=.1] (-1.5,-.5) rectangle (1.5,.5);
	\draw[very thick, gray] (1.5,.5) to (-1.5,.5);
	\draw[very thick, gray] (1.5,-.5) to (-1.5,-.5);
	\draw[very thick,seam,<-] (1.5,-.5) to node[right]{\scs$(\gamma,-)$} (1.5,.5);
	\draw[very thick,seam,<-] (-1.5,-.5) to node[left]{\scs$(\gamma,+)$} (-1.5,.5);
	\draw [blue,ultra thick,->] (-1.125,.5) to (-.375,.5);
	\draw [blue,ultra thick,<-] (.375,.5) to (1.125,.5);
	\draw [blue,ultra thick,->] (-.5,-.5) to (.5,-.5);
	\node[blue] at (.75,.675) {\scs$\ma_1$};
	\node[blue] at (-.75,.675) {\scs$\ma_2$};
	\node[blue] at (0,-.675) {\scs$\ma_3$};
	\end{tikzpicture}
	\]
	With the orientation induced by the standard orientation of the plane, 
	we have $\sgn(\ma_1) = +1 = \sgn(\ma_3)$ and $\sgn(\ma_2) = -1$.
	\end{example}

If $\iota\colon I\rightarrow \Sigma$ is an arc with image $C$, 
then a finite set of points $\pp\subset C$ will be called \emph{standard} if $\pp = \iota(\pp^n)$, 
where $\pp^n \subset (0,1)$ is our chosen set of $n$ points from \S \ref{ss:BN}. 
If $C_1\cup\cdots \cup C_r$ is a union of finitely pairwise disjoint arcs in $\Sigma$, 
then $\pp\subset C_1\cup \cdots \cup C_r$ is \emph{standard} if each $\pp\cap C_i$ is standard in $C_i$.

\begin{definition}\label{def:tangle in SMS}
A (crossingless) \emph{tangle} in a seamed marked surface $(\Sigma,\Pi;\Gamma)$ 
is a neatly embedded $1$-submanifold 
$(T,\partial T) \subset (\Sigma, A(\Pi))$ 
such that all intersections $T\cap \ma_i$ and $T\cap \gamma_j$ are transverse and standard. 
We let $\Tan(\Sigma,\Pi;\Gamma)$ denote the set of all such tangles.
	\end{definition}	
	
Refining the notation in Definition \ref{def:tangle in SMS},
if $\pp \subset A(\Pi)$ 
is standard, 
then we let $\Tan(\Sigma,\mathbf{p};\Gamma)$ denote the 
subset of $\Tan(\Sigma,\Pi;\Gamma)$ consisting of tangles with $\del T = \pp$.
In the event that $\Gamma$ is empty, 
we abbreviate $\Tan(\Sigma,\mathbf{p}):=\Tan(\Sigma,\mathbf{p};\emptyset)$.

\subsection{Chain complexes for glued disks}
\label{ss:cflat i}

Fix a seamed marked surface $(\Sigma,\Pi;\Gamma)$. 
In this section we define the complex $\PMS_{\Sigma,\Pi;\Gamma}(T|V|S)$ 
associated to a pair of tangles $T,S \in \Tan(\Sigma,\Pi;\Gamma)$
and a 1-morphism $V$ in the bicategory $\bn(\Pi)$ defined as follows.

\begin{definition}\label{def:bn Pi}
Let $(\Sigma,\Pi)$ be a marked surface.  
Associate to $\Pi = \{\ma_1,\ldots,\ma_{\ell}\}$ the following bicategory:
\begin{equation}\label{eq:boundary bicat}
\bn(\Pi) 
:= \bn^{\sgn_\Sigma(\ma_1)} \otimes \cdots \otimes \bn^{\sgn_\Sigma(\ma_\ell)}
\end{equation}
where $\bn^+=\bn$ and $\bn^-=\bn^{\op}$.
\end{definition}

Note that the order on the tensor factors in \eqref{eq:boundary bicat} agrees with the 
total order on $\Pi$ given in Definition \ref{def:marked surface}.
If $(\Sigma,\Pi)$ is positive, then we have a canonical identification 
$\bn(\Pi) \cong \bn^{\otimes \ell}$. 
Otherwise we will use the following identification. 

\begin{definition}\label{def:prismBC to standardBC} 
Given a marked surface $(\Sigma,\Pi)$,
let $\Phi=\Phi_{\Sigma,\Pi}\colon \bn(\Pi) \to \bn^{\otimes \ell}$ 
be the 2-functor obtained as the tensor product of 2-functors 
$\Id \colon \bn \to \bn$ on factors for which $\sgn_\Sigma(\ma_i)=+1$ and 
2-functors $r_{xz} \colon \bn^{\op} \to \bn$ on factors for which $\sgn_\Sigma(\ma_i)=-1$. 
Explicitly, on 1-morphisms we have
\begin{equation}\label{eq:Vpm}
\Phi(V_1,\dots,V_\ell) := (V_1^{\sgn_\Sigma(\ma_1)},\ldots,V_\ell^{\sgn_\Sigma(\ma_\ell)}) 
\quad \text{where} \quad
V_i^{\sgn_\Sigma(\ma_i)} = \begin{cases} V_i & \text{ if $\sgn_\Sigma(\ma_i)=+1$} \\ 
				r_{xz}(V_i) & \text{ if $\sgn_\Sigma(\ma_i)=-1$.}
				\end{cases}
\end{equation}
\end{definition}

\begin{example}
	\label{ex:instances of boundary bicats}
Let $(\Sigma,\Pi;\Gamma)$ be a seamed marked surface. 
Choose a total order $\Gamma=\{\gamma_1,\ldots,\gamma_g\}$ and consider the 
marked surface $(\cutS,\cutP)$ of Construction~\ref{constr:cutseamedsurf}.
As an important special cases of Definition \ref{def:bn Pi}, 
we have
\[
\bn(\cutP) = \bn(\gpm)\otimes \bn(\Pi)
\]
where $\bn(\gpm) :=\bigotimes_{i=1}^g (\bn^\op \otimes \bn)$
and (as above) $\bn(\Pi)=\bigotimes_{i=1}^\ell\bn^{\sgn_\Sigma(\b_i)}$.
\end{example}

\begin{conv}
	\label{conv:tangles to objects}
Given $S,T \in \Tan(\Sigma,\Pi;\Gamma)$, 
we introduce the following shorthand for certain 1-morphism categories 
of the bicategories appearing in Example \ref{ex:instances of boundary bicats}:
\begin{gather*}
\bni(\Pi)^{\partial T}_{\partial S}:= 
	\bigotimes_{i=1}^\ell \left(\bn^{|\b_i \cap \partial T|}_{|\b_i \cap \partial S|}\right)^{\sgn(\b_i)} 
\, , \quad 
\bn(\gpm)^{T}_{S} := 
	\bigotimes_{i\in 1}^g \left(\left(\bn^{|\gamma_i \cap T|}_{|\gamma_i \cap S|}\right)^\op 
	\otimes \bn^{|\gamma_i \cap T|}_{|\gamma_i \cap S|}\right)
\\
\bn(\cutP)^{T}_{S} := \bn(\gpm)^{T}_{S} \otimes  \bni(\Pi)^{\partial T}_{\partial S} \, .
\end{gather*}
Observe that each of these categories depends only very coarsely 
on the tangles $S,T \in \Tan(\Sigma,\Pi;\Gamma)$, 
i.e.~they only depend on the (standard) intersection of the tangles with the boundary edges and/or seams.
Consequently, given standard subsets $\pp,\qq \subset A(\Pi)$ we similarly 
denote the evident $1$-morphism category by $\bni(\Pi)^{\qq}_{\pp}$
\end{conv}

The bicategory $\bn(\cutP)$ is isomorphic to a tensor product of bicategories 
associated to the regions of $(\Sigma,\Pi;\Gamma)$.
Given an ordering $\Reg(\Sigma;\Gamma) = \{D_1,\ldots,D_R\}$
and tangles $S, T \in \Tan(\Sigma,\Pi;\Gamma)$, 
set $S_i := S|_{D_i}$ and $T_i := T|_{D_i}$.
There are then isomorphisms of (bi)categories
\begin{equation}
	\label{eq:reorder}
\bni(\cutP)\cong \bigotimes_{i=1}^R \bni\big(\cutP|_{D_i}\big) 
\, , \quad
\bni(\cutP)^{T}_{S} \cong 
	\bigotimes_{i=1}^R \bni\big(\cutP|_{D_i}\big)^{\partial T_i}_{\partial S_i}
\end{equation}
given by reordering tensor factors via the total ordering of $\cutP$.  
Since our regions are disks, 
we can now apply the prism spaces/modules from \S \ref{ss:arbitrary disks} 
to the present setup.

\begin{definition}[Cut surface modules]
\label{def:cut surface module}
Let $(\Sigma,\Pi;\Gamma)$ be a seamed marked surface. Choose:
\begin{enumerate}
	\item a linear ordering on the set of seams $\Gamma=\{\gamma_1,\ldots,\gamma_g\}$,
		\label{los}
	\item a linear ordering on the set of regions 
		$\Reg(\Sigma;\Gamma) = \{D_1,\ldots,D_R\}$, and
		\label{lor}
	\item a standardization $\pi_i$ of $(D_i,A_i)$, where $A_i=A(\cutP|_{D_i})$.
		\label{cos}
\end{enumerate}
With these choices made, 
and given tangles $T,S \in \Tan(\Sigma,\Pi;\Gamma)$, let
\begin{equation}
	\label{eq:MTSexpl}
\PMS_{\cutS,\cutP}(T|-|S) \colon 
		\bni(\cutP)^{T}_{S} \to \gModen{\Z}{\cring}
\end{equation}
be the tensor product $\bigotimes_{i=1}^R \PMS_{D_i,A_i,\pi_i}(T_i|-|S_i)$ of functors 
from Definition \ref{def:BNprismarbitrary},
precomposed with the tensor product $\bigotimes_{i=1}^R \Phi_{D_i,\cutP|_{D_i}}$ 
of functors from Definition \ref{def:prismBC to standardBC}, precomposed with the 
isomorphism \eqref{eq:reorder} which reorders the tensor factors.
\end{definition}

Unravelling Definition \ref{def:cut surface module}, 
the action of $\PMS_{\cutS,\cutP}(T|-|S)$ on objects 
$V \in \bni(\cutP)^{T}_{S}$ is given as follows.  
If $V=(V_h)_{h \in \cutP}$, then
\[
\PMS_{\cutS,\cutP}(T|V|S) = 
\bigotimes_{i=1}^R \PMS_{\std} \big( \pi_i T_i | 
	V^{\sgn_{\cutS}(h_{i,1})}_{h_{i,1}},\ldots,V^{\sgn_{\cutS}(h_{i,k_{i}})}_{h_{i,k_{i}}}
		 | \pi_i S_i \big)
\]
where here $h_{i,1},\ldots,h_{i,k_{i}}$
denote the elements of $\cutP|_{D_i}$ 
written in the order $[\pi_i] \in \LO(A(\cutP|_{D_i}))$ 
from Definition \ref{def:cyclic refinement}.
We will denote the extension of \eqref{eq:MTSexpl} to categories of bounded above chain 
complexes similarly:
\begin{equation}
	\label{eq:MTScxexpl}
	\PMS_{\cutS,\cutP}(T|-|S) \colon \Ch^- \left(\bni(\cutP)^T_S \right) 
		\rightarrow \Ch^-\big(\gModen{\Z}{\cring}\big) \, .
\end{equation}

We can view the cut surface module as being associated to the surface
$\overline{\Sigma \smallsetminus \Gamma}$ obtained by cutting $\Sigma$ along $\Gamma$. 
We now associate a $\bni(\Pi)^{\partial T}_{\partial S}$-module to $\Sigma$ itself
by ``gluing the cut surface module along $\Gamma_\pm$.''

\begin{definition}
\label{def:Cflat} 
Given $T,S \in \Tan(\Sigma,\Pi;\Gamma)$ consider the ``tensor product'' 
of bar complexes
	\begin{equation}
		\label{eq:bar}
	B^T_S:= \Big( \Bar^{|\gamma_1\cap T|}_{|\gamma_1\cap S|} , \ldots ,
	\Bar^{|\gamma_g \cap T|}_{|\gamma_g \cap S|} \Big) \in \Ch^-\left(\bni(\gpm)^T_S \right)
	\end{equation} 
and define
	\begin{equation}
		\label{eq:MTScx}
		\PMS_{\Sigma,\Pi;\Gamma}(T|-|S)
		\colon \bni(\Pi)^{\partial T}_{\partial S} 
			\rightarrow \Ch^-\big(\gModen{\Z}{\cring}\big)
	\end{equation} 
to be the functor given by the contraction of
the dg functor $\PMS_{\cutS,\cutP}(T|-|S)$ from \eqref{eq:MTScxexpl} with $B^T_S$, i.e.~
	\begin{equation}
		\label{eq:CflatcxW}
	\PMS_{\Sigma,\Pi;\Gamma}(T|V|S) := 
		\PMS_{\cutS,\cutP}(T | B_S^T,V_1,\ldots,V_\ell | S)
	\end{equation}
for all $V = (V_1,\ldots,V_\ell) \in \bn(\Pi)^{\partial T}_{\partial S}$.
	\end{definition}

Note that $\PMS_{\Sigma,\Pi;\Gamma}(T|V|S)$ is an object of $\Ch^-\big(\gModen{\Z}{\cring}\big)$. 
When any of the data $\Sigma,\Pi$ or $\Gamma$ are understood, 
we will omit them from the notation, 
e.g.~writing $\PMS(T|V|S)$ or $\PMS_{\Sigma,\Pi}(T|V|S)$.
In the special case where $\partial S = \partial T = \pp$ and $V=\one_\pp$, we abbreviate by writing
	\begin{equation}
		\label{eq:Cflatcx}
	\PMS_{\Sigma,\Pi;\Gamma}(T,S):= \PMS_{\Sigma,\Pi;\Gamma}(T|\one_\pp|S).
	\end{equation}
 
\begin{theorem}
Different choices for \eqref{los}, \eqref{lor}, or \eqref{cos}
in Definition \ref{def:cut surface module} yield 
canonically isomorphic functors 
$\PMS_{\Sigma,\Pi;\Gamma}(T|-|S)$
in Definition \ref{def:Cflat}.
\end{theorem}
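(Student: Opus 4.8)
The statement asserts independence of $\PMS_{\Sigma,\Pi;\Gamma}(T|-|S)$ from the three auxiliary choices \eqref{los}, \eqref{lor}, \eqref{cos}, so I would treat each choice in turn and argue that changing it produces a canonically isomorphic functor. The guiding principle is that each choice only affects how tensor factors are ordered or how regions are standardized, and each such reordering/reidentification is implemented by a coherent isomorphism coming from the structures already established in \S\ref{ss:arbitrary disks} and \S\ref{s:Rozanskybar}.

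First, for the standardization choice \eqref{cos}: fixing the linear orderings of $\Gamma$ and $\Reg(\Sigma;\Gamma)$, suppose we replace $\pi_i$ by another standardization $\pi_i'$ of $(D_i,A_i)$. If $[\pi_i]=[\pi_i']$ in $\LO(A_i)$, then Lemma~\ref{lem:stiso} gives a canonical isomorphism of prism $2$-modules $\MM_{D_i,A_i,\pi_i}\cong\MM_{D_i,A_i,\pi_i'}$, hence a canonical isomorphism of the corresponding prism functors, which tensors up (over the regions) to a canonical isomorphism of $\PMS_{\cutS,\cutP}(T|-|S)$ and therefore, after contracting with $B^T_S$, of $\PMS_{\Sigma,\Pi;\Gamma}(T|-|S)$. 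If instead $[\pi_i]\neq[\pi_i']$, they differ by a cyclic rotation, and Corollary~\ref{cor:prism2M2} supplies a canonical isomorphism $\MM_{D_i,A_i,\pi_i'}\circ\rho^{\ell_i}\cong\MM_{D_i,A_i,\pi_i}$. Here one must check that precomposing the $i$-th region's prism functor with $\rho^{\ell_i}$ (a cyclic permutation of the arcs $\cutP|_{D_i}$) is absorbed by the contraction with $B^T_S$ and by the reordering isomorphism \eqref{eq:reorder}: the point is that the bar complexes $\Bar^n_m$ carry the compatibility with $r_{xy}$-type symmetries from Proposition~\ref{prop:symmetries of B}, and, more to the point, the rotation isomorphism $\upsilon$ of Proposition~\ref{prop:rot2functor} is pseudonatural with $k$-fold iteration canonically the identity, so the two copies $(\gamma_j,\pm)$ of each seam and their $\Bar$-factor transform consistently when a region is rotated. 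So the net effect on $\PMS_{\Sigma,\Pi;\Gamma}$ is trivial up to canonical isomorphism.

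Next, for the region ordering \eqref{lor}: changing the ordering of $\Reg(\Sigma;\Gamma)$ only permutes the tensor factors in $\bigotimes_{i=1}^R\PMS_{D_i,A_i,\pi_i}(T_i|-|S_i)$ and correspondingly in \eqref{eq:reorder}. Since the symmetric monoidal structure on $\gMod{\Z}{\cring}$ (and on $\dgMod{\Grp}{\cring}$) is coherent, transposing tensor factors is implemented by the Koszul braiding, which is a canonical natural isomorphism; one checks it intertwines the module structures (this is the standard hexagon/coherence bookkeeping). Changing the seam ordering \eqref{los} is of the same nature: it permutes the factors of $B^T_S$ in \eqref{eq:bar} and the tensor factors of $\bn(\gpm)^T_S$ in Construction~\ref{constr:cutseamedsurf}, again implemented by the symmetric braiding, and the bar complex factors $\Bar^{n}_{m}$ are themselves unchanged. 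In all three cases the isomorphisms are canonical (no choices) and one should, for completeness, note that they are mutually compatible, so that the functor $\PMS_{\Sigma,\Pi;\Gamma}(T|-|S)$ is well-defined up to a coherent system of canonical isomorphisms; this uses nothing beyond the coherence of symmetric monoidal categories together with Lemmas~\ref{lem:stiso}, \ref{lem:assocunit-prismstacking} and Propositions~\ref{prop:stprism2M}, \ref{prop:rot2functor}.

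\textbf{Main obstacle.} The routine permutation arguments (region and seam orderings) are unproblematic; the real work is in the standardization case when $[\pi_i]\neq[\pi_i']$, i.e.\ verifying that the cyclic rotation $\rho^{\ell_i}$ of a single region's arcs — which mixes the ``cut seam'' arcs $(\gamma_j,\pm)$ coming from $\Gamma_\pm$ with the genuine boundary arcs $\ma\in\Pi$, and which can move a $(\gamma_j,+)$ into a different cyclic position relative to its partner $(\gamma_j,-)$ — is exactly compensated by the contraction with $B^T_S$. This requires carefully tracking how $\upsilon$ from Proposition~\ref{prop:rot2functor} interacts with the Eilenberg--Zilber shuffle product $\mu$ (Proposition~\ref{prop:Bar is algebra}) and with the symmetries of $\Bar^n_m$ in Proposition~\ref{prop:symmetries of B}, since the two half-seam arcs carry dual ($\op$ versus non-$\op$) tensor factors and are paired precisely by $B^T_S$. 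I expect this compatibility to follow formally from the sphericality statement Proposition~\ref{prop:sphericality} together with item~\eqref{BS3} of Proposition~\ref{prop:symmetries of B}, but writing it out cleanly is the crux of the argument.
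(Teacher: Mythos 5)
Your proposal is correct and follows essentially the same route as the paper: symmetric monoidal coherence handles different orderings of seams and regions, and Lemma \ref{lem:stiso} together with Corollary \ref{cor:prism2M2} handles different standardizations. The ``main obstacle'' you flag is less serious than you suggest, because the theorem concerns only the functor $\PMS_{\Sigma,\Pi;\Gamma}(T|-|S)$ and not the composition maps, so no compatibility with the Eilenberg--Zilber product is required here --- the functoriality of the rotation isomorphism \eqref{eq:prismonerotation} in the arguments $V_j$ (which is what Corollary \ref{cor:prism2M2} packages) already suffices once one contracts with $B^T_S$.
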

\begin{proof}
Different choices for \eqref{los} and \eqref{lor} will yield canonically isomorphic
complexes using the symmetric monoidal structure on dg categories
to permute tensor factors in the domain.
Since the ordering of the tensor factors in $\bni\big(\cutP|_{D_i}\big)$ is given by 
$[\pi_i] \in \LO(A(\cutP|_{D_i}))$, 
Corollary \ref{cor:prism2M2} gives that different choices \eqref{cos} 
of standardizations $\pi_i$ of the regions 
$\big(D_i, A(\cutP|_{D_i}) \big)$ for $i=1,\ldots,R$ 
will yield canonically isomorphic functors $\PMS_{\Sigma,\Pi;\Gamma}(T|-|S)$.
\end{proof}

\subsection{Graphical model}
\label{sec:graphicalmodel}

The chain complexes $\PMS_{\Sigma,\Pi;\Gamma}(T,S)$ from \eqref{eq:Cflatcx} are defined by
evaluating prism modules on bar complexes and identity tangles. 
In terms of planar evaluations, these functors treat the two sides of each bar complex differently:
given \eqref{eq:totalorder} and Definition \ref{def:cut surface module},
the functor $r_{xz}$ is applied to the first factor of each $\Bar_{m_i}^{n_i}$.
In Section~\ref{s:Rozanskybar}, we introduced graphical notation 
for $(r_{xz}\otimes\id)(\Bar^n_m) = \tBar_m^n$.
We extend this notation to multiple tensor factors as follows:
\[
\big(\Bar^{n_1}_{m_1} , \ldots , \Bar^{n_g}_{m_g} \big)
\xmapsto{(r_{xz}\otimes\id)^{\otimes g}}
\big( \tBar^{n_1}_{m_1} , \ldots , \tBar^{n_g}_{m_g} \big) =
\left(\begin{tikzpicture}[anchorbase,scale=1]
	\draw[thick, double] (0,-0.6) node[below=-2pt]{\scs $m_1$} 
					to (0,0.6) node[above=-2pt]{\scs $n_1$};
	\rjf{0}{0}{1}
	\end{tikzpicture}
,
	\begin{tikzpicture}[anchorbase,scale=1]
	\draw[thick, double] (0,-0.6) node[below=-2pt]{\scs $m_1$} 
					to (0,0.6) node[above=-2pt]{\scs $n_1$};
	\ljf{0}{0}{1}
\end{tikzpicture}
\, , \, 
\begin{tikzpicture}[anchorbase,scale=1]
	\draw[thick, double] (0,-0.6) node[below=-2pt]{\scs $m_2$} 
					to (0,0.6) node[above=-2pt]{\scs $n_2$};
	\rjf{0}{0}{2}
	\end{tikzpicture}
,
	\begin{tikzpicture}[anchorbase,scale=1]
	\draw[thick, double] (0,-0.6) node[below=-2pt]{\scs $m_2$} 
					to (0,0.6) node[above=-2pt]{\scs $n_2$};
	\ljf{0}{0}{2}
\end{tikzpicture}
\, , \,
\ldots \, , \,
	\begin{tikzpicture}[anchorbase,scale=1]
	\draw[thick, double] (0,-0.6) node[below=-2pt]{\scs $m_g$} 
					to (0,0.6) node[above=-2pt]{\scs $n_g$};
	\rjf{0}{0}{$g$}
	\end{tikzpicture}
,
	\begin{tikzpicture}[anchorbase,scale=1]
	\draw[thick, double] (0,-0.6) node[below=-2pt]{\scs $m_g$} 
					to (0,0.6) node[above=-2pt]{\scs $n_g$};
	\ljf{0}{0}{$g$}
\end{tikzpicture}
\right) \, .
\]

Since we can evaluate any 
$\bigotimes_{i=1}^g \big(\bn_{m_i}^{n_i}\big)^{\otimes 2}$-module
on $\big(\tBar^{n_1}_{m_1} , \ldots , \tBar^{n_g}_{m_g} \big)$, 
planar Bar-Natan diagrams containing one or more pairs of such purple boxes are well-defined.
Recall from Remark \ref{rmk:purple boxes merge to roz} 
that when two purple boxes are paired (i.e.~they correspond to the same copy of $\tBar_m^n$)
and occur adjacent to one another in the plane with arrows directed away from one another, 
we may replace them with a Rozansky projector, as in \eqref{eq:purpleprojector}.
We will often suppress the arrows of the purple boxes using the following bookkeeping device. 
If $E$ is a finite set and $h \in E \times \{-1,1\}$, then we set
\[
\begin{tikzpicture}[anchorbase,scale=1]
	\draw[thick, double] 
	(0,-0.6) to (0,0.6);
	\draw[seam, thick, fill=white] (-0.2,-.4) rectangle (0.2,+.4);
	\node at (0,0) {\small$h$};
	\node at (0,.8) {\tiny$n$};
	\node at (0,-.8) {\tiny$m$};
\end{tikzpicture}
:=\begin{tikzpicture}[anchorbase,scale=1]
	\draw[thick, double] 
	(0,-0.6) to (0,0.6);
	\node at (0,.8) {\tiny$n$};
	\node at (0,-.8) {\tiny$m$};
	\rjf{0}{0}{e}
\end{tikzpicture}
\ \ \text{ if } \ \ h =(e,-) \ \  , \qquad  
\begin{tikzpicture}[anchorbase,scale=1]
	\draw[thick, double] 
	(0,-0.6) to (0,0.6);
	\node at (0,.8) {\tiny$n$};
	\node at (0,-.8) {\tiny$m$};
	\draw[seam, thick, fill=white] (-0.2,-.4) rectangle (0.2,+.4);
	\node at (0,0) {\small$h$};
\end{tikzpicture}
:=\begin{tikzpicture}[anchorbase,scale=1]
	\draw[thick, double] 
	(0,-0.6) to (0,0.6);
	\node at (0,.8) {\tiny$n$};
	\node at (0,-.8) {\tiny$m$};
	\ljf{0}{0}{e};
\end{tikzpicture}
\ \ \text{ if } \ \ h=(e,+) \, .
\]

Using these conventions,
the complex $\PMS_{\Sigma,\Pi;\Gamma}(T,S)$
from \eqref{eq:Cflatcx} is described as follows:
\begin{equation}
\label{eq:homcomplex}
\begin{aligned}
\PMS_{\Sigma,\Pi;\Gamma}(T,S) 
&:= 
\bigotimes_{i=1}^R
q^{|\del S_i\cup \del T_i|/4}  \KhEval{\ \ 
\begin{tikzpicture}[anchorbase,scale=1]
\begin{scope}[shift={(0,.75)}]\fTbox{1.8}{\pi_i T_i}\end{scope}
\begin{scope}[shift={(0,-.75)}]\fTbox{1.8}{r_y(\pi_i S_i)}\end{scope}
%
\begin{scope}[shift={(-1,0)}]
	\dottedv{-.5} 
\draw[thick,double] (0,-0.5) to (0,0.5); 
\draw[seam, thick, fill=white] (-0.25,-.3) rectangle (0.25,+.3); 
\node at (0,0) {\tiny$h_{i,1}$}; 
	\dottedv{.5} 
\end{scope}
\node at (0,0) {$\cdots$};
%
\begin{scope}[shift={(1,0)}]
\dottedv{-.5}
\draw[thick,double] (0,-0.5) to (0,0.5); 
\draw[seam, thick, fill=white] (-0.275,-.3) rectangle (0.275,+.3); 
\node at (0,0) {\tiny$h_{i,\ell_i}$}; 
\dottedv{.5}
\end{scope}
\end{tikzpicture} \ \ 
	} 
\end{aligned}
\end{equation}
where $\{h_{i,1},\ldots,h_{i,\ell_{i}}\} = \cutP|_{D_i} \cap \Gamma_{\pm}$ 
is the set of cut seams in $\del D_i$ ordered via the restriction of $[\pi_i]$ to this subset,
and the dotted vertical strands indicate (several or zero) copies of $\one$
corresponding to arcs in $\cutP|_{D_i} \cap \Pi$ that (may) lie between 
the cuts seams in $\cutP|_{D_i} \cap \Gamma_{\pm}$.
An analogous diagrammatic description is available for
the chain complexes $\PMS_{\Sigma,\Pi;\Gamma}(T|V_{\bullet}|S)$ from \eqref{eq:CflatcxW}. 
In this case, the objects $V_h$ or $r_x(V_h)$ are inserted in place of the dotted vertical strands, 
depending on the orientation of the arcs $h \in \cutP|_{D_i} \cap \Pi$.

\begin{remark}
	\label{rem:rotatefactor}
	The presentation of each tensor factor in \eqref{eq:homcomplex}
	can be adjusted by picking a standardization $\pi_i' \colon (D_i,A_i) \to (\Dst,\Ast)$
	that gives a different linear refinement $[\pi_i'] \in \LO( A(\cutP|_{D_i}))$, 
	and using the pivotal sphericality of the Bar-Natan categories.
	In particular, we may pick any cut seam $h\in \cutP|_{D_i} \cap \Gamma_{\pm}$ 
	and express the $i$-{th} tensor factor of \eqref{eq:homcomplex} 
	with the $h$-labelled box appearing on the far left or far right:
\[
\KhEval{ \ \ 
\begin{tikzpicture}[anchorbase,scale=1]
\begin{scope}[shift={(0,.75)}]\fTbox{1.8}{\pi_i' T_i}\end{scope}
\begin{scope}[shift={(0,-.75)}]\fTbox{1.8}{r_y(\pi_i' S_i)}\end{scope}
%
\begin{scope}[shift={(-1,0)}]
\dottedv{0} 
\ljf{0}{0}{\gamma}
\dottedv{.4} 
\end{scope}
\node at (.5,0) {$\cdots$};
\end{tikzpicture} \ \ 
}
\quad \text{resp.} \quad
\KhEval{ \ \ 
\begin{tikzpicture}[anchorbase,scale=1]
\begin{scope}[shift={(0,.75)}]\fTbox{1.8}{\pi_i' T_i}\end{scope}
\begin{scope}[shift={(0,-.75)}]\fTbox{1.8}{r_y(\pi_i' S_i)}\end{scope}
%
\begin{scope}[shift={(1,0)}]
\dottedv{-.4} 
\dottedv{0} 
\rjf{0}{0}{\gamma}
\end{scope}
\node at (-.5,0) {$\cdots$};
\end{tikzpicture} \ \ 
}
\]
depending on whether $h=(\gamma,+)$ or $h=(\gamma,-)$, respectively.  
\end{remark}

\begin{example}\label{ex:annulus} 
We consider the case when $\Sigma$ is a standardly-oriented annulus, 
$\Pi$ consists of two arcs (one on each boundary component) 
whose orientations agree with that of $\Sigma$, and $\Gamma = \{\gamma\}$.
Let $T \in \Tan(\Sigma,\Pi;\Gamma)$ with $| T \cap \gamma |=n$ 
and its image under a standardization of the (lone) region $\cutS$ be depicted as follows:
\begin{equation}
\label{eq:tangle in annulus}
T = 
\begin{tikzpicture}[anchorbase,scale=1]
\draw[very thick, gray] (0,0) circle (.5);
\draw[very thick, gray] (0,0) circle (1.5);
\draw[thick,double] (0,0) circle (1);
\draw[thick,double] (0,-1.5) to (0,-.5);
\node at (0,-1.7) {\tiny$k$};
\node at (0,-.3) {\tiny$l$};
\node at (1.2,0) {\tiny$n$};
\begin{scope}[shift={(0,-1)}] \draw[fill=white] (0,0) circle (.25);\end{scope}
\node at (0,-1) {$T$};
\draw[<-, seam,very thick] (-1.5,0) to node[pos=.25,above=-2pt]{\scs$\gamma$} (-.5,0);
\draw [blue,ultra thick,domain=240:300,->] plot ({1.5*cos(\x)}, {1.5*sin(\x)});
\draw [blue,ultra thick,domain=225:315,<-] plot ({.5*cos(\x)}, {.5*sin(\x)});
\end{tikzpicture} \qquad,\qquad
\pi T =
\begin{tikzpicture}[anchorbase,scale=1]
\draw[thick, double] (.25,1) to [out=0,in=270] (.5,1.25) to [out=90,in=0] (-.5,2)
	to [out=180,in=90] (-1.5,0) node[below=-1pt]{\tiny$n$};
\draw[thick, double] (0,1.25) to [out=90,in=0] (-.5,1.5) to [out=180,in=90] (-1,0) node[below=-2pt]{\tiny$l$};
\draw[thick, double] (-.25,1) to [out=180,in=90] (-.5,0) node[below=-1pt]{\tiny$n$};
\draw[thick, double] (0,1) to (0,0) node[below=-2pt]{\tiny$k$};
\draw[thick,fill=white] (0,1) circle (.25);
\node at (0,1) {$T$};
\end{tikzpicture}
=
\begin{tikzpicture}[anchorbase,scale=1]
\begin{scope}[shift={(0,.8)}] \fTbox{1.1}{\pi T} \end{scope}
\draw[thick,double]
(.3,-.55) to (.3,.55) 
(.9,-.55) to (.9,.55) 
(-.3,-.55) to (-.3,.55) 
(-.9,-.55) to (-.9,.55);
\node at (-1.1,0) {\tiny$n$};
\node at (-.5,0) {\tiny$l$};
\node at (.1,0) {\tiny$n$};
\node at (.7,0) {\tiny$k$};
\end{tikzpicture} \, .
\end{equation}
Given another tangle $S \in \Tan(\Sigma,\del T;\{\gamma\})$, let
$m=|S \cap \gamma|$. The complex $\PMS_{\Sigma,\Pi;\Gamma}(T,S)$ is then
computed as:
\[
\PMS_{\Sigma,\Pi;\Gamma}(T,S) =	
q^{\frac{n+m+k+l}{2}} \KhEval{ \ 
\begin{tikzpicture}[anchorbase,scale=1]
\begin{scope}[shift={(0,1)}] \fTbox{1.1}{\pi T} \end{scope}
\begin{scope}[shift={(0,-1)}] \fTbox{1.1}{r_y(\pi S)} \end{scope}
\draw[thick,double]
(.3,-.75) to (.3,.75) 
(.9,-.75) to (.9,.75) 
(-.3,-.75) to (-.3,.75) 
(-.9,-.75) to (-.9,.75);
\node at (-1.1,.55) {\tiny$n$};
\node at (-.43,0) {\tiny$l$};
\node at (.1,.55) {\tiny$n$};
\node at (.775,0) {\tiny$k$};
\node at (-1.1,-.55) {\tiny$m$};
\node at (.1,-.55) {\tiny$m$};
\rjf{-.9}{0}{}
\ljf{.3}{0}{}
\end{tikzpicture}
\ }	
\simeq
q^{\frac{k+l}{2}} \KhEval{\ 
\begin{tikzpicture}[anchorbase,scale=1]
\begin{scope}[shift={(0,1)}] \fTbox{1.2}{\pi T} \end{scope}
\begin{scope}[shift={(0,-1)}] \fTbox{1.2}{r_y(\pi S)} \end{scope}
\draw[thick,double] (-.1,-.75) to[out=90,in=270] (.5,-.2) to (.5,.2) to[out=90,in=270] (-.1,.75);
\begin{scope} 
	\clip (-.5,.5) rectangle (.5,-.5);
	\draw[white, line width=4pt] 
		(-.1,-.3) to[out=0,in=90] (.5,-.75)
		(-.1,.3) to[out=0,in=270] (.5,.75);
	\end{scope}
\draw[thick,double]
(-.9,-.75) to[out=135,in=180] (-.7,-.3)
(-.9,.75) to[out=225,in=180] (-.7,.3)
(-.1,-.3) to[out=0,in=90] (.5,-.75)
(-.1,.3) to[out=0,in=270] (.5,.75);
\draw[thick,double] (.9,-.75) to (.9,.75);
\draw[fill=white] (-.7,-.5) rectangle (-.1,.5);
\node[rotate=90] at (-.4,0) {$\Bproj_{n+m}$};
\node at (.375,0) {\tiny$l$};
\node at (.775,0) {\tiny$k$};
\end{tikzpicture}
\ } \, .
\]

The second diagram here illustrates how one may express $\PMS_{\Sigma,\Pi;\Gamma}(T,S)$ entirely
using Rozansky projectors; in general, this will involve diagrams that involve tangles with crossings.
Such crossings determine chain complexes over $\bn$ via the formalism for Khovanov homology 
developed in \cite{BN2}.
Note, however, that such crossings are superfluous, up to homotopy equivalence:
the Rozansky projector is constructed from ``split tangles'', as indicated in the first diagram.
This description allows us to make contact with the 
description given in \S \ref{ss:intro summary},
i.e.~$q^{\frac{-(k+l)}{2}}\PMS_{\Sigma,\Pi;\Gamma}(T,S)$ is given by
\begin{equation}
	\label{eq:annulusflex}
	\KhEval{
	\begin{tikzpicture}[anchorbase,scale=1]
		\draw[thick, double] 
		(0,-1.1) to (0,1.1)  
		(0,1.1) \ul (-0.75,1.4) \ld (-1.5,1.1) to (-1.5,-1.1) \dr (-.75,-1.4) \ru (0,-1.1) 
		(0.5,0.6) \ru (1,1.1) \ul (0.5,1.6) \pl (-1.4,1.6) \ld (-2,1) to (-2,0)
		(0.5,-0.6) \rd (1,-1.1) \dl (0.5,-1.6) \pl (-1.4,-1.6) \lu (-2,-1) to (-2,0)
		(-0.5,0.6) \ld (-1,0.4)
		(-0.5,-0.6) \lu (-1,-0.4)
		;
		\rjf{-2}{0}{}
		\ljf{-1}{0}{}
		\circlepair{0}{0}{}	
		\node at (0.6,-.85) {\tiny$m$};
		\node at (0.6,.85) {\tiny$n$};
		\node at (.2,0) {\tiny$k$};
		\node at (.2,1.3) {\tiny$l$};
		\node at (.2,-1.3) {\tiny$l$};
	\end{tikzpicture}
}	
\simeq
\KhEval{
	\begin{tikzpicture}[anchorbase,xscale=-1, yscale=1]
		\draw[thick, double] (1,-1.1) to (1,1.1)  (1,1.1) \ur (1.4,1.4) \rd (1.8,1.1) to (1.8,-1.1) \dl (1.4,-1.4) \lu (1,-1.1) ;
		\draw[thick, double] 
		(.5,.6) \lu (0,1.1) \ur (.5,1.6) to (2.7,1.6) \rd (3.2,1.1) \dl (2.7,.6)
		(.5,-.6) \ld (0,-1.1) \dr (.5,-1.6) to (2.7,-1.6) \ru (3.2,-1.1) \ul (2.7,-.6);
		\horwithedge{1}{0}{1.1}
		\circlepair{1}{0}{}
		\rtallbox{2.1}{0}{n+m}
		\node at (0.4,-.85) {\tiny$m$};
		\node at (0.4,.85) {\tiny$n$};
		\node at (.8,0) {\tiny$k$};
		\node at (.8,1.3) {\tiny$l$};
		\node at (.8,-1.3) {\tiny$l$};
		\end{tikzpicture}
}	 
\simeq
\KhEval{
	\begin{tikzpicture}[anchorbase,scale=1]
		\toruscdisk{-1.2}{.2}{.6}{.5}
		\draw[thick,double] (0.1,-.25) to [out=250,in=90] (0,-.65) to [out=270,in=110] (0.1,-1.35);
		\torus{0}{0}{2}{thick}
		\draw[thick,double] (0,.15) ellipse (1.5 and .8);
		\begin{scope}[shift={(0,-.65)}] \draw[fill=white] (0,0) circle (.25);\end{scope}
		\node  at (0,-.65) {$T$};
		\node at (0.4,-.8) {\tiny$n$};
		\node at (-0.4,-.8) {\tiny$n$};
		\node at (.1,-1.5) {\tiny$k$};
		\node at (.1,-.1) {\tiny$l$};
		\end{tikzpicture}
}	
\end{equation}
See \S \ref{s:affine BN} for additional structure in the annular case.
\end{example}

The graphical model described in this section makes clear that 
the cohomology of
the complexes $\PMS_{\Sigma,\Pi;\Gamma}(T,S)$ and 
(more generally) $\PMS_{\Sigma,\Pi;\Gamma}(T|V|S)$ agrees with
the Rozansky--Willis invariant \cite{rozansky2010categorification,MR4332675} of
certain links assembled from $T$ and $S$ (and $V$).

\subsection{Composition and associativity}
\label{ss:cflat ii}

Below, in Theorem \ref{thm:dgcatS} we establish that the complexes
$\PMS(T,S)$ from \eqref{eq:Cflatcx} can be regarded as the 
$\Hom$-complexes in a dg category. For this we will need maps 
$\cring\rightarrow \PMS(T,T)$ that determine the identity endomorphisms, 
and composition maps $\PMS(T,S)\otimes \PMS(S,R)\rightarrow \PMS(T,R)$. 
In fact, we will define generalizations of the latter for the 
complexes $\PMS(T|V|S)$.

For the following, 
recall the tensor product of bar complexes 
$B^T_S\in \Ch^-\left(\bni(\gpm)^{T}_{S}\right)$ from \eqref{eq:bar}.

\begin{definition}\label{def:alg struct on BT} 
Set $\one_T := (\one_{|\gamma_1 \cap T|},\one_{|\gamma_1 \cap T|},\ldots, 
		\one_{|\gamma_g \cap T|},\one_{|\gamma_g \cap T|}) \in \bn(\gpm)^T_T$
and let
\begin{itemize}
	\item $\eta_T \colon \one_T \rightarrow B^T_T$ be the map 
		in $\Ch^-\left(\bni(\gpm)^T_T\right)$
		inherited from the inclusions $(\one_n , \one_n) \rightarrow \Bar^n_n$
		in $\Ch^-\left((\bn_n^n)^{\op} \otimes \bn_n^n \right)$, and
	\item $\mu_S \colon 
		B^T_S \hComp B^S_R \rightarrow B^T_R$ be the map 
		in $\Ch^-\left(\bni(\gpm)^{T}_{R}\right)$ 
		given via the Eilenberg--Zilber shuffle product 
		$\mu_m\colon \Bar^n_m \hComp \Bar^m_k \rightarrow \Bar^n_k$
		from Proposition \ref{prop:Bar is algebra}.
		   \end{itemize}
	   \end{definition}
	   
\begin{lemma}\label{lemma:alg struct on BT} The multiplication maps $\mu_S$ and
unit maps $\eta_T$ from Definition \ref{def:alg struct on BT} are strictly
unital and associative.
\end{lemma}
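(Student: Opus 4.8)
The statement asserts that the maps $\eta_T$ and $\mu_S$ from Definition \ref{def:alg struct on BT} are strictly unital and associative. The plan is to reduce everything, tensor factor by tensor factor, to the corresponding statements already established for the single bar complexes $\Bar^n_m$ in Section \ref{s:Rozanskybar}. Recall that $B^T_S = \big( \Bar^{|\gamma_1\cap T|}_{|\gamma_1\cap S|} , \ldots , \Bar^{|\gamma_g \cap T|}_{|\gamma_g\cap S|} \big)$ is, by \eqref{eq:bar}, simply the tuple of bar complexes, one for each seam $\gamma_i \in \Gamma$, regarded as an object of $\Ch^-\left(\bni(\gpm)^T_S\right) = \Ch^- \big( \bigotimes_{i=1}^g \big( (\bn^{n_i}_{m_i})^\op \otimes \bn^{n_i}_{m_i}\big)\big)$ after applying totalization \eqref{eq:totalization}. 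The maps $\eta_T$ and $\mu_S$ are, by construction, the totalizations of the $g$-fold external tensor products of the single-seam maps: $\eta_T = \bigotimes_{i=1}^g \eta^{(i)}$, where $\eta^{(i)} \colon (\one_{n_i},\one_{n_i}) \to \Bar^{n_i}_{n_i}$ is the inclusion, and $\mu_S = \bigotimes_{i=1}^g \mu^{(i)}$, where $\mu^{(i)} = \mu_{m_i} \colon \Bar^{n_i}_{m_i} \hComp \Bar^{m_i}_{k_i} \to \Bar^{n_i}_{k_i}$ is the Eilenberg--Zilber shuffle product of Proposition \ref{prop:Bar is algebra}.

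First I would record the (purely formal) fact that if $(C^{(i)}, \eta^{(i)}, \mu^{(i)})$ for $i = 1,\ldots,g$ are strictly unital and associative ``algebra-like'' data in symmetric monoidal categories $\mathcal{A}_i$, then their external tensor product in $\bigotimes_i \mathcal{A}_i$ inherits strict unitality and associativity; this uses only the coherence and symmetry of the monoidal structure on $\dgMod{\Grp}{\cring}$ (hence on $\Ch^-$ of the relevant categories), together with the interchange law for $\hComp$ applied component-wise, as in the definition of $\bn^{\otimes k}$ and its horizontal composition $\hComp$ given component-wise in \S \ref{ss:setup}. Concretely, the diagram expressing associativity of $\mu_S$, namely $\mu_R \circ (\mu_S \hComp \id) = \mu_S \circ (\id \hComp \mu_R)$ as maps $B^T_S \hComp B^S_R \hComp B^R_Q \to B^T_Q$, factors as the $g$-fold tensor product of the associativity diagrams for $\mu^{(i)}$, each of which commutes strictly by Proposition \ref{prop:Bar is algebra} (which asserts that $\mu_m$ is strictly associative). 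Similarly, strict unitality, i.e.~$\mu_S \circ (\eta_T \hComp \id) = \id = \mu_S \circ (\id \hComp \eta_S)$ after identifying $\one_T \hComp B^T_S \cong B^T_S \cong B^T_S \hComp \one_S$, reduces to the corresponding strict unitality for each $\mu_{m_i}$, which is again part of Proposition \ref{prop:Bar is algebra}.

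The one point requiring slight care---and the only place where the bookkeeping of the ordered set $\gpm$ from \eqref{eq:totalorder} matters---is making sure the identifications are the right ones. The horizontal composition $\hComp$ on $\bni(\gpm)^{\bullet}_{\bullet}$ is defined so that composing $B^T_S \hComp B^S_R$ pairs up, for each seam $\gamma_i$, the $\bn^{n_i}_{m_i}$-factor of the first with the $\bn^{m_i}_{k_i}$-factor of the second (and leaves the $(\bn^{\bullet}_{\bullet})^\op$-factors, which record the ``other side'' of the cut, untouched except for their own pairing). So one should verify that $\mu_S$ as defined in Definition \ref{def:alg struct on BT} is indeed the component-wise Eilenberg--Zilber product, i.e.~that no Koszul sign from the symmetric braiding is being dropped when reordering the interleaved tensor factors $(\gamma_1,-),(\gamma_1,+),(\gamma_2,-),(\gamma_2,+),\ldots$; this is a direct unwinding of the totalization functor \eqref{eq:totalization} and the symmetric monoidal structure on $\DGCat$, and the signs are automatically consistent because both $\mu_S$ and the associativity/unitality constraints are built from the same braiding.

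\textbf{Main obstacle.} I do not anticipate a genuine difficulty: the whole content is the reduction to Proposition \ref{prop:Bar is algebra} plus formal nonsense about external tensor products in symmetric monoidal categories. The one thing to be careful about---and hence the ``hard part,'' such as it is---is the sign bookkeeping alluded to above: tracking that the Koszul signs introduced by permuting the tensor factors of $\bni(\gpm)$ into the order \eqref{eq:totalorder} (which is needed to even write $B^T_S \hComp B^S_R$) cancel correctly against the signs in the Eilenberg--Zilber shuffle product, so that the ``strict'' associativity and unitality survive the totalization rather than degenerating to associativity/unitality only up to sign or up to homotopy. Since $\ip{(i,j),(i',j')} = ii'$ depends only on the homological degrees and the bar differential has homological degree $\iota = (1,0)$, and since the external tensor factors for distinct seams are independent, these signs are governed entirely by the standard sign rule for tensor products of complexes, which is known to be strictly coherent; I would simply remark this and cite Proposition \ref{prop:Bar is algebra} and the symmetric monoidal structure of \S \ref{ss:setup}.
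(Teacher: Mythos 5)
Your proposal is correct and matches the paper's own proof, which likewise reduces to Proposition \ref{prop:Bar is algebra} for the single-seam Eilenberg--Zilber products and then invokes functoriality of the totalization \eqref{eq:totalization} to transport strict unitality and associativity through the tensor product over the seams. Your additional remarks on the Koszul sign bookkeeping are a more explicit unwinding of what the paper subsumes under ``functoriality of totalization,'' but the argument is the same.
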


\begin{proof}
Recall that we view $B_S^T$ as an object of $\Ch^-\left(\bni(\gpm)^{T}_{S}\right)$ 
using 
totalization \eqref{eq:totalization}, 
and the maps $\eta_T$ and $\mu_S$ are induced by corresponding maps 
in $\bigotimes_{i=1}^g \Ch^{-}(\bn^\op \otimes \bn)$.
It is a straightforward consequence of Proposition \ref{prop:Bar is algebra} that 
these latter maps are strictly unital and associative,
therefore the result follows from functoriality of totalization.
\end{proof}

\begin{definition}\label{def:Cflat-composition-and-units} 
Consider a triple of tangles $T,S,R\in \Tan(\Sigma,\Pi;\Gamma)$. 
Given $W\in \bn(\Pi)^{\partial T}_{\partial S}$ and $V\in \bn(\Pi)^{\partial S}_{\partial R}$ 
let
\begin{equation}
	\label{eq:Cflatcomposition}
\circ_S \colon \PMS(T|W|S) \otimes \PMS(S|V|R) \to \PMS(T|W\star V|R)
	\end{equation}
be the chain map given as the composite of the prism stacking maps
\[
\PMS_{\cutS,\cutP}(T|B^T_S, W|S) \otimes \PMS_{\cutS,\cutP}(S|B^S_R, V|R) 
		\rightarrow \PMS_{\cutS,\cutP}(T|B^T_S\star B^S_R, W\star V|R)
\]
induced by Definition~\ref{def:prismstackingmaps},
followed by the map
\[
\PMS_{\cutS,\cutP}(T|B^T_S\star B^S_R, W\star V|R)
	\to \PMS_{\cutS,\cutP}(T|B^T_R, W\star V|R)
\]
obtained by applying the functor 
$\PMS_{\cutS,\cutP}(T|-,W\star V|R) \colon \Ch^{-}(\bn(\gpm)^T_R) \rightarrow \Ch^{-}(\gModen{\Z}{\cring})$ 
to the multiplication map $\mu_S$ from Definition \ref{def:alg struct on BT}. 

Additionally, let
\begin{equation}
	\label{eq:Cflatunit}
	\epsilon_T\colon \cring \to \PMS(T|\one_\pp|T)
\end{equation}
be the chain map given as the composite
as the composite
\[	
	\cring \xrightarrow{e_T} \PMS_{\cutS,\cutP}(T|\one_{\Gamma\cap T}, \one_{\pp}|T)
			\rightarrow \PMS_{\cutS,\cutP}(T|B^T_T, \one_{\pp}|T)
\]
where $e_T$ denotes the tensor product of 
the units for the prism stacking maps from Definition~\ref{def:prismstackingmaps} 
and the second arrow is the functor 
$\PMS_{\cutS,\cutP}(T|-, \one_{\pp}|T) 
	\colon \Ch^-(\bn(\gpm))^T_T \rightarrow \Ch^{-}(\gModen{\Z}{\cring})$ 
applied to the unit map 
$\eta_T\colon \one_{\Gamma\cap T}\rightarrow B^T_T$ 
from Definition \ref{def:alg struct on BT}.
\end{definition}

\begin{example}
Specializing to the case $\partial T = \partial S = \partial R=\pp$ and $W=\one_{\pp}=V$, 
the chain map \eqref{eq:Cflatcomposition} takes the form:
\begin{equation}
	\label{eq:Cflatcompositioncat}
\circ_S \colon 
\PMS(T|S) \otimes \PMS(S|R) \to \PMS(T|R)
\end{equation}
Diagrammatically, the composition \eqref{eq:Cflatcompositioncat} can be pictured as 
(the tensor product over $1\leq i\leq R$ of) the maps
	\begin{gather}
		\label{eq:composition}
		\KhEval{
			\begin{tikzpicture}[anchorbase,scale=.6]
					\vertconn{0}{.5}{1}
					\vertconn{0}{2.75}{1}
					\tTbox{0}{3.75}{\pi_i T_i}
					\tTbox{0}{2.25}{r_y(\pi_i S_{i})}
					\tTbox{0}{1.5}{\pi_i S_i}
					\tTbox{0}{0}{r_y(\pi_i R_{i})}
					\node at (-1.75,3.25) {$\cdots$};
					\jf{-3}{3.25}{1}
					\jf{-.5}{3.25}{k}
					\node at (-1.75,1) {$\cdots$};
					\jf{-3}{1}{1}
					\jf{-.5}{1}{k}
			\end{tikzpicture}
		}\!
		\longrightarrow
		\KhEval{
			\begin{tikzpicture}[anchorbase,scale=.6]
				\vertconn{0}{.5}{3.25}
				\tTbox{0}{3.75}{\pi_i T_i}
				\tTbox{0}{0}{r_y(\pi_i R_{i})}
				\node at (-1.75,3.25) {$\cdots$};
				\jf{-3}{3.25}{1}
				\jf{-.5}{3.25}{k}
				\node at (-1.75,1) {$\cdots$};
				\jf{-3}{1}{1}
				\jf{-.5}{1}{k}
			\end{tikzpicture}
		}\! 
		\xrightarrow{\mu}
		\KhEval{
			\begin{tikzpicture}[anchorbase,scale=.6]
				\vertconn{0}{.5}{3.25}
				\tTbox{0}{3.75}{\pi_i T_i}
				\tTbox{0}{0}{r_y(\pi_i R_{i})}
				\node at (-1.75,3.25) {$\cdots$};
				\node at (-1.75,1) {$\cdots$};
				\jfbig{-3}{1}{1}
				\jfbig{-.5}{1}{k}
			\end{tikzpicture}
		}.
	\end{gather}
	where we have suppressed grading shifts present in \eqref{eq:homcomplex}.
\end{example}

We think of the composition maps \eqref{eq:Cflatcomposition} 
as generalizations of the prism stacking maps for disks from Definition~\ref{def:prismstackingmaps} 
to surfaces $\Sigma$ glued from disks. 
The following is an analogue of Lemma~\ref{lem:prismstacking-star} in our present setting.

	\begin{lem}\label{lem:Cflatcomposition-star} 
		For morphisms $f \colon V \to V'$ in $\bni(\Pi)^{\partial S}_{\partial R}$ 
		and $g \colon W \to W'$ in $\bn(\Pi)^{\partial T}_{\partial S}$
		we have a commutative square of chain maps:
		\[
			\begin{tikzcd}
				\PMS(T|W|S) \otimes \PMS(S|V|R)
				\arrow[r,"\circ_S"] 
				 \arrow[d,"\PMS(T|g|S)\otimes \PMS(S|f|R)"] 
				&  
				\PMS(T|W \star V|R)
				\arrow[d,"\PMS(T|g \star f|R)"]
				\\
				\PMS(T|W'|S) \otimes	\PMS(S|V'|R)
				\arrow[r,"\circ_S"] 
				&  
				\PMS(T|W'\star V'|R)
			\end{tikzcd}
		\]
In other words, the composition maps 
	$\circ_S \colon \PMS(T|W|S) \otimes \PMS(S|V|R) \to \PMS(T|W\star V|R)$ 
	from \eqref{eq:Cflatcomposition} 
	form the components of a natural transformation:
		\[
			\begin{tikzcd}[column sep=3cm]
				\bni(\Pi)^{\partial T}_{\partial S} \otimes \bni(\Pi)^{\partial S}_{\partial R}
				\arrow[r,"{\PMS(T|-|S)\otimes \PMS(S|-|R)}"] 
				 \arrow[d,"\star"] 
				& 
				\Ch^-\left(\gModen{\Z}{\cring}\right)\otimes \Ch^-\left(\gModen{\Z}{\cring}\right)
				\arrow[d,"\otimes"] 
				\arrow[dl,double,"\circ_S"] 
				\\
				\bni(\Pi)^{\partial T}_{\partial R}  
				\arrow[r,swap, "{\PMS(T|-|R)}"] 
				 &  
				 \Ch^-\left(\gModen{\Z}{\cring}\right)
			\end{tikzcd}
		\]
	\end{lem}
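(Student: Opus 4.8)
The plan is to reduce the asserted naturality to the naturality of the prism stacking maps (Lemma~\ref{lem:prismstacking-star}) together with the functoriality of the cut-surface module $\PMS_{\cutS,\cutP}(T|-|R)$ on chain complexes. Recall from Definition~\ref{def:Cflat-composition-and-units} that $\circ_S$ is the composite of a prism stacking map
\[
\PMS_{\cutS,\cutP}(T|B^T_S,W|S)\otimes\PMS_{\cutS,\cutP}(S|B^S_R,V|R)\to\PMS_{\cutS,\cutP}(T|B^T_S\star B^S_R,W\star V|R)
\]
followed by the map induced by applying $\PMS_{\cutS,\cutP}(T|-,W\star V|R)$ to the multiplication $\mu_S\colon B^T_S\star B^S_R\to B^T_R$. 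So the diagram in the statement breaks into two rectangles stacked vertically, and I would verify each separately.

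First I would treat the bottom rectangle: here one applies the cut-surface functor to the $\bn(\gpm)$-variable, which is held fixed (being $\mu_S$, a morphism independent of $W,V$), while varying the $\bn(\Pi)$-variable through $g\star f$. Since the relevant prism stacking maps on the $\bn(\Pi)$-slots are natural in those slots by Lemma~\ref{lem:prismstacking-star}, and since the maps $\PMS_{\cutS,\cutP}(T|\mu_S,-|R)$ are natural transformations of functors on $\bni(\cutP)^T_R$ (being obtained by applying a functor to a fixed arrow), the two ways around this rectangle agree; this is a formal consequence of the middle-interchange law for the bifunctor $\PMS_{\cutS,\cutP}(T|-,-|R)$. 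Next, for the top rectangle I would invoke Lemma~\ref{lem:prismstacking-star} directly, applied with the roles of the tangle slots played by the pair $(B^T_S,W)$ and $(B^S_R,V)$ and the morphisms $(\id,g)$ and $(\id,f)$: the square commutes because prism stacking is natural in each of its tensor-factor arguments, and here we are only moving in the $W,V$ arguments while keeping the bar-complex arguments $B^T_S,B^S_R$ (and hence their $\star$-product) fixed. Pasting the two commuting rectangles yields the asserted commuting square.

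The reformulation as a $2$-cell filling the pentagon/square of functors is then just a repackaging: the square of chain maps, being natural in $V$ and $W$, says precisely that the family $\{\circ_S\}$ assembles into a natural transformation from $\otimes\circ\big(\PMS(T|-|S)\otimes\PMS(S|-|R)\big)$ to $\PMS(T|-|R)\circ\star$, which is the content of the triangle diagram. I would also remark that everything takes place in $\Ch^-(\gModen{\Z}{\cring})$, so there are no boundedness or convergence subtleties; the totalization functor \eqref{eq:totalization} is used tacitly, exactly as in the proof of Lemma~\ref{lemma:alg struct on BT}, and its functoriality is what lets the component-wise arguments on each tensor factor be combined.

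The main obstacle I anticipate is purely bookkeeping rather than conceptual: one must carefully track the reordering isomorphism \eqref{eq:reorder} and the functors $\Phi_{D_i,\cutP|_{D_i}}$ from Definition~\ref{def:prismBC to standardBC} that intervene in the definition of $\PMS_{\cutS,\cutP}$, and confirm that the morphisms $g\star f$ and $\mu_S$ are transported through these in a compatible way—i.e.\ that $\Phi$ and the tensor-factor permutation are themselves natural, so that the naturality squares of Lemma~\ref{lem:prismstacking-star} for the individual disks $D_i$ can be tensored together and precomposed with these isomorphisms without disturbing commutativity. Since $\Phi$ is built from the $2$-functors $\Id$ and $r_{xz}$, and $r_{xz}\colon\bn^\op\to\bn$ is a (strict) $2$-functor, this compatibility is automatic, but writing it out cleanly requires some care with the indexing of slots. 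Once that is in place the proof is a short diagram chase.
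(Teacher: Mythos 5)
Your proof is correct and is essentially the argument the paper intends: the paper omits the proof of this lemma, deferring to the (harder) proof of Proposition~\ref{prop:Cflatassocunit}, which uses exactly your decomposition into a rectangle commuting by Lemma~\ref{lem:prismstacking-star} and a rectangle commuting by (bi)functoriality of the cut-surface module. Your additional remarks on transporting morphisms through $\Phi$ and the reordering isomorphism are accurate bookkeeping and do not change the substance.
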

	\begin{proof} Analogous to the proof of
	Proposition~\ref{prop:Cflatassocunit} below, 
	but easier and thus omitted.
	\end{proof}
	
The composition maps $\circ_S$ enjoy the expected associativity
	and unitality properties as well.

\begin{prop}[Associativity and unitality]
	\label{prop:Cflatassocunit}
Given a quadruple of tangles $Q,R,S,T\in \Tan(\Sigma,\Pi;\Gamma)$, 
as well as 1-morphisms 
$V_S^T\in \bn(\Pi)^{\partial T}_{\partial S}$, 
$V_R^S\in \bn(\Pi)^{\partial S}_{\partial R}$, 
and $V_Q^R\in \bn(\Pi)^{\partial R}_{\partial Q}$. 
the composition maps \eqref{eq:Cflatcomposition} are associative and unital.
Precisely, the composites
		\begin{align*}
		\circ_R \circ (\circ_S \otimes \id) &\colon 
			\PMS(T|V_S^T|S)\otimes \PMS(S|V_R^S|R)\otimes \PMS(R|V_Q^R|Q) \to 
			\PMS(T|(V_S^T \star V_R^S)\star V_Q^R|Q) \\
		\circ_S \circ (\id\otimes \circ_R) &\colon
			\PMS(T|V_S^T|S)\otimes \PMS(S|V_R^S|R)\otimes \PMS(R|V_Q^R|Q) \to 
			\PMS(T|V_S^T\star (V_R^S\star V_Q^R)|Q)
		\end{align*}
are equal (up to the coherent isomorphism induced by 
$(V_S^T \star V_R^S)\star V_Q^R \cong V_S^T\star (V_R^S\star V_Q^R)$)
and the composites
		\begin{align*}
			\circ_S\circ (\id \otimes e_S) &\colon 
			\PMS(T|V_S^T|S) \otimes \cring \to 
			\PMS(T|V_S^T\star \one|S)\\
			\circ_T \circ (e_T \otimes \id) &\colon
			\cring \otimes \PMS(T|V_S^T|S) \to 
			\PMS(T|\one \star V_S^T|S)
		\end{align*}
are both identity maps (after adjusting by the coherent isomorphisms induced by 
$V_S^T \hComp \one \cong V_S^T \cong \one \hComp V_S^T$).
\end{prop}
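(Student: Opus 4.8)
The plan is to reduce the associativity and unitality of the composition maps $\circ_S$ to the corresponding properties of their two constituent ingredients, which have already been established. Recall from Definition~\ref{def:Cflat-composition-and-units} that $\circ_S$ is built as the prism stacking map $m_{\pi_i S_i}$ (applied region-by-region, in the guise of Definition~\ref{def:prismstackingmaps}), followed by the map on prism spaces induced by the Eilenberg--Zilber multiplication $\mu_S \colon B^T_S \hComp B^S_R \to B^T_R$ of Definition~\ref{def:alg struct on BT}. The unit $\epsilon_T$ is built from the prism unit $e_T$ followed by the map induced by $\eta_T \colon \one_T \to B^T_T$. So the two facts I can lean on are: (i) Lemma~\ref{lem:assocunit-prismstacking}, which says the prism stacking maps $m_{\apS}$ (and units $e_{\apT}$) are strictly associative and unital; and (ii) Lemma~\ref{lemma:alg struct on BT}, which says the maps $\mu_S$ and $\eta_T$ on bar complexes are strictly associative and unital.

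First I would unpack both composites $\circ_R \circ (\circ_S \otimes \id)$ and $\circ_S \circ (\id \otimes \circ_R)$ into their definitions. Each is a four-step string: two prism stacking maps and two maps induced by $\mu$. The key structural point is that these two kinds of maps act on ``different coordinates'' of the prism space $\PMS_{\cutS,\cutP}(T \mid B^T_R, W\star V \mid R)$: the stacking maps $m_{\apS}$ operate by pairing the cap-tangle slots $\pi_i S_i$ with $r_y(\pi_i S_i)$ via saddle and tube cobordisms, while the $\mu$-induced maps operate by applying the functor $\PMS_{\cutS,\cutP}(T\mid -, W\star V\mid R)$ to a morphism of complexes living entirely in the $\bn(\gpm)$-slot. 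By the functoriality of the prism modules (Lemma~\ref{lem:prismfunctor}) together with the naturality of the prism stacking maps (Lemma~\ref{lem:prismstacking-star}, and its surface-level analogue Lemma~\ref{lem:Cflatcomposition-star}), these two families of operations commute past one another. This ``interchange'' observation is what lets me separate the verification into the two independent pieces.

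Concretely, the argument runs as follows. Using the interchange, I rewrite $\circ_R\circ(\circ_S\otimes\id)$ so that \emph{both} prism stacking maps are applied first (composing to $m_{\pi R}\circ(m_{\pi S}\otimes\id)$, region by region) and both $\mu$-induced maps are applied afterward (composing to the map induced by $\mu_R\circ(\mu_S\hComp\id)$); similarly for $\circ_S\circ(\id\otimes\circ_R)$. Now Lemma~\ref{lem:assocunit-prismstacking} equates the two prism-stacking halves (up to the coherent isomorphism $(W_\bullet\star V_\bullet)\star U_\bullet \cong W_\bullet\star(V_\bullet\star U_\bullet)$ at the level of the $\bni(\Pi)$-slot), while Lemma~\ref{lemma:alg struct on BT} equates the two $\mu$-halves, hence so does the functor $\PMS_{\cutS,\cutP}(T\mid -\mid R)$ applied to them. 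Combining these two equalities gives the desired associativity, with the coherence isomorphism being the totalization/tensor product of the one from Lemma~\ref{lem:assocunit-prismstacking} on the $\bni(\Pi)$-factors and the identity on the $\bn(\gpm)$-factors. Unitality is the same but shorter: $\circ_S\circ(\id\otimes e_S)$ unpacks, after interchange, into $m_{\pi S}\circ(\id\otimes e_{\pi S})$ (which is the identity by Lemma~\ref{lem:assocunit-prismstacking}) followed by the map induced by $\mu_S\circ(\id\hComp\eta_S)$ (which is the identity by Lemma~\ref{lemma:alg struct on BT}), and dually for $\circ_T\circ(e_T\otimes\id)$ with $\eta_T$.

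The main obstacle I anticipate is bookkeeping rather than mathematics: making the ``interchange'' step fully rigorous requires being careful about the order in which the regional tensor factors in \eqref{eq:reorder}, the $\Phi_{D_i,\cutP|_{D_i}}$ reflections from Definition~\ref{def:prismBC to standardBC}, and the totalization functor \eqref{eq:totalization} are applied, and about tracking the several coherent isomorphisms (the one reordering tensor factors, the ones from $W_\bullet\star\one_\bullet\cong W_\bullet$, and the associator for $\star$) so that the final equality is stated with precisely the coherence isomorphism claimed. There are also Koszul signs to monitor, coming from the symmetric monoidal structure on $\dgMod{\Grp}{\cring}$ used in forming $\otimes$ of complexes and in the totalization; but since both $\mu_S$ and the stacking maps are honest degree-zero chain maps and the bar complexes carry their grading in the ``$t$'' direction only, these signs are the standard ones and cancel in the comparison. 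I would relegate the routine verification of these compatibilities to a remark that it follows from functoriality of totalization and the far-commutativity of cobordisms supported over disjoint regions of the diagrams \eqref{eq:homcomplex}, exactly as in the proof of Lemma~\ref{lemma:alg struct on BT}.
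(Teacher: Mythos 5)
Your proposal is correct and matches the paper's argument: the paper organizes the same reasoning as a $3\times 3$ grid of commutative squares, where your ``interchange'' step is exactly the two squares handled by the naturality of prism stacking (Lemma~\ref{lem:prismstacking-star}), and the remaining two squares are Lemma~\ref{lem:assocunit-prismstacking} and Lemma~\ref{lemma:alg struct on BT}, precisely as you use them. The unitality argument is likewise identical in structure.
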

\begin{proof}
For book-keeping purposes, we will denote
\[
V_Q^S = V_R^S \hComp V_Q^R \, , \quad V_R^T = V_S^T \hComp V_R^S 
\, , \quad V_Q^T = (V_S^T \star V_R^S)\star V_Q^R \cong V_S^T\star (V_R^S\star V_Q^R) \, .
\]
Given $I,J \in \{Q,R,S,T\}$, we let $X_I^J = (B_I^J , V_I^J)$, 
which we regard as an object of $\Ch^-(\bn(\cutP)_I^J)$.
The Eilenberg--Zilber product from Definition \ref{def:alg struct on BT} 
then gives a collection of chain maps
\begin{equation}\label{eq:Xijk}
X^K_J\star X^J_I \rightarrow  X^K_I \, .
\end{equation}

For reasons of space, abbreviate by writing $(-|-|-):= \PMS_{\cutS,\cutP}(-|-|-)$, 
so in particular \eqref{eq:CflatcxW} simply reads as 
$\PMS(T|V_S^T|S) = (T|X_S^T|S)$.
Further, the tensor product $\otimes_\cring$ 
will be denoted $\cdot$, and horizontal composition symbols $\hComp$ will be omitted. 
In these conventions, consider the diagram:
\[
\begin{tikzpicture}\tikzstyle{every node}=[font=\small]
\node (00) at (0,0) {$(T|X_S^T|S )\cdot (S|X_R^S|R) \cdot (R|X_Q^R|Q)$};
\node (01) at (6,0) {$(T|X_S^T X_R^S|R)\cdot (R|X_Q^R|Q)$};
\node (02) at (11.5,0) {$(T|X_R^T|R)\cdot (R|X_Q^R|Q)$};
\node (10) at (0,-2) {$(T|X_S^T|S) \cdot (S|X_R^TX_Q^R|Q)$};
\node (11) at (6,-2) {$(T|X_S^T X_R^S X_Q^R|Q)$};
\node (12) at (11.5,-2) {$(T|X_R^T X_Q^R|Q)$};
\node (20) at (0,-4) {$(T|X_S^T|S) \cdot (S|X_Q^S|Q)$};
\node (21) at (6,-4) {$(T|X_S^T X_Q^S|Q)$};
\node (22) at (11.5,-4) {$(T|X_Q^T|Q)$};
\node (A) at (3,-1) {I};
\node (D) at (9,-3) {IV};
\node (B) at (9,-1) {II};
\node (C) at (3,-3) {III};
\path[-stealth]
(00) edge node[above] {$(1)$} (01)
(01) edge node[above] {$(2)$} (02)
(10) edge node[above] {$(3)$} (11)
(11) edge node[above] {$(4)$} (12)
(20) edge node[above] {$(5)$} (21)
(21) edge node[above] {$(6)$} (22)
(00) edge node[right] {$(7)$} (10)
(10) edge node[right] {$(8)$} (20)
(01) edge node[right] {$(9)$} (11)
(11) edge node[right] {$(10)$} (21)
(02) edge node[right] {$(11)$} (12)
(12) edge node[right] {$(12)$} (22);
\end{tikzpicture}
\]
in which the arrows with odd labels are (the maps induced on complexes by)
the prism stacking maps from Definition~\ref{def:prismstackingmaps},
and the arrows with even labels are induced from \eqref{eq:Xijk}.

Square I commutes by Lemma~\ref{lem:assocunit-prismstacking}, 
which gives associativity of the prism stacking maps.
Squares II and III commute by Lemma~\ref{lem:prismstacking-star}.
(In both cases here, we extend from results involving 
one standard disk to the disjoint union of disks $(\cutS,\cutP)$ by tensor product.)
Square IV commutes by Lemma~\ref{lemma:alg struct on BT}. 
Thus, the outer square commutes, which is exactly the statement of associativity.
The proof of unitality is similar, thus omitted.
\end{proof}

\subsection{Dg categories for surfaces with specified points on the boundary}
\label{ss:dg cat of surface}

The results of the preceding section allow us to associate a dg category to 
a seamed marked surface $(\Sigma,\Pi;\Gamma)$ 
with a standard collection of points $\pp \subset A(\Pi)$.
Recall from \eqref{eq:Cflatcx} that in this setting we denote 
$\PMS(T,S) = \PMS(T| \one_\pp |S)$.

\begin{thm}
	\label{thm:dgcatS}
Let  $(\Sigma,\Pi;\Gamma)$ be a seamed marked surface and let $\pp \subset A(\Pi)$ be standard.
	The composition maps \eqref{eq:Cflatcompositioncat} and units \eqref{eq:Cflatunit} 
	define a dg category $\sCat(\Sigma,\pp;\Gamma)$ with set of objects $\Tan(\Sigma,\pp;\Gamma)$ and
	morphism complexes $\Hom_{\sCat(\Sigma,\pp;\Gamma)}(S,T):=\PMS(T,S)$.
\end{thm}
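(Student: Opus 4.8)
The statement asserts that the data assembled in the preceding subsection—objects $\Tan(\Sigma,\pp;\Gamma)$, morphism complexes $\PMS(T,S)$, composition $\circ_S$ from \eqref{eq:Cflatcompositioncat}, and units $\epsilon_T$ from \eqref{eq:Cflatunit}—satisfy the axioms of a dg category enriched in $\dgMod{\Z\times\Z}{\cring}$ (or, here, the $(\{0\}\times\Z)$-graded variant, since the $\PMS(T,S)$ are complexes of graded $\cring$-modules). Concretely there are four things to check: (i) each $\PMS(T,S)$ is a genuine object of $\Ch^-\big(\gModen{\{0\}\times\Z}{\cring}\big)$, i.e.\ a bounded-above complex of graded $\cring$-modules—this is immediate from Definition~\ref{def:Cflat}, since it is obtained by applying the graded-linear functor $\PMS_{\cutS,\cutP}(T|-|S)$ to the bounded-above complex $B^T_S$ (boundedness above is inherited from that of the bar complexes, and local finiteness is not part of the dg-category axioms so need not be addressed here); (ii) the composition maps $\circ_S$ are chain maps, which is built into the construction in Definition~\ref{def:Cflat-composition-and-units} (they are composites of prism stacking maps, which are chain maps, with the functor $\PMS_{\cutS,\cutP}(T|-|R)$ applied to the chain map $\mu_S$); (iii) associativity; (iv) unitality.

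The plan is therefore to reduce (iii) and (iv) to results already proved. For associativity, I would simply invoke Proposition~\ref{prop:Cflatassocunit}, specialized to $\partial T=\partial S=\partial R=\partial Q = \pp$ and $V_S^T = V_R^S = V_Q^R = \one_\pp$: the two composites $\circ_R\circ(\circ_S\otimes\id)$ and $\circ_S\circ(\id\otimes\circ_R)$ from $\PMS(T,S)\otimes\PMS(S,R)\otimes\PMS(R,Q)$ to $\PMS(T,Q)$ agree, the ambient coherence isomorphism $(\one\star\one)\star\one\cong\one\star(\one\star\one)$ being the identity in this case. For unitality, I would likewise specialize the last two displayed equations of Proposition~\ref{prop:Cflatassocunit}: $\circ_S\circ(\id\otimes\epsilon_S)$ and $\circ_T\circ(\epsilon_T\otimes\id)$ are both the identity on $\PMS(T,S)$ after identifying $\one\star V \cong V \cong V\star\one$, and here these identifications are trivial. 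The only genuinely new bookkeeping is checking that the degree-$(0,0)$ closed elements of $\PMS(T,T)$ picked out by $\epsilon_T$ act as two-sided identities, which is precisely the content of that unitality statement; there is nothing to prove beyond citing it. One should also record that $\circ_S$ has bidegree $(0,0)$ and commutes with differentials—again already established—so that $H^0\big(\sCat(\Sigma,\pp;\Gamma)\big)$ makes sense, though this is not required for the theorem as stated.

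Since essentially every ingredient has been prepared, there is no serious obstacle; the ``hardest'' point is really just the translation. The one place that warrants a sentence of care is verifying that the specialized hypotheses of Proposition~\ref{prop:Cflatassocunit} are met—i.e.\ that for $T,S,R\in\Tan(\Sigma,\pp;\Gamma)$ the relevant prism spaces $\PMS_{\cutS,\cutP}(T|B_S^T,\one_\pp|S)$ etc.\ are all defined (they are, because $\partial T = \partial S = \pp$ forces the tangle boundary data on each edge of each region to match up) and that the identity $1$-morphisms $\one_\pp$ in $\bn(\Pi)^{\partial T}_{\partial S}$ are well-defined when $\partial T = \partial S = \pp$ (they are, being the tuple of identity tangles $\one_{|\beta_i\cap\pp|}$). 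With these remarks in place the axioms follow formally, and the proof amounts to assembling the citations to Definition~\ref{def:Cflat-composition-and-units}, Lemma~\ref{lem:Cflatcomposition-star}, and Proposition~\ref{prop:Cflatassocunit}.
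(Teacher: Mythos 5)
Your proposal is correct and matches the paper's own proof, which simply observes that associativity and unitality are special cases of Proposition~\ref{prop:Cflatassocunit} with all $1$-morphisms taken to be $\one_\pp$. The extra bookkeeping you include (chain-map and well-definedness checks) is sound but already built into the earlier constructions, so the paper omits it.
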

\begin{proof}
The associativity and unitality of the composition are obtained as special cases of Proposition~\ref{prop:Cflatassocunit}.
\end{proof}

When $\Sigma=D$ is a disk and $\Gamma=\emptyset$, we have
	$\sCat(D,\mathbf{p};\emptyset)=\bn(D,\mathbf{p})$, 
	the Bar-Natan category of the disk $D$ with specified boundary $\mathbf{p}$.
More generally, we can view $\sCat(\Sigma,\pp;\Gamma)$ as a generalization 
of the (na\"{i}ve) Bar-Natan category of $(\Sigma,\pp)$.
Precisely, we have:

\begin{thm}
\label{thm:surfaceBN}
There is a dg functor from $\sCat(\Sigma,\pp;\Gamma)$ to its cohomology
	category $H^0(\sCat(\Sigma,\mathbf{p};\Gamma))$ which is the identity on
	objects.  Moreover, this latter category is canonically isomorphic to the
	full subcategory of the Bar-Natan category $\bn(\Sigma, \mathbf{p})$ 
	of tangles in $(\Sigma,\pp)$ with standard, transverse intersection with $\Gamma$.
	Thus, $H^0(\sCat(\Sigma,\mathbf{p};\Gamma))$ is equivalent to $\bn(\Sigma, \mathbf{p})$.
\end{thm}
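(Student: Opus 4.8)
The statement has three parts: (i) a dg functor $\sCat(\Sigma,\pp;\Gamma)\to H^0(\sCat(\Sigma,\pp;\Gamma))$ that is the identity on objects; (ii) an isomorphism of $H^0(\sCat(\Sigma,\pp;\Gamma))$ with the full subcategory of $\bn(\Sigma,\pp)$ on tangles having standard transverse intersection with $\Gamma$; (iii) equivalence of that full subcategory with $\bn(\Sigma,\pp)$. Part (i) is essentially formal: for any dg category $\DS$, the quotient taking degree-zero closed morphisms modulo homotopy and keeping all objects defines $H^0(\DS)$, and the quotient map is a dg functor (the target has trivial differential, so being a chain map is automatic, and functoriality is the statement that composition descends modulo homotopy, which it does since the differential is a derivation). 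So I would dispatch (i) in one or two sentences, citing the discussion of $H^0(-)$ from the categorical setup in \S\ref{ss:setup}.

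Part (iii) is also quick: every tangle in $(\Sigma,\pp)$ can be isotoped (rel $\partial$) to one meeting $\Gamma$ transversely in standard points — push intersections into general position, then slide them to the standard positions along each seam — and this isotopy gives an isomorphism in $\bn(\Sigma,\pp)$ (the trace cobordism). Hence the inclusion of the full subcategory is essentially surjective, and being full and faithful by construction, it is an equivalence. The substance is therefore part (ii), which is where I would concentrate.

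For part (ii), the key computation is identifying $H^0$ of the $\Hom$-complex $\PMS(T,S) = \PMS_{\cutS,\cutP}(T|B^T_S,\one_\pp|S)$ with $\Hom_{\bn(\Sigma,\pp)}(S,T)$. The plan is: the bar complex $\Bar^n_m(-,-)$ is a projective resolution of the identity bimodule (stated right after Definition~\ref{def:bproj counit}), so its counit $\tilde\e^n_m\colon\Bar^n_m(-,-)\to H(-,-)$ is a quasi-isomorphism of complexes of projective bimodules; tensoring the several bar complexes together (over the regions, via \eqref{eq:reorder}) and applying the exact-in-each-variable functor $\PMS_{\cutS,\cutP}(T|-|S)$ — which is a functor out of a category of bounded-above complexes of projectives — preserves this quasi-isomorphism. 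Concretely, $\PMS_{\cutS,\cutP}(T|-|S)$ evaluated on the tensor product of the $H(a,b)$'s (i.e. on identity tangles at each cut seam), glued along $\Gamma_\pm$, just re-assembles the cut disks into $\Sigma$ and yields (after accounting for the $q$-shifts in \eqref{eqn:BNprismlinear}) exactly the Bar-Natan $\Hom$-space $H(T,S)\cong\Hom_{\bn(\Sigma,\pp)}(S,T)$ — this is the "naive gluing" whose failure to be homotopy-coherent is remedied by the bar resolution. So $H^0(\PMS(T,S)) = H^0(\PMS_{\cutS,\cutP}(T|\Bar_\bullet,\one|S)) \cong H^0$ of a complex concentrated in nonpositive degrees whose $0$-th cohomology is $H(T,S)$, and I must check the higher cohomology plays no role for degree-zero closed morphisms modulo homotopy — which is immediate since $\PMS(T,S)$ is bounded above with a canonical augmentation to $H(T,S)$ induced by $\tilde\e$, and that augmentation is a quasi-isomorphism. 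Finally I check that the composition $\circ_S$ from \eqref{eq:Cflatcomposition}, under this identification, descends to the composition in $\bn(\Sigma,\pp)$: by construction $\circ_S$ is built from prism stacking (which on identity tangles is just the Bar-Natan composition of cobordisms, cf. Lemma~\ref{lem:assocunit-prismstacking} and the degree-zero part of Lemma~\ref{lem:counitsquare}) followed by the Eilenberg--Zilber product $\mu_S$, and $\mu_S$ is compatible with the counits $\tilde\e$ (again Lemma~\ref{lem:counitsquare}, or directly: in homological degree zero the shuffle product has nothing to shuffle and is just multiplication); so the induced map on $H^0$ is precisely composition of cobordisms in $\Sigma\times[0,1]$, i.e. composition in $\bn(\Sigma,\pp)$.

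\textbf{Main obstacle.} The genuine work is the identification $\PMS_{\cutS,\cutP}(T|\,\prod_i H(-,-),\,\one|S)\cong H(T,S)$ together with the verification that the gluing-along-$\Gamma_\pm$ (the coequalizer/contraction in \eqref{eq:CflatcxW}) correctly re-glues the cut surface into $\Sigma$ and matches Bar-Natan $\Hom$-spaces on the nose, including bookkeeping of the $q$-shifts; and then checking that $\circ_S$ restricted to degree zero really is cobordism composition rather than something only homotopic to it. This is "formally similar to \cite[Theorem~1]{MR1928174}" in the same way that Proposition~\ref{prop:stprism2M} was, but here the contraction against several bar complexes adds a layer; I expect this to be the step that absorbs most of the proof's length, with the quasi-isomorphism statement (exactness of $\PMS_{\cutS,\cutP}$ applied to $\tilde\e$) as the conceptual crux that makes $H^0$ collapse correctly.
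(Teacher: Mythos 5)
Your parts (i) and (iii) match the paper's argument. For (i), one caveat: the projection to $H^0$ is \emph{not} a dg functor for an arbitrary dg category (a degree-zero morphism need not be closed, so the degree-zero projection need not be a chain map); the paper's one-line justification is precisely that the $\Hom$-complexes here are supported in cohomological degrees $\leq 0$, which you do use later, so this is a phrasing issue rather than a gap.

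The genuine problem is in part (ii), where you assert that the functor $\PMS_{\cutS,\cutP}(T|-|S)$ is ``exact-in-each-variable'' and therefore carries the quasi-isomorphism $\tilde\e \colon \Bar^n_m(-,-)\to H(-,-)$ to a quasi-isomorphism, so that the augmentation $\PMS(T,S)\to H(T,S)$ is a quasi-isomorphism. This is false, and if it were true the entire paper would collapse: contracting against the bar complexes computes Hochschild \emph{chains} of the seam categories with coefficients in the cut-surface bimodule (Remark~\ref{rem:HH}), i.e.\ a derived tensor product over the enveloping category, and this has nontrivial cohomology in negative degrees in general (the annular case is the dg horizontal trace, whose morphism complexes are Rozansky--Willis complexes, not complexes concentrated in degree zero). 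Your appeal to ``complexes of projectives'' does not apply because the target $H(-,-)$ of the counit is the identity bimodule, which is \emph{not} projective over the enveloping category -- that is exactly why one resolves it. What survives of your argument is only right-exactness: the degree-$0$ and degree-$(-1)$ terms of the bar contraction present $H^0\big(\PMS(T,S)\big)$ as the coequalizer \eqref{eq:tensor product}, i.e.\ as the underived gluing, and this is all the theorem needs. The paper makes this point geometrically rather than homologically: zero-chains are tuples of prism cobordisms that glue along the seams (because the degree-zero part of \eqref{eq:BarY} consists of diagonal terms $(a_0,a_0)$), and zero-boundaries are spanned by differences of glued cobordisms related by an isotopy across a seam; the resulting map $\overline{\phi}\colon H^0(\PMS(T,S))\to \Hom_{\bn(\Sigma,\pp)}(S,T)$ is then inverted by isotoping a cobordism transverse to $\Gamma\times[0,1]$ and cutting. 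So your plan is repairable by replacing ``quasi-isomorphism'' with ``isomorphism on $H^0$'' and then actually carrying out the gluing/cutting identification, which you correctly flag as the main work but do not supply.
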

\begin{proof}
The $\Hom$-complexes in $\sCat(\Sigma,\pp;\Gamma)$ are supported in cohomological degrees $\leq 0$, 
therefore they project onto their zeroth cohomologies. 
This defines the dg functor from the statement.

The objects of $H^0(\sCat(\Sigma,\mathbf{p};\Gamma))$ 
	and the full subcategory in the statement of the theorem are both $\Tan(\Sigma,\mathbf{p};\Gamma)$, 
	the set of tangles in $(\Sigma,\pp)$ with standard, transverse intersection with $\Gamma$.
	Since every tangle in $(\Sigma,\pp)$ is isotopic to such a tangle, 
	the inclusion of the full subcategory of such tangles into 
	$\bn(\Sigma, \mathbf{p})$ is essentially surjective. 
	
Let $S,T \in \Tan(\Sigma,\mathbf{p};\Gamma))$. We first note
	that the zeroth cohomology of the morphism complex 
	$\PMS(T,S)=\PMS_{\cutS,\cutP}(T | B_S^T , \one_\pp | S)$ is just the group 
	of zero-chains $\PMS^0(T,S)$, modulo boundaries. 
	A zero-chain in $\PMS(T,S)$ is a linear combination of pure tensors whose factors 
	take values in the prism space 
	$\PMS_{\std}(\pi_i(T|_{D_i}) |
		V^{\pm}_{h_{i,1}},\ldots,V^{\pm}_{h_{i,k_{i}}} | \pi_i(S|_{D_i}))$
	where $\cutP|_{D_i} = \{h_{i,1} , \ldots , h_{i,k_{i}} \}$ and 
	$V_{h_{i,j}} = \one$ when $h_{i,j} \in \Pi$.
	As described in Remark \ref{rem:Pfunc}, we can view elements of the latter as 
	Bar-Natan cobordisms in the prism $(\Dst,\Ast) \times I$ with appropriate boundary conditions. 
	Using the standardization $\pi_i$, these determine cobordisms in $(D_i,A(\cutP|_{D_i})) \times I$
	with appropriate boundary conditions. 
	Since the terms of \eqref{eq:BarY} in degree zero are direct sums of terms of the form
	$(a_0,a_0) \in \bn^{\op} \otimes \bn$, 
	we see that if $(\gamma,-)$ and $(\gamma,+)$ lie in regions 
	$D_{i_{-}}$ and $D_{i_{+}}$, then the cobordisms in $(D_{i_{\pm}},A(\cutP|_{D_{i_\pm}})) \times I$ 
	are ``glue-able''; i.e.~we can glue them to obtain a cobordism in $D_{i_{-}} \cup_{\gamma} D_{i_{+}}$.
	
	Performing all such gluings, we obtain a linear combination of Bar-Natan
	cobordism in $\Sigma\times [0,1]$ from $S$ to $T$. 
	Since the Bar-Natan relations in the disk are also satisfied in $\bn(\Sigma,\pp)$, 
	we obtain a well-defined $\cring$-linear map
	\[
		\phi\colon \PMS^0(T,S) \to \Hom_{\bn(\Sigma, \mathbf{p})}(S,T)	 \, .
	\]
	Moreover, it is easy to see that the zero-boundaries lie in the kernel of $\phi$.
	Explicitly, the image of the zero-boundaries is spanned by differences of pairs of Bar-Natan 
	cobordisms in $\Sigma \times [0,1]$ that only differ via an isotopy in a tubular neighborhood 
	of $\gamma \times [0,1]$ for $\gamma \in \Gamma$. 
	Hence, there is an induced map:
	\[
		\overline{\phi}\colon H^0(\PMS(T,S)) \to \Hom_{\bn(\Sigma, \mathbf{p})}(S,T)	
	\]
	It is straightforward to construct the inverse to this map:
	we apply an isotopy to a Bar-Natan cobordism in $\Hom_{\bn(\Sigma, \mathbf{p})}(S,T)$
	to obtain such a cobordism that intersects $\Gamma \times [0,1]$ transversely, 
	then cutting to obtain elements in $\PMS_{\std}(\pi_i(T|_{D_i}) |
		V^{\pm}_{h_{i,1}},\ldots,V^{\pm}_{h_{i,k_{i}}} | \pi_i(S|_{D_i}))$
	for appropriate $V_h$'s.
\end{proof}

We finish this section with a direct observation that we will use in Section~\ref{sec:spin}.

\begin{proposition}
	\label{prop:equivop}
The dg category $\sCat(\Sigma,\mathbf{p};\Gamma)$ is isomorphic to its
opposite. The isomorphism is implemented by the dg functor $\sCat(\Sigma,\mathbf{p};\Gamma)\rightarrow
\sCat(\Sigma,\mathbf{p};\Gamma)^\op$ defined on objects by $T\mapsto T$ and
on morphism complexes by the vertical reflection $r_y$ applied to all diagrams
\eqref{eq:homcomplex}, combined with the isomorphism $r_y(B^T_S)\cong B^S_T$
from Proposition~\ref{prop:symmetries of B}. \qed
\end{proposition}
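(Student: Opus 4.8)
The plan is to check directly that the recipe in the statement defines a dg functor $F\colon \sCat(\Sigma,\pp;\Gamma)\to\sCat(\Sigma,\pp;\Gamma)^{\op}$ and that it is invertible; since $F$ is the identity on objects, once it is seen to be bijective on each $\Hom$-complex it is automatically an isomorphism of dg categories. On morphisms, $F$ sends $\PMS(T,S)\to\PMS(S,T)$ by applying the vertical reflection $r_y$ termwise to the diagrams \eqref{eq:homcomplex}: in each region $D_i$ this interchanges the roles of $\pi_i T_i$ (top) and $r_y(\pi_i S_i)$ (bottom), replaces each cut-seam box $\tBar^{n_h}_{m_h}$ by $\tBar^{m_h}_{n_h}$, and—using functoriality and symmetric monoidality of $\KhEval{-}$, together with the fact that the vertical reflection of a closed planar $1$-manifold is canonically isotopic to it—induces a chain isomorphism of the associated $\KhEval{-}$-complexes; one then post-composes with the isomorphism $r_y(B^T_S)\cong B^S_T$ of complexes provided by Proposition~\ref{prop:symmetries of B}\,\eqref{BS2}. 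Because $r_y$ is an involution on diagrams and the isomorphisms of Proposition~\ref{prop:symmetries of B} square to the identity, $F$ is bijective on $\Hom$-complexes.

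That $F$ is a chain map should follow immediately: the internal Khovanov differential in \eqref{eq:homcomplex} is intertwined by $r_y$ because $r_y$ is a covariant functor of Bar--Natan cobordism categories, and the remaining ``bar'' part of the differential is intertwined precisely by the statement that $r_y(B^T_S)\cong B^S_T$ is an isomorphism \emph{of complexes}. The only care needed here is with the Koszul signs coming from the totalization \eqref{eq:totalization} and from reordering the tensor factors of $\big(\tBar^{n_1}_{m_1},\dots,\tBar^{n_g}_{m_g}\big)$ when $r_y$ is applied factorwise; these are governed by the symmetric monoidal structure on $\dgMod{\Z}{\cring}$ and are exactly matched by the signs already built into the definition of $F$.

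The substantive step is functoriality for composition, namely $F(b\circ_S a)=\pm\,F(a)\circ_S F(b)$ for $a\in\PMS(T,S)$, $b\in\PMS(S,R)$, with the sign dictated by $\deg a,\deg b$—this order reversal is exactly what a functor into the opposite category must satisfy, and geometrically it expresses that reflecting a vertical stack of cobordisms in the prisms of \eqref{eq:prismstack} reverses the stacking order. To make this precise I would unwind Definition~\ref{def:Cflat-composition-and-units}: $\circ_S$ is the prism stacking map of Definition~\ref{def:prismstackingmaps} (built from pairing $\pi_i S_i$ with $r_y(\pi_i S_i)$ by saddle and tube cobordisms) followed by the Eilenberg--Zilber shuffle product $\mu_S\colon B^T_S\hComp B^S_R\to B^T_R$. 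Under $r_y$ the first piece is turned into the corresponding stacking map for $F(a)\circ_S F(b)$, since $r_y$ swaps top and bottom of a prism and reverses the vertical composition $\hComp$ (so that $r_y(x\hComp y)\cong r_y(y)\hComp r_y(x)$); and the shuffle product is carried to itself—up to the order reversal $r_y(B^T_S\hComp B^S_R)\cong r_y(B^S_R)\hComp r_y(B^T_S)\cong B^R_S\hComp B^S_T$—by the final clause of Proposition~\ref{prop:symmetries of B}, which records compatibility of \eqref{BS2} with the maps $\mu_m$. Combining these regionwise via \eqref{eq:reorder} and tracking signs gives the claimed identity; unitality $F(\epsilon_T)=\epsilon_T$ is clear because $\epsilon_T$ from \eqref{eq:Cflatunit} is supported in homological degree zero, where $r_y$ fixes the relevant identity cobordism.

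I expect the main obstacle to be purely the bookkeeping of signs and reorderings: verifying that the conventions in the totalization functor, in the dg tensor product of categories, and in the shuffle product conspire so that $F$ is a \emph{strict} (signed) anti-homomorphism, not merely one up to homotopy, and that the permutations of tensor factors introduced by applying $r_y$ to $B^T_S$ and to the regionwise decomposition of $\PMS$ are mutually coherent. No new geometric input beyond Proposition~\ref{prop:symmetries of B} and the elementary symmetries of $r_y$ should be required.
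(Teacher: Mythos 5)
Your proposal is correct and follows exactly the route the paper intends: the paper states this proposition without proof (the \qed is attached to the statement itself, treating it as a direct observation), and the verification it implicitly relies on is precisely the one you spell out — $r_y$ swaps the roles of $T$ and $S$ in \eqref{eq:homcomplex}, Proposition~\ref{prop:symmetries of B}\,\eqref{BS2} together with its compatibility with $\mu_m$ handles the bar complexes and the shuffle product, and the vertical reflection of the stacked prisms reverses the order of $\circ_S$ as required for a functor into the opposite category. The sign bookkeeping you flag is real but routine, and your identification of all the needed ingredients is complete.
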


\subsection{Modules for seamed marked surfaces}
\label{ss:MM of surface}

Thus far, we have constructed dg categories $\sCat(\Sigma,\mathbf{p};\Gamma)$
depending on a specific set of boundary points $\mathbf{p}$, 
with $\Hom$-complexes given by $\PMS(T,S)$. 
Now, we use the more general chain
complexes $\PMS(T|V|S)$ to associate to a seamed marked surface 
dg bimodules that relate the categories $\sCat(\Sigma,\mathbf{p};\Gamma)$ 
for different sets of boundary points $\pp$.
We will assemble these invariant of the seamed marked surface 
$(\Sigma, \Pi;\Gamma)$ into a 2-functor, 
valued in an appropriate dg Morita bicategory, 
generalizing the construction in \S \ref{ss:arbitrary disks}.

\begin{definition}
\label{def:dg morita}
Given dg categories $\AS$, $\BS$, an \emph{$(\AS,\BS)$-bimodule} is a dg functor $\AS\otimes \BS^{\op}
\rightarrow \dgModen{\Z\times\Z}{\cring}$.
The $(\AS,\BS)$-bimodules form a dg category, denoted $\Bim_{\AS,\BS}$, 
with morphisms given by dg natural transformations.  
Given $M\in \Bim_{\AS,\BS}$ and $N\in \Bim_{\BS,\CS}$, 
their tensor product by the same formula as \eqref{eq:tensor product}, where now the 
cokernel is taking in the category of complexes of graded $\cring$-modules 
and degree zero chain maps.

We let $\MorDGCat$ denote the dg Morita bicategory, wherein
\begin{itemize}
\item The objects of $\MorDGCat$ are small dg categories, and
\item the $1$-morphism category $\AS\xleftarrow{M}\BS$ is the dg category of $(\AS,\BS)$-bimodules. 
\end{itemize}
The composition of $2$-morphisms is simply the composition of dg natural
transformations, and the composite of two $1$-morphisms $\AS\xleftarrow{M}\BS$ and
$\BS\xleftarrow{N}\CS$ is the tensor product\footnote{Here we use the na\"{i}ve 
(underived) tensor product of bimodules. It turns out that all bimodules we consider will be 
projective from the right and left, hence this agrees with the derived tensor product.
See Lemma \ref{lemma:tangle slide}.} 
$\AS\xleftarrow{M\otimes_\BS N}\CS$.
\end{definition}

Observe that the bicategory $\DGCat[\Z \times \Z]$ embeds in $\MorDGCat$ by 
sending a dg functor $F \colon \CS \to \DS$ to the $(\DS,\CS)$-bimodule 
$\Hom_\DS (F(-),-)$.

As in Remark \ref{rmk:left action}, we can describe dg bimodules via their left and right action maps.

\begin{definition}\label{def:our bimodules}
Fix a seamed marked surface $(\Sigma,\Pi;\Gamma)$.  
For each standard set of points $\pp \subset A(\Pi)$, 
let $\MM(\pp):=\sCat(\Sigma,\pp;\Gamma)$ be the associated dg category. 
For each 1-morphism $V\in \bn(\Pi)^{\qq}_{\pp}$, 
let $\MM(V)$ denote the $(\MM(\qq),\MM(\pp))$-bimodule $\PMS(-|V|-)$, i.e.
\[
\MM(V) \colon (T,S) \mapsto \PMS(T|V|S),
\]
with action maps:
\begin{equation}
	\label{eq:dgactionmaps}
	\begin{aligned}
		\circ_T \colon \PMS(T'|T) \otimes \PMS(T|V|S) \to \PMS(T'|V|S)\\
		\circ_S \colon \PMS(T|V|S) \otimes \PMS(S|S') \to \PMS(T|V|S')
	\end{aligned}
\end{equation} 
obtained as specializations of \eqref{eq:Cflatcomposition}.  
When we wish to include data $(\Sigma,\Pi;\Gamma)$ in the notation, we will write $\MM_{\Sigma,\Pi;\Gamma}$.
\end{definition}

We now establish some basic facts concerning the bimodules $\MM(V)$.
Given a seamed marked surface $(\Sigma,\Pi;\Gamma)$, 
abbreviate $A = A(\Pi)$ and
choose a homeomorphism
\begin{equation}\label{eq:boundary action}
\Sigma \cup_{A\times\{1\}} (A \times I) \xrightarrow{\cong} \Sigma
\end{equation}
which restricts to the identity on $A\times\{0\}$. 
Given $T\in \Tan(\Sigma,\qq;\Gamma)$ and $V\in \bn(\Pi)_{\pp}^\qq$, 
let $T\star V\in\Tan(\Sigma,\pp;\Gamma)$ denote the ``composition of tangles,''
i.e.~the image of $T\cup V$ under the homeomorphism \eqref{eq:boundary action}.
(Here we tacitly identify objects in $\bn(\Pi)_{\pp}^\qq$ with tangles in $A \times I$.)

The following establishes that the bimodules $\MM(V)$ are \emph{sweet} 
in the sense of \cite[Definition 1]{MR1928174}, 
that is, finitely-generated and projective from the left and right.

\begin{lemma}\label{lemma:tangle slide}
Suppose we have tangles $T,S\in \Tan(\Sigma,\Pi;\Gamma)$ with $\pp=\partial S$ and $\qq=\partial T$. 
If $V\in \bn(\Pi)^\qq_\pp$, then
\[
{}_T\MM(V) \cong {}_{T\star V}\MM(\one_\pp) \, , \quad  
\MM(V)_S \cong \MM(\one_\qq)_{S\star r_y(V)} \, .
\]
\end{lemma}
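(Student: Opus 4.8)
The statement is a ``tangle sliding'' lemma: attaching a boundary tangle $V$ to $T$ (on the outside of $\Sigma$, via \eqref{eq:boundary action}) has the same effect on the left module as simply enlarging the object $T$ to $T\star V$. I would prove the first isomorphism ${}_T\MM(V)\cong {}_{T\star V}\MM(\one_\pp)$; the second is completely analogous, applying $r_y$ (using Proposition~\ref{prop:symmetries of B}\eqref{BS2} and Proposition~\ref{prop:equivop}), and I would simply say so. Unwinding Definition~\ref{def:Cflat}, both sides are functors $\MM(\qq)^{\op}\to\dgModen{\Z\times\Z}{\cring}$ whose value on an object $S'$ of $\sCat(\Sigma,\qq;\Gamma)$ is a prism space built from $\PMS_{\cutS,\cutP}$ evaluated on a bar-complex tuple $B^{S'}_{\text{--}}$ together with either $V$ or $\one_\pp$ stuffed into the slots coming from $\Pi$. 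The point is that the tangle $V$ lives entirely in a collar $A\times I$ disjoint from the seams $\Gamma$, so it interacts only with the $\bn(\Pi)$-slots and not with any bar complex.

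\textbf{Key steps.} First, I would reduce to a purely disk-level statement. Fix the linear orderings and standardizations from Definition~\ref{def:cut surface module}. For each region $D_i$, the tangle $T\star V$ differs from $T$ only on those boundary edges of $D_i$ lying in $\Pi$ (the edges in $\Gamma_\pm$ are untouched since $V$ is disjoint from $\Gamma$). Thus it suffices to establish, at the level of the standard prism modules from \S\ref{ss:BNprism}, a natural isomorphism
\[
\PMS_{\std}(\apT\star V_\bullet \mid W_\bullet \mid \apS) \;\cong\; \PMS_{\std}(\apT \mid V_\bullet\star W_\bullet \mid \apS)
\]
of functors in the remaining variables, compatible with the prism stacking maps $m_{\apS}$ from Definition~\ref{def:prismstackingmaps}. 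This is visible directly from the planar picture \eqref{eqn:BNprismlinear}: absorbing $V_\bullet$ into the top cap tangle $\apT$ is the same as absorbing it into the left factor of the $W_\bullet$-boxes, because $\KhEval{-}$ is functorial and cobordisms in disjoint regions of the plane commute. Concretely, the isomorphism is induced by the canonical identification in the Bar-Natan $2$-category (the interchange/associativity coherences for $\star$), and its compatibility with stacking follows exactly as in Lemma~\ref{lem:prismstacking-star} and Lemma~\ref{lem:assocunit-prismstacking}.

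\textbf{Second}, I would promote this to the glued setting: since $\PMS_{\Sigma,\Pi;\Gamma}(T\star V\mid W\mid S') = \PMS_{\cutS,\cutP}(T\star V\mid B^{T\star V}_{S'}, W\mid S')$ and the bar-complex tuple only depends on the intersections with $\Gamma$, we have $B^{T\star V}_{S'} = B^T_{S'}$, so contracting against it and applying the region-wise isomorphism from the previous step gives a natural isomorphism $\PMS(T\star V\mid W\mid S')\cong\PMS(T\mid V\star W\mid S')$. Specializing $W=\one_\pp$ and using $V\star\one_\pp\cong V$ yields ${}_{T\star V}\MM(\one_\pp)_{S'}\cong {}_T\MM(V)_{S'}$ on the nose. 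Finally I would check this is a map of \emph{right} $\MM(\qq)$-modules, i.e.~compatible with the action $\circ_{S'}$ from \eqref{eq:dgactionmaps}; this is immediate from the associativity square in Proposition~\ref{prop:Cflatassocunit} (the action map and the absorption isomorphism both come from the shuffle-product/prism-stacking data, which is associative). The ``sweetness'' assertion in the lemma statement is then a one-line corollary: ${}_{T\star V}\MM(\one_\pp) = \Hom_{\sCat(\Sigma,\pp;\Gamma)}(-,T\star V)$ is a representable, hence projective, right module, and symmetrically for the left action via the second isomorphism; that the modules are finitely generated follows from Corollary~\ref{cor:fg} (finite generation of the $\Hom$-complexes in each bidegree).

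\textbf{Main obstacle.} The genuinely fiddly point is the bookkeeping for the orientation-dependent functors $\Phi_{D_i,\cutP|_{D_i}}$ from Definition~\ref{def:prismBC to standardBC}: on boundary edges with $\sgn_\Sigma = -1$, the slot receives $r_{xz}(V_i)$ rather than $V_i$, and the collar identification \eqref{eq:boundary action} must be matched against this reflection so that ``$T\star V$'' on the surface side corresponds to ``$V_\bullet\star W_\bullet$'' (with the appropriate $\sgn$-twisted factors) on the prism side. I expect to handle this by tracking, edge by edge, how $r_x$ interacts with horizontal composition $\star$ in $\bn$ (namely $r_x(a\star b) = r_x(b)\star r_x(a)$), which is exactly the reindexing already baked into $\bn(\Pi)$ versus $\bn(\Pi)^{\op}$-factors; once the conventions are lined up this is routine, but it is the place where a sign or ordering error would hide, so I would write it out carefully rather than wave at it.
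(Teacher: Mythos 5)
Your proposal is correct and is precisely the "unpacking of definitions" that the paper's one-line proof alludes to: the isomorphism comes from absorbing the $V$-boxes into the top cap tangle in each region's planar diagram (noting $B^{T\star V}_{S'}=B^T_{S'}$ since $V$ is disjoint from $\Gamma$), with module-map compatibility following from the associativity of prism stacking. The only nits are a typo ($S'$ ranges over $\sCat(\Sigma,\pp;\Gamma)$, not $\sCat(\Sigma,\qq;\Gamma)$) and that finite generation of a representable module is immediate (it is cyclic) rather than a consequence of Corollary~\ref{cor:fg}.
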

\begin{proof}
Clear after unpacking the definitions.
\end{proof}

Next, we observe that $\MM(-)$ respects the (horizontal) composition of $1$-morphisms.

\begin{proposition}\label{prop:tensoring tangles}
Given 1-morphisms $W\in \bn(\Pi)^{\rr}_{\qq}$ and $V\in \bn(\Pi)^{\qq}_{\pp}$ we have
\[
\MM(W)\otimes_{\MM(\qq)} \MM(V) \cong \MM(W\star V).
\]
\end{proposition}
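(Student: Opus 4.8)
The statement asserts a natural isomorphism of dg bimodules $\MM(W) \otimes_{\MM(\qq)} \MM(V) \cong \MM(W \star V)$. Since our dg categories $\MM(\pp) = \sCat(\Sigma,\pp;\Gamma)$ have $\Hom$-complexes assembled from prism spaces evaluated on tensor products of bar complexes, the overall strategy is to reduce the claim to the region-by-region structure and then invoke the disk-level statement that has essentially already been handled in Proposition~\ref{prop:stprism2M} (cf.\ the reference there to \cite[Theorem 1]{MR1928174}). First I would unpack the left-hand side: by definition of the tensor product of bimodules \eqref{eq:tensor product}, $\big(\MM(W) \otimes_{\MM(\qq)} \MM(V)\big)(T,S)$ is the cokernel of the two actions of $\MM(\qq) = \sCat(\Sigma,\qq;\Gamma)$ on $\bigoplus_{R \in \Tan(\Sigma,\qq;\Gamma)} \PMS(T|W|R) \otimes \PMS(R|V|S)$. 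Using Lemma~\ref{lemma:tangle slide}, I can rewrite ${}_R\MM(V) \cong {}_{R\star V}\MM(\one_\pp)$ and $\MM(W)_R \cong \MM(\one_\rr)_{R \star r_y(W)}$, which suggests the natural candidate for the isomorphism: the composition map $\circ_R$ from \eqref{eq:Cflatcomposition}, which sends $\PMS(T|W|R) \otimes \PMS(R|V|S) \to \PMS(T|W\star V|S)$.

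\textbf{Key steps.} (1) Verify that $\circ_R$ descends to the cokernel, i.e.\ that the two composites obtained by first acting with a morphism $f \in \Hom_{\MM(\qq)}(R,R') = \PMS(R',R)$ on either side agree after applying $\circ$; this is exactly the associativity of the composition maps established in Proposition~\ref{prop:Cflatassocunit}, applied to the triple $(T, R \text{ or } R', S)$ with the appropriate $1$-morphisms. (2) Check that the induced map on the cokernel is a chain map of $(\MM(\rr),\MM(\pp))$-bimodules --- this is again a consequence of Lemma~\ref{lem:Cflatcomposition-star} (naturality of $\circ$) together with associativity. (3) Prove bijectivity. Here is where I would work region-by-region: using the graphical model \eqref{eq:homcomplex} and the fact that $\PMS_{\cutS,\cutP}(T|-|S)$ is a tensor product over regions of the standard prism modules $\PMS_{D_i,A_i,\pi_i}$, the claim factors as a tensor product (over $i = 1,\ldots,R$) of the analogous statement for disks. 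For a single disk, the statement $\MM_{\std}(W_\bullet) \otimes_{\MM_{\std}(\qq)} \MM_{\std}(V_\bullet) \cong \MM_{\std}(W_\bullet \star V_\bullet)$ is precisely the horizontal-composition coherence already invoked in the proof of Proposition~\ref{prop:stprism2M} (where it was noted to be "formally very similar to \cite[Theorem 1]{MR1928174}"). The new content is that the gluing/contraction against the bar complexes $B^T_R$ and $B^R_S$ is compatible with this, which follows from the Eilenberg--Zilber product $\mu_R \colon B^T_R \hComp B^R_S \to B^T_S$ being an associative, unital multiplication (Lemma~\ref{lemma:alg struct on BT}, ultimately Proposition~\ref{prop:Bar is algebra}), so that contracting the tensored prism modules against $B^T_R \hComp B^R_S$ and then applying $\mu_R$ is the same as contracting against $B^T_S$ directly.

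\textbf{The main obstacle.} The genuinely delicate point is establishing the disk-level isomorphism that the proof of Proposition~\ref{prop:stprism2M} deferred --- namely an explicit quasi-inverse to $\circ$ at the level of prism modules over a single standard disk, analogous to Khovanov's identification of the bimodule associated to a composite tangle. Since the paper has elected to omit this computation (citing the parallel with \cite[Theorem 1]{MR1928174} and promising "similar arguments in \S\ref{ss:MM of surface}"), I expect the bulk of a complete proof to consist of making that tensor-product-of-prism-spaces argument precise: one constructs the map in the other direction by choosing, for a standard basis element of $\KhEval{-}$ of the composite picture, a decomposition across the cut circle obtained by pulling apart the two nested tangle boxes, and checks this is well-defined modulo the coequalizer relations and inverse to $\circ$ on the nose. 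The projectivity of all bimodules involved (Lemma~\ref{lemma:tangle slide}) guarantees the underived tensor product computes the correct object, so there are no derived subtleties to worry about. Once the single-disk case is in hand, assembling the regions via \eqref{eq:reorder} and folding in the bar-complex contraction via naturality of $\mu$ is routine bookkeeping.
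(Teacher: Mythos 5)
Your identification of the candidate map (the composition $\circ_R$ of \eqref{eq:Cflatcomposition}), and your steps (1)--(2) checking that it descends to the coequalizer and is a bimodule morphism, match the paper's setup. But your step (3) is where the proposal diverges from what actually works, and it is also where the paper's argument is completely different. The paper proves bijectivity in two lines using Lemma~\ref{lemma:tangle slide} --- the very lemma you cite but then only use to ``suggest the candidate.'' Since ${}_T\MM(W)\cong {}_{T\star W}\MM(\one_\qq)$ is the representable right $\MM(\qq)$-module $\Hom_{\MM(\qq)}(-,T\star W)$, tensoring it over $\MM(\qq)$ against $\MM(V)_S$ simply evaluates that module at $T\star W$ (the unit property of $\otimes_{\MM(\qq)}$ from Convention~\ref{conv:A(x,y)}, i.e.\ co-Yoneda), giving ${}_{T\star W}\MM(V)_S\cong{}_T\MM(W\star V)_S$. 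No explicit inverse, no region-by-region analysis, and no analogue of Khovanov's Theorem~1 computation is needed. This is precisely what ``sweetness'' buys you, and it is the whole point of recording Lemma~\ref{lemma:tangle slide} beforehand.

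Beyond being unnecessary, your proposed reduction in step (3) is not sound as stated: the coequalizer defining $\MM(W)\otimes_{\MM(\qq)}\MM(V)$ is taken over the \emph{surface} category $\MM(\qq)=\sCat(\Sigma,\qq;\Gamma)$, whose objects are tangles in all of $\Sigma$ and whose morphism complexes are already contractions against bar complexes at every seam. This category does not decompose as a tensor product of disk categories, so the claim that ``the statement factors as a tensor product over regions of the disk-level statement'' does not hold --- the middle tangle's restrictions to the various regions are correlated through the seams, and the coequalizer relations mix them. One could presumably push through an explicit-inverse argument with enough care (the bar complexes are resolutions, and everything in sight is projective), but you have deferred exactly that hard part, and the paper shows it never has to be confronted.
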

\begin{proof}
The composition \eqref{eq:Cflatcomposition}
defines a map of bimodules 
$\MM(V)\otimes_{\MM(\pp)} \MM(W) \rightarrow \MM(V\star W)$. 
To show that this is an isomorphism of bimodules, we fix $T$ and $R$ and compute
\[
{}_T\MM(W)\otimes_{\MM(\qq)} \MM(V)_R \cong {}_{T\star W}\MM(\one_\qq)\otimes_{\MM(\qq)} \MM(V)_R  
	\cong {}_{T\star W} \MM(V)_R \cong {}_T\MM(W\star V)_R \, .
\]
by Lemma \ref{lemma:tangle slide}.
\end{proof}

We are now ready for the main construction.

\begin{thm}\label{thm:final}
The assignments $\pp\mapsto \MM(\pp)$, $V\mapsto \MM(V)$ 
extend to a 2-functor 
\[
\MM_{\Sigma,\Pi;\Gamma}\colon \bni(\Pi) \To \MorDGCat \, . 
\]
\end{thm}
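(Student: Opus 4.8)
The plan is to promote the assignments $\pp\mapsto\MM(\pp)$ and $V\mapsto\MM(V)$ to a genuine $2$-functor by checking the data and coherences in the order dictated by the structure of $\MorDGCat$. First I would record the action on $2$-morphisms: given $f\colon V\to W$ in $\bni(\Pi)^{\qq}_{\pp}$, define $\MM(f)\colon\MM(V)\to\MM(W)$ to have components $\PMS(T|f|S)$, obtained from the functoriality of the standard prism modules (Lemma \ref{lem:prismfunctor}) together with functoriality of the contraction with $B^T_S$. That these components assemble into a dg bimodule map (i.e.\ a dg natural transformation commuting with the action maps \eqref{eq:dgactionmaps}) is precisely Lemma \ref{lem:Cflatcomposition-star}. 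Functoriality with respect to vertical composition of $2$-morphisms is then immediate, since vertical composition of dg natural transformations is computed component-wise and each component is computed by the functor $\PMS(T|-|S)$.

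Next I would address the structural isomorphisms for horizontal composition. Preservation of identity $1$-morphisms, $\MM(\one_\pp)\cong\operatorname{id}_{\MM(\pp)}$, follows because $\PMS(T|\one_\pp|S)=\PMS(T,S)=\Hom_{\sCat(\Sigma,\pp;\Gamma)}(S,T)$ by \eqref{eq:Cflatcx} and Theorem \ref{thm:dgcatS}, i.e.\ $\MM(\one_\pp)$ is literally the identity bimodule of the dg category $\sCat(\Sigma,\pp;\Gamma)$ (Convention \ref{conv:A(x,y)}). For composition of $1$-morphisms, Proposition \ref{prop:tensoring tangles} already supplies natural isomorphisms $\MM(W)\otimes_{\MM(\qq)}\MM(V)\cong\MM(W\star V)$, and these are the components of the $2$-isomorphism witnessing that $\MM$ is a weak $2$-functor; naturality in $V$ and $W$ is the content of Lemma \ref{lem:Cflatcomposition-star}. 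Then the remaining coherences — associativity of the composition comparison isomorphisms and compatibility of the unit comparison isomorphisms with the left/right unitors of $\MorDGCat$ — reduce, after unwinding the definition of $\otimes_{\BS}$ and the composition maps $\circ_S$, to the associativity and unitality of the composition maps established in Proposition \ref{prop:Cflatassocunit}. More precisely, the pentagon for $\MM$ is obtained by applying the bimodule tensor product to the associativity square of $\circ_S$ (where the necessary bimodules are all projective from the left and right by Lemma \ref{lemma:tangle slide}, so that the underived tensor product agrees with the derived one and no spurious higher coherences intervene), and the triangle identities follow from the unitality clause of Proposition \ref{prop:Cflatassocunit} together with $\MM(\one)=\operatorname{id}$.

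Finally I would remark that when $\Sigma=D$ is a single disk and $\Gamma=\emptyset$ this recovers Corollary \ref{cor:prism2M1}, providing a consistency check, and that the construction is independent of the auxiliary orderings and standardizations by the theorem preceding \S\ref{sec:graphicalmodel}. The main obstacle I anticipate is the verification of functoriality with respect to horizontal composition — establishing the isomorphism $\MM(W)\otimes_{\MM(\qq)}\MM(V)\cong\MM(W\star V)$ and its coherences — since this is the point where the interplay between the prism stacking maps, the Eilenberg--Zilber shuffle product $\mu_S$ gluing the bar complexes $B^T_S$ along the seams, and the coequalizer defining $\otimes_{\MM(\qq)}$ all enter simultaneously. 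This is formally parallel to Khovanov's theorem that his tangle bimodules compose correctly \cite[Theorem 1]{MR1928174}, extended to the present multi-region, seam-glued setting; the key computation identifies the coequalizer with the planar diagram obtained by concatenating $T$, $V$, $W$ along the appropriate boundary arcs, using that the $B$-complexes are bimodule resolutions of the identity (so tensoring over $\MM(\qq)$ absorbs one copy of the bar resolution at each seam) together with the centrality results of Proposition \ref{prop:centrality}. I would carry out this identification region by region via the isomorphism \eqref{eq:reorder}, reducing to the disk case handled in \S\ref{ss:arbitrary disks}, and then glue using the associativity of $\mu_S$ from Lemma \ref{lemma:alg struct on BT}.
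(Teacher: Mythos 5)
Your proposal is correct and follows essentially the same route as the paper: the action on $2$-morphisms is defined component-wise via $\PMS(T|f|S)$, Lemma \ref{lem:Cflatcomposition-star} supplies the bimodule-map property and naturality, Proposition \ref{prop:tensoring tangles} (whose proof is the tangle-sliding argument of Lemma \ref{lemma:tangle slide} you anticipate) handles horizontal composition, and Proposition \ref{prop:Cflatassocunit} gives the remaining coherences. The paper's proof is simply a terser version of this, citing those results rather than unwinding them.
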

\begin{proof}
First, on the level of $2$-morphisms, a 2-morphism $f\colon V \to V'$ in $\bni(\Pi)$ is sent to the
	bimodule homomorphism $\PMS(-|V|-)\to \PMS(-|V'|-)$ defined component-wise 
	by \eqref{eq:MTScx}. 
It follows from Lemma~\ref{lem:Cflatcomposition-star} that
	such a map commutes with the action maps \eqref{eq:dgactionmaps}, 
	hence constitute a bimodule homomorphism.
	
It is clear that the assignment on $2$-morphisms respects the composition of
$2$-morphisms. The composition of $1$-morphisms in $\bni(\Pi)$ is
compatible with the composition of bimodules by Proposition \ref{prop:tensoring tangles}. 
The compatibility of horizontal
and vertical composition on $2$-morphisms has been verified in
Lemma~\ref{lem:Cflatcomposition-star}. 
\end{proof}

\begin{remark}\label{rem:HH} 
Theorem \ref{thm:final} defines $\MM_{\Sigma,\Pi;\Gamma}$ as a
	module for the bicategory $\bni(\Pi)$ valued in the bicategory $\MorDGCat$.
	Similarly, the 2-functor for the cut surface $\MM_{\cutS,\cutP;\emptyset}$
	can be considered as a module for 
	$\bni(\cutP) \cong \bn(\gpm)\otimes \bni(\Pi)$. 
	Recall that $\bn(\gpm)$ is a tensor product of
	copies of $\bn^\op \otimes \bn$, one for each seam $\gamma \in \Gamma$. If we
	consider the tensor product $\bni(\Gamma):=\bigotimes_{\gamma\in \Gamma} \bn$, 
	then $\bni(\gpm) \cong \bn(\Gamma)^\op \otimes \bn(\Gamma)$ can be
	interpreted as the \emph{enveloping category} of $\bn(\Gamma)$ and
	$\MM_{\cutS,\cutP;\emptyset}$ as a $\bn(\Gamma)$-\emph{bi}module valued
	in $\bni(\Pi)$-modules.  We then arrive at the slogan: 
	\begin{center}\emph{The (glued) surface module $\MM_{\Sigma,\Pi;\Gamma}$ is computed as the Hochschild chains for the \\seam category $\bni(\Gamma)$ with coefficients in the bimodule $\MM_{\cutS,\cutP;\emptyset}$ associated with the cut surface}.\end{center}
	\end{remark}

\subsection{Coarsening, gluing, and seam reversal} 
\label{s:coarsening}

In this section, we construct quasi-equivalences that relate the dg categories associated to seamed marked surfaces
$(\Sigma,\Pi;\Gamma)$ and $(\Sigma,\Pi;\Gamma')$ with the same underlying marked surface but with different 
sets of seams.

\begin{construction}
	\label{constr:coarsening}
Let $(\Sigma,\Pi;\Gamma)$ be a seamed marked surface and let $\gamma\in \Gamma$ be such
	that $(\Sigma,\Pi;\Gamma\smallsetminus\{\gamma\})$ is 
	again a seamed marked surface. 
Define $\Gamma':=\Gamma\setminus\{\gamma\}$ and let
	$D_i$, $D_j$ denote the two regions of $(\Sigma,\Pi;\Gamma)$ that are separated
	by the arc $\gamma$. 
Given tangles $S\in \Tan(\Sigma,\Pi;\Gamma) \subset \Tan(\Sigma,\Pi;\Gamma')$
	and $T\in \Tan(\Sigma,\Pi;\Gamma)\subset \Tan(\Sigma,\Pi;\Gamma')$
	and $V\in \bni(\Pi)^{\partial T}_{\partial S}$, 
	abbreviate $\PMStwo(T|V|S):= \PMS_{\Sigma,\Pi;\Gamma'}(T|V|S)$ 
	while retaining our usual abbreviation $\PMS(T|V|S):= \PMS_{\Sigma,\Pi;\Gamma}(T|V|S)$.

We now define the \emph{coarsening} chain map:
\[\mathrm{coarsen}_\gamma(S|V|T)\colon \PMS(T|V|S)\to \PMStwo(T|V|S)\] by describing its
action on the relevant factors appearing in \eqref{eq:homcomplex}. 
On all tensor factors corresponding to regions $k\in I\setminus\{i,j\}$ it is the identity. To
describe the action on the tensor factors indexed by $i$ and $j$, 
we assume that $D_i$ is to the left of $\gamma$ (as we travel in the direction of its orientation)
while $D_j$ is to right.
Further, we may assume that both tensor
factors are presented as in Remark~\ref{rem:rotatefactor} applied to 
the cut seams $(\gamma,\pm)$. 
The coarsening map is then given by:
\begin{multline} 
	\label{eq:coarsen}
		\KhEval{
		\begin{tikzpicture}[anchorbase,xscale=1]
			\begin{scope}[shift={(-7,0)},xscale=-1]
				\vertconnbox{0}{-.5}{1}
				\node at (-1.75,0) {$\cdots$};
				\ljf{-3}{0}{\gamma}
				\jf{-.5}{0}{\cdot}
				\twopitangleboxes{0}{0}{j}
		\end{scope}
		\vertconnbox{0}{-.5}{1}
			\node at (-1.75,0) {$\cdots$};
			\ljf{-3}{0}{\gamma}
			\jf{-.5}{0}{\cdot}
			\twopitangleboxes{0}{0}{i}
			\end{tikzpicture}
			}
					\\ \\				
					=
					\KhEval{
						\begin{tikzpicture}[anchorbase,xscale=1]
							\begin{scope}[shift={(-7,0)},xscale=-1]
								\vertconnbox{0}{-.5}{1}
								\node at (-1.75,0) {$\cdots$};
								\ljf{-3}{0}{\gamma}
								\jf{-.5}{0}{\cdot}
								\twopitangleboxes{0}{0}{j}
						\end{scope}
						\vertconnbox{0}{-.5}{1}
							\node at (-1.75,0) {$\cdots$};
							\ljf{-3}{0}{\gamma}
							\jf{-.5}{0}{\cdot}
							\twopitangleboxes{0}{0}{i}
							\filldraw[white] (-4.2,-.48) rectangle (-2.8,.48); 
							\draw[double, thick] 
							(-4.01,0.5) \dr (-3.8,.3) to (-3.2,.3) \ru (-2.99,.5)
							(-4.01,-0.5) \ur (-3.8,-.3) to (-3.2,-.3) \rd (-2.99,-.5)
							;
							\filldraw[white] (-3.8,-.4) rectangle (-3.2,.4);
							\draw[thick] (-3.8,-.4) rectangle (-3.2,.4);
							\node[rotate=90] at (-3.5,0) {$\Bproj$};
							\end{tikzpicture}
							}
									\\ \\
									\xrightarrow{\e}
									\KhEval{
										\begin{tikzpicture}[anchorbase,xscale=1]
											\begin{scope}[shift={(-7,0)},xscale=-1]
												\vertconnbox{0}{-.5}{1}
												\node at (-1.75,0) {$\cdots$};
												\ljf{-3}{0}{\gamma}
												\jf{-.5}{0}{\cdot}
												\twopitangleboxes{0}{0}{j}
										\end{scope}
										\vertconnbox{0}{-.5}{1}
											\node at (-1.75,0) {$\cdots$};
											\ljf{-3}{0}{\gamma}
											\jf{-.5}{0}{\cdot}
											\twopitangleboxes{0}{0}{i}
											\filldraw[white] (-4.2,-.48) rectangle (-2.8,.48); 
											\draw[double, thick] 
											(-4.01,0.5) \dr (-3.8,.3) to (-3.2,.3) \ru (-2.99,.5)
											(-4.01,-0.5) \ur (-3.8,-.3) to (-3.2,-.3) \rd (-2.99,-.5);
											\end{tikzpicture}
											}
	\end{multline}
where we omit the relevant $q$-shifts for simplicity.
The last map in \eqref{eq:coarsen} is the counit from Definition \ref{def:bproj counit} and 
the codomain is canonically identified
with the tensor factor of the merged region $D_i\cup D_j$ in $\PMStwo(T|V|S)$ via
sphericality maps.
\end{construction}

\begin{prop}
	\label{prop:coarsenhomotopyequiv}
	In the setting of Construction~\ref{constr:coarsening}, the coarsening chain
	map $\mathrm{coarsen}_\gamma(S|V|T)$ is a homotopy equivalence.
\end{prop}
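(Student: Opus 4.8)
The plan is to show that $\mathrm{coarsen}_\gamma(S|V|T)$ is a homotopy equivalence by exhibiting it as a composite of maps, each of which is manifestly a homotopy equivalence. The map factors as a sequence of three moves, as in \eqref{eq:coarsen}: first an isomorphism assembling the two purple boxes labelled $(\gamma,\pm)$, which lie in adjacent positions (after applying Remark~\ref{rem:rotatefactor}) with arrows directed away from one another, into a single Rozansky projector $\Bproj_{m+n}$ via \eqref{eq:purpleprojector}; then the counit map $\e_{m+n}\colon \Bproj_{m+n}\to \one_{m+n}$ from Definition~\ref{def:bproj counit}; and finally the sphericality identification matching the result with the tensor factor of the merged region $D_i\cup D_j$ in $\PMStwo(T|V|S)$. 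The first and third are isomorphisms, so the entire question reduces to showing that applying $\e_{m+n}$ at the glued seam induces a homotopy equivalence of morphism complexes.

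First I would make precise what ``applying $\e_{m+n}$'' means homologically. After the first move, the $i$-th and $j$-th tensor factors of \eqref{eq:homcomplex} have been combined into a single planar evaluation of the form $\KhEval{A \star \Bproj_{m+n} \star B}$ for appropriate through-degree data coming from $T_i$, $S_i$, $T_j$, $S_j$ (and the relevant $V$-boxes and $\one$-strands). The remaining tensor factors are unchanged. By functoriality of $\KhEval{-}$ and the monoidal structure, it suffices to show that $\e_{m+n}$ induces a homotopy equivalence after this planar evaluation, i.e.\ that the induced map $\KhEval{A \star \Bproj_{m+n} \star B} \to \KhEval{A \star \one_{m+n} \star B}$ is a quasi-isomorphism. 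This is exactly the content of Proposition~\ref{prop:centrality}: the diagram $A \star \Bproj_{m+n} \star B$ is evaluated against the complex $\tBar$, whose degree-zero chain objects are through-degree zero (they are direct sums of terms $a_0$ from $\bn^{m+n}_0$ glued to $a_0$ from $\bn^0_{m+n}$, by \eqref{eq:BarY}), so $\Cone(\e_{m+n})$ is annihilated on the appropriate side, giving the homotopy equivalence. More carefully, the purple boxes elsewhere in the diagram mean we are evaluating a complex built from through-degree zero objects, which is precisely the hypothesis of Proposition~\ref{prop:centrality}(1) (or (2) in general), so $\e_{n,m}\star \id$ (resp.\ $\id \star \e_{n,m}$) is a homotopy equivalence, and tensoring with the contractible cone on the other factors preserves this.

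The main obstacle I anticipate is not a conceptual one but a bookkeeping one: carefully tracking that the planar picture really does present $\Bproj_{m+n}$ in a position where it is being multiplied against a through-degree zero object. The subtlety flagged in Remark~\ref{rmk:purple boxes merge to roz} is that general planar evaluations of $\tBar^n_m$ need not produce adjacent purple boxes, which is why Construction~\ref{constr:coarsening} explicitly invokes Remark~\ref{rem:rotatefactor} to rotate each of the two relevant tensor factors so that the $(\gamma,\pm)$-boxes sit at the far edge. I would need to verify that after this rotation the two boxes genuinely abut with the correct relative orientation — here Lemma~\ref{lem:revpurple} is useful, since it shows that both natural relative orientations of abutting purple boxes yield $q^{-(m+n)/2}\Bproj_{m+n}$, so the argument is insensitive to which of $(\gamma,+)$ or $(\gamma,-)$ ends up on which side. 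One also must confirm that the $q$-shift bookkeeping works out, but since every map in the chain is either a shift-preserving isomorphism or the degree-zero counit, and the shifts have been suppressed in \eqref{eq:coarsen} for exactly this reason, the gradings match up automatically; this is the kind of routine verification I would relegate to a remark. The upshot: $\mathrm{coarsen}_\gamma(S|V|T)$ is a composite of isomorphisms and one application of a counit that Proposition~\ref{prop:centrality} guarantees is a homotopy equivalence, hence is itself a homotopy equivalence.
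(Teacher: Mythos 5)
Your overall strategy is the same as the paper's: factor $\mathrm{coarsen}_\gamma$ through the merging of the two $(\gamma,\pm)$-boxes into $\Bproj_{m+n}$, reduce to the counit $\e_{m+n}$, and invoke Proposition~\ref{prop:centrality} using the fact that \eqref{eq:coarsen} exhibits the projector as attached to through-degree zero objects. The identification of the key inputs (Remark~\ref{rem:rotatefactor}, Lemma~\ref{lem:revpurple}, Proposition~\ref{prop:centrality}) is correct.

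The gap is in the step where you dispose of the rest of the complex. The full complex $\PMS(T|V|S)$ is not of the form $\KhEval{A\star\Bproj_{m+n}\star B}$ for fixed $A,B$, nor is it a tensor product of the $\gamma$-contribution with the contributions of the other seams: it is the contraction of a multi-module against the tensor product of \emph{all} the bar complexes $B^T_S$, so its differential has one component per seam, and a single region may carry boxes from several seams at once. Your phrase ``tensoring with the contractible cone on the other factors preserves this'' therefore does not describe the actual structure, and the appeal to Proposition~\ref{prop:centrality} does not directly apply to a complex whose differential contains these extra components. The paper resolves this by a two-step argument: first pass to the auxiliary complexes $\PMS_0$ and $\PMStwo_0$ in which every component of the differential is set to zero except the one coming from $\Bar$ at the seam $\gamma$; in that setting the complex decomposes as a direct sum over the chain objects of the remaining bar complexes, each summand is a genuine planar composition of $\Bproj$ with a through-degree zero object, and Proposition~\ref{prop:centrality} gives contractibility of the cone. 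Then the Basic Perturbation Lemma is used to reinstate the suppressed differential components and conclude that the cone of the honest coarsening map is still contractible. Your proof needs this perturbation step (or an equivalent careful identification of the full complex as $\Bproj\star D$ for a single bounded-above complex $D$ of through-degree zero objects in $\Ch^-(\bn)$, with all residual differentials absorbed into $D$); as written, the justification is missing.
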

\begin{proof} 
First, consider a variation of the construction of the complexes
	$\PMS(T|V|S)$ and $\PMStwo(T|V|S)$, where we set all components of the
	differential to zero, except for the ones coming from the bar complex associated with
	the seam $\gamma$. 
	Denote these complexes respectively by $\PMS_0(T|V|S)$ and $\PMStwo_0(T|V|S)$  
	and note that the coarsening map still commutes with the (modified) differential, 
	thus givens a chain map between these complexes.

In fact, this chain map
	\begin{equation}
		\label{eqref:coarsenchain}
	\mathrm{coarsen}_\gamma(S|V|T) \colon \PMS_0(T|V|S)\to \PMStwo_0(T|V|S)
		\end{equation} 
is a homotopy equivalence by Proposition~\ref{prop:centrality}, 
since the projector is connected to objects of
through-degree zero, as is manifestly shown in \eqref{eq:coarsen}. 
This is equivalent to the cone of \eqref{eqref:coarsenchain} being contractible. 
By homological perturbation (see e.g.~\cite[Basic Perturbation Lemma]{Markl}), 
the cone of 
\[
	\mathrm{coarsen}_\gamma(S|V|T)\colon \PMS(T|V|S)\to
	\PMStwo(T|V|S)	
\]
is again contractible, thus this map is a homotopy equivalence.
\end{proof}

\begin{remark}
An analogue of the coarsening chain maps in Construction~\ref{constr:coarsening}
	is possible even when $\Gamma\setminus\{\gamma\}$ no longer yields a 
	seamed marked surface. 
This requires a different local model than \eqref{eq:coarsen} and
	will typically not yield homotopy equivalences. 
We will not use such chain maps in this paper.
\end{remark}

\begin{proposition}
	\label{prop:coarsencomp}
	The coarsening chain maps from Construction~\ref{constr:coarsening}
	intertwine with the composition maps from
	Definition~\ref{def:Cflat-composition-and-units}. Namely, in the setting of
	the latter definition, and retaining notation from
	Construction~\ref{constr:coarsening}, the coarsening maps fit as vertical
	arrows into strictly commuting diagrams of the form:
	\[
		\begin{tikzcd}
			\PMS(T|W|S) \otimes \PMS(S|V|R)
			\arrow[r,"\circ_S"] 
			 \arrow[d,""] 
			&  
			\PMS(T|W\star V|R)
			\arrow[d,""]
			\\
			\PMStwo(T|W|S) \otimes \PMStwo(S|V|R)
			\arrow[r,"\circ_S"] 
			&  
			\PMStwo(T|W\star V|R)
		\end{tikzcd}
	\]
\end{proposition}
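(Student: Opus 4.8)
The plan is to reduce everything to the local model in which the coarsening map is defined, and then observe that both composition maps $\circ_S$ are built out of exactly the same two ingredients---prism stacking maps and the Eilenberg--Zilber shuffle product $\mu_S$ on bar complexes---each of which is already known to be compatible with the pieces assembled in the coarsening map. Concretely, recall from Definition~\ref{def:Cflat-composition-and-units} that $\circ_S$ on $\PMS(T|W|S) \otimes \PMS(S|V|R)$ is the composite of (i) the prism stacking maps of Definition~\ref{def:prismstackingmaps}, applied tensor-factor-wise over the regions, and (ii) the functor $\PMS_{\cutS,\cutP}(T|-,W\star V|R)$ applied to the multiplication $\mu_S \colon B^T_S \hComp B^S_R \to B^T_R$. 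The same formula computes $\circ_S$ for $\PMStwo = \PMS_{\Sigma,\Pi;\Gamma'}$, now using the coarser cut surface and the bar complexes indexed by the seams in $\Gamma' = \Gamma \smallsetminus \{\gamma\}$. The coarsening map $\mathrm{coarsen}_\gamma$ is the identity on all tensor factors indexed by regions $k \neq i,j$, and on the factors indexed by $D_i, D_j$ it is the composite in \eqref{eq:coarsen}: merge the two purple $\gamma$-boxes into $\Bproj$ via the sphericality maps of \S\ref{ss:BN} and Remark~\ref{rmk:purple boxes merge to roz}, then apply the counit $\e \colon \Bproj \to \one$ from Definition~\ref{def:bproj counit}.

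First I would dispose of the factors indexed by $k \neq i,j$: there $\mathrm{coarsen}_\gamma$ is the identity and the two horizontal $\circ_S$ maps literally coincide, so commutativity is immediate. The content is concentrated in the two factors indexed by $D_i$ and $D_j$ (which become a single factor indexed by $D_i \cup D_j$ after coarsening). There, the claim decomposes into two sub-compatibilities. The first: the prism stacking maps commute with the merging of purple boxes into a $\Bproj$ via sphericality. This follows from the naturality of prism stacking (Lemma~\ref{lem:prismstacking-star}) together with the fact---Lemma~\ref{lem:counitsquare} and the surrounding discussion---that the sphericality/rotation isomorphisms and the identifications of $\tBar$ with $\Bproj$ are natural and compatible with prism stacking; indeed $\mu_S$ restricted to the $\gamma$-factor is precisely the sideways multiplication $\mu_m \colon \Bproj \hComp \Bproj \to \Bproj$ appearing in \eqref{eq:algebra}, and Lemma~\ref{lem:counitsquare} records exactly that this $\mu_m$ intertwines with the counit $\e$ and the cap cobordisms. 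The second sub-compatibility: the counit $\e \colon \Bproj_{m+n} \to \one_{m+n}$, applied to the $\gamma$-factor after stacking, agrees whether we stack-then-counit or counit-then-stack. This is the strictly commuting square of Lemma~\ref{lem:counitsquare} (the square labeled \eqref{fig:counit}), which says precisely that $\e \ostar \e$ followed by the cap cobordisms equals $\mu_m$ followed by $\e$.

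Assembling: in the $D_i, D_j$ factors the square to be checked factors as an outer rectangle built from (a) a naturality square for prism stacking against the sphericality merging isomorphism, and (b) the counit square of Lemma~\ref{lem:counitsquare}; pasting these two commuting squares gives the desired commutativity. On the remaining factors both routes are the identity. Hence the full diagram strictly commutes. I would also note, as in the proof of Proposition~\ref{prop:Cflatassocunit}, that since all maps involved are induced from the corresponding maps in $\bigotimes_{i=1}^g \Ch^-(\bn^\op \otimes \bn)$ (via totalization and the functoriality of $\KhEval{-}$), it suffices to check the compatibility at that level, where it is exactly Lemma~\ref{lem:counitsquare} combined with associativity/naturality of the Eilenberg--Zilber product from Proposition~\ref{prop:Bar is algebra}.

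I expect the main obstacle to be purely organizational rather than mathematical: namely, carefully matching the "presented as in Remark~\ref{rem:rotatefactor}" normalization used to define $\mathrm{coarsen}_\gamma$ with the normalization implicit in the composition maps $\circ_S$, so that the sphericality/twist bookkeeping ($\mathrm{twist}_\pm$ of Lemma~\ref{lem:fullrotation}, suppressed per Remark~\ref{rem:2pirot}) lines up on the nose and the square is \emph{strictly} (not just up to homotopy) commutative. Given Remark~\ref{rem:2pirot}'s convention of suppressing these isomorphisms and the strictness already established in Lemmas~\ref{lem:counitsquare} and~\ref{lem:prismstacking-star}, this should go through; the proof will be short and may reasonably be left mostly to the reader, as is done for the closely analogous Lemma~\ref{lem:Cflatcomposition-star}.
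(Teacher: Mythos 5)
Your proposal is correct and follows essentially the same route as the paper: the paper's proof simply juxtaposes \eqref{eq:coarsen} with \eqref{eq:composition} and cites Lemma~\ref{lem:counitsquare} as the crux, which is exactly the key compatibility you isolate (counit versus shuffle product $\mu_m$), with the remaining factors handled trivially as you note. Your write-up just makes explicit the bookkeeping (naturality of prism stacking, factor-wise reduction, sphericality normalization) that the paper leaves implicit.
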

\begin{proof}
Considering \eqref{eq:coarsen} and \eqref{eq:composition}, 
we see that this is a consequence of Lemma~\ref{lem:counitsquare}.  
	\end{proof}
	
The following is obvious from the definition of the coarsening maps.

\begin{proposition}
	The coarsening chain maps $\mathrm{coarsen}_\gamma(S|V|T)$ from
	Construction~\ref{constr:coarsening} are natural in the argument $V$, 
	i.e.~they form the components of a natural transformation
		\[
		\PMS_{\Sigma,\Pi;\Gamma}(T|-|S) \To \PMS_{\Sigma,\Pi;\Gamma'}(T|-|S) 
			\]
	of functors $\bni(\Pi)^{\partial T}_{\partial S} \rightarrow \Ch^-\big(\gModen{\Z}{\cring}\big)$.
	\end{proposition}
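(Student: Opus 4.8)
The statement asserts naturality in $V$ of the coarsening chain maps $\mathrm{coarsen}_\gamma(S|V|T)$. The plan is to reduce this to the naturality of the three ingredients out of which these maps are built, as displayed in \eqref{eq:coarsen}: (i) the purple-box merge/sphericality identification relating adjacent $\tBar$-boxes to a copy of $\Bproj$, as in Remark~\ref{rmk:purple boxes merge to roz} and Lemma~\ref{lem:revpurple}; (ii) the counit $\e$ from Definition~\ref{def:bproj counit}; and (iii) the sphericality isomorphism of Proposition~\ref{prop:sphericality} used to re-present the merged tensor factor. The first map in \eqref{eq:coarsen} is an isomorphism, not merely a chain map, and it is induced by sphericality maps $\mathrm{sph}$ and the identification $\mu$, all of which are natural with respect to morphisms of the $1$-morphisms $V_\bullet$ appearing in the other (un-merged) positions. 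Since the coarsening map is the identity on all tensor factors indexed by regions $k \neq i,j$, and since a general morphism $f \colon V \to V'$ in $\bni(\Pi)^{\partial T}_{\partial S}$ is a $\cring$-linear combination of pure tensors $f_1 \otimes \cdots \otimes f_\ell$ with each $f_h$ supported in a single region, it suffices to check commutativity of the naturality square one tensor factor at a time.

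First I would fix the region-wise reduction: writing $\PMS_{\Sigma,\Pi;\Gamma}(T|V|S) = \bigotimes_i (\text{tensor factor}_i)$ as in \eqref{eq:homcomplex}, observe that $\mathrm{coarsen}_\gamma(S|-|T)$ is the tensor product of the identity on all factors $k \notin \{i,j\}$ with the composite \eqref{eq:coarsen} on the $i$-th and $j$-th factors. The functors $\PMS_{\std}(\apT | - | \apS)$ and $\PMS_{D,A,\pi}(T|-|S)$ are functorial in each slot by Lemma~\ref{lem:prismfunctor} (and Definition~\ref{def:BNprismarbitrary}), so the action of $f = f_1 \otimes \cdots \otimes f_\ell$ is the tensor product of the individual actions $\PMS_{\std}(\apT|f_h|\apS)$ placed in the appropriate slots. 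Hence the naturality square decomposes as a tensor product of squares, one per region, and the squares for $k \notin \{i,j\}$ commute trivially since both horizontal arrows there are identity maps. It thus remains to treat the combined $(i,j)$-factor.

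Next I would verify commutativity of the remaining square, i.e.~that the composite \eqref{eq:coarsen} intertwines the actions of $f_\bullet$ on source and target. This is where the three ingredients enter. The purple-box merge and the sphericality re-presentation from Proposition~\ref{prop:sphericality} and Remark~\ref{rem:rotatefactor} are both implemented by cobordisms (cups, caps, tubes, saddles) supported away from the boxes labelled by the $V_h$'s; hence they far-commute with the cobordisms realising $\PMS_{\std}(\apT|f_h|\apS)$, by exactly the far-commutativity argument used in the proof of Lemma~\ref{lem:prismstacking-star}. The middle map in \eqref{eq:coarsen} is the counit $\e_{m+n} \colon \Bproj_{m+n} \to \one_{m+n}$; its naturality with respect to the $V_h$-actions is the statement that $\e_{m+n}$ is a morphism of complexes over $\bn$ (Definition~\ref{def:bproj counit}) together with bifunctoriality of $\KhEval{-}$, so again the relevant cobordisms commute past one another. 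Assembling these, the square commutes, which is the claim.

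The main obstacle I expect is purely organisational rather than conceptual: correctly bookkeeping the re-presentation of the $i$-th and $j$-th tensor factors in the normal form of Remark~\ref{rem:rotatefactor} (so that the cut seams $(\gamma,\pm)$ sit at the far left/right), checking that the $f_h$-action commutes with the rotation/twist isomorphisms $\mathrm{twist}_\pm$ invoked there, and confirming that all the intervening cobordisms are genuinely disjoint from the $V_h$-boxes so that far-commutativity applies verbatim. Once the diagram is drawn carefully in the normal form, each square is a localised instance of naturality already established upstream (Lemma~\ref{lem:prismstacking-star}, Proposition~\ref{prop:symmetries of B}, Definition~\ref{def:bproj counit}), and no new argument is needed.
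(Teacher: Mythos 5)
Your proposal is correct. The paper offers no written proof of this proposition -- it is introduced with the remark that it is ``obvious from the definition of the coarsening maps'' -- and your argument is precisely the content of that claim: the coarsening composite \eqref{eq:coarsen} is supported entirely on the tensor slots attached to the cut seams $(\gamma,\pm)$, while a morphism $f$ of $1$-morphisms acts by cobordisms over the boxes $V_h$ indexed by arcs of $\Pi$, which are disjoint from the seams; the region-wise decomposition and far-commutativity then give the naturality square. No gap.
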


In summary, we obtain:
		
		\begin{cor}
			\label{cor:coarsening-transfo}
			The coarsening maps $\mathrm{coarsen}_\gamma(S|V|T)$ from
			Construction~\ref{constr:coarsening} assemble to a pseudonatural transformation			
			$\MM_{\Sigma,\Pi;\Gamma} \To \MM_{\Sigma,\Pi;\Gamma'}$
			between the 2-functors from Theorem~\ref{thm:final}. \qed
		\end{cor}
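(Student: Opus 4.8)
The statement to prove is Corollary~\ref{cor:coarsening-transfo}: that the coarsening maps assemble into a pseudonatural transformation $\MM_{\Sigma,\Pi;\Gamma} \To \MM_{\Sigma,\Pi;\Gamma'}$.

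\smallskip

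The plan is to verify the defining data and axioms of a pseudonatural transformation, drawing on the three propositions that immediately precede the corollary. Recall that a pseudonatural transformation between $2$-functors valued in a bicategory consists of: (i) for each object $\nn$ of $\bni(\Pi)$, a $1$-morphism component, which here must be a $\big(\MM_{\Sigma,\Pi;\Gamma'}(\nn), \MM_{\Sigma,\Pi;\Gamma}(\nn)\big)$-bimodule; (ii) for each $1$-morphism $V$ of $\bni(\Pi)$, an invertible $2$-morphism (bimodule map) comparing the two composites; subject to the usual naturality-with-respect-to-$2$-morphisms axiom and the two coherence axioms (compatibility with horizontal composition and with units).

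\smallskip

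First I would specify the $1$-morphism components. Since $\MM_{\Sigma,\Pi;\Gamma}(\pp) = \sCat(\Sigma,\pp;\Gamma)$ and $\MM_{\Sigma,\Pi;\Gamma'}(\pp) = \sCat(\Sigma,\pp;\Gamma')$ share the same object set $\Tan(\Sigma,\pp;\Gamma) \subset \Tan(\Sigma,\pp;\Gamma')$, the natural choice for the component at $\pp$ is the bimodule $(T,S) \mapsto \PMStwo(T,S) = \PMS_{\Sigma,\Pi;\Gamma'}(T,S)$, with left action via $\circ_T$ in $\sCat(\Sigma,\pp;\Gamma')$ and right action via $\circ_S$ precomposed along $\mathrm{coarsen}_\gamma$ to make it a module over $\sCat(\Sigma,\pp;\Gamma)$; equivalently, $\PMStwo$ pulled back along the dg functor induced by coarsening. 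The $1$-morphism component at $V \in \bni(\Pi)^{\qq}_{\pp}$ is then the bimodule map built from $\mathrm{coarsen}_\gamma(S|V|T)\colon \PMS(T|V|S) \to \PMStwo(T|V|S)$, which is well-defined precisely because it is a chain map (Construction~\ref{constr:coarsening}). Its invertibility as a $2$-morphism in $\MorDGCat$ is exactly Proposition~\ref{prop:coarsenhomotopyequiv}, which asserts that $\mathrm{coarsen}_\gamma$ is a homotopy equivalence.

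\smallskip

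The remaining axioms are then read off from the preceding results: naturality in the $2$-morphism variable $V$ is the content of the (unnumbered) proposition asserting that $\mathrm{coarsen}_\gamma(S|-|T)$ is a natural transformation of functors $\bni(\Pi)^{\partial T}_{\partial S} \to \Ch^-(\gModen{\Z}{\cring})$; compatibility with horizontal composition (the hexagon axiom) follows from Proposition~\ref{prop:coarsencomp}, which gives the strictly commuting squares intertwining coarsening with the composition maps $\circ_S$; and compatibility with identity $1$-morphisms follows from the same proposition applied to the unit maps $\epsilon_T$, together with the observation that $\mathrm{coarsen}_\gamma$ acts as the identity on all tensor factors indexed by regions other than $D_i, D_j$ and sends the degree-zero unit through the counit $\e$ correctly. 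I expect the main obstacle to be purely bookkeeping: carefully matching the bimodule structure on $\PMStwo$ when viewed as a $\big(\MM_{\Gamma'}(\qq),\MM_{\Gamma}(\pp)\big)$-bimodule against the structure maps used in Propositions~\ref{prop:coarsencomp} and~\ref{prop:coarsenhomotopyequiv}, and checking that the sphericality identifications used at the end of \eqref{eq:coarsen} to land in the merged-region tensor factor are compatible with the reordering isomorphisms \eqref{eq:reorder} — but no genuinely new argument is required beyond assembling the three cited propositions, so the proof is a one-line citation (``This follows by combining Propositions~\ref{prop:coarsenhomotopyequiv},~\ref{prop:coarsencomp}, and the naturality statement above''), which is presumably why the authors marked it \texttt{\textbackslash qed} inline.
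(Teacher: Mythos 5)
Your proposal is correct and matches the paper's (implicit) argument exactly: the corollary is stated with an inline \qed precisely because it is the assembly of Proposition~\ref{prop:coarsenhomotopyequiv}, Proposition~\ref{prop:coarsencomp}, and the naturality-in-$V$ statement, with the object components being the bimodules realized by the identity-on-objects coarsening dg functors. The only caveat — that the $2$-cells are homotopy equivalences rather than strict isomorphisms, so one really gets a ``pseudonatural quasi-equivalence'' — is one the paper itself acknowledges in the remark following Corollary~\ref{cor:coarsening-dg-functors}, so your reading is faithful.
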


In particular, for every object $\mathbf{p}\in \bni(\Pi)$, 
	Corollary \ref{cor:coarsening-transfo} provides
	a bimodule from $\sCat(\Sigma,\mathbf{p};\Gamma)$ to
	$\sCat(\Sigma,\mathbf{p};\Gamma')$.
	These bimodules are actually realized by dg functors:

		\begin{cor}\label{cor:coarsening-dg-functors} 
		For each object $\pp \in \bni(\Pi)$, 
		the chain maps $\mathrm{coarsen}_\gamma(S|\one_\pp|T)$ assemble into a dg functor
		\[
		\mathrm{coarsen}_\gamma\colon \sCat(\Sigma,\mathbf{p};\Gamma) 
			\to \sCat(\Sigma,\mathbf{p};\Gamma')
			\]
		which is the identity on objects and a quasi-equivalence.
		\end{cor}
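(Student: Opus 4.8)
The plan is to deduce this corollary from the results already established about the coarsening chain maps, by verifying that they genuinely assemble into a \emph{dg functor} (not merely a morphism of bimodules) and then invoking the prior homotopy-equivalence and naturality statements. First I would take the object $\pp \in \bni(\Pi)$, so $V = \one_\pp$ is the identity $1$-morphism, and recall that on $\Hom$-complexes the coarsening map is $\mathrm{coarsen}_\gamma(S|\one_\pp|T) \colon \PMS(T,S) \to \PMStwo(T,S)$ from Construction~\ref{constr:coarsening}. Since $\Tan(\Sigma,\pp;\Gamma) \subseteq \Tan(\Sigma,\pp;\Gamma')$ as object sets, a candidate dg functor $\mathrm{coarsen}_\gamma$ is defined to be the inclusion on objects and these chain maps on morphisms. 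To check this is a dg functor, I must verify: (i) each $\mathrm{coarsen}_\gamma(S|\one_\pp|T)$ is a degree-zero chain map — this is part of Construction~\ref{constr:coarsening} and Proposition~\ref{prop:coarsenhomotopyequiv}; (ii) compatibility with composition, i.e.~$\mathrm{coarsen}_\gamma \circ (\circ_S) = (\circ_S) \circ (\mathrm{coarsen}_\gamma \otimes \mathrm{coarsen}_\gamma)$ — this is exactly the strictly commuting square in Proposition~\ref{prop:coarsencomp} specialized to $W = \one_\pp = V$; and (iii) compatibility with units, i.e.~$\mathrm{coarsen}_\gamma \circ \epsilon_T = \epsilon_T'$ — this follows since the unit map $\epsilon_T$ from \eqref{eq:Cflatunit} is induced from $\eta_T\colon \one_T \to B^T_T$ and the counit $\e$ kills all positive-degree bar terms, so the composite with $\mathrm{coarsen}_\gamma$ lands on the degree-zero term, recovering $\epsilon_T'$; alternatively one can observe this is the unital part of the square in Lemma~\ref{lem:counitsquare}.

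Next I would address the two remaining adjectives. That $\mathrm{coarsen}_\gamma$ is \emph{the identity on objects} is immediate from the construction. That it is a \emph{quasi-equivalence} has two parts: it must be essentially surjective on $H^0$, and it must induce isomorphisms on all cohomology of $\Hom$-complexes. Essential surjectivity on $H^0$ is in fact essential surjectivity on the nose: every tangle $T \in \Tan(\Sigma,\pp;\Gamma')$ is isotopic to one lying in $\Tan(\Sigma,\pp;\Gamma)$ (one only needs to isotope so that the intersections with $\gamma$ become transverse and standard), and by Theorem~\ref{thm:surfaceBN} isotopic tangles become isomorphic in $H^0(\sCat(\Sigma,\pp;\Gamma'))$. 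The quasi-fully-faithfulness — that $H^*$ of $\mathrm{coarsen}_\gamma(S|\one_\pp|T)$ is an isomorphism for all $S,T$ — is precisely the content of Proposition~\ref{prop:coarsenhomotopyequiv}, which asserts each such chain map is a homotopy equivalence (hence in particular a quasi-isomorphism).

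The bulk of the work has therefore already been done in the preceding propositions; what remains is genuinely just assembly and bookkeeping. The one point requiring a little care — and the step I expect to be the mildest obstacle — is checking the unit compatibility (iii), because the counit $\e$ appearing in the definition of $\mathrm{coarsen}_\gamma$ interacts with the unit $\eta_T$ of the bar algebra; concretely, one must confirm that the composite $\cring \xrightarrow{\epsilon_T} \PMS(T,T) \xrightarrow{\mathrm{coarsen}_\gamma} \PMStwo(T,T)$ equals $\epsilon_T'$, which amounts to the identity $\e_{m+n} \circ (\text{inclusion of } \one \text{ into } \Bproj_{m+n}) = \id$ at the level of degree-zero terms — this is exactly the counitality of the coalgebra structure from Remark~\ref{rmk:roz is coalg} (equivalently, $\e_{m+n}\circ \nu = \id$ follows since $\nu$ is the inclusion of the homological-degree-zero summand and $\e_{m+n}$ projects onto it followed by the canonical identity cobordism). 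Once this is in hand, the corollary follows formally: $\mathrm{coarsen}_\gamma$ is a dg functor, identity on objects, essentially surjective, and a quasi-isomorphism on all $\Hom$-complexes, hence a quasi-equivalence.
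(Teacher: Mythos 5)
Your proof is correct and follows essentially the same route as the paper's: Proposition~\ref{prop:coarsencomp} for compatibility with composition, Proposition~\ref{prop:coarsenhomotopyequiv} for the quasi-isomorphisms on $\Hom$-complexes, and isotoping tangles to have standard intersection with $\Gamma$ for quasi-essential surjectivity. Your additional check of unit compatibility (via $\e\circ\eta=\id$ on the degree-zero bar term) is a point the paper leaves implicit, but it is a correct and welcome piece of bookkeeping rather than a different argument.
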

		
\begin{proof}
Proposition~\ref{prop:coarsencomp} gives that the
	coarsening maps are compatible with composition and thus constitute a dg
	functor $\mathrm{coarsen}_\gamma$ which is the identity on objects. 
By Proposition~\ref{prop:coarsenhomotopyequiv} they are also homotopy
	equivalences and hence quasi-isomorphisms. 
Finally, every object of $\sCat(\Sigma,\mathbf{p};\Gamma')$ 
	(a tangle with standard intersection with $\Gamma'$) 
	is isomorphic to an object in the image of $\mathrm{coarsen}_\gamma$ 
	(consider an isotopic tangle that has standard intersection with $\Gamma$). 
Thus $\mathrm{coarsen}_\gamma$ is (quasi-)essentially surjective, 
	and hence a quasi-equivalence.
\end{proof}

\begin{remark} A pseudonatural transformation between 2-functors that assigns equivalences to
objects is known as a \emph{pseudonatural equivalence}.
Corollary~\ref{cor:coarsening-dg-functors} expresses that
$\mathrm{coarsen}_\gamma$ assigns \emph{quasi-}equivalences (between dg
categories) to objects. 
Thus, the transformations in Corollary \ref{cor:coarsening-transfo} 
should be called \emph{pseudonatural quasi-equivalences}. 
Under a suitable localization of the target Morita bicategory, 
they become honest pseudonatural equivalences. 
\end{remark}

Before we consider the effect of reversing seam orientation on our dg categories, 
we pause to record a variant of Construction~\ref{constr:coarsening}. 
This produces gluing maps that relate
the dg categories associated to a seamed marked surface $(\Sigma, \Pi;\Gamma)$ 
and the seamed marked surface that results from gluing together two boundary arcs 
$\ma, \ma' \in \Pi$.
Since none of the subsequent results in this paper rely on this material, 
our discussion is somewhat terse.

\begin{construction}
	\label{const:gluing}
Given a seamed marked surface $(\Sigma,\Pi;\Gamma)$ and a pair 
	$\{\ma,\ma'\}\in \Pi$ of arcs with opposite orientation relative to the orientation of $\Sigma$, 
	let $\Sigma'$ denote the result of gluing $\Sigma$ along $\ma$ and $\ma'$.
Set $\Pi':=\Pi \smallsetminus \{\ma,\ma'\}$ and $\Gamma'=\Gamma \cup \{\ma\}$. 
Further, let $\mathbf{p},\mathbf{q} \subset A(\Pi)$ be standard subsets
	such that $|\mathbf{p} \cap \ma| = |\mathbf{p} \cap \ma'|$ and 
	$|\mathbf{q} \cap \ma| = |\mathbf{q} \cap \ma'|$.
Fix $S\in \Tan(\Sigma,\mathbf{p};\Gamma)$ and 
	$T\in \Tan(\Sigma,\mathbf{q};\Gamma)$ 
	and let the objects obtained from the 
gluing\footnote{Here, we may need to adjust the parametrization 
of $\ma$ or $\ma'$ in order for the subsets $\mathbf{p} \cap \ma$ 
and $\mathbf{p} \cap \ma'$ (and similarly for $\qq$) to agree after gluing.} 
be denoted $\mathbf{p}',\mathbf{q}', S', T'$. 
Lastly, suppose that $V \in \bn(\Pi)_{\del S}^{\del T}$ has the property that its components $V_\ma$
	and $V_{\ma'}$ indexed by $\ma$ and $\ma'$, satisfy $r_x(V_\ma)= V_{\ma'}$. 
Given such $V$, we let $V' \in \bn(\Pi')^{\partial T'}_{\partial S'}$ 
denote the result of omitting the components $V_\beta$ and $V_{\beta'}$. 

Proceeding analogously to \eqref{eq:coarsen}, we define the 
\emph{gluing} chain map:
\begin{equation}
	\label{eq:glue}
\mathrm{glue}_{V_\ma',V_{\ma}} \colon 
	\PMS_{\Sigma,\Pi;\Gamma}(T|V|S) \to \PMS_{\Sigma',\Pi';\Gamma'}(T'|V'|S')
	\end{equation}
by describing its action on the relevant tensor factors. 
Again, it is the identity on tensor factors corresponding to regions that 
do not intersect $\ma$ or $\ma'$.
On the remaining factors, we again first
realize the appropriate tensor factors as facing each other
(assuming, for simplicity that two regions are involved) along the $\ma$ and
$\ma'$ components.
Then, we map $r_x(V_\ma) \boxtimes V_{\ma}$ into the degree zero chain group 
of $\iota_m^n(\Bar^n_m) = q^{-\frac{1}{2}(m+n)} \Bproj_{m+n}$
(where here $V_\ma \in \bn_m^n$).

In the graphical language of Section~\ref{sec:graphicalmodel}, 
we map the pair of ``black boxes'' labelled
with $V_\ma$ and $V_{\ma'}$ into a pair of ``purple boxes'' in the same position. 
This latter description also works if $\ma$ and $\ma'$ border the same region.
\end{construction}

\begin{rem}
For each pair $\{V_\ma,V_{\ma'} \}$ with $V_{\ma'} = r_x(V_\ma)$, 
we can assemble the gluing maps into a pseudonatural transformation between 2-functors.
For this, we first restrict the domain of $\MM_{\Sigma,\Pi;\Gamma}$ to 
$\bn(\Pi \smallsetminus \{\ma, \ma'\}) = \bn(\Pi')$. 
Note that, in this setup, a $1$-morphism $V'$ determines a $1$-morphism $V \in \bn(\Pi)$ 
by placing $V_\ma$ and $V_{\ma'}$ in the appropriate (missing) entries.
The components of 
\[
\mathrm{glue}_{V_\ma',V_{\ma}} \colon
	(\MM_{\Sigma,\Pi;\Gamma})|_{\bn(\Pi \smallsetminus \{\ma, \ma'\})} 
	\To \MM_{\Sigma',\Pi';\Gamma'}
	\]
are then given by \eqref{eq:glue}. In fact, via these gluing maps we can see
$\MM_{\Sigma',\Pi';\Gamma'}$ as a kind of derived self-tensor product of the $\bn$-\emph{multimodule} $\MM_{\Sigma,\Pi;\Gamma}$ 
along the copies of $\bn$ acting at the arcs $\ma$ and $\ma'$.
\end{rem}

Lastly, we define equivalences associated with reversing the orientation of a seam in a 
seamed marked surface.

\begin{construction}
	\label{const:reorient}
	Given a seamed marked surface $(\Sigma,\Pi;\Gamma)$ and a seam $\gamma\in \Gamma$, 
	let $\Gamma'$ be the set of seams obtained from $\Gamma$ by replacing $\gamma$ with its
	orientation reversal. 
	For $S,T,V$ chosen as in Construction~\ref{constr:coarsening} and again denoting 
	$\PMStwo(T|V|S):= \PMS_{\Sigma,\Pi;\Gamma'}(T|V|S)$, 
	the \emph{reorientation} isomorphism
	\[
	\mathrm{reorient}_\gamma(S|V|T)\colon \PMS(T|V|S)\to \PMStwo(T|V|S)
		\]
	given via the isomorphism of Lemma~\ref{lem:revpurple}
	(i.e.~induced by precomposing with the symmetries \eqref{BS1} and \eqref{BS3} 
	from Proposition~\ref{prop:symmetries of B} in the tensor factors associated with $\gamma$).
\end{construction}

The locality of the construction immediately gives the following.

\begin{cor}
	\label{cor:reorientation-transfo}
	The reorientation isomorphisms $\mathrm{reorient}_\gamma(S|V|T)$ from
	Construction~\ref{const:reorient} assemble to a canonical 
	pseudo-natural equivalence between the 2-functors	
		\[\MM_{\Sigma,\Pi;\Gamma} \To \MM_{\Sigma,\Pi;\Gamma'}\] from
	Theorem~\ref{thm:final}. 
These equivalences intertwine the coarsening and gluing transformations 
	and the equivalences for reversing orientation on distinct seams
	$\gamma,\gamma'\in \Gamma$ commute with each other. \qed
\end{cor}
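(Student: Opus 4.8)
The plan is to treat Corollary~\ref{cor:reorientation-transfo} as an immediate consequence of the locality of all the structures involved, in exactly the style of Corollary~\ref{cor:coarsening-transfo} and Corollary~\ref{cor:reorientation-transfo}'s stated proof (``the locality of the construction immediately gives the following''). Concretely, the reorientation isomorphisms $\mathrm{reorient}_\gamma(S|V|T)$ from Construction~\ref{const:reorient} are built, tensor factor by tensor factor over the regions in $\Reg(\Sigma;\Gamma)$, by precomposing with the symmetries \eqref{BS1} and \eqref{BS3} of $\Bar^n_m$ from Proposition~\ref{prop:symmetries of B}. So the first step is to verify that these component maps assemble into a pseudonatural transformation $\MM_{\Sigma,\Pi;\Gamma}\To\MM_{\Sigma,\Pi;\Gamma'}$: one must check naturality in the argument $V$ (which follows because the symmetries $r_x$, $r_z$ act functorially and commute with the cobordism-induced action maps), and compatibility with the composition/action maps \eqref{eq:Cflatcomposition}, i.e.~that the analogue of Proposition~\ref{prop:coarsencomp} holds. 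The latter reduces to the final clause of Proposition~\ref{prop:symmetries of B}, namely that the symmetries of $\Bar^n_m$ are compatible with the multiplication maps $\mu_m$; since $\mathrm{reorient}_\gamma$ only touches the tensor factor associated to $\gamma$ and acts by these symmetries there, the relevant square commutes strictly.

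Next I would observe that each component $\mathrm{reorient}_\gamma(S|V|T)$ is an \emph{isomorphism} of chain complexes — not merely a homotopy equivalence — because the symmetries in Proposition~\ref{prop:symmetries of B} are isomorphisms and Lemma~\ref{lem:revpurple} identifies the resulting reversed-purple-box configuration with $q^{-\frac{m+n}{2}}\Bproj_{m+n}$ on the nose (up to canonical iso). Hence, at every object $\pp\in\bni(\Pi)$, the $1$-morphism component $\MM_{\Sigma,\Pi;\Gamma}(\pp)\to\MM_{\Sigma,\Pi;\Gamma'}(\pp)$ is an invertible bimodule (indeed an isomorphism of dg categories, by restricting the reorientation maps to $V=\one_\pp$ and invoking the strict compatibility with composition), so the transformation is a genuine pseudonatural equivalence, stronger than the pseudonatural \emph{quasi}-equivalence of Corollary~\ref{cor:coarsening-dg-functors}.

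For the remaining assertions, I would argue by locality/support. The reorientation for $\gamma$ is supported (as a modification of \eqref{eq:homcomplex}) only in the two tensor factors indexed by the regions abutting $\gamma$, whereas the coarsening map $\mathrm{coarsen}_{\gamma'}$ (for $\gamma'\neq\gamma$) and the gluing map $\mathrm{glue}_{V_\ma',V_\ma}$ are likewise supported only in the tensor factors of the regions abutting $\gamma'$, resp.~$\ma,\ma'$. If $\gamma'\neq\gamma$ these supports interact only through far-commuting operations on disjoint tensor factors (or, when they share a region, through the commutativity of the $r_x,r_z$ symmetries with the counit $\e_{m+n}$ of Definition~\ref{def:bproj counit} and with the shuffle product, which is exactly the content of the final clause of Proposition~\ref{prop:symmetries of B} together with Lemma~\ref{lem:counitsquare}); in either case the two transformations commute. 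The commutativity of the reorientations for two distinct seams $\gamma,\gamma'\in\Gamma$ is the same argument, now using that the symmetries act on independent tensor factors of $B^T_S$ (one for each seam) and that $r_x\circ r_z = r_z\circ r_x$.

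The main obstacle I anticipate is purely bookkeeping rather than conceptual: one must be careful that the presentation of the relevant tensor factors as ``facing each other'' (via the sphericality/rotation maps of Lemma~\ref{lem:fullrotation} and Proposition~\ref{prop:sphericality}, as invoked in Remark~\ref{rem:rotatefactor}) is chosen \emph{coherently} when several of these transformations are composed, so that the claimed strict commutativities really are strict and not merely up to the $\mathrm{twist}_\pm$ isomorphisms that Remark~\ref{rem:2pirot} suppresses. Since all of these identifications are canonical and the paper has already adopted the convention of suppressing $\mathrm{twist}_\pm$, this amounts to checking that no incompatible choices are forced — which follows from the fact that reorientation at $\gamma$ does not alter the cyclic ordering data $[\pi_i]$ of any region, so the standardizations used on all regions may be kept fixed throughout. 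With that observed, the corollary follows.
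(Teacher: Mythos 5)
Your proposal is correct and follows essentially the same route as the paper, whose proof consists of the single observation that ``the locality of the construction immediately gives the following''; you have simply spelled out the details (compatibility of the $\Bar^n_m$-symmetries with $\mu_m$ and with the counits via Proposition~\ref{prop:symmetries of B} and Lemma~\ref{lem:counitsquare}, disjointness of supports for distinct seams, and invertibility of the components). Nothing in your elaboration is off track.
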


From now on, we suppress the dependence of $\MM_{\Sigma,\Pi;\Gamma}$ on 
	the orientations of the elements of $\Gamma$, 
	as all such choices yield canonically equivalent 2-functors, 
	compatible with coarsening and gluing transformations.

\subsection{Coherence for coarsening}
\label{s:coherence}

In Section \ref{s:coarsening}, we constructed equivalences between the dg
	categories (and more generally, the $2$-functors) associated to seamed
	marked surfaces with the same underlying marked surface $(\Sigma, \Pi)$ but
	differing sets of seams $\Gamma$ and $\Gamma'$. We now restrict our
	attention to certain seam sets and use classical results of Harer
	\cite{MR830043} to assign dg categories (and more generally, $2$-functors as
	in Theorem \ref{thm:final}) to $(\Sigma, \Pi)$ that are canonical up to
	coherent quasi-equivalence. This approach is inspired by \cite{DyKa,HKK}.

Let $(\Sigma, \Pi;\Gamma)$ be a seamed marked surface, 
	and consider the triple $(X,\BB,\CB)$ associated to $(\Sigma, \Pi)$ in Remark \ref{rem:truncation}.
Recall that $X$ is obtained by collapsing each components of 
	$\del \Sigma \smallsetminus \operatorname{int} A(\Pi)$ 
	to a point, $\BB$ consists of the resulting points whose preimages are arcs, and $\CB$ consists of the 
	resulting points whose preimages are circles.
The set of seams $\Gamma$ determines a collection of arcs $\Gamma_X$ in $X$ 
	which we now consider as \emph{unoriented}.
These arcs have boundary in $\BB \cup \CB$ 
	and they do not intersect in $X \smallsetminus (\BB \cup \CB)$.
As in Construction \ref{const:cutting along arc}, we may consider the result 
	$\overline{X \smallsetminus \Gamma_X}$ of cutting $X$ along the arcs $\Gamma_X$.
The resulting regions are polygons with edges coming from $\Pi$ and (two copies of) $\Gamma_X$, 
	and we call these polygons \emph{unpuctured} if they do not contain a point of $\CB$ in their interior.

\begin{definition}
	\label{def:tessellatedS}
A seamed marked surface $(\Sigma, \Pi;\Gamma)$ is called \emph{tessellated} provided 
	no component of $\overline{X \smallsetminus \Gamma_X}$ is an unpunctured monogon 
	or an unpunctured digon.
\end{definition}

Note that, by our definition, a seamed marked surface
with $\Gamma = \emptyset$ is tessellated 
if (and only if) $\Sigma = D$ is a disk and $|\Pi| \geq 3$ or $\Pi = \emptyset$. 
As such, when $\Sigma = D$ (equivalently, $X$ is an unpunctured disk) 
some results we state will differ slightly from certain parts of the literature.
For example, one can check that a marked surface $(\Sigma, \Pi)$ may be tessellated 
as in Definition \ref{def:tessellatedS} if and only if 
the corresponding surface $(X,\BB \cup \CB)$ admits an ideal triangulation
in the sense of \cite[Definition 2.6]{FST}, 
or when $\Sigma = D$ and $|\Pi| = 0$ or $3$.
Given a tessellated surface $(\Sigma,\Pi;\Gamma)$, 
we will refer to the set of isotopy classes
\[
\TG = \{[\gamma] \mid \gamma \in \Gamma_{X} \}
\]
of unoriented arcs in $X$ as a \emph{tessellation} of $(\Sigma, \Pi)$.

\begin{definition}
	The \emph{tessellation poset} of a marked surface $(\Sigma,\Pi)$ is the
	partially ordered set $T(\Sigma,\Pi)$ whose elements are tessellations, 
	ordered by containment.
\end{definition}

It is useful to think of this poset as a category with objects (equivalence classes of)
tessellated surfaces $(\Sigma, \Pi;\Gamma)$ and with a unique 
\emph{tessellation coarsening} morphism
$(\Sigma, \Pi; \Gamma_1) \to (\Sigma, \Pi; \Gamma_2)$ 
whenever $\Gamma_2$ can be obtained by removing some seams from $\Gamma_1$, 
while still defining a tessellation
(rigorously: if $\TG_2$ is a subset of $\TG_1$). 
Clearly, tessellation coarsenings are generated under composition by 
\emph{elementary coarsenings} that remove a single seam and, thus, 
join two\footnote{Removing an arbitrary single seam may 
also merge one disk into an annulus, but this
would not result in a tessellation, so this would not be a coarsening morphism.}
regions. 
If two seams can be removed simultaneously, while still
resulting in a tessellation, the corresponding elementary coarsenings commute.
In fact, $T(\Sigma,\Pi)$ has no additional relations, not even of the higher kind.
Precisely, the following result is established in \cite{MR830043}; 
see also 
\cite{HatcherT},
\cite[Theorem 3.7]{FST}, 
and \cite[Proposition 3.3.9]{DyKa}.

\begin{proposition}
	\label{prop:tessellationposetcontractible}
If $(\Sigma,\Pi)$ is a marked surface that admits a tessellation,
	then the geometric realization of the nerve of the tessellation poset $T(\Sigma,\Pi)$ is contractible. 
	\qed
\end{proposition}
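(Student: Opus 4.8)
\textbf{Proof plan for Proposition \ref{prop:tessellationposetcontractible}.}
The statement is a classical result about mapping class groups and arc complexes, so the plan is to reduce it to the known contractibility of the (ideal) arc complex, rather than to reprove that fact. First I would set up the precise dictionary between the poset $T(\Sigma,\Pi)$ of tessellations and the combinatorics of arcs in the associated surface-with-marked-points $(X,\BB\cup\CB)$ from Remark~\ref{rem:truncation}. A tessellation $\TG$ is a collection of isotopy classes of pairwise non-crossing arcs in $X$ with endpoints in $\BB\cup\CB$ that cuts $X$ into polygons, none of which is an unpunctured monogon or digon; the partial order is reverse inclusion. The main point is that $T(\Sigma,\Pi)$ is (the opposite of) the poset of simplices of a simplicial complex $\mathcal{A}(X)$ closely related to the classical arc complex: its vertices are isotopy classes of essential arcs, and a finite set of vertices spans a simplex precisely when the arcs can be realized disjointly and the complementary regions satisfy the no-small-polygon condition. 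Here one must be slightly careful about which arc complex is relevant --- the honest $\mathcal{A}(X)$ of Harer/Hatcher versus its ``filling'' subcomplex whose simplices are the \emph{filling} arc systems --- and the condition in Definition~\ref{def:tessellatedS} is exactly designed to single out the top-dimensional / filling part.

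The key steps, in order, would be: (1) translate Definition~\ref{def:tessellatedS} and the coarsening morphisms into the statement that $T(\Sigma,\Pi)$ is the poset of non-degenerate simplices of the subcomplex $\mathcal{A}_\infty(X)\subset \mathcal{A}(X)$ spanned by filling arc systems (equivalently, those whose complement is a disjoint union of unpunctured polygons of the allowed type together with once-punctured polygons); (2) invoke the classical theorem of Harer \cite{MR830043} (see also Hatcher \cite{HatcherT}, and the combinatorial repackaging in \cite[Theorem 3.7]{FST} or \cite[Proposition 3.3.9]{DyKa}) that the arc complex $\mathcal{A}(X)$ is contractible and that the subspace of non-filling arc systems is either empty or contractible, so that the filling subcomplex $\mathcal{A}_\infty(X)$ --- which is $\mathcal{A}(X)$ minus this subspace --- is contractible; (3) identify the geometric realization of the nerve of the poset $T(\Sigma,\Pi)$ with the barycentric subdivision of $\mathcal{A}_\infty(X)$, using the standard fact that the nerve of the face poset of a simplicial complex is its barycentric subdivision and is therefore homeomorphic to the complex itself; (4) handle the low-complexity exceptional cases by hand --- in particular when $\Sigma = D$ is a disk with $|\Pi| = 0$ or $|\Pi| = 3$ the poset $T(\Sigma,\Pi)$ has a terminal object (the empty tessellation) and hence a contractible nerve, and when $X$ is a once-punctured monogon or similar degenerate surface one checks directly that $T(\Sigma,\Pi)$ is a point or a cone.

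The main obstacle I anticipate is step (1)–(2): matching our bookkeeping conventions (oriented seams, the arcs $\ma_i\in\Pi$ treated as ``sides'' rather than as cuttable arcs, the translation via Remark~\ref{rem:truncation} between $(\Sigma,\Pi)$ and $(X,\BB,\CB)$) with the precise hypotheses under which the cited contractibility results are stated, and correctly isolating the ``filling'' condition so that removing the non-filling locus yields a contractible rather than merely homotopy-equivalent subcomplex. Harer's and Hatcher's results are usually phrased for the full arc complex or for complexes of curves, and the passage to the subcomplex of tessellations requires knowing that the ideal (filling) arc systems form a cofiltered-looking subposet whose realization is the complement of a contractible subspace; this is exactly the content imported from \cite[Theorem 3.7]{FST} and \cite[Proposition 3.3.9]{DyKa}, so in the write-up I would lean on those references for the surfaces with punctures or enough marked points and only spell out by hand the finitely many disk cases excluded by their hypotheses. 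Once the dictionary is in place, contractibility of the nerve is immediate from the homeomorphism in step (3).
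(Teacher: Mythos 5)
Your plan coincides with the paper's treatment: the proposition is stated without an explicit proof precisely because it is imported from Harer (and Hatcher, Fomin--Shapiro--Thurston, Dyckerhoff--Kapranov), the only surface-specific input being the dictionary of Remark~\ref{rem:truncation} between $(\Sigma,\Pi)$ and $(X,\BB,\CB)$ and the observation of Remark~\ref{rem:disks} that for disks the empty tessellation is a terminal object of $T(\Sigma,\Pi)$, so the nerve is a cone (a ball rather than Harer's sphere). One minor correction to your step (4): that terminal-object argument applies to every disk with $|\Pi|\geq 3$, not only $|\Pi|=0$ or $3$.
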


\begin{rem}\label{rem:disks} The reader familiar with the literature might
expect us to assert in Proposition \ref{prop:tessellationposetcontractible} that
when $\Sigma = D$ (i.e.~$X$ is an unpunctured disk), the tessellation poset is
not contractible, and instead is a sphere. Indeed, this would be the case if we
didn't allow the tessellation $\Gamma = \emptyset$. 
Allowing this tessellation (as we do) then gives that 
the geometric realization of the nerve of
$T(D,\Pi)$ is a ball.
\end{rem}

We will now use Proposition \ref{prop:tessellationposetcontractible} to construct the 
dg categories (and more generally, the $2$-functors) associated to marked surfaces 
that admit tessellations. The following records the remaining marked surfaces, 
which we will treat separately; see Remark \ref{rem:noT2}.

\begin{rem}
	\label{rem:noT}
The connected marked surfaces with non-empty boundary that do not admit tessellations are 
given in Table \ref{table:exceptions}, where, as always, $D$ denotes a disk.
\begin{table}[h]
	\begin{center}
	\begin{tabular}{|c|c||c|c|c|c|}\hline
		$\Sigma$ & $|\Pi|$ & $X$  & $|\BB|$ & $|\CB|$ & description of $(X,\BB,\CB)$ \\ \hline\hline 
		$S^1 \times [0,1]$ & $0$ & $S^2$  & $0$ & $2$ & twice punctured sphere  \\ \hline
		$D$ & $1$ & $D$  & $1$ & $0$ & unpunctured monogon \\ \hline
		$D$ & $2$ & $D$  & $2$ & $0$ & unpunctured digon \\ \hline
	\end{tabular}
	\caption{\label{table:exceptions}}
	\end{center}
	\end{table}
	
\noindent Note that, for us, $(D,\Pi)$ admits the single tessellation $\Gamma = \emptyset$ when $|\Pi|=0,3$.
\end{rem}

The following results will allow us to define functors from the tessellation poset.

\begin{thm}\label{thm:coarsencommute}
Let $(\Sigma,\Pi;\Gamma)$ be a tessellated surface and let
	$\gamma_1,\gamma_2\in \Gamma$ with $\gamma_1\neq \gamma_2$ be 
	such that for $\Gamma_i:=\Gamma\smallsetminus\{\gamma_i\}$ and
	$\Gamma':=\Gamma\smallsetminus\{\gamma_1,\gamma_2\}$ the triples
	$(\Sigma,\Pi;\Gamma_1)$, $(\Sigma,\Pi;\Gamma_2)$, and $(\Sigma,\Pi;\Gamma')$ 
	are again tessellated surfaces. 
Then, the coarsening transformations for $\gamma_1$ and $\gamma_2$ 
	form a strictly commuting square:
	\[
	\begin{tikzcd}
		& \MM_{\Sigma,\Pi;\Gamma_1} \arrow[dr, double, "\mathrm{coarsen}_{\gamma_2}"]
		&
		\\[-15pt]
		\MM_{\Sigma,\Pi;\Gamma}
		\arrow[ur, double,"\mathrm{coarsen}_{\gamma_1}"]
		\arrow[dr, double,"\mathrm{coarsen}_{\gamma_2}",swap]&&
		\MM_{\Sigma,\Pi;\Gamma'}
	\\[-15pt]
	&
	\MM_{\Sigma,\Pi;\Gamma_2}
	\arrow[ur, double,"\mathrm{coarsen}_{\gamma_1}",swap]
	&
	\end{tikzcd} \, .
	\]
\end{thm}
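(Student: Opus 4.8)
The plan is to reduce the statement to a local computation in a small neighborhood of $\gamma_1 \cup \gamma_2$ and then to verify the commuting square there. First I would observe that, by hypothesis, $\gamma_1$ and $\gamma_2$ are disjoint arcs, and each borders a pair of regions of $(\Sigma,\Pi;\Gamma)$. Since both coarsening transformations act as the identity on all tensor factors indexed by regions not adjacent to the seam being removed (see Construction~\ref{constr:coarsening}), the only tensor factors of $\PMS_{\Sigma,\Pi;\Gamma}(T|V|S)$ on which both $\mathrm{coarsen}_{\gamma_1}$ and $\mathrm{coarsen}_{\gamma_2}$ act nontrivially are those indexed by regions that touch $\gamma_1$ or $\gamma_2$. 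I would split into two cases according to whether $\gamma_1$ and $\gamma_2$ have a common adjacent region. If they do not, then the two coarsening maps act on disjoint sets of tensor factors, and the square commutes strictly and trivially from the interchange law in the symmetric monoidal category of dg categories (and bimodules): $(\mathrm{coarsen}_{\gamma_1} \otimes \id) \circ (\id \otimes \mathrm{coarsen}_{\gamma_2}) = (\id \otimes \mathrm{coarsen}_{\gamma_2}) \circ (\mathrm{coarsen}_{\gamma_1} \otimes \id)$, both composites being $\mathrm{coarsen}_{\gamma_1} \otimes \mathrm{coarsen}_{\gamma_2}$ on the relevant factors.

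The substantive case is when $\gamma_1$ and $\gamma_2$ share a common adjacent region $D_0$, so that removing both seams merges (at least) three regions $D_1, D_0, D_2$ into one. Here I would work in the tensor factor of \eqref{eq:homcomplex} indexed by the merged region. Using Remark~\ref{rem:rotatefactor}, I would present this factor so that the purple boxes corresponding to the cut seams $(\gamma_1,\pm)$ and $(\gamma_2,\pm)$ appear in convenient positions — e.g.\ the $\gamma_1$-boxes adjacent to each other and the $\gamma_2$-boxes adjacent to each other. By Remark~\ref{rmk:purple boxes merge to roz}, each adjacent pair can be replaced by a Rozansky projector $\Bproj$, and the coarsening map is then $\e_{\Bproj}$ applied to the appropriate copy. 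The key point is then that the two counit maps $\e$ (one for each of the two $\Bproj$'s being contracted) commute past one another: they are supported in different regions of the planar diagram (since $\gamma_1 \ne \gamma_2$ are disjoint arcs, their cut neighborhoods in the regions $D_1 \cup D_0 \cup D_2$ are disjoint), and hence the far-commutativity of cobordisms supported on disjoint subdiagrams — exactly as used in the proof of Lemma~\ref{lem:counitsquare} and Lemma~\ref{lem:prismstacking-star} — gives that applying $\e$ for $\gamma_1$ then $\e$ for $\gamma_2$ equals applying them in the opposite order. One must also check that the sphericality identifications of the merged region with the standardized disk (which appear at the end of Construction~\ref{constr:coarsening}) are compatible with performing the two merges in either order; this follows from the coherence of the pivotal/spherical structure on the Bar-Natan bicategory recorded in Proposition~\ref{prop:sphericality} and Lemma~\ref{lem:fullrotation}, since the rotations implementing the two standardizations commute.

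Having established the identity on morphism complexes $\PMS_{\Sigma,\Pi;\Gamma}(T|V|S)$, I would then promote it to the level of $2$-functors: the two composite pseudonatural transformations $\MM_{\Sigma,\Pi;\Gamma} \To \MM_{\Sigma,\Pi;\Gamma'}$ have, by the above, the same component bimodule maps, and their compatibility data (with composition $\circ_S$ and units $\epsilon_T$) agree because each individual coarsening is compatible with these by Proposition~\ref{prop:coarsencomp}, and compatibility data compose strictly. Thus the square of transformations commutes strictly, as claimed.

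\textbf{Main obstacle.} I expect the genuinely delicate step to be the bookkeeping of the sphericality/standardization identifications when the two seams are removed in different orders: a priori the intermediate surfaces $(\Sigma,\Pi;\Gamma_1)$ and $(\Sigma,\Pi;\Gamma_2)$ have different standardizations chosen for their regions (as part of the data in Definition~\ref{def:cut surface module}), and one must check that the composite rotation/twist isomorphisms used to identify the two intermediate presentations with the final one agree. This is a coherence statement that ultimately reduces to Proposition~\ref{prop:sphericality} together with the fact that rotations supported near disjoint arcs commute, but verifying it carefully — rather than the commutation of the two counits, which is routine far-commutativity — is where the real work lies.
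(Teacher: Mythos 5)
Your proposal is correct and follows essentially the same route as the paper: the same case division (seams bordering disjoint pairs of regions versus sharing a common region), the same argument in each case (independence of tensor factors, respectively far-commutativity of the two counits applied to distant copies of $\Bproj$ in a common planar model chosen via Remark~\ref{rem:rotatefactor}), and the same reduction from general $V$ to $V=\one$ when promoting to the level of $2$-functors. The coherence of standardizations that you flag as the delicate point is handled in the paper by the same mechanism you describe, namely choosing adapted models for the outer factors inside a single planar model for the shared region.
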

\begin{proof}
We will establish the commutativity on component $1$-morphisms $V \in \bni(\Pi)$, 
	namely the commutativity of:
	\[
	\begin{tikzcd}
		& \MM_{\Sigma,\Pi;\Gamma_1}(V) \arrow[dr, "\mathrm{coarsen}_{\gamma_2}"]
		&
		\\[-15pt]
		\MM_{\Sigma,\Pi;\Gamma}(V)
		\arrow[ur, "\mathrm{coarsen}_{\gamma_1}"]
		\arrow[dr, "\mathrm{coarsen}_{\gamma_2}",swap]&&
		\MM_{\Sigma,\Pi;\Gamma'}(V)
	\\[-15pt]
	&
	\MM_{\Sigma,\Pi;\Gamma_2}(V)
	\arrow[ur, "\mathrm{coarsen}_{\gamma_1}",swap]
	&
	\end{tikzcd}	
	\]
For $V=\one$, this implies the commutativity of the dg functors from
	Corollary~\ref{cor:coarsening-dg-functors},  and then the commutativity 
	for general $V$ using Lemma~\ref{lemma:tangle slide}.
Consequently, this establishes commutativity for the entire coarsening transformations. 
		
The assumption that $\Gamma'$ defines a tessellation 
	(thus, in particular, a seamed marked surface)
	implies that we are in one of two situations 
	from the perspective of $(\Sigma,\Pi;\Gamma)$:
	\begin{enumerate}
		\item There are four pairwise distinct regions $D_i,D_j,D_k,D_l$ with
		respect to $\Gamma$, where $\gamma_1$ separates $D_i$ and $D_j$ and
		$\gamma_2$ separates $D_k$ and $D_l$.
		\item There are three pairwise distinct regions $D_i,D_j,D_k$ with
		respect to $\Gamma$, where $\gamma_1$ separates $D_i$ and $D_j$ and
		$\gamma_2$ separates $D_j$ and $D_k$.
	\end{enumerate}
In the first case, commutativity follows because
	$\mathrm{coarsen}_{\gamma_1}$ and $\mathrm{coarsen}_{\gamma_2}$ act
	on completely independent tensor factors. 
	
In the second case, commutativity follows because we can find a planar model
	encompassing all three involved tensor factors (similar to the model for two
	tensor factors in \eqref{eq:coarsen}), in which the two coarsening maps are
	realised by applying counits to distant copies of $\Bproj$, 
	and these clearly commute. 
	In fact, we can choose any model for the $j$-th tensor factor, as
	long as we choose adapted (see Remark~\ref{rem:rotatefactor}) models for the
	$i$-th and $k$-th factors with purple boxes pointing left or right depending on
	whether these disks lie to the right or left of the arcs, respectively.
	We can then place these planar models in a suitable subregion of the model for the
	$j$-th factor and proceed as in \eqref{eq:coarsen}.
\end{proof}

Since the $2$-functors $\MM_{\Sigma,\Pi;\Gamma}$ depend only on the combinatorial data 
associated with the cut surface $\cutS,\cutP$, 
and since Corollary \ref{cor:reorientation-transfo} gives that these $2$-functors are independent of 
seam orientation, the following is immediate.

\begin{lem}
If $\Gamma$ and $\Gamma'$ are sets of seams in the marked surface $(\Sigma, \Pi)$ giving rise to the 
same tessellation, then there is a canonical identification of the $2$-functors 
$\MM_{\Sigma,\Pi;\Gamma}$ and $\MM_{\Sigma,\Pi;\Gamma'}$. \qed
\end{lem}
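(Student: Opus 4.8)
The plan is to reduce the statement to the already-established independence results and then observe that no genuine verification remains. First I would recall the precise inputs to the construction of $\MM_{\Sigma,\Pi;\Gamma}$: by Definition~\ref{def:Cflat} and Definition~\ref{def:cut surface module}, the $2$-functor is built from (i) the cut surface $(\cutS,\cutP)$, together with a choice of linear orderings on $\Gamma$ and on $\Reg(\Sigma;\Gamma)$ and a choice of standardizations $\pi_i$ of the regions; and (ii) the tensor product of bar complexes $B^T_S$ contracted against the cut surface modules. The key point is that the data entering (i) and (ii) depend on $\Gamma$ \emph{only} through the combinatorics of the cut surface $\cutS$ with its cut-seam arcs $\gpm$ and the marked arcs $\Pi$ — that is, through the isotopy classes of the arcs $\Gamma$ rel boundary, which is exactly what a tessellation records.

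The main step is to exhibit, for sets of seams $\Gamma,\Gamma'$ inducing the same tessellation $\TG$, a homeomorphism of pairs $(\Sigma,\Gamma) \to (\Sigma,\Gamma')$ isotopic to the identity (fixing $A(\Pi)$), which exists precisely because $\TG = \{[\gamma] : \gamma\in\Gamma_X\}$ determines the arcs up to isotopy. Such a homeomorphism induces an identification of cut surfaces $(\cutS,\cutP) \cong (\cutS',\cutP')$ compatible with the region decompositions and with the functions $\reg$ and $\sgn$, hence an isomorphism of the $1$-morphism categories $\bn(\cutP) \cong \bn(\cutP')$ and $\bn(\gpm)\cong\bn(\gpm')$. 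Then I would check that the prism modules $\PMS_{D_i,A_i,\pi_i}$, the functors $\Phi_{D_i,\cutP|_{D_i}}$, and the bar complexes $B^T_S$ all transport along this identification — this is immediate from their definitions once one notes that all the relevant tangle data ($T_i, S_i$, intersections with seams) is carried across by the homeomorphism. Finally, Corollary~\ref{cor:reorientation-transfo} handles any discrepancy in seam orientations, since a tessellation consists of \emph{unoriented} arcs, and Lemma~\ref{lem:stiso} together with Corollary~\ref{cor:prism2M2} absorbs any difference in the chosen standardizations $\pi_i$.

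Assembling these, the composite identification is canonical: it does not depend on the chosen homeomorphism up to the isotopies just mentioned, and it intertwines the composition and unit maps of Definition~\ref{def:Cflat-composition-and-units} because those are defined entirely in terms of the prism stacking maps and the Eilenberg--Zilber products $\mu_S$, both of which are natural with respect to the transport. I expect the only point requiring real care — and hence the main (mild) obstacle — is verifying that the identification is genuinely \emph{canonical}, i.e.\ independent of auxiliary choices: one must check that two homeomorphisms realizing the same tessellation, two orderings of $\Gamma$ or $\Reg$, and two compatible families of standardizations all yield the \emph{same} identification of $2$-functors, not merely isomorphic ones. This is where the coherence machinery of \S\ref{s:coherence} and the preceding naturality lemmas (Lemma~\ref{lem:stiso}, Corollary~\ref{cor:prism2M2}, Theorem~\ref{thm:coarsencommute}) are invoked; modulo those, the argument is formal.
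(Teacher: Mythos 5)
Your proof is correct and follows essentially the same route as the paper, which dispatches the lemma as immediate from the facts that the construction depends only on the combinatorial data of the cut surface $(\cutS,\cutP)$ and that Corollary~\ref{cor:reorientation-transfo} removes any dependence on seam orientations. Your version simply makes explicit how "same combinatorial data" is realized (via a homeomorphism isotopic to the identity carrying $\Gamma$ to $\Gamma'$) and correctly delegates the remaining choice-independence to Lemma~\ref{lem:stiso}, Corollary~\ref{cor:prism2M2}, and the reorientation equivalences.
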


We thus arrive at the following.

\begin{thm}\label{thm:final2} 
Let $(\Sigma,\Pi)$ be a marked surface that admits a tessellation.
The assignment $(\Sigma,\Pi;\Gamma)\mapsto
	\MM_{\Sigma,\Pi;\Gamma}$ from Theorem~\ref{thm:final}  together with the coarsening transformations
from Corollary~\ref{cor:coarsening-transfo} yields a functor from the tessellation poset of $(\Sigma,\Pi)$ to the
category of 2-functors $\bni(\Pi) \To \MorDGCat$ and pseudonatural transformations between them:
\begin{equation}
	\label{eq:TPF}
\MM \colon T(\Sigma, \Pi) \to \mathrm{2Fun}\big(\bni(\Pi),\MorDGCat \big) \, .
\end{equation}
Moreover, the pseudonatural transformations are object-wise quasi-equivalences. \qed
\end{thm}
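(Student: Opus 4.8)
The plan is to assemble the statement from the pieces already in place, treating it as a bookkeeping result once the two genuinely substantive inputs -- Theorem~\ref{thm:final} and Theorem~\ref{thm:coarsencommute} -- are granted. First I would recall that by Theorem~\ref{thm:final}, each seamed marked surface $(\Sigma,\Pi;\Gamma)$ yields a $2$-functor $\MM_{\Sigma,\Pi;\Gamma}\colon \bni(\Pi)\To \MorDGCat$, and by Corollary~\ref{cor:coarsening-transfo}, each elementary coarsening $\gamma\in\Gamma$ yields a pseudonatural transformation $\mathrm{coarsen}_\gamma\colon \MM_{\Sigma,\Pi;\Gamma}\To\MM_{\Sigma,\Pi;\Gamma\smallsetminus\{\gamma\}}$. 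The preceding lemma identifies $\MM_{\Sigma,\Pi;\Gamma}$ with $\MM_{\Sigma,\Pi;\Gamma'}$ whenever $\Gamma,\Gamma'$ induce the same tessellation, so the assignment $\TG\mapsto \MM_{\Sigma,\Pi;\Gamma}$ and the coarsening transformations are well-defined on the tessellation poset $T(\Sigma,\Pi)$; this is what must be promoted to a functor into the (bi)category $\mathrm{2Fun}(\bni(\Pi),\MorDGCat)$ of $2$-functors and pseudonatural transformations.

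The key step is to check functoriality. Since $T(\Sigma,\Pi)$ is a poset, it suffices to (i) send each object $\TG$ to $\MM_{\Sigma,\Pi;\Gamma}$ and each covering relation (elementary coarsening) to the corresponding $\mathrm{coarsen}_\gamma$, and (ii) verify that this respects composition, i.e.\ that the composite of coarsening transformations along any chain in the poset depends only on the endpoints. Because every tessellation coarsening factors into elementary ones, and because $T(\Sigma,\Pi)$ has no higher relations beyond the commuting squares of simultaneously-removable seams (Proposition~\ref{prop:tessellationposetcontractible}, via \cite{MR830043}), it is enough to check that two elementary coarsenings $\mathrm{coarsen}_{\gamma_1}$ and $\mathrm{coarsen}_{\gamma_2}$ commute whenever both orders are defined within the poset -- and this is precisely Theorem~\ref{thm:coarsencommute}. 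One also needs the identity relation: the ``empty'' coarsening is the identity transformation, which is immediate from the construction. The interchange with reorientation (Corollary~\ref{cor:reorientation-transfo}) guarantees that the identifications used are compatible with the suppressed seam orientations, so nothing depends on choices made there.

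Next I would address the last sentence: the pseudonatural transformations are object-wise quasi-equivalences. For elementary coarsenings this is exactly Corollary~\ref{cor:coarsening-dg-functors}, which states that $\mathrm{coarsen}_\gamma$ assigns to each $\pp\in\bni(\Pi)$ a quasi-equivalence of dg categories $\sCat(\Sigma,\pp;\Gamma)\to\sCat(\Sigma,\pp;\Gamma')$. For a general morphism in the tessellation poset, we compose finitely many elementary coarsenings, and a composite of quasi-equivalences is a quasi-equivalence (quasi-equivalences are closed under composition), so the claim follows for all of $T(\Sigma,\Pi)$. Strictly, one should note that ``object-wise'' here refers to evaluating the $2$-functor on an object $\pp$, where the resulting bimodule is the one associated to a dg functor -- namely $\Hom$ along $\mathrm{coarsen}_\gamma$ -- so that the notion of quasi-equivalence of dg categories is the relevant one.

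The main obstacle is purely organizational: making precise what ``functor from a poset to $\mathrm{2Fun}(\bni(\Pi),\MorDGCat)$'' means and that strict commutativity of the squares in Theorem~\ref{thm:coarsencommute} (rather than commutativity up to modification) is exactly the coherence one needs -- there are no higher cells to track because the source is a $1$-category (a poset) and Harer's result says its classifying space is contractible, hence any two chains with common endpoints give the same composite. So the proof is essentially: invoke Theorem~\ref{thm:final} for objects, Corollary~\ref{cor:coarsening-transfo} for morphisms, Theorem~\ref{thm:coarsencommute} for composition-coherence, the identification lemma for well-definedness on tessellations, and Corollary~\ref{cor:coarsening-dg-functors} together with closure of quasi-equivalences under composition for the final clause. \qed
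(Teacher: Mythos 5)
Your proposal is correct and matches the paper's approach: the paper states this theorem with no separate proof precisely because it is the assembly of Theorem~\ref{thm:final}, Corollary~\ref{cor:coarsening-transfo}, Theorem~\ref{thm:coarsencommute}, the identification lemma for seam sets inducing the same tessellation, and Corollary~\ref{cor:coarsening-dg-functors}, exactly as you lay out. Your only inessential detour is invoking Proposition~\ref{prop:tessellationposetcontractible} for functoriality — since the source is a poset, independence of the composite from the chosen factorization into elementary coarsenings follows from the commuting squares of Theorem~\ref{thm:coarsencommute} alone; contractibility is only needed later for Theorem~\ref{thm:final3}.
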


For marked surfaces $(\Sigma, \Pi)$ that admit tessellations,
the contractibility of the tessellation poset provided by Proposition \ref{prop:tessellationposetcontractible}
implies that a suitably defined homotopy colimit of the functor $\MM$ 
from Theorem \ref{thm:final2} can be considered to be 
canonically associated with $(\Sigma, \Pi)$, 
without the auxiliary choice of a system of seams $\Gamma$. 
On the level of objects, we can be more precise.
As shown in \cite{Tabuada}, 
the category of small (pointed) dg categories $\DGCat[\Z \times \Z]$ admits a model structure 
wherein the weak equivalences are the quasi-equivalences. 

\begin{definition}
Let $(\Sigma,\Pi)$ be a marked surface that admits a tessellation.
Given $\pp \subset A(\Pi)$, let $\sCat(\Sigma,\pp)$ denote the 
homotopy colimit of the functor
\begin{equation}
	\label{eq:TPF2}
\begin{aligned}
	T(\Sigma, \Pi) &\to \DGCat[\Z \times \Z] \\
	(\Sigma,\Pi;\Gamma) &\mapsto \sCat(\Sigma,\mathbf{p};\Gamma)
\end{aligned}
\end{equation}
obtained by evaluating \eqref{eq:TPF} at the object $\pp \in \BN(\Pi)$.
\end{definition}

In apparent contrast to $\sCat(\Sigma,\pp;\Gamma)$, 
the dg category $\sCat(\Sigma,\pp)$ is canonically associated to 
$(\Sigma,\Pi)$, independent of any choice of seams.
Nevertheless, the following shows that upon passing to the category 
$\Hqe$, which is the localization of $\DGCat[\Z \times \Z]$ at the quasi-equivalences,
the same can actually be said of $\sCat(\Sigma,\pp;\Gamma)$ 
for \emph{any choice} of tessellation.

\begin{thm}\label{thm:final3} 
Let $(\Sigma, \Pi; \Gamma)$ be a tessellated surface.
For each $\pp \in \bni(\Pi)$, there is a canonical quasi-equivalence
$\sCat(\Sigma,\pp;\Gamma) \to \sCat(\Sigma,\pp)$.
Consequently, if $(\Sigma, \Pi; \Gamma)$ and $(\Sigma, \Pi; \Gamma')$ 
are two tessellations of the same marked surface $(\Sigma, \Pi)$, 
there is a canonical isomorphism 
$\sCat(\Sigma,\pp;\Gamma) \cong \sCat(\Sigma,\pp;\Gamma')$ in $\Hqe$.
\end{thm}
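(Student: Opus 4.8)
The plan is to deduce Theorem~\ref{thm:final3} from the contractibility of the tessellation poset (Proposition~\ref{prop:tessellationposetcontractible}) together with the coherence established in Theorem~\ref{thm:coarsencommute}. Fix a marked surface $(\Sigma,\Pi)$ admitting a tessellation and fix $\pp \subset A(\Pi)$. By Theorem~\ref{thm:coarsencommute} and the preceding lemma identifying $\MM_{\Sigma,\Pi;\Gamma}$ for seam sets giving the same tessellation, the assignment $(\Sigma,\Pi;\Gamma) \mapsto \sCat(\Sigma,\pp;\Gamma)$ together with the coarsening dg functors from Corollary~\ref{cor:coarsening-dg-functors} is a genuine (strict) functor $T(\Sigma,\Pi) \to \DGCat[\Z\times\Z]$, all of whose morphisms are quasi-equivalences (i.e.\ weak equivalences in the Tabuada model structure \cite{Tabuada}). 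This is exactly the functor \eqref{eq:TPF2}, and $\sCat(\Sigma,\pp)$ is defined as its homotopy colimit.

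First I would invoke the standard fact that for a functor $F \colon \mathcal{I} \to \mathcal{M}$ into a model category that sends every morphism of $\mathcal{I}$ to a weak equivalence, and for which the nerve $|N\mathcal{I}|$ is contractible, the canonical map $F(i) \to \hocolim_{\mathcal{I}} F$ is a weak equivalence for every object $i \in \mathcal{I}$. (One way to see this: such an $F$ factors, up to objectwise weak equivalence, through a constant functor; alternatively, $\hocolim_{\mathcal I} F \simeq |N\mathcal{I}| \otimes^{\mathbf L} F(i)$ when $F$ is "locally constant", and $|N\mathcal I|$ is contractible.) Applying this with $\mathcal{I} = T(\Sigma,\Pi)$ --- which is contractible by Proposition~\ref{prop:tessellationposetcontractible} --- and with $F$ the functor \eqref{eq:TPF2}, gives for each tessellation $\Gamma$ a canonical quasi-equivalence $\sCat(\Sigma,\pp;\Gamma) \to \sCat(\Sigma,\pp)$, namely the structure map of the homotopy colimit. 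This proves the first assertion.

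For the second assertion, given two tessellations $\Gamma, \Gamma'$ of $(\Sigma,\Pi)$, I would compose the canonical quasi-equivalence $\sCat(\Sigma,\pp;\Gamma) \to \sCat(\Sigma,\pp)$ with the inverse (in $\Hqe$, the localization of $\DGCat[\Z\times\Z]$ at quasi-equivalences) of the canonical quasi-equivalence $\sCat(\Sigma,\pp;\Gamma') \to \sCat(\Sigma,\pp)$. Since both are weak equivalences, both become invertible in $\Hqe$, and the composite is the desired canonical isomorphism $\sCat(\Sigma,\pp;\Gamma) \cong \sCat(\Sigma,\pp;\Gamma')$ in $\Hqe$. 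Canonicity is inherited from the canonicity of the homotopy colimit structure maps, which in turn rests on the strict commutativity in Theorem~\ref{thm:coarsencommute}.

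The main obstacle --- and the step deserving the most care --- is verifying that \eqref{eq:TPF2} really is a strict functor on $T(\Sigma,\Pi)$, not merely a lax or pseudo one: this is where Theorem~\ref{thm:coarsencommute} is essential, as it shows the elementary coarsening dg functors commute on the nose, so that any two zig-zag-free composites of coarsenings realizing the same poset relation agree strictly. (Here one uses that in a poset all composable strings between two objects are equal, so only commutativity of "independent" elementary coarsenings --- the content of Theorem~\ref{thm:coarsencommute} --- needs checking.) A secondary point is a clean citation or brief argument for the homotopy-colimit fact used above; I would cite the relevant statement about homotopy colimits of objectwise-equivalence-valued functors over a contractible indexing category, e.g.\ via the formula $\hocolim_{\mathcal I}F \simeq |N\mathcal I|\otimes^{\mathbf L}F(i)$, and note that the construction of $\hocolim$ in $\DGCat[\Z\times\Z]$ makes sense by \cite{Tabuada}.
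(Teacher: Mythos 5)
Your proposal is correct and follows essentially the same route as the paper: the paper's proof likewise observes (via Theorem~\ref{thm:final2}) that the functor \eqref{eq:TPF2} is homotopically constant, invokes the contractibility of the nerve of $T(\Sigma,\Pi)$ from Proposition~\ref{prop:tessellationposetcontractible}, and cites the standard homotopy-colimit fact you state (the paper references \cite[Corollary 29.2]{HToD}). Your additional remarks on strict functoriality via Theorem~\ref{thm:coarsencommute} and on inverting in $\Hqe$ are just expansions of steps the paper leaves implicit.
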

\begin{proof}
In model categorical language, 
Theorem \ref{thm:final2} gives that the functor \eqref{eq:TPF2} 
is homotopically constant.
By Proposition \ref{prop:tessellationposetcontractible}, 
the (nerve of the) domain of \eqref{eq:TPF2} is contractible, 
and the result follows e.g.~from \cite[Corollary 29.2]{HToD}.
\end{proof}

\begin{remark}
	\label{rem:noT2}
Theorem \ref{thm:final3} associates a dg category to marked surfaces $(\Sigma,\Pi)$ 
that admit tessellations, 
i.e.~to all marked surfaces with the exception of those containing a connected component 
as listed in Remark \ref{rem:noT}.
We will discuss the special case of $(S^1 \times [0,1], \emptyset)$ 
in Remark \ref{rem:Connes}.
The remaining cases $(D,\Pi)$ with $|\Pi|=1$ or $2$ can be treated as follows.
Since the tessellation poset is empty, we may instead consider the 
``seamed marked surfaces'' poset, consisting of equivalence classes of seam sets $\Gamma$, 
where equivalence is generated by isotopy and seam reversal, 
and the partial order is again given by containment.
We again find that this poset is contractible, 
so the argument in the proof of Theorem \ref{thm:final3} applies.
In fact, as is the case for the tessellation poset for marked disks with $|\Pi| \neq 1, 2$, 
this poset has a terminal object given by $\Gamma = \emptyset$, 
so we can view the (dg) category $\sCat(D,\pp;\emptyset)$ as canonically associated to $(D,\Pi)$;
in fact, the choice of $\Pi$ is essentially immaterial.
\end{remark}

Theorems~\ref{thm:final2} and \ref{thm:final3} 
also allow us to construct an action of orientation
preserving diffeomorphisms of $\Sigma$ that fix the boundary $\partial \Sigma$ pointwise. 
Let $\diff(\Sigma, \del \Sigma)$ denote the group of such diffeomorphisms.

\begin{construction}[Sketch: action by diffeomorphisms]\label{constr:diffeos} 
Let $(\Sigma,\Pi)$ be a marked surface that admits a tessellation 
and let $\phi\in \diff(\Sigma)$. 
Given any tessellation $(\Sigma, \Pi; \Gamma)$, after acting by $\phi$
we again find that $(\Sigma, \Pi; \phi(\Gamma))\in T(\Sigma, \Pi)$. 
By inspecting Definition~\ref{def:our bimodules}, it is clear
that the geometric action of $\phi$ on tangles 
in $\Tan(\Sigma,\Pi;\Gamma)$ tautologically extends to an invertible transformation:
\[
\MM_{\phi}\colon \MM_{\Sigma,\Pi;\Gamma} \To \MM_{\Sigma,\Pi;\phi(\Gamma)} \, .
\]
Upon appropriate localization, 
we may post-compose with the essentially unique identification 
\[\MM_{\Sigma,\Pi;\phi(\Gamma)}\To \MM_{\Sigma,\Pi;\Gamma} \] 
and view $\MM_{\phi}$ as an automorphism of $\MM_{\Sigma,\Pi;\Gamma}$.
Further, another application of Proposition \ref{prop:tessellationposetcontractible} 
shows that $\MM_{\phi}$ does not depend on the auxiliary choice of $\Gamma$ 
and that these maps are compatible with composing 
diffeomorphisms---all understood in a suitably weak sense. 
\end{construction}

This construction only sketches how individual diffeomorphisms act and---by
discreteness of the construction---this only depends on the underlying mapping class.
We do conjecture, however, that an appropriate 
$\infty$-categorical setup for the homotopy colimit of the functor from Theorem~\ref{thm:final2} 
will support a homotopy-coherent action of the entire diffeomorphism group.

\section{The dg extended affine Bar-Natan category}
\label{s:affine BN}

Recall that the lone marked surface not covered by our results in \S \ref{s:coherence}
was the marked surface $(S^1 \times [0,1], \emptyset)$. 
In this section, we briefly elaborate on the categories associated to (marked) annuli.

\subsection{The annulus invariant}
\label{ss:annulus and htr}
We begin by making the setup from Example \ref{ex:annulus} more precise.
Let us model $S^1$ as the unit circle in $\C$ 
and consider the annulus $S^1 \times
[0,1]$ and $\Gamma=\{\gamma\}$ with $\gamma = \{\mathsf{i}\} \times [0,1]$ 
(here $\mathsf{i} \in \C$ denotes the imaginary unit).
Let $\ma \colon [0,1] \to S^1$ be a positively-oriented arc whose image lies in 
some small neighborhood of $-\mathsf{i}$
and let $\Pi = \{ \ma_0, \ma_1\}$ for $\ma_j = \ma \times \{j\}$.
By slight abuse of notation, 
let $\pp^n$ denote the standard set of $n$ points in $\ma$.
(Technically, since $\pp^n \subset (0,1)$, 
this should be written $\ma(\pp^n)$.)

\begin{definition}\label{def:annulus setup}
Consider the marked surface $\big(S^1 \times [0,1], \Pi = \{ \ma_0, \ma_1\}\big)$.
Given $k,l \in \N$, 
set $\pp^k_l:=\pp^k \times\{1\} \sqcup \pp^l \times\{0\} \subset \Pi$.
The \emph{dg extended affine Bar-Natan category}
is the dg category
\[
\Affbn^k_l:=\sCat(S^1\times[0,1],\pp^k_l;\{\gamma\}).
\]
\end{definition}

\begin{remark}
A typical object $T\in \Affbn^k_l$ is pictured in \eqref{eq:tangle in annulus}.
\end{remark}

Given a $\K$-linear bicategory $\CB$, 
the work in \cite[Section 6.4]{2002.06110} defines its \emph{dg horizontal trace} 
$\hTr^{\dg}(\CB)$, 
a dg analogue of the usual horizontal trace of a bicategory 
(see e.g.~\cite{BHLZ,QR2} for the latter). 
When $\CB = \bn$, 
we now show that
the dg horizontal trace is a special case of Definition \ref{def:annulus setup}.

\begin{thm}
	\label{thm:dgtrace}
	There is an equivalence of dg categories $\Affbn^0_0 \cong \hTr^{\dg}(\bn)$.
	\end{thm}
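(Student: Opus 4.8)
The plan is to compare the two dg categories object-by-object and morphism-by-morphism, using the explicit graphical model from \S\ref{sec:graphicalmodel}, specialized to the annulus as in Example~\ref{ex:annulus}. First I would recall the definition of $\hTr^{\dg}(\bn)$ from \cite[\S6.4]{2002.06110}: its objects are endo-$1$-morphisms of $\bn$, i.e.~objects $T \in \bn_n^n$ for varying $n$, and the morphism complex from $S \in \bn_m^m$ to $T \in \bn_n^n$ is (a twisted complex built from) the two-sided bar resolution of the identity bimodule, paired against $T$ on one side and $S$ on the other---concretely $\Hom_{\hTr^{\dg}(\bn)}(S,T) = \bigoplus_{r \ge 0} t^{-r} \bigoplus_{a_0,\dots,a_r} \HOM_{\bn}(\dots)$ with the bar differential, which (after applying the representable functor $\KhEval{-}$) is exactly the planar evaluation of $\Bar_m^n$ closed up in the annulus. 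On the other side, Definition~\ref{def:annulus setup} with $k=l=0$ gives $\sCat(S^1\times[0,1],\emptyset;\{\gamma\})$, whose objects are tangles in the annulus transverse to the single seam $\gamma$---equivalently, closed $1$-manifolds $T$ in $S^1\times[0,1]$ meeting $\gamma$ in $n$ points for some $n$, which up to isotopy are precisely the objects of $\bn_n^n$ closed up around the annulus---and whose $\Hom$-complexes $\PMS_{\Sigma,\Pi;\Gamma}(T,S)$ are, by \eqref{eq:homcomplex} with one region and one seam (no $\Pi$-arcs), given by $q^{(n+m)/2}\KhEval{-}$ applied to the diagram pairing $\pi T$ and $r_y(\pi S)$ along a single pair of purple boxes, i.e.~exactly the closure of $\Bar_m^n$ in the annulus.

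\textbf{Key steps, in order.} (1) Set up the bijection on objects: every object of $\Affbn^0_0$ is a tangle in the annulus with standard transverse intersection with $\gamma$; cutting along $\gamma$ produces an $(n,n)$-tangle in a disk (an object of $\bn_n^n$), and this assignment is a bijection onto $\bigsqcup_n \mathrm{Ob}(\bn_n^n) = \mathrm{Ob}(\hTr^{\dg}(\bn))$, since reglueing recovers $T$ and the closure construction is injective on isotopy classes of disk tangles with no closed components once we also allow the additive/$q$-shift completion. (2) Identify morphism complexes: using the description of $\PMS$ in \eqref{eq:homcomplex} for the annulus (already unravelled in Example~\ref{ex:annulus}) together with Definition~\ref{def:bar} and \eqref{eq:Bar-visual}, show $\PMS_{\Sigma,\Pi;\Gamma}(T,S) \cong q^{(n+m)/2}\KhEval{\iota(\Bar_m^n)\text{ closed around the annulus}}$, and match this term-by-term and differential-by-differential with the bar complex defining $\Hom_{\hTr^{\dg}(\bn)}(S,T)$; the $t$- and $q$-gradings agree by construction. (3) Match composition: the composition $\circ_S$ from Definition~\ref{def:Cflat-composition-and-units} is built from the prism stacking maps followed by the Eilenberg--Zilber shuffle product $\mu_S$ on the bar complexes (Proposition~\ref{prop:Bar is algebra}), and this is precisely how composition in $\hTr^{\dg}(\bn)$ is defined in \cite{2002.06110} (there too it is the shuffle product on the bar resolution, interpreted as a trace); similarly the units $\epsilon_T$ correspond to the bar-complex unit $\eta_T$, so the bijection is a dg functor. (4) Conclude that it is an isomorphism of dg categories, hence in particular an equivalence; alternatively, invoke Theorem~\ref{thm:final3} to note that the answer is independent of the choice of seam, which a posteriori justifies that the one-seam model of \cite{2002.06110} computes the right thing.

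\textbf{Main obstacle.} The essential work is step (3): reconciling the combinatorial bookkeeping of the two constructions of composition. In the present paper, composition in $\sCat$ first applies prism stacking (pairing $S$ with $r_y(S)$ via saddle and tube cobordisms across the cut disk) and then the shuffle product $\mu_S$ on the seam bar complexes; in \cite{2002.06110}, the dg horizontal trace composition is packaged directly as a shuffle-product map on the cyclic bar complex. I expect the bulk of the proof to be a careful diagram-chase verifying that, under the identification of step (2), the prism-stacking-then-$\mu_S$ composite agrees on the nose with the \cite{2002.06110} composite, using Lemma~\ref{lem:prismstacking-star}, Lemma~\ref{lem:assocunit-prismstacking}, and the associativity/unitality of $\mu$ from \cite[Theorem~5.2]{EML}. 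The grading shifts $q^{(n+m)/2}$ and the $r_y$/$r_{xz}$ reflections introduced by Definition~\ref{def:cut surface module} must be tracked precisely, and one should check that the canonical identification of $\bn(\gpm)$-modules with the enveloping-category picture of Remark~\ref{rem:HH} is exactly the cyclic-bar-complex picture used in \cite{2002.06110}; once this is pinned down, functoriality and the dg-functor axioms follow formally, and essential surjectivity plus fully-faithfulness give the equivalence.
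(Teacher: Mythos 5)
Your proposal is correct and follows essentially the same route as the paper: identify objects by cutting along the single seam, match the $\Hom$-complexes graphically with the bar complex $\Bar_m^n$ closed in the annulus (using the purple-box notation and the symmetries of Proposition~\ref{prop:symmetries of B}), and observe that composition on both sides is the Eilenberg--Zilber shuffle product. The paper's actual proof treats your step (3) as immediate from the fact that both compositions are defined via the same shuffle product, rather than as the main obstacle.
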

	
In the following, we assume some familiarity with \cite{2002.06110}.
	
	\begin{proof}
	The objects of $\Affbn^0_0$ and $\hTr^{\dg}(\bn)$ are both parametrized by a
	non-negative integer $n\in \N$ and an object $T\in \bn_n^n$. 
We identify such tangles with a $2$-partitioned cap tangle as follows:
\[
		\begin{tikzpicture}[anchorbase]
			\draw[thick,double] (0,-.55) node[below=-2pt]{\scs$n$} 
				to (0,.55) node[above=-2pt]{\scs$n$};
		\draw[thick, fill=white] (-.55,-.25) rectangle (.55,.25);
		\node at (0,0) {$T$};
		\end{tikzpicture}
		\;\;
		\mapsto
		\;\;
		\begin{tikzpicture}[anchorbase]
			\draw[thick,double] 	
			(0,-.55) node[below=-2pt]{\scs$n$} to (0,.25) \ur (.7,.65) \rd (1.4,.25) 
				to (1.4,-.55) node[below=-2pt]{\scs$n$};
			\draw[thick, fill=white] (-.6,-.25) rectangle (.6,.25);
			\node at (0,0) {$T$};
			\end{tikzpicture}
\]
The complexes of morphisms from an object $S\in \bn_m^m$ to an object $T\in
\bn_n^n$ in $\hTr^{\dg}(\bn)$ is defined by first considering morphism spaces
\[
\Hom_{\bn}(S\star Z, Z' \star T) 
	\cong q^{\frac{m+n}{2}} 	
	\KhEval{
	\begin{tikzpicture}[anchorbase]
		\draw[thick,double] 	
		(0,-.75) to (0,.25) \ur (.7,.65) \rd (1.4,.25) to (1.4,-1) \dl (.7,-1.4)\lu (0,-1);
		\draw[thick, fill=white] (-.6,-.25) rectangle (.6,.25);
		\draw[thick, fill=white] (-.6,-.5) rectangle (.6,-1);
		\draw[thick, fill=white] (.8,-.25) rectangle (2,.25);
		\draw[thick, fill=white] (.8,-.5) rectangle (2,-1);
		\node at (0,0) {$Z'$};
		\node at (1.4,0) {$r_x(S)$};
		\node at (0,-.75) {$T$};
		\node at (1.4,-.75) {$r_x(Z)$};
		\end{tikzpicture}
	}
	\cong q^{\frac{m+n}{2}} 	
	\KhEval{
	\begin{tikzpicture}[anchorbase]
		\draw[thick,double] 	
		(0,-1.25) to (0,.25) \ur (.7,.65) \rd (1.4,.25) to (1.4,-1.75) \dl (.7,-2.15)\lu (0,-1.75);
		\draw[thick, fill=white] (-.6,-.25) rectangle (.6,.25);
		\draw[thick, fill=white] (-.6,-.5) rectangle (.6,-1);
		\draw[thick, fill=white] (-.6,-1.25) rectangle (.6,-1.75);
		\draw[thick, fill=white] (.8,-.5) rectangle (2,-1);
		\node at (0,0) {$T$};
		\node at (1.4,-.75) {$r_{xy}(Z')$};
		\node at (0,-.75) {$r_y(Z)$};
		\node at (0,-1.5) {$r_y(S)$};
		\end{tikzpicture}
	}
\]
for $Z,Z' \in \bn_n^m$
and then contracting with the bar complex $\Bar^n_m$. 
Using the (purple box) notation from Definition \ref{def:tbar} 
and Remarks \ref{rmk:einstein} and \ref{rmk:purple boxes merge to roz},
as well as the symmetries \eqref{BS1} and \eqref{BS2} from Lemma~\ref{prop:symmetries of B},
we obtain the isomorphism
\[
	\Hom_{\hTr^{\dg}(\bn)}(S,T) 
\cong q^{\frac{m+n}{2}} 	
	\KhEval{
	\begin{tikzpicture}[anchorbase]
		\draw[thick,double] 	
		(0,-1.25) to (0,.25) \ur (.7,.65) \rd (1.4,.25) to (1.4,-1.75) \dl (.7,-2.15)\lu (0,-1.75);
		\draw[thick, fill=white] (-.6,-.25) rectangle (.6,.25);
		\draw[thick, fill=white] (-.6,-1.25) rectangle (.6,-1.75);
		\rjf{0}{-.75}{}
		\ljf{1.4}{-.75}{}
		\node at (0,0) {$T$};
		\node at (0,-1.5) {$r_y(S)$};
		\end{tikzpicture}
	}
=
	\KhEval{
	\begin{tikzpicture}[anchorbase]
		\draw[thick,double] 	
		(0,-.25) to (0,.25) \ur (.7,.65) \rd (1.4,.25) to (1.4,-0.25) \dl (.7,-0.65)\lu (0,-0.25)
		(0,-1.25) \ur (.7,-.85) \rd (1.4,-1.25) to (1.4,-1.75) \dl (.7,-2.15)\lu (0,-1.75);
		\draw[thick, fill=white] (-.6,-.25) rectangle (.6,.25);
		\draw[thick, fill=white] (-.6,-1.25) rectangle (.6,-1.75);
		\draw[thick,fill=white] (0.4,-.3) rectangle (1,-1.2);
		\node[rotate=90] at (.7,-.75) {$\Bproj_{n+m}$};
		\node at (0,0) {$T$};
		\node at (0,-1.5) {$r_y(S)$};
		\end{tikzpicture}
	}
	=
	\Hom_{\Affbn^0_0}(S,T) 
	\]
	Moreover, this isomorphism is compatible with composition, which in both
	categories is defined via the Eilenberg--Zilber shuffle product on bar
	complexes, 
	c.f.~\cite[Section 6.4]{2002.06110} and Proposition~\ref{prop:Bar is algebra}.
	\end{proof}

\begin{remark}
	\label{rem:Connes}
Theorem \ref{thm:dgtrace}	 also holds if we replace the dg category
\[
\Affbn_0^0 = \sCat(S^1 \times [0,1], \emptyset \subset \{\ma_0,\ma_1\}; \{ \gamma \})
\]
associated to the marked surface $(S^1 \times [0,1] , \{\ma_0,\ma_1\})$
with the (similarly notated) dg category 
\[
\sCat(S^1 \times [0,1], \emptyset; \{ \gamma \})
\]
associated to the marked surface $(S^1 \times [0,1] , \emptyset)$.
Indeed, upon inspection, we see that these categories are equal on the nose.

Recall that the case of the annulus with $\Pi=\emptyset$ 
	was excluded from the discussion in \S \ref{s:coherence} because
	Proposition~\ref{prop:tessellationposetcontractible} is not available. 
The above identification shows that we can view the category $\Affbn_0^0$ as being associated 
	to $(S^1 \times [0,1],\emptyset)$.
However, since the definition of $\Affbn_0^0$ required choosing a point in $S^1$ 
	(for us, $\mathsf{i} \in S^1$),
	there is an entire $S^1$-family of such categories, rather then a single one. 
These categories are all equivalent, but not up to unique equivalence. 
This ambiguity is well-known in the setting of the dg horizontal trace 
	and, more classically, Hochschild chains, 
	where it gives rise to the Connes differential; see e.g.~\cite[\S 5.6, \S 8.4]{2002.06110}.
	\end{remark}

\begin{remark}
The results in \cite[\S 6.2]{rozansky2010categorification} show that the bottom projectors
	$\Bproj_{n+m}$ admit a description as the stable limit of the Khovanov invariants 
	of powers of the full twist braid on $m+n$ strands.
Theorem~\ref{thm:dgtrace} therefore shows that we may
	approximate the cohomology of the morphism complexes in the horizontal
	trace $\hTr^{\dg}(\bn)$ by the Khovanov homology of link diagrams
	assembled from the annular links $S$ and $T$, with a number of full
	twists inserted in place of the bottom projector.
	\end{remark}

\subsection{Composing affine tangles}
\label{ss:composing in affbn}

This equivalence of categories in Theorem \ref{thm:dgtrace}
can be promoted to an equivalence of \emph{monoidal} dg categories.  
More generally, 
the dg extended affine Bar-Natan categories are expected to be 1-morphism
categories in a dg extended affine Bar-Natan bicategory\footnote{This may have an
interpretation as factorization homology of the monoidal Bar-Natan bicategory
over the circle.}. 
The horizontal composition
\[
	\Affbn_{m}^n \times \Affbn_{k}^m \to \Affbn_{k}^n
	\]
where $k,m,n\in \N$ would correspond to the operation of stacking annular tangles inside each other:

\[
\begin{tikzpicture}[anchorbase,scale=1]
\draw[very thick, gray] (0,0) circle (.25);
\draw[very thick, gray] (0,0) circle (1);
\draw[very thick, gray] (0,0) circle (1.75);
\draw[thick,double] (0,0) circle (.625);
\draw[thick,double] (0,0) circle (1.375);
\draw[thick,double] (0,-1.75) to (0,-.25);
\begin{scope}[shift={(0,-.625)}] \draw[fill=white] (0,0) circle (.25);\end{scope}
\node at (0,-.625) {$T_1$};
\begin{scope}[shift={(0,-1.375)}] \draw[fill=white] (0,0) circle (.25);\end{scope}
\node at (0,-1.375) {$T_2$};
\draw[->, seam,very thick] (0,.25) to (0,1.75);
\draw [blue,ultra thick,domain=240:300] plot ({1.75*cos(\x)}, {1.75*sin(\x)});
\draw [blue,ultra thick,domain=235:305] plot ({cos(\x)}, {sin(\x)});
\draw [blue,ultra thick,domain=225:315] plot ({.25*cos(\x)}, {.25*sin(\x)});
\end{tikzpicture}
\]
We will write $a(T_1,T_2)$ for the result of stacking the annular tangles $T_1$ and $T_2$ as in the figure.

We now describe a candidate for the annular stacking composition. 
On the level of objects, it is simply given by gluing tangles.  
On morphisms, the relevant chain map
is most clearly expressed in the language of \eqref{eq:annulusflex}. 
In this graphical language, the map
$\PMS(T_1,S_1)\otimes \PMS(T_2,S_2) \to \PMS(a(T_1,T_2),a(S_1,S_2))$ 
is given as the following composition 
(wherein we suppress grading shifts to save space):
\begin{multline*}
\PMS(T_1,S_1)\otimes \PMS(T_2,S_2) \cong 
	\KhEval{
	\begin{tikzpicture}[anchorbase,scale=.85,smallnodes]
		\begin{scope}[shift={(-3,0)}]
		\draw[thick, double] 
		(0,-1.2) to (0,1.2)  
		(0,1.2) \ul (-0.75,1.5) \ld (-1.5,1.2) to (-1.5,-1.2) \dr (-.75,-1.5) \ru (0,-1.2) 
		(0.5,0.7) \ru (1,1.2) \ul (0.5,1.7) \pl (-1.5,1.7) \ld (-2,1) to (-2,0)
		(0.5,-0.7) \rd (1,-1.2) \dl (0.5,-1.7) \pl (-1.5,-1.7) \lu (-2,-1) to (-2,0)
		(-0.5,0.7) \ld (-1,0.4)
		(-0.5,-0.7) \lu (-1,-0.4)
		;
		\rjf{-2}{0}{}
		\ljf{-1}{0}{}
		\circlepairwide{0}{0}{1}	
		\end{scope}
		\draw[thick, double] 
		(0,-1.2) to (0,1.2)  
		(0,1.2) \ul (-0.75,1.5) \ld (-1.5,1.2) to (-1.5,-1.2) \dr (-.75,-1.5) \ru (0,-1.2) 
		(0.5,0.7) \ru (1.2,1.3) \ul (0.5,1.9) \pl (-4.7,1.9) \ld (-5.5,1) to (-5.5,0)
		(0.5,-0.7) \rd (1.2,-1.3) \dl (0.5,-1.9) \pl (-4.7,-1.9) \lu (-5.5,-1) to (-5.5,0)
		(-0.5,0.7) \ld (-1,0.4)
		(-0.5,-0.7) \lu (-1,-0.4)
		;
		\rjf{-5.5}{0}{}
		\ljf{-1}{0}{}
		\circlepairwide{0}{0}{2}	
	\end{tikzpicture}
} \\ \\
\to
\KhEval{
	\begin{tikzpicture}[anchorbase,scale=.85,smallnodes]
		\draw[thick, double] 
		(0,1.2) \ul (-0.75,1.5) \ld (-1.5,1.2) to (-1.5,.7) \dl (-1.9,.3) to (-2.8,.3)
		(0,-1.2) \dl (-0.75,-1.5) \lu (-1.5,-1.2) to (-1.5,-.7) \ul (-1.9,-.3) to (-2.8,-.3) 
		;
		\begin{scope}[shift={(-3,0)}]
		\draw[thick, double] 
		(0,1.2) \ul (-.3,1.5) to (-1.7,1.5) \ld (-2,1.2) to (-2,-1.2) \dr (-1.7,-1.5) to (-.3,-1.5) \ru (0,-1.2) 
		(0.5,0.7) \ru (1,1.2) \ul (0.5,1.7) \pl (-1.9,1.7) \ld (-2.5,1) to (-2.5,0)
		(0.5,-0.7) \rd (1,-1.2) \dl (0.5,-1.7) \pl (-1.9,-1.7) \lu (-2.5,-1) to (-2.5,0)
		(-0.5,0.7) \ld (-1.5,0.4)
		(-0.5,-0.7) \lu (-1.5,-0.4)
		;
		\rjf{-2.5}{0}{}
		\ljf{-1.5}{0}{}
		\circlepairwide{0}{0}{1}	
		\end{scope}
		\draw[thick, double] 
		(0,-1.2) to (0,1.2)  
		(0.5,0.7) \ru (1.2,1.3) \ul (0.5,1.9) \pl (-5.1,1.9) \ld (-6,1) to (-6,0)
		(0.5,-0.7) \rd (1.2,-1.3) \dl (0.5,-1.9) \pl (-5.1,-1.9) \lu (-6,-1) to (-6,0)
		(-0.5,0.7) \pl (-1.5,.07) to (-3.3,.07) \pl (-3.8,.5)  \ld (-4,.4)
		(-0.5,-0.7) \pl (-1.5,-.07) to (-3.3,-.07) \pl (-3.8,-.5)  \lu (-4,-.4)
		;
		\rjf{-6}{0}{}
		\ljf{-4}{0}{}
		\circlepairwide{0}{0}{2}	
	\end{tikzpicture}
}
\to
\KhEval{
	\begin{tikzpicture}[anchorbase,scale=.85,smallnodes]
		\draw[thick, double] 
		(0,1.2) \ul (-0.75,1.5) \ld (-1.5,1.2) to (-1.5,.7) \dl (-1.9,.3) to (-2.8,.3)
		(0,-1.2) \dl (-0.75,-1.5) \lu (-1.5,-1.2) to (-1.5,-.7) \ul (-1.9,-.3) to (-2.8,-.3) 
		(0.5,0.7) \ru (1.2,1.3) \ul (0.5,1.9) \pl (-4.3,1.9) \ld (-5.2,1) to (-5.2,0)
		(0.5,-0.7) \rd (1.2,-1.3) \dl (0.5,-1.9) \pl (-4.3,-1.9) \lu (-5.2,-1) to (-5.2,0)
		;
		\begin{scope}[shift={(-3,0)}]
		\draw[thick, double] 
		(0,1.2) \ul (-.3,1.5) to (-1.4,1.5) \ld (-1.6,1.2) to (-1.6,-1.2) \dr (-1.4,-1.5) to (-.3,-1.5) \ru (0,-1.2) 
		(0.5,0.7) \ru (1,1.2) \ul (0.5,1.7) \pl (-1.4,1.7) \ld (-2,1) to (-2,0)
		(0.5,-0.7) \rd (1,-1.2) \dl (0.5,-1.7) \pl (-1.4,-1.7) \lu (-2,-1) to (-2,0)
		(-0.5,0.7) \ld (-1.2,0.4)
		(-0.5,-0.7) \lu (-1.2,-0.4)
		;
		\rjf{-2.1}{0}{}
		\circlepairwide{0}{0}{1}	
		\end{scope}
		\draw[thick, double] 
		(0,-1.2) to (0,1.2)  
		(-0.5,0.7) \pl (-1.5,.07) to (-3.3,.07) \pl (-3.8,.5)  \ld (-4,.4)
		(-0.5,-0.7) \pl (-1.5,-.07) to (-3.3,-.07) \pl (-3.8,-.5)  \lu (-4,-.4)
		;
		\ljf{-4.1}{0}{}
		\circlepairwide{0}{0}{2}	
	\end{tikzpicture}
}
\end{multline*}

Here, the isomorphism uses sphericality and symmetric monoidality of $\bn_0^0$
and the first arrow is a (composition of) saddle morphism(s). 
The last arrow uses yet another composition of bottom projectors/bar complexes,
this time induced by the monoidal structure $\boxtimes$ on $\bn$;
compare with Proposition \ref{prop:Bar is algebra} and Remark~\ref{rmk:roz is coalg}, 
where the composition was based on $\star$. 
It is clear that for a triple gluing, the chain maps implementing the gluing on the
level of $2$-morphims are as associative as this composition of Rozansky projectors.

\section{Spin networks}
\label{sec:spin}

As discussed in \S \ref{s:intro}, 
	we expect the dg categories $\sCat(\Sigma,\pp)$ to 
	be the invariants assigned to surfaces by a higher-categorical version of 
	the Turaev--Viro TFT.
The vector spaces associated to (appropriately decorated) surfaces $\Sigma$ by the latter 
	admit a description in terms of the Temperley--Lieb skein module of $\Sigma$;
	see e.g.~\cite{SN-TV}.
Given an ideal triangulation of $\Sigma$, 
	this skein module has a basis consisting of
	so-called ``spin networks,'' which are built from Jones--Wenzl projectors.
In this section, we construct a categorified analogue of this spin network basis.
The most striking result is Theorem \ref{thm:pairing}, 
	which shows that the (twisted) $\Hom$-pairing on $\sCat(\Sigma,\pp)$
	recovers a bilinear form appearing in various formulations 
	of TFTs derived from the Temperley--Lieb category.

While we have worked over a general commutative ring $\cring$ thus far, 
	we assume for the duration of this section 
	that $\cring$ is an integral domain,
and we let $\mathbb{K}$ denote the fraction field of $\cring$.
In order to avoid reproducing large swaths of background, 
	we will also assume that the reader is familiar with the relevant parts 
	of the categorification literature concerning the categorification of Jones--Wenzl projectors, 
	e.g.~the papers \cite{CooperKrushkal} and \cite{Roz}.
We will also regularly employ homological perturbation theory;
	see e.g.~\cite{Markl}.

\subsection{The setup}
\label{ss:spin network setup}

We first introduce an appropriate category of one-sided twisted complexes, with some restrictions on the gradings.  

\begin{definition}
Denote the power set of $\Z\times \Z$ by $\mathcal{P}(\Z\times \Z)$.
Let $O_{\leftangle} \subset \mathcal{P}(\Z\times \Z)$ 
be the set of all subsets $S\subset \Z \times \Z$ satisfying the following conditions:
\begin{enumerate}
	\item[(a)] $S$ is bounded from the right, i.e. $S\subset \Z_{\leq M} \times \Z$
	for some $M\in \Z$.
	\item[(b)] $S$ has finite intersection with every vertical line $\{x\}\times
	\Z$ for $x\in \Z$. 
	\item[(c)] We have $\lim_{x\to -\infty} \min(y\in \Z|(x,y)\in
	S)=\infty$. 
\end{enumerate}
Here the minimum of the empty set is declared to be $\infty$.
\end{definition}

\begin{lemma} 
	\label{lem:Oproperties}
The set $O_{\leftangle}$ satisfies the following properties:
\begin{itemize}
	\item Every finite subset of $\Z\times \Z$ is in $O_{\leftangle}$.
	\item For every $y\in \Z$, each $S\in O_{\leftangle}$ 
		has finite intersection with the horizontal line $\Z\times \{y\}$.
	\item Every $S\in O_{\leftangle}$ is bounded from below, 
		i.e.~$S\subset  \Z \times \Z_{\geq m}$ for some $m\in \Z$.
	\item If $S\in O_{\leftangle}$ and $S'\subset S$ then $S'\in O_{\leftangle}$.
	\item If $S_1,S_2\in O_{\leftangle}$ then $S_1\cup S_2\in O_{\leftangle}$.
	\item If $S_1,S_2\in O_{\leftangle}$ then $S_1+S_2\in O_{\leftangle}$.
	\end{itemize}
\end{lemma}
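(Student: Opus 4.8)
The plan is to verify each of the six bullet points in turn, checking the defining conditions (a), (b), (c) of $O_{\leftangle}$ directly. These are all elementary set-theoretic verifications, so the proof will be short; the main content is simply organizing the six checks and not confusing the various boundedness directions.

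First I would dispatch the easy ones. For the first bullet, if $S$ is finite then (a) and (b) are immediate, and (c) holds vacuously since $\{x : (x,y)\in S \text{ for some } y\}$ is finite, so $\min(y : (x,y)\in S) = \infty$ for all sufficiently negative $x$. For the third bullet, suppose $S \in O_{\leftangle}$ is not bounded below; then there is a sequence $(x_i, y_i) \in S$ with $y_i \to -\infty$. By (a) the $x_i$ lie in $\Z_{\leq M}$; if infinitely many $x_i$ coincide this contradicts (b), so after passing to a subsequence we may assume $x_i \to -\infty$, and then $\min(y : (x,y) \in S) \leq y_i \to -\infty$ along $x = x_i$, contradicting (c). For the fourth bullet (downward closure under $\subseteq$), each of (a), (b), (c) is clearly inherited by any subset: restricting to a subset can only shrink the intersections with lines, and can only increase the function $x \mapsto \min(y : (x,y) \in S)$. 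The second bullet (finite intersection with horizontal lines $\Z \times \{y\}$) follows from (a) together with the third bullet: $S \subseteq \Z_{\leq M} \times \Z_{\geq m}$ for some $M, m$, and condition (c) forces, for all but finitely many $x \leq M$, the slice $\{y : (x,y) \in S\}$ to lie in $\Z_{> y}$; combined with (b) for the remaining finitely many $x$, only finitely many points of $S$ can have second coordinate equal to $y$.

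Next the two closure-under-union/sumset statements. For $S_1 \cup S_2$: condition (a) holds with $M = \max(M_1, M_2)$; condition (b) holds since the intersection of $S_1 \cup S_2$ with a vertical line is the union of two finite sets; condition (c) holds since $\min(y : (x,y) \in S_1 \cup S_2) = \min\big(\min(y : (x,y)\in S_1), \min(y : (x,y)\in S_2)\big)$, which tends to $\infty$ as a minimum of two sequences each tending to $\infty$ (with the convention $\min(\emptyset) = \infty$). For the Minkowski sum $S_1 + S_2 = \{s_1 + s_2 : s_1 \in S_1, s_2 \in S_2\}$: using the third bullet, write $S_i \subseteq \Z_{\leq M_i} \times \Z_{\geq m_i}$, so $S_1 + S_2 \subseteq \Z_{\leq M_1 + M_2} \times \Z_{\geq m_1 + m_2}$, giving (a). For (b), fix $x$; a point $(x, y) \in S_1 + S_2$ arises as $(x_1, y_1) + (x_2, y_2)$ with $x_1 + x_2 = x$, $x_i \leq M_i$, hence $x_i \geq x - M_j$ for $\{i,j\} = \{1,2\}$, so $x_1$ ranges over a finite set and correspondingly $x_2 = x - x_1$; for each such pair the slices of $S_1$ and $S_2$ above $x_1$ and $x_2$ are finite by (b), so only finitely many $y$ occur. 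For (c), let $N > 0$ be given; choose $R_1$ so that $\min(y : (x_1, y) \in S_1) > N - m_2$ whenever $x_1 < R_1$, and note $\min(y:(x_2,y)\in S_2) \geq m_2$ always; if $x < R_1 + M_2$ then in any decomposition $x = x_1 + x_2$ with $x_i \leq M_i$ we have $x_1 = x - x_2 < R_1$, hence $y_1 + y_2 > (N - m_2) + m_2 = N$, so $\min(y : (x,y) \in S_1 + S_2) > N$.

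I do not anticipate a genuine obstacle here: every step is a direct unwinding of the definitions, and the only thing to be careful about is keeping the directions straight (bounded from the right, bounded from below, the function going to $+\infty$ on the left). The one place warranting mild attention is the sumset case (c), where one must produce the threshold $R_1$ explicitly and check that a decomposition with small total $x$ forces a small $x_1$; but this is exactly the computation sketched above. I would present the proof as a short enumerated list of the six verifications, reusing the third bullet (boundedness from below) as a lemma-within-the-proof for the second, fifth, and sixth bullets.
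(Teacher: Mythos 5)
Your overall strategy is the same as the paper's (the paper simply declares the lemma straightforward, adding only the observation you also make for condition (b) of $S_1+S_2$: that finitely many pairs of vertical lines of $S_1$ and $S_2$ can contribute to a given vertical line of $S_1+S_2$). Five of your six verifications are correct, and your treatment of the second and third bullets is careful and sound.

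However, your verification of condition (c) for the Minkowski sum contains a step that fails. You claim that if $x < R_1 + M_2$, then every decomposition $x = x_1 + x_2$ with $x_i \leq M_i$ forces $x_1 < R_1$. This is false: the constraint $x_2 \leq M_2$ gives only a \emph{lower} bound $x_1 = x - x_2 \geq x - M_2$, not an upper bound. Concretely, with $M_2 = 0$, $R_1 = -100$, $x = -200$, the decomposition $x_1 = 0$, $x_2 = -200$ satisfies all your hypotheses but not $x_1 < R_1$. The danger is precisely the case where $x_2$ is very negative while $x_1$ stays moderate; your argument then only gives $y_1 + y_2 \geq m_1 + m_2$, which does not exceed $N$. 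The fix is to use condition (c) for \emph{both} factors symmetrically: given $N$, choose $R_1$ with $x_1 < R_1 \Rightarrow y_1 > N - m_2$ and $R_2$ with $x_2 < R_2 \Rightarrow y_2 > N - m_1$. If $x < R_1 + R_2$, then any decomposition has $x_1 < R_1$ or $x_2 < R_2$ (else $x = x_1 + x_2 \geq R_1 + R_2$), and in either case $y_1 + y_2 > N$. With this repair the proof is complete.
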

	\begin{proof} Straightforward. For property (b) for $S_1+S_2$, 
	note that by (a) there are only finitely many pairs of vertical lines in $S_1$ and $S_2$
	that may contribute to a vertical line in $S_1+S_2$. 
	\end{proof}

\begin{definition}\label{def:angle shaped over k}
The \emph{support} of a complex $C \in\dgMod{\Z \times \Z}{\cring}$, 
denoted $\supp(C)$, is the subset of indices 
$(i,j)\in \Z\times \Z$ in which 
$C$ is nonzero.
We say that $C$ is \emph{angle-shaped} if the chain modules $C^{i,j}$ of $C$ 
are 
free of finite rank over
$\cring$ and $\supp(C)\in O_{\leftangle}$.
We denote by
$\dgMod{\leftangle}{\cring}$ (respectively $\dgModen{\leftangle}{\cring}$)
the full subcategories of 
$\dgMod{\Z \times \Z}{\cring}$ (resp. $\dgModen{\Z \times \Z}{\cring}$)
consisting of all angle-shaped complexes. For $(i,j)\in \Z\times \Z$ we let
\begin{equation}\label{eq:poincare and euler}
P(C):=\sum_{i,j} t^iq^j \mathrm{dim}(\mathbb{K}\otimes H^{i,j}(C)) \in \Z\llbracket t\inv, q\rrbracket[t,q\inv] 
\, , \quad
\chi(C):= P(C)|_{t=-1}\in \Z\llbracket q\rrbracket[q\inv] 
\end{equation}
be the 
\emph{graded Poincar\'e series} (respectively \emph{graded Euler characteristic}) of $C$, where $\mathbb{K}$ is the fraction field of $\cring$.
\end{definition}

Note that the angle shaped condition guarantees that the 
Laurent series in \eqref{eq:poincare and euler} are well-defined.

\begin{lem}
	\label{lem:CDangle}
If $C,D\in \dgMod{\leftangle}{\cring}$, 
then $C \otimes D \in \dgMod{\leftangle}{\cring}$ 
and we have the identities
$P(C\otimes D)=P(C)P(D)$ and $\chi(C\otimes D)=\chi(C)\chi(D)$.
\end{lem}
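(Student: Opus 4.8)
\textbf{Proof plan for Lemma~\ref{lem:CDangle}.}

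The plan is to verify the three assertions in turn, each reducing to a routine check given the structural results already in place. First I would show that $C \otimes D$ is angle-shaped. The chain modules of $C \otimes D$ in bidegree $(i,j)$ are finite direct sums of tensor products $C^{a,b} \otimes D^{i-a,j-b}$; since each factor is free of finite rank over $\cring$ and the sum is finite (by the already-noted properties of $O_{\leftangle}$, in particular finiteness of intersections with vertical lines, applied to both $\supp(C)$ and $\supp(D)$), each chain module of $C \otimes D$ is again free of finite rank. For the support condition, observe that $\supp(C \otimes D) \subseteq \supp(C) + \supp(D)$, and Lemma~\ref{lem:Oproperties} states precisely that $O_{\leftangle}$ is closed under (Minkowski) sums, so $\supp(C \otimes D) \in O_{\leftangle}$ and hence $C \otimes D \in \dgMod{\leftangle}{\cring}$.

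Next I would establish the Poincar\'e series identity $P(C \otimes D) = P(C) P(D)$. Since $\mathbb{K}$ is the fraction field of the domain $\cring$, it is flat over $\cring$, so $\mathbb{K} \otimes_\cring H^{*,*}(C \otimes_\cring D) \cong H^{*,*}\big((\mathbb{K}\otimes_\cring C) \otimes_{\mathbb{K}} (\mathbb{K}\otimes_\cring D)\big)$, and over the field $\mathbb{K}$ the K\"unneth theorem gives a bidegree-preserving isomorphism $H^{*,*}(E \otimes_\mathbb{K} F) \cong H^{*,*}(E) \otimes_\mathbb{K} H^{*,*}(F)$ (no Tor terms appear since we are over a field). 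Taking graded dimensions and using that the angle-shaped condition guarantees all the relevant sums converge in $\Z\llbracket t\inv,q\rrbracket[t,q\inv]$, this yields the stated product formula. The Euler characteristic identity then follows immediately by specializing $t = -1$ in $P(C \otimes D) = P(C)P(D)$, which is a valid substitution because the angle-shaped hypothesis ensures $P(C)$, $P(D)$, and $P(C\otimes D)$ all lie in the ring $\Z\llbracket t\inv,q\rrbracket[t,q\inv]$ on which this specialization is defined, landing in $\Z\llbracket q\rrbracket[q\inv]$.

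The only point requiring a little care—and the closest thing to an obstacle—is making sure that all the infinite series manipulations are legitimate, i.e.~that the convergence regions match up so that the K\"unneth isomorphism on homology translates into an honest identity of Laurent-type series rather than a merely formal one. This is handled entirely by the properties of $O_{\leftangle}$ collected in Lemma~\ref{lem:Oproperties}: boundedness from the right, finiteness along vertical lines, and the growth condition~(c) together guarantee that the coefficient of each monomial $t^i q^j$ in $P(C)P(D)$ is a finite sum, so the product is well-defined in the stated ring and agrees coefficient-wise with $P(C \otimes D)$. With that observation the proof is complete.
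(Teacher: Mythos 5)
Your proof is correct and follows essentially the same route as the paper's: the support condition via closure of $O_{\leftangle}$ under Minkowski sums (Lemma~\ref{lem:Oproperties}), and the series identities via the K\"unneth theorem after base change to the fraction field $\mathbb{K}$. Your additional care about flatness of $\mathbb{K}$ and finiteness of the coefficients in the product series is exactly the content the paper leaves implicit in the phrase ``straightforward application of the K\"unneth theorem.''
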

\begin{proof}
We have that
$\supp(C\otimes D) = \supp(C) + \supp(D)$, 
hence Lemma \ref{lem:Oproperties} implies that 
$C \otimes D \in \dgMod{\leftangle}{\cring}$.
The rest is a straightforward application of the K\"unneth theorem. 
\end{proof}

We will need a generalization of Definition \ref{def:angle shaped over k} 
to certain one-sided twisted complexes over our dg categories $\CS(\Sigma,\pp;\Gamma)$.

\begin{definition}\label{def:angle shaped twisted complexes}
Fix a collection of objects $\Chi$ in a differential $(\Z \times \Z)$-graded category $\CS$.
We say that a one-sided twisted complex\footnote{Recall from \S \ref{ss:setup} 
that when forming $\Ch^{-}(\CS)$ in this setting, 
we formally adjoin shifts of objects, so $X$ is a well-defined object in $\Ch^{-}(\CS)$.} 
$X=\tw_\a(\bigoplus_{i\in I} t^{k_i}q^{l_i}X_i) \in \Ch^{-}(\CS)$ is 
\emph{angle-shaped with respect to $\Chi$}
provided
\begin{itemize}
\item $X_i \in \Chi$ for all $i \in I$, 
\item the set $\{i \in I \mid (k_i,l_i) = (k,l)\}$ is finite for each $(k,l) \in \Z \times \Z$, and
\item the set of shift exponents $(k_i,l_i)$ is in $O_{\leftangle}$.
\end{itemize}
\end{definition}

If the classes of the objects in $\Chi$ are $\Z[q,q^{-1}]$-linearly independent 
in the Grothendieck group $K_0(\CS)$
of the additive completion of $\CS$ (with shifts adjoined), 
then such $X$ have a well-defined Euler characteristic
\[
[X] :=\sum_{i\in I} (-1)^{k_i} q^{l_i} [X_i] 
	\in \Z\llbracket q \rrbracket [q\inv] \otimes_{\Z[q,q^{-1}]} K_0(\CS) \, .
\]
Note that every complex in the category $\dgModen{\leftangle}{\cring}$ 
is angle-shaped 
in the sense of Definition \ref{def:angle shaped twisted complexes}
with respect to the singleton collection $\{ \cring \}$, and 
the Euler characteristic $\chi(C)$ from \eqref{eq:poincare and euler} will 
coincide with $[\mathbb{K}\otimes_\cring C]$, where $\mathbb{K}$ is the 
fraction field of $\cring$.

\begin{definition}\label{def:angle shaped cats}
Let $(\Sigma, \Pi; \Gamma)$ be a seamed marked surface and let $\pp \subset A(\Pi)$ be standard.
A tangle $T \in \Tan(\Sigma,\Pi;\Gamma)$ is said to be \emph{minimal} 
if it does not contain any contractible circle components.
Let $\CS^{\leftangle}(\Sigma,\pp;\Gamma)$ denote the dg category consisting of 
complexes that are angle-shaped with respect to the collection of all minimal tangles.
\end{definition}

Since $\bn_m^n$ is equal to $\CS([0,1]^2,\pp;\emptyset)$ for appropriate $\pp$,
we can also consider $\CS^{\leftangle}(\bn_m^n)$.
Note in particular that there is a canonical quasi-equivalence 
$\CS^{\leftangle}(\bn_0^0) \cong \dgModen{\leftangle}{\cring}$.

Let $P_{n,n} \in \Ch^{-}(\bn_n^n)$ denote the complexes constructed in 
	\cite{CooperKrushkal} and \cite{Roz} that categorify the $n$-strand Jones--Wenzl projector.
These complexes admit an abstract characterization akin to Proposition~\ref{prop:Punique}. 
Namely, they are uniquely characterized (up to homotopy) by the conditions that 
	they are supported in non-positive homological degree, 
	contain exactly one copy of the identity tangle in bidegree $(0,0)$, 
	and that they annihilate tangles in $\bn_n^n$ of ``non-maximal through-degree''
	(i.e.~any tangle that does not have a shift-of-identity summand).
In particular, this implies that there is a homotopy equivalence 
	$P_{n,n} \hComp P_{n,n} \xrightarrow{\simeq} P_{n,n}$.

\begin{proposition}\label{prop:P-n-n angle shaped}
The projectors $P_{n,n}$ admit an angle-shaped model 
	(i.e.~$P_{n,n}$ is homotopy equivalent to a complex in $\Ch^{\leftangle}(\bn^n_n)$).
\end{proposition}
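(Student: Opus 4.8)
The strategy is to produce an explicit angle-shaped model by mimicking one of the standard recursive constructions of $P_{n,n}$ (e.g.\ the Cooper--Krushkal or Rozansky construction) and checking at each stage that the angle-shaped condition from Definition~\ref{def:angle shaped twisted complexes} is preserved. Recall that $P_{n,n}$ is built, up to homotopy, from an infinite iterated mapping cone construction: roughly, one starts with the identity tangle $\one_n$ in bidegree $(0,0)$, and successively cones on maps that kill the summands of non-maximal through-degree, with each new ``correction term'' pushed further into negative homological degree (and shifted in the $q$-grading). The key bookkeeping fact is that in these constructions the homological degree $k_i$ of a chain object and its quantum degree $l_i$ satisfy a linear lower bound of the form $l_i \geq c|k_i| - d$ for constants $c>0$, $d\geq 0$ depending only on $n$ --- this is precisely what makes the infinite complex ``converge'' and is already implicit in the convergence arguments of \cite{CooperKrushkal,Roz}. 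Such a bound immediately gives conditions (a), (b), (c) defining $O_{\leftangle}$: boundedness from the right (homological degrees are $\leq 0$), finiteness along each vertical line (only finitely many correction terms land in each fixed homological degree), and $\min\{l : (k,l)\in S\}\to\infty$ as $k\to-\infty$ (by the linear bound).

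\textbf{Key steps.} First, I would fix a concrete model for $P_{n,n}$ --- say the one from \cite{Roz} as a twisted complex $\tw_\alpha\big(\bigoplus_{i} t^{k_i}q^{l_i}X_i\big)$ where each $X_i$ is a Temperley--Lieb diagram (crossingless $(n,n)$-tangle), and in particular each $X_i$ is a minimal tangle since it has no closed components. This already arranges that the objects lie in the collection of all minimal tangles, as required by Definition~\ref{def:angle shaped cats}. Second, I would verify the finiteness condition: in each homological degree $k$, the construction contributes only finitely many summands --- this follows from the recursive structure, where the degree-$k$ part is a finite iterated cone. Third, and most importantly, I would establish the linear grading bound $l_i \geq c|k_i| - d$. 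For the Rozansky/Cooper--Krushkal projectors this can be extracted from the explicit description of the maps being coned on (each such map has a definite positive quantum degree and negative homological degree, and successive corrections accumulate), or alternatively from the known Poincar\'e series of $P_{n,n}$ (the Euler characteristic is $1/[n+1]$ type data, and the homology is supported in a cone-shaped region). Finally, these three facts together say precisely that $\supp$ of the twisted complex lies in $O_{\leftangle}$, and by Lemma~\ref{lem:Oproperties} this is stable under the finite manipulations needed, so $P_{n,n}$ is homotopy equivalent to an object of $\Ch^{\leftangle}(\bn^n_n)$.

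\textbf{Main obstacle.} The technical heart is the linear grading estimate $l_i \geq c|k_i| - d$: the angle-shaped condition (c) is strictly stronger than mere homological boundedness, and one must genuinely control how the quantum grading grows relative to the homological grading along the infinite tail of the projector complex. The cleanest route is probably to invoke the explicit chain-level models in \cite{CooperKrushkal,Roz} where this growth is visible from the construction (each categorified Jones--Wenzl projector is assembled so that the $r$-th correction term sits in homological degree $\sim -r$ and quantum degree $\sim r$, up to bounded error), rather than attempting an abstract argument from the characterizing properties alone. A secondary, more routine point is checking that passing from whatever model one cites to a genuinely angle-shaped one (and knowing this is preserved under homotopy equivalence within $\Ch^{-}(\bn_n^n)$) does not leave the subcategory --- this is handled by the last bullet of Lemma~\ref{lem:Oproperties} together with standard homological perturbation, exactly as used elsewhere in the paper.
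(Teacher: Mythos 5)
Your proposal is correct and takes the same route as the paper, whose entire proof is the one-line assertion that the claim is clear from the constructions in \cite{CooperKrushkal} and \cite{Roz}; you have simply made explicit the bookkeeping (chain objects are crossingless hence minimal tangles, homological degrees bounded above, finitely many summands per degree, and the linear growth of quantum degree along the tail) that the authors leave implicit. The one point worth flagging is that your "linear bound" framing is slightly stronger than what condition (c) of $O_{\leftangle}$ actually requires — only that the minimal quantum degree tends to $\infty$ as the homological degree tends to $-\infty$ — but the explicit models do satisfy the linear bound, so nothing is lost.
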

\begin{proof}
This is clear from the constructions in \cite{CooperKrushkal} and \cite{Roz}.
\end{proof}

\begin{proposition}\label{prop:P-2n-0 angle shaped}
The projectors $P_{2n,0}$ admit an angle-shaped model.
\end{proposition}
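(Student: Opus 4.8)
The statement asserts that the complex $P_{2n,0} \in \Ch^-(\bn_0^{2n})$ categorifying the $2n$-to-$0$ Jones--Wenzl projector admits an angle-shaped model, i.e.~is homotopy equivalent to a one-sided twisted complex whose summands are minimal cap tangles with shift exponents in $O_{\leftangle}$. The natural strategy is to obtain $P_{2n,0}$ from the already-known angle-shaped model for $P_{n,n}$ of Proposition~\ref{prop:P-n-n angle shaped} by ``closing up'' via a cup. Concretely, let $\cup_n \in \bn_0^{2n}$ denote the $n$-fold nested cup tangle from $0$ to $2n$ points. The abstract characterization of $P_{2n,0}$ recalled before Proposition~\ref{prop:P-n-n angle shaped} (supported in non-positive homological degree, a single copy of $\cup_n$ in bidegree $(0,0)$, and annihilating cap tangles of non-maximal through-degree) should be matched by the complex $P_{n,n} \hComp \cup_n$, where $P_{n,n}$ acts on the ``top'' $n$ strands of $\cup_n$. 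Indeed, the idempotent property $P_{n,n}\hComp P_{n,n}\simeq P_{n,n}$ and the universal property of $P_{n,n}$ give that $P_{n,n}\hComp \cup_n$ kills any cap tangle factoring through fewer than $n$ through-strands, which is precisely the non-maximal through-degree condition in $\bn_0^{2n}$; and $P_{n,n}$ contributes exactly one shift-of-identity in degree $(0,0)$, so $P_{n,n}\hComp\cup_n$ has exactly one copy of $\cup_n$ in bidegree $(0,0)$ and is supported in non-positive homological degrees. By the uniqueness clause, $P_{2n,0}\simeq P_{n,n}\hComp \cup_n$.

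\textbf{Key steps.} First I would record the abstract characterization of $P_{2n,0}$ (this is stated in the excerpt just before Proposition~\ref{prop:P-n-n angle shaped}, transported from $\bn_n^n$ to $\bn_0^{2n}$ via the obvious adaptation, or cited from \cite{CooperKrushkal, Roz}). Second, I would verify that $P_{n,n}\hComp\cup_n$ satisfies this characterization: the homological support and the degree-$(0,0)$ summand are immediate from the corresponding properties of $P_{n,n}$ and the fact that $\hComp$ with $\cup_n$ is exact and degree-preserving; the annihilation property follows because any cap tangle $b\in \bn_0^{2n}$ of non-maximal through-degree, when precomposed appropriately, exhibits $b$ as a summand of (a shift of) $a \hComp \cup_n$ for some $a \in \bn_n^n$ of non-maximal through-degree, which $P_{n,n}$ annihilates. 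Third, apply the uniqueness clause to conclude $P_{2n,0}\simeq P_{n,n}\hComp \cup_n$. Fourth, and this is the only genuinely technical point, I would check that horizontally composing the angle-shaped model of $P_{n,n}$ (from Proposition~\ref{prop:P-n-n angle shaped}) with the single object $\cup_n$ preserves the angle-shaped property. Composition with a fixed tangle sends each summand $t^{k_i}q^{l_i} X_i$ to $t^{k_i}q^{l_i}(X_i\hComp \cup_n)$, which after resolving closed circles via the Bar--Natan relation \eqref{eq:BNrels} decomposes as a direct sum of shifts $t^{k_i}q^{l_i+\varepsilon}$ of minimal cap tangles, where $\varepsilon\in\{-1,+1\}$ records circle evaluations and the number of circles is bounded (by $n$), so each summand contributes finitely many terms. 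Since $O_{\leftangle}$ is closed under the relevant finite perturbations and unions (Lemma~\ref{lem:Oproperties}), the resulting shift-exponent set remains in $O_{\leftangle}$, and the finiteness-on-each-vertical-line condition is preserved because only finitely many $i$ land on each vertical line to begin with and each spawns boundedly many new terms. Hence $P_{n,n}\hComp\cup_n$ is angle-shaped with respect to minimal tangles.

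\textbf{Main obstacle.} The conceptual content is routine given the tools already assembled; the main thing requiring care is the bookkeeping in the fourth step, namely confirming that replacing each summand $X_i$ by the circle-reduced form of $X_i\hComp\cup_n$ does not destroy conditions (b) and (c) defining $O_{\leftangle}$ — in particular, that the quantum-degree shifts introduced by circle evaluations are uniformly bounded (they are, by at most $n$) so that condition (c), $\lim_{x\to-\infty}\min\{y\mid (x,y)\in S\}=\infty$, survives. This is exactly the sort of stability afforded by the closure properties in Lemma~\ref{lem:Oproperties}, so I expect no real difficulty, only the need to state the circle-reduction estimate cleanly. An alternative, if one prefers to avoid the composition argument entirely, is to invoke directly the explicit construction of $P_{2n,0}$ in \cite{CooperKrushkal, Roz} (or \cite{Roz}'s inductive ``fork'' model) and observe termwise that it is built from minimal cap tangles with the required growth in homological and quantum degree, paralleling the proof of Proposition~\ref{prop:P-n-n angle shaped}; I would mention this as the shorter route and use it as the actual proof if the composition bookkeeping becomes unwieldy.
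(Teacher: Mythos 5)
Your primary argument rests on a misidentification of the object $P_{2n,0}$. In this paper it is \emph{not} a complex in $\Ch^-(\bn_0^{2n})$ built on a cup tangle: $P_{2n,0}$ is the categorified projection onto the trivial isotypic component of the $2n$-fold tensor power, i.e.\ (up to the normalization in Definition \ref{def:bottom proj}) Rozansky's through-degree-zero \emph{bottom} projector $\Bproj_{2n}\in\Ch^-(\bn_{2n}^{2n})$. This is exactly how it is used in Lemma \ref{lem:CKcup}, whose proof identifies the $P_{2n,0}$-diagram with the $\Bproj$-diagram ``simply by Definition \ref{def:bottom proj}.'' Its universal property is Proposition \ref{prop:Punique} (chain objects of through-degree \emph{zero}; the cone of the counit is annihilated by through-degree-zero objects), which is dual to the Jones--Wenzl-type characterization you quote; the characterization recalled before Proposition \ref{prop:P-n-n angle shaped} pertains to $P_{n,n}$ only. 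Consequently the proposed identification $P_{2n,0}\simeq P_{n,n}\hComp \cup_n$ cannot be repaired: it does not typecheck ($P_{n,n}$ has $n$ endpoints on each side while your $\cup_n$ has $2n$ on one side and $0$ on the other), and the natural corrections fail --- $P_{2n,2n}\hComp\cup_n\simeq 0$ because the Jones--Wenzl projector annihilates every tangle of non-maximal through-degree, while inserting $P_{n,n}$ on half the strands of a cup-cap produces an object with the wrong universal property. Indeed, no finite composition of Jones--Wenzl projectors with a fixed tangle yields $P_{2n,0}$: Lemma \ref{lem:CKcup} shows that even after sandwiching with $I_n$ one obtains the \emph{infinite} bar-type twisted complex $\PB_{C_n}(I_n)$.

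Your fallback --- reading off angle-shapedness from the explicit construction in the literature --- is precisely the paper's proof, which consists of the single citation to \cite[Section 8]{rozansky2010categorification}. Note that this citation is doing real work: the model of $P_{2n,0}$ the paper itself constructs, namely $q^{n}\iota(\Bar^{n}_{m})$ with $\Bar^n_m$ the bar complex of the \emph{big} Khovanov ring (Definition \ref{def:bar}), is visibly not angle-shaped, since each homological degree is a direct sum over all objects, including tangles with arbitrarily many closed circles, which contribute in each fixed bidegree infinitely often. So a correct self-contained proof must exhibit a genuinely smaller homotopy-equivalent model (the bar complex over the small Khovanov ring, or Rozansky's full-twist approximation) and verify conditions (a)--(c) of $O_{\leftangle}$ termwise, in parallel with Proposition \ref{prop:P-n-n angle shaped} --- not close up $P_{n,n}$.
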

\begin{proof}
See \cite[Section 8]{rozansky2010categorification}.
\end{proof}

Our next result asserts that complexes built from angle shaped complexes 
using horizontal and vertical composition in $\Ch(\bn)$ are again angle-shaped.
These operations, and the pivotality of $\bn$, allow us to define dg functors
\begin{equation}
	\label{eq:SM}
F \colon \bn(D_1,\pp_1)\otimes \cdots \otimes \bn(D_r,\pp_r)\rightarrow \bn(D_0,\pp_0)
\end{equation}
in the following setup.
Here $D_0$ is a disk containing disjoint sub-disks $D_1,\ldots,D_r$ in its interior, 
and we obtain a family of functors as in \eqref{eq:SM}, 
one for each (crossingless) tangle in $D_0 \smallsetminus \cup_{i=1}^r D_i$ 
with boundary at the specified points $\cup_{i=0}^r \pp_i$.
The functor is given by ``plugging in'' tangles and cobordisms to the relevant input disks;
see e.g.~\cite[Section 5]{BN2}.

\begin{proposition}\label{prop:canopolisAngle}
The functors \eqref{eq:SM} preserve angle-shaped complexes, 
i.e.~they extend to functors
\[
F \colon \CS^{\leftangle}(D_1,\pp_1)\otimes \cdots \otimes \CS^{\leftangle}(D_r,\pp_r) \rightarrow \CS^{\leftangle}(D_0,\pp_0).
\]
\end{proposition}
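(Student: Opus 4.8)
\textbf{Proof proposal for Proposition \ref{prop:canopolisAngle}.}

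The plan is to reduce the statement to the two basic operations out of which the functors $F$ in \eqref{eq:SM} are built --- horizontal composition $\hComp$ and vertical stacking of cobordisms in $\Ch(\bn)$ --- together with the observation that precomposing a fixed crossingless tangle in $D_0 \smallsetminus \bigcup_{i} D_i$ contributes only finitely many caps, cups, and identity strands, hence only a bounded grading shift and no new circle components beyond finitely many contractible ones. Since $F$ is, by construction (see \cite[Section 5]{BN2}), obtained by first taking the external tensor product $\boxtimes$ of the input complexes, then applying a sequence of planar saddle/cap/cup cobordisms prescribed by the fixed tangle, it suffices to check that each of these elementary operations sends angle-shaped complexes to angle-shaped complexes. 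The external tensor product case is essentially Lemma \ref{lem:CDangle} at the level of supports: $\supp(X \boxtimes Y) \subseteq \supp(X) + \supp(Y)$ as sets of bidegree-shifts, so $O_{\leftangle}$-membership is preserved by the last bullet of Lemma \ref{lem:Oproperties}, and the fiberwise finiteness is preserved because, as in the proof of Lemma \ref{lem:Oproperties}(b) for sums, only finitely many pairs of vertical lines can contribute to a given vertical line.

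First I would make precise what ``angle-shaped with respect to the collection of minimal tangles'' buys us: given a one-sided twisted complex $X = \tw_\a\big(\bigoplus_{i\in I} t^{k_i}q^{l_i} X_i\big)$ with $X_i$ minimal and $\{(k_i,l_i)\}\in O_{\leftangle}$, applying an elementary planar cobordism $c$ replaces each summand $X_i$ by $c(X_i)$, a single crossingless tangle that may now contain some contractible circles. Using the Bar-Natan delooping relations \eqref{eq:BNrels} --- i.e.~the fact that a crossingless tangle with $r$ contractible circles is isomorphic in $\Ch^{-}(\bn)$ to a direct sum of $2^r$ copies of the underlying minimal tangle with shifts in the set $\{0,2\}^{\times r}$ (up to an overall shift) --- one rewrites $c(X)$ as an angle-shaped twisted complex over minimal tangles again. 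The only point requiring care is that the shift-set stays in $O_{\leftangle}$: delooping adds a bounded contribution $\{0,2\}^{\times r}$ to each $(k_i,l_i)$, and since $r$ is bounded over all summands (it is at most the number of cap/cup pieces in the fixed tangle, which is finite), this is a translate-and-finite-union operation, stable under $O_{\leftangle}$ by Lemma \ref{lem:Oproperties}. Iterating over the finitely many elementary cobordisms comprising $F$ then gives the result, and one checks functoriality (compatibility with the dg structure and with the twist differentials) is automatic since delooping isomorphisms are natural.

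The main obstacle I anticipate is bookkeeping the \emph{one-sided twisted complex} structure carefully through delooping: after applying a saddle to $X = \tw_\a(\bigoplus_i t^{k_i}q^{l_i}X_i)$ and delooping, the resulting twist differential on $\bigoplus_i \bigoplus_{\e\in\{0,2\}^{k_i}} t^{k_i} q^{l_i + |\e|} (X_i)_{\min}$ involves both the image of $\a$ under $c$ and the maps produced by delooping the circles that $c$ creates; one must verify this is still a valid (i.e.~strictly lower-triangular in the appropriate sense) twist, so that the output genuinely lies in $\Ch^{-}(\CS^{\leftangle}(D_0,\pp_0))$ and not merely in some completion. This is where the finiteness condition (second bullet of Definition \ref{def:angle shaped twisted complexes}) does real work: it ensures that in each fixed bidegree only finitely many summands are present, so all the structure maps are finite sums and the one-sided/lower-triangularity condition is preserved. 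Once this is in hand, the statement follows by straightforward induction on the number of elementary cobordism pieces in a chosen factorization of the planar tangle defining $F$.
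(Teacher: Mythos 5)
Your proposal is correct and follows essentially the same route as the paper: the paper's proof consists precisely of the observation that the set of $q$-shifts occurring when $F(T_1,\ldots,T_r)$ (for minimal inputs $T_i$) is rewritten as a sum of shifts of minimal tangles is bounded above and below, followed by the argument of Lemma \ref{lem:CDangle}; your delooping argument with the uniform bound on the number of contractible circles is exactly the expanded version of that observation. The only quibble is terminological --- the elementary pieces of $F$ are planar tangle compositions (cups, caps, identities), not ``cobordisms'' --- but the substance, including the care about one-sidedness of the resulting twisted complexes, is sound.
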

\begin{proof}
This follows similarly to Lemma \ref{lem:CDangle} after noticing that
the set of possible $q$-shifts that occur when expressing 
$F(T_1,\ldots,T_r)$ as a sum of shifts of minimal tangles
(as the $T_i$ themselves range over all minimal tangles in $\Tan(D_i,\pp_i)$) 
is bounded from above and below.
\end{proof}

Combining Propositions \ref{prop:P-2n-0 angle shaped} 
	and \ref{prop:canopolisAngle} yields the following.
		
		\begin{proposition}
			\label{prop:cxangle}
			The complexes $\PMS_{\Sigma,\Pi;\Gamma}(T|V|S)$ from \eqref{eq:CflatcxW} are angle-shaped. 
		\end{proposition}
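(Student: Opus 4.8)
The plan is to reduce the statement to the two input results on angle-shapedness of projectors together with the closure of angle-shapedness under the operations used in the construction of $\PMS_{\Sigma,\Pi;\Gamma}(T|V|S)$. Recall from Definition~\ref{def:Cflat} that this complex is obtained by contracting the cut surface module $\PMS_{\cutS,\cutP}(T|-|S)$ against the tensor product of bar complexes $B^T_S$, and that the cut surface module is a tensor product over regions $D_i$ of standard prism modules $\PMS_{\std}(\pi_i T_i | \ldots | \pi_i S_i)$. Unwinding the graphical description \eqref{eq:homcomplex}, the complex in question is a (finite) tensor product over regions of $\KhEval{-}$ applied to planar diagrams in which each cut seam carries a copy of $\tBar^n_m$, equivalently (by Remark~\ref{rmk:purple boxes merge to roz} and Lemma~\ref{lem:revpurple}, after suitably presenting each tensor factor) a copy of the bottom projector $\Bproj_{m+n}$, and each boundary arc from $\Pi$ carries either $V_h$ or $r_x(V_h)$.

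First I would reduce to the case $V = \one_{\pp}$, i.e.\ to the complexes $\PMS_{\Sigma,\Pi;\Gamma}(T,S)$: by Lemma~\ref{lemma:tangle slide}, ${}_T\MM(V) \cong {}_{T\star V}\MM(\one_\pp)$, so a general $\PMS(T|V|S)$ is isomorphic (as a complex of graded $\cring$-modules) to $\PMS(T\star V, S)$ for a possibly different tangle, and angle-shapedness is manifestly preserved under such isomorphism. By Lemma~\ref{lem:CDangle} it then suffices to prove that each tensor factor of \eqref{eq:homcomplex} is angle-shaped, since a finite tensor product of angle-shaped complexes is angle-shaped. So I would fix a region $D_i$ and consider the single complex $\KhEval{-}$ of the planar diagram built from $\pi_i T_i$, $r_y(\pi_i S_i)$, some identity strands (from $\Pi$), and one bottom projector $\Bproj_{m_h+n_h}$ for each cut seam $h \in \cutP|_{D_i}\cap\Gamma_\pm$.

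The main step is then to invoke Proposition~\ref{prop:P-2n-0 angle shaped} together with Proposition~\ref{prop:canopolisAngle}. Observe that $\Bproj_{m+n} = q^{\frac{1}{2}(m+n)}\iota_m^n(\Bar^n_m)$ is, up to the identification provided by Corollary~\ref{cor:bar is roz}, Rozansky's bottom projector, which is the through-degree-zero projector; by the abstract characterization it is (up to homotopy and grading shift) a summand of $P_{2N,0}$ for $N=m+n$ (alternatively one cites directly \cite[Section~8]{rozansky2010categorification}), hence admits an angle-shaped model by Proposition~\ref{prop:P-2n-0 angle shaped}. The tangles $\pi_i T_i$ and $r_y(\pi_i S_i)$, viewed as objects of the relevant Bar--Natan category, are (shifts of) single minimal tangles, hence trivially angle-shaped. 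The planar diagram of \eqref{eq:homcomplex} exhibits $\KhEval{-}$ of it as the image, under a functor of the form \eqref{eq:SM}, of these angle-shaped inputs sitting in disjoint input disks of a larger disk, with a fixed crossingless tangle pattern in the complement. Proposition~\ref{prop:canopolisAngle} then gives that the output lies in $\CS^{\leftangle}(D_0,\pp_0)$; applying the canonical quasi-equivalence $\CS^{\leftangle}(\bn_0^0)\cong \dgModen{\leftangle}{\cring}$ we conclude the tensor factor is angle-shaped in the sense of Definition~\ref{def:angle shaped over k}.

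The step I expect to require the most care is verifying that the planar diagram \eqref{eq:homcomplex} really is of the form \eqref{eq:SM} applied to the asserted inputs, i.e.\ that the bottom projectors and the tangle boxes genuinely sit in disjoint sub-disks with only a crossingless tangle in the complement. In \eqref{eq:annulusflex} one sees that re-expressing things via bottom projectors can introduce tangles with crossings; however, as noted there, the split-tangle description of $\Bproj$ means that up to homotopy equivalence one may work with the crossingless ``purple box'' model, and homotopy equivalences visibly preserve angle-shapedness (using that the $q$-shifts appearing in a homotopy equivalence between bounded-above complexes of free modules of finite rank are bounded, so condition (c) is preserved---this is the same bounded-shift observation used in Proposition~\ref{prop:canopolisAngle}). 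One should also double-check that the functor \eqref{eq:SM} in question has a \emph{finite} number of input disks and a single fixed tangle pattern per tensor factor, which holds because each region $D_i$ has finitely many cut seams and boundary arcs; the finite tensor product over the $R$ regions is then handled by Lemma~\ref{lem:CDangle} as above.
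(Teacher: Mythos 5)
Your overall strategy is the same as the paper's: everything reduces to Proposition~\ref{prop:P-2n-0 angle shaped} (angle-shapedness of the through-degree-zero projector / bar complex) plus Proposition~\ref{prop:canopolisAngle} (planar composition preserves angle-shapedness), and your preliminary reduction to $V=\one_\pp$ via Lemma~\ref{lemma:tangle slide} is fine. However, the intermediate decomposition you use has a genuine flaw. First, each cut seam $h=(\gamma,\pm)$ carries only \emph{one half} of $\tBar^{n}_{m}$ (one of the two purple boxes), not a full copy; the two halves of a given $\tBar^n_m$ sit on $(\gamma,+)$ and $(\gamma,-)$, which in general lie in two \emph{different} regions $D_i\neq D_j$. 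The replacement of paired purple boxes by $\Bproj_{m+n}$ (Remark~\ref{rmk:purple boxes merge to roz}, Lemma~\ref{lem:revpurple}) requires both boxes to appear adjacent in a single planar picture, so it is simply not available within one region's factor. Second, and relatedly, the expression $\bigotimes_{i=1}^R$ in \eqref{eq:homcomplex} is a categorified Einstein summation (Remark~\ref{rmk:einstein}): the bar differential of $\Bar^{n_j}_{m_j}$ couples the boxes sitting in the two regions abutting $\gamma_j$, so $\PMS_{\Sigma,\Pi;\Gamma}(T|V|S)$ is \emph{not} a tensor product of independently defined region-wise complexes, and Lemma~\ref{lem:CDangle} cannot be applied region by region. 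Even if one retreats to underlying graded modules (angle-shapedness is indeed differential-independent), the homological and $q$-shifts of $\Bar^{n_j}_{m_j}$ are attached to whole terms $a_0\otimes(\cdots)\otimes a_r$ and cannot be canonically split between the two regions, so the region-wise "factors" are not well-defined angle-shaped objects.

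The repair is exactly the paper's proof: do not decompose by region. Instead, present the whole of \eqref{eq:homcomplex} as $F(\tBar^{n_1}_{m_1},\ldots,\tBar^{n_G}_{m_G})$ for a \emph{single} functor $F$ of the form \eqref{eq:SM}, in which each $\tBar^{n_j}_{m_j}$ occupies a \emph{pair} of input disks (the two sides of $\gamma_j$, wherever they land in $D_0$), and the tangles $T$, $S$ and the $V_h$ form part of the fixed crossingless pattern (or further angle-shaped inputs). Each $\tBar^{n_j}_{m_j}$ has an angle-shaped model as an object of $\Ch^-(\bn^{n_j}_{m_j}\otimes\bn^{n_j}_{m_j})$ by Proposition~\ref{prop:P-2n-0 angle shaped}, and Proposition~\ref{prop:canopolisAngle} then applies in one step. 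This also makes your worry about crossings in \eqref{eq:annulusflex} moot, since one never passes through the $\Bproj$-with-crossings presentation.
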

	
In particular, this proves Corollary~\ref{cor:fg} from the introduction.
	
\begin{proof}
From the description of $\PMS_{\Sigma,\Pi;\Gamma}(T|V|S)$ as in \eqref{eq:homcomplex}, 
	it is clear that 
\[	
\PMS_{\Sigma,\Pi;\Gamma}(T|V|S) = F(\tBar^{n_1}_{m_1},\ldots, \tBar^{n_G}_{m_G})
	\]
	for an appropriate functor $F$ as in \eqref{eq:SM}.
Each complex $\tBar^{n_i}_{m_i}\in \Ch^-(\bn^n_m\otimes \bn^n_m)$ 
	has an angle shaped model by Proposition \ref{prop:P-2n-0 angle shaped}, 
	and so Proposition \ref{prop:canopolisAngle} completes the proof.
\end{proof}

\subsection{A projector formula}

In the remaining sections, we will find a generating set of objects for
$\CS^{\leftangle}(\Sigma,\mathbf{p},\Gamma)$ 
which is orthogonal for the (symmetrized) $\Hom$-pairing. 
The main tool for showing orthogonality is Lemma \ref{lem:CKcup} below. 
Before stating it, we need a bit of setup.

\begin{definition}
	For $n \in \N$, consider the following objects in $\Ch^-(\bn^{2n}_{2n})$:
	\[
	I_{n} := 
	\begin{tikzpicture}[anchorbase]
	\draw[thick, double] (-.75,-.7) to  (-.75,.7);
	\draw[thick, double] (.75,-.7) to (.75,.7);
	\draw[thick, fill=white] (-1.25,-.25) rectangle (-0.25, .25);
	\draw[thick, fill=white] (1.25,-.25) rectangle (0.25, .25);
	\node at (-.75,0) {$P_{n,n}$};
	\node at (.75,0) {$P_{n,n}$};
	\node at (1,.6) {\scriptsize $n$};
	\node at (1,-.6) {\scriptsize $n$};
	\node at (-1,.6) {\scriptsize $n$};
	\node at (-1,-.6) {\scriptsize $n$};
	\end{tikzpicture}\ 
	\, , \quad 
	C_n := q^n \begin{tikzpicture}[anchorbase,xscale=.75]
	\draw[thick, double] (-.75,-.7)  \ur  (0,-.15)   \rd (.75,-.7);
	\draw[thick, double] (-.75,.7) \dr  (0,.15) \ru (.75,.7);
	\node at (1,.6) {\scriptsize $n$};
	\node at (1,-.6) {\scriptsize $n$};
	\node at (-1,.6) {\scriptsize $n$};
	\node at (-1,-.6) {\scriptsize $n$};
	\end{tikzpicture}
	\]
	\end{definition}

\begin{conv}\label{conv:P is algebra}
For the remainder of this section we will choose a model for $P_{n,n}$ for which
the aforementioned homotopy equivalence 
$\mu\colon P_{n,n} \hComp P_{n,n}\xrightarrow{\simeq} P_{n,n}$ is strictly associative 
with strict unit $\eta\colon \one_n\rightarrow P_{n,n}$ 
(this can always be done).
\end{conv}

\begin{lemma}\label{lemma:image of I}
For a complex $X\in \Ch^-(\bn^{2n}_{2n})$, the following are equivalent: 
\begin{enumerate}
\item $X$ is homotopy equivalent both to $X\simeq I_n\star X\star I_n$ and 
	to a complex of through-degree zero.
\item $X$ is homotopy equivalent to a one-sided twisted complex constructed from $I_n\star C_n\star I_n$.
\end{enumerate}
\end{lemma}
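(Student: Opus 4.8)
The plan is to prove the equivalence of the two characterizations of complexes $X \in \Ch^-(\bn^{2n}_{2n})$ built from the ``two-sided projected identity'' $I_n$, by exhibiting an explicit homotopy-categorical manipulation that does not require any delicate finiteness or support hypotheses (since this is a statement purely within $\Ch^-(\bn^{2n}_{2n})$ about general one-sided twisted complexes). The direction $(2)\Rightarrow(1)$ should be the easier one: if $X$ is built as a one-sided twisted complex from copies of $I_n \star C_n \star I_n$, then using $I_n \star I_n \simeq I_n$ (which follows from $\mu\colon P_{n,n}\hComp P_{n,n}\xrightarrow{\simeq} P_{n,n}$ being a strict idempotent in the model of Convention~\ref{conv:P is algebra}) one sees $I_n \star X \star I_n \simeq X$ termwise, compatibly with the twist differential; and each $I_n \star C_n \star I_n$ has through-degree zero because $C_n$ is a ``cap-cup'' tangle, hence so is any one-sided twisted complex assembled from grading shifts of such objects.

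For the harder direction $(1)\Rightarrow(2)$, the strategy I would follow is the standard ``projector swallows everything of the right through-degree'' argument, in the spirit of the uniqueness statements already invoked in the paper (Proposition~\ref{prop:Punique} and the abstract characterization of $P_{n,n}$ recalled just before Proposition~\ref{prop:P-n-n angle shaped}). First I would reduce to the case $X = I_n \star Y \star I_n$ where $Y$ is literally a complex of through-degree-zero tangles, i.e.\ a one-sided twisted complex whose underlying objects are (shifts of) tangles of the form $b \hComp b'$ with $b \in \bn_0^{2n}$, $b' \in \bn^0_{2n}$. Each such $b \hComp b'$, after absorbing the surrounding $P_{n,n}\hComp P_{n,n}$ on each side, becomes a complex of the form $I_n \star (b \hComp b') \star I_n$; the key computational input is that for any such through-degree-zero tangle, $I_n \star (b \hComp b') \star I_n$ is homotopy equivalent to a one-sided twisted complex built from $I_n \star C_n \star I_n$. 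This last point reduces to analyzing $P_{n,n} \hComp b$ for $b$ a crossingless $(2n,0)$-cap tangle: by the defining annihilation property of $P_{n,n}$ (it kills tangles of non-maximal through-degree), $P_{n,n}\hComp b$ is contractible unless $b$, viewed as a morphism from $2n$ points, factors through the ``identity half-diagram'', in which case $P_{n,n}\hComp b \simeq P_{n,n}\hComp(\text{nested cup}) $, which after reflecting and re-closing is precisely $C_n$ decorated with projectors, i.e.\ $I_n \star C_n$ up to grading shift. Running this argument on both sides and reassembling the twist differential (using that passing to homotopy equivalent summands termwise in a bounded-above one-sided twisted complex yields a homotopy equivalent twisted complex, via homological perturbation as in \cite[Basic Perturbation Lemma]{Markl}) gives the desired presentation.

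The main obstacle I anticipate is the bookkeeping in the reassembly step: one must check that the termwise homotopy equivalences $I_n \star (b\hComp b') \star I_n \simeq (\text{twisted complex on }I_n\star C_n\star I_n)$ can be chosen compatibly enough that the global twist differential $\alpha$ on $X$ transports to a well-defined twist differential on the assembled complex built from $I_n \star C_n \star I_n$. This is exactly the situation homological perturbation theory is designed for, but one needs the complexes to be bounded above (which holds, since we work in $\Ch^-$) and one needs the contractions to be genuine strong deformation retracts; I would set this up by first replacing $P_{n,n}$ by the strictly-unital, strictly-associative model of Convention~\ref{conv:P is algebra} so that $I_n$ is a strict idempotent, which makes the ``$I_n \star (-) \star I_n$'' operation a genuine (non-homotopical) projector on the nose at the level of underlying graded objects, and only the differential needs perturbing. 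A secondary, more routine check is verifying that $C_n$ really is the correct ``elementary'' building block, i.e.\ that every through-degree-zero tangle sandwiched between projectors collapses to $I_n \star C_n \star I_n$ up to grading shift and not to some other cap-cup configuration; this follows because after absorbing projectors, all nested-cup diagrams on $n+n$ strands become equivalent, so there is a unique such block up to shift.
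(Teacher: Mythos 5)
Your proposal is correct and follows essentially the same route as the paper: the easy direction is immediate from $I_n\star I_n\simeq I_n$ and the fact that $C_n$ has through-degree zero, and the converse is exactly the observation that for a through-degree-zero object $Y$ the sandwich $I_n\star Y\star I_n$ collapses (the side projectors kill every cap configuration except the fully nested one) to a shift of $I_n\star C_n\star I_n$, followed by a termwise replacement assembled via homological perturbation. The only quibble is cosmetic: the surviving diagrams are not ``all equivalent after absorbing projectors''---rather, all non-rainbow cap diagrams are annihilated outright by $P_{n,n}\boxtimes P_{n,n}$, leaving a unique block up to shift, which is what you in fact use.
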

\begin{proof}
Clearly (2) implies (1).  For the converse, note that if $Y\in
\bn^{2n}_{2n}$ has through-degree zero then $I_{n} \star Y\star I_{n}$ is
homotopy equivalent to a direct sum of shifts of $I_n\star C_n\star I_n$.
Applying this to the chain objects of $X$ yields
(by homological perturbation) the implication from (1) to (2).
\end{proof}

\begin{lemma}\label{lemma:PC(I)} 
For $n\in \N$ there is a complex 
	$\PB_{C_n}(I_n)\in \Ch^-(\bn^{2n}_{2n})$ which is uniquely characterized 
	(up to homotopy equivalence) by the following two properties:
\begin{enumerate}
\item $\PB_{C_n}(I_n)$ is (homotopy equivalent to) a one-sided twisted complex constructed from $I_n\star C_n\star I_n$.
\item There exists a degree zero chain map $\e\colon \PB_{C_n}(I_n)\rightarrow I_n$ with $C_n \star \Cone(\e)\simeq 0 \simeq \Cone(\e)\star C_n$.
\end{enumerate}
\end{lemma}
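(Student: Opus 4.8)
The statement is an existence-and-uniqueness result for a ``projector-like'' object $\PB_{C_n}(I_n)$, and the structure of the argument should closely mirror the proof of Proposition~\ref{prop:Punique} (and the cited \cite[\S 5.2]{hogancamp2020constructing}), but relativized to the idempotent $I_n = P_{n,n} \star P_{n,n}$. The key conceptual point is that inside the ``corner'' of $\Ch^-(\bn^{2n}_{2n})$ cut out by the idempotent $I_n$ (that is, complexes $X$ with $X \simeq I_n \star X \star I_n$), the object $C_n$ behaves like a through-degree-generating object: by Lemma~\ref{lemma:image of I}, the complexes $X$ that are simultaneously $I_n$-stable and of through-degree zero are exactly the one-sided twisted complexes built from $I_n \star C_n \star I_n$. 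So the sought-after $\PB_{C_n}(I_n)$ is the ``$C_n$-categorified projection of $I_n$'' in the sense of Rozansky's projector formalism applied within this corner.

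First I would construct $\PB_{C_n}(I_n)$ together with its counit $\e$. The natural candidate is built as a homotopy colimit (stable limit) of the complexes obtained by repeatedly gluing copies of the turnback complex $C_n$ onto $I_n$, in direct analogy with Rozansky's construction in \cite[\S 5.2]{hogancamp2020constructing}: one forms a telescope/tower whose stages are twisted complexes assembled from $I_n \star C_n \star I_n$, with structure maps using the (strictly associative, strictly unital) multiplication $\mu$ on $P_{n,n}$ from Convention~\ref{conv:P is algebra} to organize the gluings, and one lets $\PB_{C_n}(I_n)$ be the resulting bounded-above complex. Property~(1) holds by construction, and the counit $\e \colon \PB_{C_n}(I_n) \to I_n$ is the evident map collapsing the tower onto its initial stage $I_n$ (note $\one_n \star \one_n = \one_{2n}$ is a summand of $I_n$ in bidegree $(0,0)$ via $\eta \boxtimes \eta$, so there is a canonical map). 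Verifying property~(2)---that $C_n \star \Cone(\e) \simeq 0 \simeq \Cone(\e) \star C_n$---is then the analogue of the statement that Rozansky's resolution is a projective resolution, and should follow from the fact that each gluing step kills one more ``layer'' of the interaction between $C_n$ and $I_n$, together with a boundedness/convergence argument for the telescope (using that all complexes in sight are bounded above, so the relevant homotopy colimit is well-behaved).

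Next I would prove uniqueness. Suppose $(P', \e')$ also satisfies (1) and (2). Following the diagram-chase in the proof of Proposition~\ref{prop:Punique}, I would consider the square
\[
\begin{tikzcd}
\PB_{C_n}(I_n) \star P' \ar[r,"\e \star \id"] \ar[d,"\id \star \e'"'] & I_n \star P' \ar[d,"\id \star \e'"] \\
\PB_{C_n}(I_n) \star I_n \ar[r,"\e \star \id"] & I_n
\end{tikzcd}
\]
---where here $\star$ means horizontal composition inside the $I_n$-corner, i.e.\ $X \star_{I_n} Y := X \star Y$ using that both factors are $I_n$-stable so that $I_n$ acts as a unit. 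Since all four complexes are bounded above and built from $I_n \star C_n \star I_n$, property~(2) for $P'$ implies $\Cone(\e' )$ is annihilated by such complexes on both sides, so $\id \star \e'$ (both instances) is a homotopy equivalence; dually property~(2) for $\PB_{C_n}(I_n)$ makes $\e \star \id$ (the left-hand map) a homotopy equivalence. One then defines $\nu \colon \PB_{C_n}(I_n) \to P'$ as the composite of $\e \star \id \colon \PB_{C_n}(I_n) \simeq \PB_{C_n}(I_n) \star I_n \to$ (something equivalent to $I_n \star P' \simeq P'$) with a homotopy inverse to $\id \star \e'$, checks $\e' \circ \nu \simeq \e$ by commutativity of the square, and concludes $\nu$ is an equivalence; the standard argument (two objects each equipped with counits satisfying the same universal property are canonically equivalent) gives uniqueness up to homotopy.

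\textbf{Main obstacle.} I expect the genuinely delicate point is \emph{the convergence/boundedness of the telescope construction} and the precise verification of property~(2) for the object we build---i.e.\ pinning down that the iterated gluing of $C_n$'s really produces a complex that is bounded above and angle-shaped (cf.\ Proposition~\ref{prop:P-n-n angle shaped} and the discussion of one-sided twisted complexes in \S\ref{ss:setup}), and that the cone of $\e$ is correctly annihilated. This is exactly the kind of ``Rozansky projector convergence'' argument that requires care about which homotopy (co)limits exist in $\Ch^-(\bn^{2n}_{2n})$; once that is in hand, the uniqueness half is a formal diagram chase identical in spirit to Proposition~\ref{prop:Punique}. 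A secondary technical nuisance is making the ``$I_n$-corner'' language rigorous---ensuring that $I_n \star (-) \star I_n$ genuinely behaves as a unital monoidal operation on its essential image up to coherent homotopy, which is why Convention~\ref{conv:P is algebra} (strict associativity/unitality of $\mu$) was set up in advance and should be invoked throughout.
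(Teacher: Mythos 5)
Your proposal matches the paper's argument: the paper also constructs $\PB_{C_n}(I_n)$ as an explicit bar-type complex $\tw_{\d}\bigl(\bigoplus_{m\geq 0} t^{-m} I(CI)^{m+1}\bigr)$ (your ``telescope of iterated gluings,'' made rigorous via Convention~\ref{conv:P is algebra}), with differential built from the saddle $C\to\one_{2n}$ and the multiplication $II\to I$, and it handles uniqueness by exactly the cone-annihilation diagram chase you describe, citing the standard argument. One small correction: the bottom of the tower must be $I_n\star C_n\star I_n$ (not $I_n$ itself, which would violate property~(1)), so the counit $\e$ is the augmentation $ICI\to II\to I$ rather than a projection onto an initial stage $I_n$; the parenthetical about $\eta\boxtimes\eta$ is not what produces $\e$.
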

\begin{proof}
Throughout this proof, we abbreviate by dropping the subscripts on $C_n$ and
$I_n$, and by omitting the symbols $\star$.  The proof of uniqueness follows
along standard lines; see e.g.~\cite[\S 3.3]{hogancamp2020constructing}. For
existence we may define $\PB_{C_n}(I_n)$ by an appropriate analogue of the bar
construction.  

To begin, 
Convention \ref{conv:P is algebra} guarantees that the homotopy equivalence 
$I I \xrightarrow{\simeq} I$ is strictly associative (and unital).
Next we form the following one-sided twisted complex
\begin{equation}
	\label{eq:PB}
\PB_C(I) := \left(\cdots \rightarrow t^{-2}ICICICI \rightarrow t\inv ICICI \rightarrow ICI\right) 
	= \tw_{\d}\left(\bigoplus_{m\geq 0} t^{-m} I(CI)^{m+1}\right).
\end{equation}
in which the twist $\d$ is a sum of $(-1)^i$ times the maps 
\[
\begin{tikzcd}
(IC)^{i} ICI (CI)^{m-i} \ar[r] & (IC)^{i} II (CI)^{m-i} \ar[r] & (IC)^{i} I (CI)^{m-i}
\end{tikzcd}
\]
induced from the iterated saddle map $C\rightarrow \one_{2n}$ 
and the equivalence $II\xrightarrow{\simeq} I$.
It is straightforward to verify that the
above is a well-defined complex satisfying (1) and (2).
\end{proof}

\begin{lemma}
	\label{lem:CKcup}
For $m,n\in\N_0$ we have homotopy equivalences
	\begin{equation}
		\label{eq:magic}
				\begin{tikzpicture}[anchorbase]
					\draw[thick, double] (-.75,-1.35) \pu  (-.75,1.35);
					\draw[thick, double] (.75,-1.35) \pu  (.75,1.35);
					\draw[thick, fill=white] (-1.25,.25) rectangle (1.25, -.25);
					\draw[thick, fill=white] (-1.25,.5) rectangle (-.25, 1);
					\draw[thick, fill=white] (.25,.5) rectangle (1.25, 1);
					\draw[thick, fill=white] (-1.25,-.5) rectangle (-.25, -1);
					\draw[thick, fill=white] (.25,-.5) rectangle (1.25, -1);
					\node at (0,0) {$P_{n+m,0}$};
					\node at (-.75,.75) {$P_{n,n}$};
					\node at (.75,.75) {$P_{m,m}$};
					\node at (-.75,-.75) {$P_{n,n}$};
					\node at (.75,-.75) {$P_{m,m}$};
					\node at (1,1.25) {\scriptsize $m$};
					\node at (1,-1.25) {\scriptsize $m$};
					\node at (-1,1.25) {\scriptsize $n$};
					\node at (-1,-1.25) {\scriptsize $n$};
					\end{tikzpicture}
						=
				q^{\frac{n+m}{2}}
				\begin{tikzpicture}[anchorbase]
					\draw[thick, double] (-.75,-1.35) to (-.75,-.75) \ur  (-.35,-.25) to (.35,-.25)  \rd (.75,-.75) to (.75,-1.35);
					\draw[thick, double] (-.75,1.35) to (-.75,.75) \dr  (-.35,.25) to (.35,.25) \ru (.75,.75) to (.75,1.35);
					\draw[thick, fill=white] (-1.25,.5) rectangle (-.25, 1);
					\draw[thick, fill=white] (.25,.5) rectangle (1.25, 1);
					\draw[thick, fill=white] (-1.25,-.5) rectangle (-.25, -1);
					\draw[thick, fill=white] (.25,-.5) rectangle (1.25, -1);
					\node at (-.75,.75) {$P_{n,n}$};
					\node at (.75,.75) {$P_{m,m}$};
					\node at (-.75,-.75) {$P_{n,n}$};
					\node at (.75,-.75) {$P_{m,m}$};
					\ujf{.25}{0}{}
					\djf{-.25}{0}{}
					\node at (1,1.25) {\scriptsize $m$};
					\node at (1,-1.25) {\scriptsize $m$};
					\node at (-1,1.25) {\scriptsize $n$};
					\node at (-1,-1.25) {\scriptsize $n$};
					\end{tikzpicture}
					\xrightarrow{\simeq}
					\begin{cases}
					\PB_{C_n}(I_n) & \text{if} \quad n=m\\
					0 & \text{if} \quad n\neq m
					\end{cases}.
				\end{equation}
In particular, $\PB_{C_n}(I_n)$ has a model in $\CS^{\leftangle}(I\times I, \pp^n\times\{0,1\})$.
\end{lemma}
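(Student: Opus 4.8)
The plan is to establish the two equalities/equivalences in \eqref{eq:magic} and then read off the final ``in particular'' claim. The first equality is purely formal: it expresses $P_{n+m,0}$ in its guise as a complex of through-degree zero tangles, specifically via the description of $\Bproj_{n+m}$ from \eqref{eq:purpleprojector} (with $\ell = 0$ and the top/bottom caps absorbed into the adjacent Jones--Wenzl projectors $P_{n,n}\boxtimes P_{m,m}$). Concretely, the left-hand side is $(P_{n,n}\boxtimes P_{m,m})\hComp P_{n+m,0}\hComp (P_{n,n}\boxtimes P_{m,m})$ (with appropriate cups/caps), and inserting the split-tangle model $q^{-\frac{n+m}{2}}\Bproj_{n+m} = \tBar^{n+m}_0$ turns the single $P_{n+m,0}$ box into a pair of purple boxes; the $q^{\frac{n+m}{2}}$ shift on the right accounts for the normalization in Definition~\ref{def:bottom proj}. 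I would make this precise by citing Remark~\ref{rmk:purple boxes merge to roz} and Lemma~\ref{lem:revpurple} to identify the purple-box expression with $\Bproj_{n+m}$.

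Next I would prove the homotopy equivalence on the right. The key observation is that, after absorbing the outer $P_{n,n}$ and $P_{m,m}$, the middle complex is of the form $I_n\boxtimes I_m$ tensored (via $\ostar$-type composition along the horizontal cups/caps) with $\Bproj_{n+m}$-data; and by the uniqueness characterization of $\Bproj$ in Proposition~\ref{prop:Punique}, what matters is how the projector interacts with through-degree zero tangles living in $\bn^n_n\boxtimes \bn^m_m$ versus the full $\bn^{n+m}_{n+m}$. When $n\neq m$: a maximal-through-degree tangle in $\bn^n_n \boxtimes \bn^m_m$ that is also a through-degree zero cap--cup tangle in $\bn^{n+m}_{n+m}$ must pair the $n$ strands on one side with the $n$ strands on the other and likewise for the $m$ strands; the Jones--Wenzl projectors $P_{n,n}$, $P_{m,m}$ kill any tangle with a ``turnback'' within a block, and the through-degree zero condition forces the remaining combinatorial possibilities to be incompatible unless $n=m$. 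So every chain object is annihilated and the complex is contractible by homological perturbation (Proposition~\ref{prop:canopolisAngle}-style bounded support, plus \cite{Markl}). When $n=m$: the resulting complex is built entirely from the single object $I_n\star C_n\star I_n$ (by Lemma~\ref{lemma:image of I}, since it is $I_n$-absorbing on both sides and through-degree zero), it carries a counit to $I_n$ induced by the counit $\e_{n+m}$ of $\Bproj_{2n}$ from Definition~\ref{def:bproj counit}, and this counit has the annihilation-of-cone property against $C_n$ because $\Cone(\e_{2n})$ kills through-degree zero objects (Proposition~\ref{prop:centrality}). Hence by the uniqueness in Lemma~\ref{lemma:PC(I)} it is homotopy equivalent to $\PB_{C_n}(I_n)$.

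Finally, the ``in particular'' statement: the left-hand side of \eqref{eq:magic} is manifestly obtained by applying a canopolis functor of the form \eqref{eq:SM} to $P_{n,n}$, $P_{m,m}$ (with $n=m$), and $P_{2n,0}$, all placed in suitable input disks of $I\times I$ with boundary $\pp^n \times\{0,1\}$. By Proposition~\ref{prop:P-n-n angle shaped} and Proposition~\ref{prop:P-2n-0 angle shaped} these inputs have angle-shaped models, and by Proposition~\ref{prop:canopolisAngle} the functor \eqref{eq:SM} preserves angle-shapedness; therefore the left-hand side, and hence its homotopy equivalent $\PB_{C_n}(I_n)$, lies in $\CS^{\leftangle}(I\times I,\pp^n\times\{0,1\})$.

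I expect the main obstacle to be the $n\neq m$ vanishing: making rigorous the claim that $(P_{n,n}\boxtimes P_{m,m})$-sandwiched through-degree zero tangles in $\bn^{n+m}_{n+m}$ vanish unless $n=m$. This is a statement about Temperley--Lieb combinatorics --- one must check that any crossingless matching contributing to a chain object, after killing block-internal turnbacks via the Jones--Wenzl projectors, either has a block-internal turnback remaining (hence is killed) or forces $n=m$ --- and then propagate this object-wise vanishing to a contractibility statement for the whole (bounded-support, one-sided) twisted complex via homological perturbation. Everything else is an application of the already-established uniqueness characterizations of $\Bproj$ (Proposition~\ref{prop:Punique}), of $\PB_{C_n}(I_n)$ (Lemma~\ref{lemma:PC(I)}), the centrality statement (Proposition~\ref{prop:centrality}), and the angle-shaped preservation lemmas.
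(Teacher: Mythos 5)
Your proposal is correct and follows essentially the same route as the paper's proof: the first equality is just the definition of the bottom projector, the $n\neq m$ case is handled by object-wise annihilation of the split cap--cup chain objects by the Jones--Wenzl projectors followed by homological perturbation, the $n=m$ case is settled by verifying the two characterizing conditions of Lemma~\ref{lemma:PC(I)} via Lemma~\ref{lemma:image of I} and the counit of $P_{2n,0}$, and angle-shapedness follows from Propositions~\ref{prop:P-n-n angle shaped}, \ref{prop:P-2n-0 angle shaped}, and \ref{prop:canopolisAngle}. The step you flag as the main obstacle is in fact immediate (a planar matching of $n+m$ endpoints split into blocks of sizes $n\neq m$ must contain a turnback inside one block), so no further combinatorial work is needed.
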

\begin{proof}
The first equality in \eqref{eq:magic} is simply Definition \ref{def:bottom proj}. 
If $n\neq m$ then each complex of the form
\[
\begin{tikzpicture}[anchorbase]
\draw[thick, double] (-.75,-1.35) to (-.75,-.75) \ur  (-.35,-.25) to (.35,-.25)  \rd (.75,-.75) to (.75,-1.35);
\draw[thick, double] (-.75,1.35) to (-.75,.75) \dr  (-.35,.25) to (.35,.25) \ru (.75,.75) to (.75,1.35);
\draw[thick, fill=white] (-1.25,.5) rectangle (-.25, 1);
\draw[thick, fill=white] (.25,.5) rectangle (1.25, 1);
\draw[thick, fill=white] (-1.25,-.5) rectangle (-.25, -1);
\draw[thick, fill=white] (.25,-.5) rectangle (1.25, -1);
\node at (-.75,.75) {$P_{n,n}$};
\node at (.75,.75) {$P_{m,m}$};
\node at (-.75,-.75) {$P_{n,n}$};
\node at (.75,-.75) {$P_{m,m}$};
\draw[thick, fill=white] (-.35,-.4) rectangle (.35,-.1);
\draw[thick, fill=white] (-.35,.4) rectangle (.35,.1);
\node[rotate = 90] at (0,-.25) {$a$};
\node[rotate = 90] at (0,.25) {$b$};
\node at (1,1.25) {\scriptsize $m$};
\node at (1,-1.25) {\scriptsize $m$};
\node at (-1,1.25) {\scriptsize $n$};
\node at (-1,-1.25) {\scriptsize $n$};
\end{tikzpicture}
\]
is contractible, 
since either $P_{m,m}$ or $P_{n,n}$ must annihilate both $a$ and $b$.
An application of homological perturbation thus establishes
the statement for $n\neq m$.

Now assume $n=m$. 
Lemma \ref{lemma:image of I} implies that $I_n\star P_{2n,0}\star I_n$ 
satisfies the first condition in Lemma \ref{lemma:PC(I)}.
For the second condition, 
Proposition \ref{prop:Punique} gives that the cone of the counit map
$\epsilon_{2n} \colon P_{2n,0}\rightarrow \one_{2n}$
annihilates any through-degree zero object. 
It follows that the cone of $I_n\star P_{2n,0}\star I_n \xrightarrow{I_n \hComp \epsilon_{2n} \hComp I_n} I_n$ 
annihilates $I_n \hComp C_n \hComp I_n$, 
so $I_n\star P_{2n,0}\star I_n \simeq P_{C_n}(I_n)$ by Lemma \ref{lemma:PC(I)}.

Finally, the last assertion follows from
Propositions \ref{prop:P-n-n angle shaped}, \ref{prop:P-2n-0 angle shaped}, and \ref{prop:canopolisAngle}.
\end{proof}

\begin{definition}\label{def:En}
Let $E_n := \Hom(\one_n,P_{n,n})$, 
with strictly unital associative algebra structure 
induced from that on $P_{n,n}$.
(See Convention \ref{conv:P is algebra}.)
\end{definition}

The following is a straightforward consequence of \eqref{eq:magic}.
\begin{corollary}
\label{cor:inverseunknot}
We have
\begin{equation}\label{eq:PCI and bar}
\begin{tikzpicture}[anchorbase]
\draw[thick, double] (-.75,-1.35) \pu  (-.75,1.35);
\draw[thick, double] (.75,-1.35) \pu  (.75,1.35);
\draw[thick, fill=white] (-1.25,.25) rectangle (1.25, -.25);
\draw[thick, fill=white] (-1.25,.5) rectangle (-.25, 1);
\draw[thick, fill=white] (.25,.5) rectangle (1.25, 1);
\draw[thick, fill=white] (-1.25,-.5) rectangle (-.25, -1);
\draw[thick, fill=white] (.25,-.5) rectangle (1.25, -1);
\node at (0,0) {$P_{2n,0}$};
\node at (-.75,.75) {$P_{n,n}$};
\node at (.75,.75) {$P_{n,n}$};
\node at (-.75,-.75) {$P_{n,n}$};
\node at (.75,-.75) {$P_{n,n}$};
\node at (1,1.25) {\scriptsize $n$};
\node at (1,-1.25) {\scriptsize $n$};
\node at (-1,1.25) {\scriptsize $n$};
\node at (-1,-1.25) {\scriptsize $n$};
\end{tikzpicture}
\ \simeq \ 
\Bar(E_n) \otimes_{E_n\otimes E_n} 
\left(
q^n
\begin{tikzpicture}[anchorbase,scale=.75]
\draw[thick, double] (-.75,-1.35) to (-.75,-.75) \ur  (0,-.25)  \rd (.75,-.75) to (.75,-1.35);
\draw[thick, double] (-.75,1.35) to (-.75,.75) \dr  (0,.25) \ru (.75,.75) to (.75,1.35);
\draw[thick, fill=white] (-1.25,.5) rectangle (-.25, 1);
\draw[thick, fill=white] (-1.25,-.5) rectangle (-.25, -1);
\node at (-.75,.75) {\small$P_{n,n}$};
\node at (-.75,-.75) {\small$P_{n,n}$};
\node at (-1,1.25) {\scriptsize $n$};
\node at (-1,-1.25) {\scriptsize $n$};
\end{tikzpicture} \
\right)
\end{equation}
where $\mathrm{Bar}(E_n)$ denotes the $2$-sided bar complex of $E_n$ 
and we view the final term as an $(E_n \otimes E_n)$-module by letting each 
factor of $E_n$ act on the two copies of $P_{n,n}$ via the multiplication map from 
Convention \ref{conv:P is algebra}.
\end{corollary}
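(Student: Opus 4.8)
The plan is to deduce Corollary~\ref{cor:inverseunknot} from Lemma~\ref{lem:CKcup} (specifically the isomorphism \eqref{eq:magic} in the case $n=m$) by unwinding the definition of $\PB_{C_n}(I_n)$ given in the proof of Lemma~\ref{lemma:PC(I)} and recognizing the one-sided twisted complex \eqref{eq:PB} as a bar construction. First I would record, using the first equality in \eqref{eq:magic} (which is just Definition~\ref{def:bottom proj} applied with the sandwiching projectors $P_{n,n}$ on top and bottom), that the left-hand side of \eqref{eq:PCI and bar} — the $P_{2n,0}$ sandwiched between four copies of $P_{n,n}$ — is homotopy equivalent to $\PB_{C_n}(I_n)$. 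This is precisely the content of Lemma~\ref{lem:CKcup} with $m=n$, so no new work is needed here.

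Next I would identify $\PB_{C_n}(I_n)$ with the claimed bar-complex tensor product. Recall from \eqref{eq:PB} that
\[
\PB_{C_n}(I_n) = \tw_{\d}\left(\bigoplus_{m\geq 0} t^{-m} I_n(C_nI_n)^{m+1}\right),
\]
where the differential is built from the iterated saddle $C_n \to \one_{2n}$ and the multiplication $I_n I_n \xrightarrow{\simeq} I_n$. The key observation is that $I_n = P_{n,n} \star P_{n,n}$ (two side-by-side projectors, as in the definition of $I_n$), so each copy of $I_n$ contributes a pair of $P_{n,n}$'s and the whole complex is assembled by gluing copies of $E_n = \Hom(\one_n, P_{n,n})$ via the algebra structure from Convention~\ref{conv:P is algebra}. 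Concretely, after using pivotality/sphericality to rotate the bottom row of projectors, the term $I_n(C_nI_n)^{m+1}$ becomes, under the representable functor $\KhEval{-}$ (or more precisely via the adjunction that turns $C_n$-insertions into tensor factors of $E_n$), a term of the form (copy of $P_{n,n}\hComp P_{n,n}$, closed up by cups as on the right of \eqref{eq:PCI and bar}) with $2(m+1)$ interleaved factors of $E_n$ — which is exactly the degree-$(-m)$ term of $\Bar(E_n)\otimes_{E_n\otimes E_n}(\text{the cup-closed }P_{n,n}\hComp P_{n,n})$. I would check that the twist differential $\d$ in \eqref{eq:PB} matches the two-sided bar differential of $E_n$: the saddle maps $C_n \to \one_{2n}$ correspond to the $E_n$-multiplications contracting adjacent tensor factors, and the equivalences $I_nI_n \to I_n$ supply the outermost module actions on the two copies of $P_{n,n}$. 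The sign $(-1)^i$ in \eqref{eq:PB} matches the alternating sign in the bar differential.

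The main obstacle I expect is purely bookkeeping: getting the pivotal rotations, the grading shifts (the $q^n$ normalization in the cup term of \eqref{eq:PCI and bar} versus the $q^{\frac{n+m}{2}}|_{m=n} = q^n$ appearing in \eqref{eq:magic}, together with the $q^n$ in the definition of $C_n$), and the reflection functors $r_x, r_y$ to line up so that the abstract isomorphism ``$I_n(C_nI_n)^{m+1} \cong E_n^{\otimes 2(m+1)}\otimes(\text{closed }P_{n,n}^{\star 2})$'' is actually natural in $m$ and compatible with differentials. This is exactly the kind of computation the paper flags as ``straightforward'' — the conceptual input (that bottom projectors are bar resolutions, Corollary~\ref{cor:bar is roz}, plus the sandwiching identity \eqref{eq:magic}) is already in hand, so I would present the proof as: invoke \eqref{eq:magic} for the homotopy equivalence to $\PB_{C_n}(I_n)$, then observe that \eqref{eq:PB} with $I_n = P_{n,n}\star P_{n,n}$ is visibly the two-sided bar complex $\Bar(E_n)$ with coefficients in the indicated $(E_n\otimes E_n)$-module, with the saddle-and-merge differential matching the bar differential term by term.
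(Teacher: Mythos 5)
Your first step coincides with the paper's: Lemma \ref{lem:CKcup} (the $m=n$ case of \eqref{eq:magic}) identifies the left-hand side of \eqref{eq:PCI and bar} with $\PB_{C_n}(I_n)$. The gap is in your second step, where you assert that $\PB_{C_n}(I_n)$ as defined in \eqref{eq:PB} is ``visibly'' or ``essentially by definition'' the two-sided bar complex of $E_n$ with the indicated coefficients. With $I_n = P_{n,n}\boxtimes P_{n,n}$ this identification does not hold on the nose. In the term $I_n(C_nI_n)^{m+1}$, each internal copy of $I_n$ is closed up by the adjacent cup and cap into a \emph{single} circle carrying the vertical composite $P_{n,n}\star P_{n,n}$ (the cup joins the bottoms of the two side-by-side projectors and the cap joins their tops), so the corresponding tensor factor is $\Hom(\one_n,P_{n,n}\star P_{n,n})$ -- homotopy equivalent to $E_n$ via the multiplication $\mu$, but equal to neither $E_n$ nor $E_n\otimes E_n$. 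Likewise the outer term $I_n\star C_n\star I_n$ carries four projectors, not the two appearing on the right of \eqref{eq:PCI and bar}. Your bookkeeping is also off: the degree $-m$ piece of $\Bar(E_n)\otimes_{E_n\otimes E_n}M$ is $E_n^{\otimes m}\otimes M$, and correspondingly exactly $m$ of the $m+2$ copies of $I_n$ in $I_n(C_nI_n)^{m+1}$ are fully enclosed in circles -- not $2(m+1)$.

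The paper closes precisely this gap by introducing $I_n':=P_{n,n}\boxtimes\one_n$ and the chain map $\theta\colon I_n'\to I_n$ induced by the unit of the right-hand projector. Since $\Cone(\theta)$ is built from objects $P_{n,n}\boxtimes X$ with $X$ of non-maximal through-degree, composing with $C_n$ on either side kills it, whence $\PB_{C_n}(I_n')\simeq\PB_{C_n}(I_n)$; and for $I_n'$ each enclosed circle carries exactly one $P_{n,n}$, so the isomorphism $\PB_{C_n}(I_n')\cong\Bar(E_n)\otimes_{E_n\otimes E_n}(I_n'\star C_n\star I_n')$ genuinely holds term by term, with the saddle-and-merge twist matching the bar differential. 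You need this replacement (or an equivalent idempotence-plus-homological-perturbation argument) before the claim that the differentials ``match term by term'' makes sense, since as written the underlying chain objects themselves do not match.
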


\begin{proof}
The proof relies on the following two observations.  
The first is a simplification of $\PB_{C_n}(I_n)$ using idempotence of $P_{n,n}$.  
Precisely, abbreviate by writing $I_n':= P_{n,n}\boxtimes \one_n$, then the unit map
$\one_n \to P_{n,n}$ gives us a chain map $\theta \colon I_n'\rightarrow I_n$.
The mapping cone of $\theta$ is homotopy equivalent to a one-sided
twisted complex constructed from complexes of the form 
$P_{n,n}\boxtimes X$ where $X\in \bn^n_n$ has non-maximal through-degree. 
Such terms become contractible after composing with $C_n$ on
the left or right, so it follows that the following maps are homotopy
equivalences
\[
\theta\star \id_{C_n}\colon I_n'\star C_n\xrightarrow{\simeq} I_n\star C_n 
\, , \quad \id_{C_n}\star \theta\colon C_n\star I_n'\xrightarrow{\simeq} C_n\star I_n \, .
\]
Next, we observe that
\[
E_n := \Hom(\one_n,P_{n,n})\cong q^n \KhEval{\begin{tikzpicture}[anchorbase]
			\draw[thick, double] (.5,0) circle (.5);
			\draw[thick, fill=white] (-.5,-.25) rectangle (0.5, .25);
			\node at (0,0) {$P_{n,n}$};
		\end{tikzpicture}} \, .
\]
Hence, if we define $\PB_{C_n}(I_n')$ by replacing $I_n$ by $I_n'$ in \eqref{eq:PB}
then (essentially, by definition) we have an isomorphism
\[
\Bar(E_n)\otimes_{E_n\otimes E_n} I_n' \star C_n\star I_n' \cong \PB_{C_n}(I_n') \, .
\]
The result now follows using the 
homotopy equivalence $\PB_{C_n}(I_n')\xrightarrow{\simeq} \PB_{C_n}(I_n)$ induced from $\theta$.
\end{proof}

\begin{remark}\label{rem:inverseunknot}
Computing Euler characteristic of both sides of \eqref{eq:inverseunknot}, 
we find
\begin{equation}\label{eq:inverseunknot}
\left[
	\begin{tikzpicture}[anchorbase]
		\draw[thick, double] (-.75,-1.35) \pu  (-.75,1.35);
		\draw[thick, double] (.75,-1.35) \pu  (.75,1.35);
		\draw[thick, fill=white] (-1.25,.25) rectangle (1.25, -.25);
		\draw[thick, fill=white] (-1.25,.5) rectangle (-.25, 1);
		\draw[thick, fill=white] (.25,.5) rectangle (1.25, 1);
		\draw[thick, fill=white] (-1.25,-.5) rectangle (-.25, -1);
		\draw[thick, fill=white] (.25,-.5) rectangle (1.25, -1);
		\node at (0,0) {$P_{2n,0}$};
		\node at (-.75,.75) {$P_{n,n}$};
		\node at (.75,.75) {$P_{n,n}$};
		\node at (-.75,-.75) {$P_{n,n}$};
		\node at (.75,-.75) {$P_{n,n}$};
		\node at (1,1.25) {\scriptsize $n$};
		\node at (1,-1.25) {\scriptsize $n$};
		\node at (-1,1.25) {\scriptsize $n$};
		\node at (-1,-1.25) {\scriptsize $n$};
	\end{tikzpicture}
\right]
= \frac{1}{[n+1]} \left[\  
	\begin{tikzpicture}[anchorbase]
		\draw[thick, double] (-.75,-1.35) to (-.75,-.75) \ur  (0,-.25)  \rd (.75,-.75) to (.75,-1.35);
		\draw[thick, double] (-.75,1.35) to (-.75,.75) \dr  (0,.25) \ru (.75,.75) to (.75,1.35);
		\draw[thick, fill=white] (-1.25,.5) rectangle (-.25, 1);
		\draw[thick, fill=white] (-1.25,-.5) rectangle (-.25, -1);
		\node at (-.75,.75) {$P_{n,n}$};
		\node at (-.75,-.75) {$P_{n,n}$};
		\node at (-1,1.25) {\scriptsize $n$};
		\node at (-1,-1.25) {\scriptsize $n$};
	\end{tikzpicture} \ 
\right],
	\end{equation}
where $[k] = \frac{q^k-q^{-k}}{q-q\inv}$ denotes the usual quantum integer. This is reminiscent of well-known formulas in the graphical
	calculus for modular categories, namely the expression for wrapping a
	$0$-framed Kirby-colored unknot around a pair of strands labeled by two
	simple objects; see e.g.~\cite[Figure 3.10]{MR3617439},
	\cite[(3.1.21)]{MR1797619}, and \cite[Figure 1.4]{De_Renzi_2022}.
		\end{remark}

\subsection{Spin networks}
\label{ss:spin-networks}

We will consider graphs in certain seamed marked surfaces, 
and will adopt the following model for graphs. 
A graph will be regarded as a tuple $(V,H,v,\tau)$ where $V$ is the set of \emph{vertices}, 
$H$ is the set of \emph{half edges}, 
$v\colon H\rightarrow V$ is a map (we think of the half edge $h$ as attached to the vertex $v(h)$), 
and $\tau$ is an involution $\tau \colon H\rightarrow H$.
In this setup, an \emph{edge} is a $\tau$-orbit in $H$, denoted $[h]=\{h,\tau(h)\}$. 
If $h=\tau(h)$, then $[h]$ is called a \emph{boundary edge}; 
otherwise $[h]$ is called an \emph{internal edge}.

\begin{definition}
A \emph{triangulated surface} is a tessellated surface $(\Sigma,\Pi;\Gamma)$ 
for which each region in $\overline{X \smallsetminus \Gamma_X}$ 
(in the notation of Definition \ref{def:tessellatedS})
is a triangle.
\end{definition}

In this setting, we will refer to the corresponding tessellation as a \emph{triangulation} of $\Sigma$.
Note that triangulations necessarily correspond to minimal 
elements of the tessellation poset $T(\Sigma,\Pi)$
i.e.~those with a maximal number of seams.
Further, given a triangulation $(\Sigma,\Pi;\Gamma)$ and a (standard) subset $\pp \subset A(\Pi)$, 
Theorem \ref{thm:final3} provides a canonical quasi-equivalence 
$\sCat(\Sigma,\pp;\Gamma) \xrightarrow{\simeq} \sCat(\Sigma,\pp)$.
Nevertheless, we will continue to work with the category $\sCat(\Sigma,\pp;\Gamma)$ 
since our categorical spin networks will depend on the choice of $\Gamma$ via the following.

\begin{definition}
The \emph{dual graph} to the triangulation $(\Sigma,\Pi;\Gamma)$ is the graph $N_\Gamma$ wherein:
\begin{enumerate}
\item the vertex set is $V(N_\Gamma):=\Reg(\Sigma,\Pi;\Gamma)$ 
(note: each region corresponds to a triangle in $\overline{X \smallsetminus \Gamma_X}$).
\item the set of half edges $H(N_\Gamma)$ is the set $\cutP=\Pi \sqcup \gpm$.
\item the map from half-edges to vertices sends $\ma\in \cutP$ to the region $D$ it abuts, 
and the involution on half-edges fixes elements of $\Pi$ and sends $(\gamma,+)\leftrightarrow (\gamma,-)$.
\end{enumerate}
\end{definition}

In particular, the set of edges in $N_\Gamma$ is $\Pi \sqcup \Gamma$, 
with $\Pi$ being the set of boundary edges and $\Gamma$ being the set of internal edges.
We may view $N_\Gamma$ as embedded in either $\Sigma$ or $X$; 
see the left diagram in Figure \ref{fig:spin} for an example.
(This depicts $N_\Gamma$ in $X$; to visualize it in $\Sigma$, 
remove a neighborhood of each corner on the boundary.)

\begin{figure}[ht]
\[
\begin{tikzpicture}[anchorbase]
	\def\radius{5em}
	\draw[very thick]
	(0:\radius) to (60:\radius) to (120:\radius) to (180:\radius) to (240:\radius) to (300:\radius) to (360:\radius);
	\draw[very thick]
	(180:\radius) to (0:\radius);
	\draw[very thick]
	(180:\radius) to (60:\radius);
	\draw[very thick]
	(180:\radius) to (-60:\radius);
	\def\smallradius{4.33em}
	%
	\fill[red] (60:2em) circle[radius=2pt];
	\fill[red] (120:3.3em) circle[radius=2pt];
	\fill[red] (240:3.3em) circle[radius=2pt];
	\fill[red] (300:2em) circle[radius=2pt];
	\draw[very thick, red]
	(30:\smallradius) to (60:2em);
	\draw[very thick, red]
	(90:\smallradius) to (120:3.3em);
	\draw[very thick, red]
	(150:\smallradius) to (120:3.3em);
	\draw[very thick, red]
	(210:\smallradius) to (240:3.3em);
	\draw[very thick, red]
	(270:\smallradius) to (240:3.3em);
	\draw[very thick, red]
	(330:\smallradius) to (300:2em);
	\draw[very thick, red]
	(120:3.3em) to (60:2em) to (300:2em) to (240:3.3em) ;
\end{tikzpicture}
\qquad,\qquad
\begin{tikzpicture}[scale=.52,smallnodes,anchorbase]
	\def\radius{4}
	\def\smallradius{2}
	\draw[very thick]
	(270:\radius) to (30:\radius) to (150:\radius) to (270:\radius);
	\draw[thick,red, double] (0,2) to (0,1);
	\draw[thick,red,double] (100:1) to[out=270,in=30] (200:1);
	\node at (-.3,1.65) {$a$};
	\node at (-.6,.35) {$i$};
	\rotatebox{120}{
	\draw[thick,red, double] (0,2) to (0,1);
	\draw[thick,red,double] (100:1) to[out=270,in=30] (200:1);
	\draw[very thick,red,fill=white] (-0.6,1-.25) rectangle (0.6,1+.25);
	\node[rotate=-120] at (.3,1.65) {$b$};
	\node[rotate=-120] at (-.6,.35) {$j$};
	}
	\rotatebox{240}{
	\draw[thick,red, double] (0,2) to (0,1);
	\draw[thick,red,double] (100:1) to[out=270,in=30] (200:1);
	\draw[very thick,red,fill=white] (-0.6,1-.25) rectangle (0.6,1+.25);
	\node[rotate=-240] at (-.3,1.65) {$c$};
	\node[rotate=-240] at (-.6,.35) {$k$};
	}
	\draw[very thick,red,fill=white] (-0.6,1-.25) rectangle (0.6,1+.25);
\end{tikzpicture}
\]
\caption{\label{fig:spin} Left: the dual graph associated to a triangulation. Right: a local spin network.}
\end{figure}
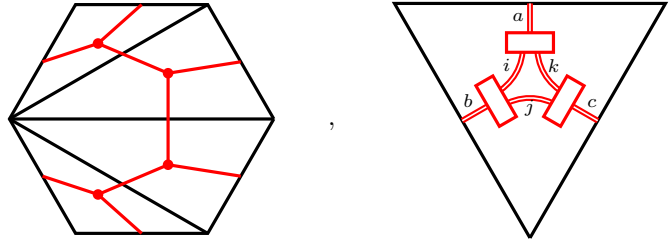

Now we turn our attention to spin networks.

\begin{definition}\label{def:spin-network}
Let $(\Sigma,\Pi;\Gamma)$ be a triangulated surface, with dual graph $N_\Gamma$.  
A function $\aa \colon \Pi\sqcup \gpm \rightarrow \N$ is called an \emph{admissible coloring} if
\begin{itemize}
\item $\aa$ assigns the same value to $(\gamma,+)$ and $(\gamma,-)$ for all $\gamma\in \Gamma$, 
	so $\aa$ may be regarded as a labelling of the edges of $N_\Gamma$.
\item for each $D \in \Reg(\Sigma,\Pi;\Gamma)$ with boundary arcs $\cutP|_D = \{\ma_1,\ma_2,\ma_3\}$ 
	and colors $a_i=\aa(\ma_i)$, 
	we have $a_1+a_2+a_3 \in 2\N$ and $a_1+a_2\geq a_3$, as well as all permutations thereof.
\end{itemize}
A \emph{spin network} in $(\Sigma,\Pi)$ is a pair $(N_\Gamma,\aa)$, 
	where $N_\Gamma$ is the dual graph of a triangulation of $\Sigma$ and $\aa$ is an admissible coloring.
\end{definition}

We will visualize a spin network as the diagram 
obtained by gluing together the local pictures as shown\footnote{Therein, 
$i,j,k \in \N$ are the unique solutions to the equations $i+k=a$, $i+j=b$, and $j+k=c$, 
which exist if and only if $(a,b,c)$ satisfy the second condition in Definition \ref{def:spin-network}.} 
on the right side of Figure~\ref{fig:spin}.
Interpreting each box with $a$ incoming and outgoing strands as the categorified projector 
$P_{a,a}$ will yield an object $\nabla_{\Gamma,\aa} \in \CS^{\leftangle}(\Sigma,\pp;\Gamma)$.  
The following makes this construction precise.

\begin{definition}\label{def:spin-network-complexes}
Let $(N_\Gamma,\aa)$ be a spin network associated to the triangulation $(\Sigma,\Pi;\Gamma)$.
Let $\pp(\aa)\subset A(\Pi)$ denote the standard (as in Definition \ref{def:tangle in SMS}) 
	set of boundary points for which $|\pp(\aa)\cap\ma|=\aa(\ma)$ for all $\ma\in \Pi$.  
Let $F_{\Gamma,\aa}\colon \bigotimes_{\ma\in \cutP} \bn^{\aa(\ma)}_{\aa(\ma)}\rightarrow \CS(\Sigma,\pp(\aa);\Gamma)$ 
	be the dg functor defined locally by
\[
(T_1,T_2,T_3)\mapsto 
\begin{tikzpicture}[scale=.52,smallnodes,anchorbase]
	\def\radius{4}
	\def\smallradius{2}
	\draw[very thick]
	(270:\radius) to (30:\radius) to (150:\radius) to (270:\radius);
	\draw[thick,red, double] (0,2) to (0,1);
	\draw[thick,red,double] (100:1) to[out=270,in=30] (200:1);
	\node at (-.3,1.75) {$a$};
	\node at (-.6,.35) {$i$};
	\rotatebox{120}{
	\draw[thick,red, double] (0,2) to (0,1);
	\draw[thick,red,double] (100:1) to[out=270,in=30] (200:1);
	\draw[very thick,red,fill=white] (-0.6,1-.25) rectangle (0.6,1+.45);
	\node at (0,1.1) {\scriptsize $T_2$};
	\node[rotate=-120] at (.3,1.75) {$b$};
	\node[rotate=-120] at (-.6,.35) {$j$};
	}
	\rotatebox{240}{
	\draw[thick,red, double] (0,2) to (0,1);
	\draw[thick,red,double] (100:1) to[out=270,in=30] (200:1);
		\draw[very thick,red,fill=white] (-0.6,1-.25) rectangle (0.6,1+.45);
	\node at (0,1.1) {\scriptsize $T_3$};
	\node[rotate=-240] at (-.3,1.75) {$c$};
	\node[rotate=-240] at (-.6,.35) {$k$};
	}
	\draw[very thick,red,fill=white] (-0.6,1-.25) rectangle (0.6,1+.45);
	\node at (0,1.1) {\scriptsize $T_1$};
\end{tikzpicture}
\]
Extending this functor to angle-shaped complexes,
the \emph{simple object} associated to $(N_\Gamma,\aa)$ is defined by
\[
L_{\Gamma,\aa} := F_{\Gamma,\aa}\left(\begin{cases}
\one_{\aa(\ma)} & \text{ if $\ma\in \gpm$ is an internal half-edge}\\
P_{\aa(\ma),\aa(\ma)} & \text{ if $\ma\in \Pi$ is a boundary half-edge}
\end{cases}\right) 
\in \Ch^{-}\!\big(\sCat(\Sigma,\pp(\aa); \Gamma) \big) \, .
\]
The \emph{costandard object} associated to $(N_\Gamma,\aa)$ is defined by
\[
\nabla_{\Gamma,\aa}:= F_{\Gamma,\aa}\left(P_{\aa(\ma),\aa(\ma)} \text{ for all } \ma\in \cutP \right)
\in \Ch^{-}\!\big(\sCat(\Sigma,\pp(\aa); \Gamma) \big) \, .
\]
\end{definition}

\begin{remark}
In the situation where $\aa(\ma) = 0,1$ for $\ma\in \Pi$, 
the ``projector'' $P_{\aa(\ma),\aa(\ma)}$ appearing in the definition of $L_{\Gamma,\aa}$ 
is either $\one_1$ or $\one_0$.
\end{remark}

\begin{remark}
	\label{rem:standardOb}
The terminology of simple and costandard objects 
is inspired by the notion of highest weight categories.  
Aspects of that theory are visible in our considerations.
To further the analogy, one would also want to consider the family of \emph{standard objects}.
In our situation, these would be the complexes $\Delta_{\Gamma,\aa}$ obtained by evaluating 
$F_{\Gamma,\aa}$ on the family of \emph{dual} projectors $P_{a,a}^\vee$, 
which lie in the category $\Ch^+(\bn_a^a)$ of bounded-below complexes over $\bn_a^a$.  
Since the $\Hom$-complexes in $\CS(\Sigma,\pp(\aa);\Gamma)$ are bounded above, 
the formation of infinite one-sided twisted complexes with cohomological shifts shifts tending to $+\infty$ 
should be treated with caution.
In this paper we will not consider these $\Delta_{\Gamma,\aa}$ any further, 
and any connection with highest weight categories will be treated as purely inspiration.
\end{remark}

For fixed $N_\Gamma$ and $\pp\subset A(\Pi)$, 
the set of spin networks $(N_\Gamma,\aa)$ with $\pp(\aa)=\pp$ form a partially ordered set with 
$\aa \leq \bb$ if $\aa(\ma)\leq \bb(\ma)$ for all $\ma\in \cutP$. 
We write $\aa<\bb$ if $\aa\leq \bb$ and $\aa\neq \bb$.  

\begin{definition}\label{def:partial-order-on-networks}
Let $(\Sigma, \Pi; \Gamma)$ be a triangulated surface and let $\pp \subset A(\Pi)$ be standard.
Let $\CS^{\leftangle}_{\leq \aa}(\Sigma,\pp;\Gamma)$ be the full subcategory of 
$\Ch^{-}\!\big(\sCat(\Sigma,\pp(\aa); \Gamma) \big)$
consisting of complexes that are 
angle-shaped with respect to the collection $\{L_{\Gamma,\bb}\}_{\bb\leq \aa}$.  
Define $\CS^{\leftangle}_{< \aa}(\Sigma,\pp;\Gamma)$ similarly.
\end{definition}

\begin{lemma}
	\label{lem:spin-basis-change}
There is a canonical chain map $L_{\Gamma,\aa}\rightarrow \nabla_{\Gamma,\aa}$ 
whose mapping cone is homotopy equivalent to an object of $\CS^{\leftangle}_{< \aa}(\Sigma,\pp(\aa);\Gamma)$. 
\end{lemma}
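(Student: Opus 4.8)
The plan is to construct the map $L_{\Gamma,\aa}\to \nabla_{\Gamma,\aa}$ edge-by-edge, using that both objects are obtained by applying the same dg functor $F_{\Gamma,\aa}$ of \eqref{eq:SM}-type to different inputs, which differ precisely on the internal half-edges $\ma\in \gpm$: in $L_{\Gamma,\aa}$ we plug in $\one_{\aa(\ma)}$, while in $\nabla_{\Gamma,\aa}$ we plug in $P_{\aa(\ma),\aa(\ma)}$. First I would use the unit map $\eta\colon \one_n\to P_{n,n}$ (from the abstract characterization recalled just before Proposition~\ref{prop:P-n-n angle shaped}, with strict unitality arranged by Convention~\ref{conv:P is algebra}) at each internal edge $\gamma\in\Gamma$. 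Since $F_{\Gamma,\aa}$ is a dg functor, applying it to the tensor product of these unit maps (and identities on the boundary inputs) yields the desired chain map $L_{\Gamma,\aa}\to \nabla_{\Gamma,\aa}$; by Proposition~\ref{prop:canopolisAngle}, both source and target, as well as the map, stay within angle-shaped complexes.

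Next I would identify the mapping cone. The cone of a tensor product of maps filters (via the standard multicomplex/cone-of-cones decomposition) so that $\Cone\big(L_{\Gamma,\aa}\to\nabla_{\Gamma,\aa}\big)$ is homotopy equivalent to a one-sided twisted complex built from the objects $F_{\Gamma,\aa}(\dots)$ where at least one internal half-edge input is replaced by $\Cone(\eta\colon \one_{\aa(\gamma)}\to P_{\aa(\gamma),\aa(\gamma)})$. The key local fact is that $\Cone(\eta)$ is homotopy equivalent to a one-sided twisted complex built from $P_{n,n}\boxtimes X$ where $X\in\bn_n^n$ has non-maximal through-degree; equivalently, $\Cone(\eta)$ is annihilated (up to homotopy) by $C_n$ on either side, exactly as exploited in the proof of Corollary~\ref{cor:inverseunknot}. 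Inserting such a term at an internal edge $\gamma$ of the spin network means the two triangle-regions adjacent to $\gamma$ are joined through a tangle of strictly smaller through-degree across $\gamma$; after absorbing the cups/caps that define the local gluing in Figure~\ref{fig:spin}, this produces — via Lemma~\ref{lem:CKcup} and homological perturbation — a complex built from simple objects $L_{\Gamma,\bb}$ with $\bb(\gamma)<\aa(\gamma)$ and $\bb(\ma)\leq\aa(\ma)$ elsewhere, i.e.~an object of $\CS^{\leftangle}_{<\aa}(\Sigma,\pp(\aa);\Gamma)$.

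I would assemble this by inducting on the number of internal edges where the input has been replaced by $\Cone(\eta)$: the bottom of the filtration is $\nabla_{\Gamma,\aa}$ itself shifted, but every higher term has had at least one $\Cone(\eta)$ inserted, hence lands (after the above local analysis, applied one edge at a time and propagated through the twisted-complex structure using the Basic Perturbation Lemma, cf.~\cite{Markl}) in $\CS^{\leftangle}_{<\aa}$. Since $\CS^{\leftangle}_{<\aa}(\Sigma,\pp(\aa);\Gamma)$ is closed under shifts and the formation of angle-shaped one-sided twisted complexes (by Lemma~\ref{lem:Oproperties} and the analogue of Lemma~\ref{lem:CDangle}), the whole cone lies there.

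The main obstacle I anticipate is the bookkeeping needed to turn ``insert $\Cone(\eta)$ at edge $\gamma$'' into ``belongs to $\CS^{\leftangle}_{<\aa}$'': one must check that after decomposing $\Cone(\eta)=P_{n,n}\boxtimes(\text{lower through-degree})$ and feeding this into $F_{\Gamma,\aa}$, the resulting local tangles, composed with the projectors $P_{a,a}$ sitting on the boundary edges and the admissibility constraints at the two adjacent triangles, genuinely reassemble into (shifts of) the simple objects $L_{\Gamma,\bb}$ for strictly smaller $\bb$, and that the infinite twisted-complex structure remains angle-shaped throughout. This is where Lemma~\ref{lem:CKcup} does the real work — it is precisely the statement that the relevant ``$P_{2n,0}$ sandwiched between two $P_{n,n}$'s'' configuration collapses to $\PB_{C_n}(I_n)$, which is what lets the lower-through-degree pieces be re-expressed in terms of smaller spin networks — so the proof reduces to invoking \eqref{eq:magic} locally at each internal edge and then applying homological perturbation to globalize.
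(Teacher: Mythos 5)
Your proposal is correct and follows essentially the same route as the paper: the map is induced by the unit maps $\one_{\aa(\gamma)}\to P_{\aa(\gamma),\aa(\gamma)}$ at the internal edges, the cone is filtered by terms in which at least one internal slot carries $\Cone(\eta)$, and the point is that $\Cone(\eta)$ is (after cancelling the two copies of $\one_{\aa(\gamma)}$) built from minimal tangles of non-maximal through-degree, so the resulting terms have strictly smaller seam-intersection data and reassemble into shifts of $L_{\Gamma,\bb}$ with $\bb<\aa$ by the canopolis/angle-shape argument of Proposition~\ref{prop:canopolisAngle}. Two small inaccuracies worth fixing: the ``$P_{n,n}\boxtimes X$'' description is the statement about $\Cone(\theta\colon I_n'\to I_n)$ from the proof of Corollary~\ref{cor:inverseunknot}, not about $\Cone(\eta)\in\Ch^-(\bn_n^n)$ itself (and ``$C_n\star\Cone(\eta)$'' does not typecheck, since $C_n$ lives in $\bn_{2n}^{2n}$); and Lemma~\ref{lem:CKcup} is not needed here --- it is the tool for the orthogonality statement, whereas for this lemma the bare through-degree observation suffices.
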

\begin{proof}
The chain map is induced by the unit map $\one_a \to P_{a,a}$.
The cone of this map is angle-shaped, with respect to the collection of 
minimal tangles in $\bn_a^a$ of non-maximal through-degree.
This, 
together with an argument analogous to 
the proof of Proposition \ref{prop:canopolisAngle},
implies the second statement.
\end{proof}

For ease of exposition, 
we now restrict to the following setup; 
see Remark \ref{rem:genspin} below for comments on the general case.

\begin{definition}\label{def:admissible-triangulations}
Let $\Sigma$ be a compact surface with a finite set of points $\pp\subset \partial\Sigma$.  A triangulation 
$(\Sigma,\Pi;\Gamma)$ will be called \emph{weakly (resp.~strictly) $\pp$-admissible} if each $\ma\in \Pi$ 
contains at most one (resp.~exactly one) point of $\pp$, and this point is standard.
\end{definition}

\begin{lemma}
	\label{lem:gen}
Let $\Sigma$ be a surface with a finite set of points $\pp\subset \partial \Sigma$.
If $(\Sigma,\Pi;\Gamma)$ is a fixed, weakly $\pp$-admissible triangulation, 
then either of the families of objects 
$\{L_{\Gamma,\aa}\}_{\pp(\aa)=\pp}$ 
or $\{\nabla_{\Gamma,\aa}\}_{\pp(\aa)=\pp}$ 
generate the category $\CS^{\leftangle}(\Sigma,\pp;\Gamma)$ 
with respect to forming angle-shaped one-sided twisted complexes.
\end{lemma}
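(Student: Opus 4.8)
The plan is to exhibit every object of $\CS^{\leftangle}(\Sigma,\pp;\Gamma)$ as an angle-shaped one-sided twisted complex built from the $L_{\Gamma,\aa}$, and then use Lemma~\ref{lem:spin-basis-change} to deduce the analogous statement for the $\nabla_{\Gamma,\aa}$. Recall that an object of $\CS^{\leftangle}(\Sigma,\pp;\Gamma)$ is by definition angle-shaped with respect to the collection of \emph{all} minimal tangles $T \in \Tan(\Sigma,\pp;\Gamma)$; so it suffices to show that each individual minimal tangle $T$ (placed in homological degree zero) is itself homotopy equivalent to an angle-shaped twisted complex built from the $L_{\Gamma,\aa}$, with a control on $q$-shifts strong enough that the twisted-complex structure of a general angle-shaped object is preserved. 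The main input is the categorified Jones--Wenzl recursion: over $\bn_a^a$, the identity tangle $\one_a$ is a summand of $P_{a,a}$ via the unit $\eta \colon \one_a \to P_{a,a}$, and $\Cone(\eta)$ is homotopy equivalent to a (bounded-below in $q$, appropriately bounded in $t$) one-sided twisted complex built from minimal tangles of strictly smaller through-degree. Iterating, any minimal tangle in $\bn_a^a$ is an angle-shaped twisted complex built from shifts of the various $P_{b,b}$ with $b \le a$, composed with crossingless tangles of non-maximal through-degree.

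\textbf{Key steps.} First I would fix the weakly $\pp$-admissible triangulation $(\Sigma,\Pi;\Gamma)$ and consider a minimal tangle $T \in \Tan(\Sigma,\pp;\Gamma)$. Cutting $T$ along $\Gamma$ yields, in each region $D_i$ (a triangle), a crossingless tangle $T_i$ whose endpoints on the three edges $\ma_1,\ma_2,\ma_3$ of $\cutP|_{D_i}$ are standard; since triangles admit no closed components for a \emph{minimal} tangle, each $T_i$ is one of the finitely many crossingless matchings of these endpoints. Second, I would express $\PMS_{\Sigma,\Pi;\Gamma}(-,T) = F_{\Gamma,\aa'}(\tBar^{n_1}_{m_1},\ldots)$ using the graphical model of \S\ref{sec:graphicalmodel} and the fact (Corollary~\ref{cor:bar is roz}, Remark~\ref{rmk:purple boxes merge to roz}) that each pair of purple boxes is a bottom projector $\Bproj_{m+n}$; the reformulation of the spin-network functor $F_{\Gamma,\aa}$ in Definition~\ref{def:spin-network-complexes} lets me recognize $T$ itself as lying in the image of $F_{\Gamma,\aa}$ applied to the crossingless input tangles $T_i$ (with $\one$'s on internal half-edges). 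Third, I would apply the categorified JW recursion simultaneously on all boundary half-edges: using Convention~\ref{conv:P is algebra} and the idempotency of $P_{a,a}$, $\one_a \simeq \Cone(\eta)[-1]$-type arguments express $\one_a$ as the total complex of $P_{a,a}$ plugged with lower-through-degree correction terms, and Proposition~\ref{prop:canopolisAngle} guarantees that applying $F_{\Gamma,\aa}$ to such angle-shaped inputs produces angle-shaped complexes in $\CS^{\leftangle}(\Sigma,\pp;\Gamma)$. The terminal objects of this recursion, where every boundary half-edge carries a genuine $P_{a,a}$ and the colors on the three edges of each triangle satisfy the admissibility inequalities, are precisely the $L_{\Gamma,\bb}$ (using that internal half-edges carry $\one$). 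The $q$- and $t$-shifts occurring at each recursion step are bounded above (homological degrees decreasing) and below (bounded $q$-degrees in the JW complexes), so the resulting twisted complex is angle-shaped in the sense of Definition~\ref{def:angle shaped twisted complexes} with respect to $\{L_{\Gamma,\bb}\}$. Finally, passing from a single $T$ to a general angle-shaped object is routine: homological perturbation (as in the proof of Lemma~\ref{lem:CKcup}) replaces each minimal-tangle summand by its twisted-complex resolution while keeping the total complex angle-shaped, since the decorated support sets lie in $O_{\leftangle}$ (Lemma~\ref{lem:Oproperties}).

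\textbf{Deducing the $\nabla$ case.} For the costandard family, Lemma~\ref{lem:spin-basis-change} gives, for each $\aa$, a chain map $L_{\Gamma,\aa} \to \nabla_{\Gamma,\aa}$ whose cone lies in $\CS^{\leftangle}_{<\aa}(\Sigma,\pp(\aa);\Gamma)$, i.e.~is built from $L_{\Gamma,\bb}$ with $\bb < \aa$. Running an induction on the (finite-below, since colors on boundary edges are fixed by $\pp$ and colors on internal edges are bounded by the triangle inequalities with boundary data) partially ordered set of colorings, one inverts this triangular change of basis: each $\nabla_{\Gamma,\aa}$ is an angle-shaped twisted complex in the $\{L_{\Gamma,\bb}\}_{\bb \le \aa}$, and conversely each $L_{\Gamma,\aa}$ is an angle-shaped twisted complex in the $\{\nabla_{\Gamma,\bb}\}_{\bb \le \aa}$. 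Composing with the first half of the argument yields that $\{\nabla_{\Gamma,\aa}\}_{\pp(\aa)=\pp}$ also generates.

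\textbf{Main obstacle.} The delicate point is the bookkeeping that keeps everything \emph{angle-shaped} throughout the simultaneous multi-edge recursion: one must check that the $q$-degrees appearing when an iterated $\Cone(\eta)$ is fed through $F_{\Gamma,\aa}$ do not run off to $-\infty$ along any vertical line and that the limit condition (c) of $O_{\leftangle}$ survives the (infinitely many, but locally finite) recursion steps needed to resolve all the $\one_a$'s into $P_{a,a}$'s plus corrections. This is essentially the content of Proposition~\ref{prop:canopolisAngle} combined with finiteness of the crossingless-matching input set in each triangle, but it requires care to organize the recursion so that at each fixed bidegree only finitely many $L_{\Gamma,\bb}$ contribute --- the triangle inequalities in Definition~\ref{def:spin-network} are what make the index set of relevant colorings locally finite, and this must be invoked explicitly.
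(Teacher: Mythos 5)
Your treatment of the costandard family is essentially the paper's argument and is fine: one inverts the unit-triangular change of basis from Lemma~\ref{lem:spin-basis-change} by induction over the (locally finite) poset of admissible colorings. The problems are concentrated in the first half, where you try to show that the $L_{\Gamma,\aa}$ generate. First, you never use the hypothesis of weak $\pp$-admissibility. It forces $\aa(\ma)\in\{0,1\}$ for every boundary edge $\ma\in\Pi$, so the projectors $P_{\aa(\ma),\aa(\ma)}$ occurring in $L_{\Gamma,\aa}$ are literally $\one_0$ or $\one_1$, and $L_{\Gamma,\aa}$ is an honest minimal tangle --- an object of $\CS(\Sigma,\pp;\Gamma)$ itself, not an infinite complex. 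Your ``categorified Jones--Wenzl recursion on all boundary half-edges'' is therefore vacuous exactly where you apply it, and the angle-shapedness bookkeeping you flag as the main obstacle does not arise at this stage at all.

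Second, and more seriously, the step that actually requires an argument is missing. A general minimal tangle $T$, cut along the seams, restricts in each triangle to a crossingless tangle that may contain turnbacks, i.e.\ arcs with both endpoints on the same edge of that triangle (equivalently, $T$ may meet a seam in more than the minimal number of points). Such a $T$ is not of the form $F_{\Gamma,\aa}$ applied to identity tangles for any coloring $\aa$, and resolving identities into projectors does nothing to eliminate the turnbacks; note also that your inputs $T_i$ are indexed by regions while $F_{\Gamma,\aa}$ takes one endomorphism-type input per half-edge, so the recognition ``$T=F_{\Gamma,\aa}(T_i)$'' does not typecheck as stated. What is needed instead is the elementary geometric observation that $T$ is isotopic --- hence isomorphic in $H^0(\CS(\Sigma,\pp;\Gamma))$, by Theorem~\ref{thm:surfaceBN} --- to a tangle meeting each seam minimally, and that after removing contractible circles (each contributing a sum of two $q$-shifts) the result is precisely some $L_{\Gamma,\aa}$. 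This isotopy-plus-circle-removal argument is the entire content of the $L$-generation step in the paper; no Jones--Wenzl machinery enters until one passes from the $L$'s to the $\nabla$'s, and the only angle-shapedness input needed is that each $\nabla_{\Gamma,\aa}$ admits an angle-shaped model, via Proposition~\ref{prop:canopolisAngle}.
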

\begin{proof}
We first claim that each $\nabla_{\Gamma,\aa}$ is angle-shaped, 
i.e.~is homotopy equivalent to an object of $\CS^{\leftangle}(\Sigma,\pp(\aa);\Gamma)$. 
This follows from the fact that each $P_{n,n}$ has an angle-shaped model, 
together with a generalization of Proposition \ref{prop:canopolisAngle} 
to the functors $F_{\Gamma,\aa}$ from Definition \ref{def:spin-network-complexes}.

Next, by a standard argument using the ``circle-removal'' isomorphism in $\BN$, 
every object of $\CS(\Sigma,\pp;\Gamma)$ 
is homotopy equivalent 
to a direct sum of grading shifts of objects $L_{\Gamma,\pp}$.
Hence the latter objects generate $\CS^{\leftangle}(\Sigma,\pp;\Gamma)$.
A straightforward induction using Lemma~\ref{lem:spin-basis-change}
allows us to write $L_{\Gamma,\aa}$ as an angle-shaped one-sided twisted complex
constructed from $\nabla_{\Gamma,\bb}$ with $\bb\leq \aa$, 
hence the costandard objects generate $\CS^{\leftangle}(\Sigma,\pp;\Gamma)$.
\end{proof}

\begin{example}
Consider the disk with 6 marked points on its boundary. 
To write down a set of generators of the associated dg category, 
we must first choose a triangulation of the 6-gon, e.g.~as on the left side of Figure~\ref{fig:spin}. 
Then, the generating objects are parametrized by spin
networks whose underlying graph is the dual graph (depicted in red), 
and whose boundary spins are all $1$.
\end{example}

\subsection{Orthogonality of spin networks}
\label{ss:orthogonality}
To conclude, we formulate and prove results on the 
orthogonality and ``norm'' of the complexes $\nabla_{\Gamma,\aa}$. 
We first introduce the \emph{pairing} involved in such a statement.

\begin{definition}\label{def:symmetrized hom pairing} 
For $(\Sigma,\mathbf{p};\Gamma)$ as in Section~\ref{s:surfaceinvt} we recall the isomorphism
			$\sCat(\Sigma,\mathbf{p};\Gamma)\xrightarrow{\sigma}
			\sCat(\Sigma,\mathbf{p};\Gamma)^{\op}$ from Proposition \ref{prop:equivop} 
	and define the \emph{symmetrized hom pairing} on $\sCat(\Sigma,\mathbf{p};\Gamma)$ to be the dg functor:
	\[
		\sCat(\Sigma,\mathbf{p};\Gamma) \otimes \sCat(\Sigma,\mathbf{p};\Gamma) 
			\xrightarrow{\id \otimes \sigma} 
		\sCat(\Sigma,\mathbf{p};\Gamma) \otimes \sCat(\Sigma,\mathbf{p};\Gamma)^{\op} 
			\xrightarrow{\Hom} \dgModen{\Z \times \Z}{\cring}
			\]
			which sends $X,Y\in \sCat(\Sigma,\mathbf{p};\Gamma)$ to
			$\ip{X,Y}:=\PMS(X|\one|\sigma(Y))$.
			\end{definition}

Proposition \ref{prop:equivop} now implies that we have natural isomorphism $\ip{X,Y}\cong \ip{Y,X}$ in $\dgModen{\leftangle}{\cring}$ and, 
thus, the identities $P\big( \ip{X,Y} \big)=P\big( \ip{Y,X} \big)$ and $\chi\big( \ip{X,Y} \big)=\chi\big( \ip{Y,X} \big)$
of Poincar\'{e} series and Euler characteristics.

\begin{lemma} The symmetrized $\Hom$-pairing from
	Definition~\ref{def:symmetrized hom pairing} extends to a dg functor
	\begin{equation}
		\label{eq:symhom}
	\ip{-,-} \colon \CS^{\leftangle}(\Sigma,\mathbf{p},\Gamma) \otimes \CS^{\leftangle}(\Sigma,\mathbf{p},\Gamma) 
		\to \dgModen{\leftangle}{\cring} \, .
		\end{equation}
	\end{lemma}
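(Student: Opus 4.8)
The claim is that the symmetrized $\Hom$-pairing of Definition~\ref{def:symmetrized hom pairing}, which a priori is a dg functor into $\dgModen{\Z \times \Z}{\cring}$, actually lands in the full subcategory $\dgModen{\leftangle}{\cring}$ of angle-shaped complexes when restricted to angle-shaped one-sided twisted complexes over $\sCat(\Sigma,\pp;\Gamma)$. The essential content is therefore a statement about supports of complexes, and the proof is a bookkeeping argument combining the results already established in \S\ref{ss:spin network setup}.

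First I would recall that for minimal tangles $S, T \in \Tan(\Sigma,\pp;\Gamma)$, the complex $\ip{S,T} = \PMS(T|\one|\sigma(S))$ is angle-shaped by Proposition~\ref{prop:cxangle} (applied to the morphism complexes, which is the special case $V = \one_\pp$ of \eqref{eq:CflatcxW}); here one uses that $\sigma$ acts as the identity on objects, so $\sigma(S) = S$ as a tangle, possibly after the harmless identification $r_y(B^T_S) \cong B^S_T$ from Proposition~\ref{prop:symmetries of B}. Thus the pairing sends pairs of \emph{generating objects} (minimal tangles) to objects of $\dgMod{\leftangle}{\cring}$. Next I would promote this to arbitrary angle-shaped one-sided twisted complexes. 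If $X = \tw_\alpha\big(\bigoplus_{i \in I} t^{k_i} q^{l_i} S_i\big)$ and $Y = \tw_\beta\big(\bigoplus_{j \in J} t^{k_j'} q^{l_j'} T_j\big)$ are angle-shaped with respect to the collection of minimal tangles (Definition~\ref{def:angle shaped twisted complexes}), then $\ip{X,Y}$ is computed as a one-sided twisted complex built from the pieces $t^{k_i - k_j'} q^{l_i - l_j'} \ip{S_i, T_j}$, with differential assembled from $\alpha$, $\beta$, and the internal differentials of the $\ip{S_i,T_j}$. Each chain module of $\ip{X,Y}$ in a fixed bidegree $(i,j)$ is then a finite direct sum of free $\cring$-modules of finite rank: finiteness follows because each $\ip{S_i, T_j}$ is angle-shaped (hence supported in $O_{\leftangle}$), the shift sets $\{(k_i,l_i)\}$ and $\{(k_j',l_j')\}$ lie in $O_{\leftangle}$, and the closure properties of $O_{\leftangle}$ under sums and under taking subsets from Lemma~\ref{lem:Oproperties} control how many summands contribute to a given vertical line.

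The heart of the matter is verifying that $\supp(\ip{X,Y}) \in O_{\leftangle}$, i.e.~the three conditions (a)--(c) of the angle-shaped condition. Condition (a) (bounded from the right) and condition (b) (finite intersection with vertical lines) follow directly from the above: the set of shift exponents $(k_i - k_j', l_i - l_j')$ lies in $O_{\leftangle}$ by Lemma~\ref{lem:Oproperties} (specifically the argument given there for $S_1 + S_2$, using right-boundedness of both factors), and adding the supports of the angle-shaped complexes $\ip{S_i, T_j}$ preserves membership in $O_{\leftangle}$ by the same lemma. Condition (c), the growth condition $\lim_{x \to -\infty} \min\{y \mid (x,y) \in \supp\} = \infty$, is the step I expect to be the main obstacle, since it is the one genuinely using the shape of $O_{\leftangle}$ rather than mere finiteness; here one argues that for the bidegree-$(i,j)$ part of $\ip{X,Y}$ to be nonzero, one needs a contribution from some $\ip{S_i, T_j}$ in bidegree $(i - k_i + k_j', j - l_i + l_j')$, and the conjunction of the growth conditions on the shift sets of $X$ and $Y$ with the growth condition on $\supp(\ip{S_i,T_j})$ (valid for the finitely many pairs $(S_i, T_j)$ whose shifts can reach a given vertical line) forces $j \to \infty$ as $i \to -\infty$. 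This is exactly the style of argument appearing in Lemma~\ref{lem:CDangle} and Proposition~\ref{prop:canopolisAngle}, adapted to the present pairing, so I would model the estimate on those. Finally, functoriality in $X$ and $Y$ is inherited from the functoriality of the $\Hom$-pairing of Definition~\ref{def:symmetrized hom pairing} and the fact that all constructions (one-sided twisted complexes, the functor $F_{\Gamma,\aa}$, totalization) are compatible with the inclusion $\dgModen{\leftangle}{\cring} \hookrightarrow \dgModen{\Z\times\Z}{\cring}$, so the factorization through $\dgModen{\leftangle}{\cring}$ is automatically a dg functor.
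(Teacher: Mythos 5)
Your proposal follows the same route as the paper, whose proof is a one-line citation of Lemma~\ref{lem:Oproperties} and Proposition~\ref{prop:canopolisAngle}; your expansion of that citation is essentially correct in structure. One detail needs fixing, and it touches the crux of the lemma: you write the combined shift exponents as $(k_i - k_j',\, l_i - l_j')$ and then invoke the closure of $O_{\leftangle}$ under Minkowski sums. But $O_{\leftangle}$ is not closed under negation (negating a set in $O_{\leftangle}$ produces one bounded from the left rather than the right, and condition (c) fails in the wrong direction), so $\{(k_i,l_i)\} + \{(-k_j',-l_j')\}$ need not lie in $O_{\leftangle}$; with those signs the argument genuinely breaks down. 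Indeed this is precisely why the \emph{unsymmetrized} $\Hom$-pairing does not extend to $\CS^{\leftangle}\otimes\CS^{\leftangle}$ --- compare the remark at the end of \S\ref{ss:orthogonality}, where the second argument of the ordinary pairing must instead be taken in $\CS^{\rightangle}$. The whole point of precomposing with $\sigma$ is that the resulting pairing is covariant in both slots with respect to shifts, so the exponents \emph{add}: the correct combined shifts are $(k_i + k_j',\, l_i + l_j')$, after which Lemma~\ref{lem:Oproperties} applies exactly as you intend and the rest of your argument (finiteness on vertical lines, the growth condition (c), and functoriality) goes through.
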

	\begin{proof}
	Lemma~\ref{lem:Oproperties} and Proposition~\ref{prop:canopolisAngle} 
	enable the extension of the symmetrized $\Hom$-pairing 
	to angle-shaped one-sided twisted complexes.
	\end{proof}

\begin{lemma}
	\label{lem:orth}
Retain the setup of Lemma \ref{lem:gen}.
Then for the symmetrized $\Hom$-pairing from \eqref{eq:symhom} we have:
\[\langle \nabla_{\Gamma,\aa} , \nabla_{\Gamma,\bb}\rangle \simeq 0\]
for all $\pp$-admissible colorings $\aa,\bb$ with $\pp(\aa)=\pp=\pp(\bb)$ and $\aa\neq \bb$.
\end{lemma}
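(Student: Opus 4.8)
The plan is to compute $\langle \nabla_{\Gamma,\aa}, \nabla_{\Gamma,\bb}\rangle$ by unpacking the definition of the symmetrized $\Hom$-pairing and the graphical model of $\PMS_{\Sigma,\Pi;\Gamma}$, and then exhibiting contractibility locally, seam by seam. Recall that $\nabla_{\Gamma,\aa} = F_{\Gamma,\aa}\big(P_{\aa(\ma),\aa(\ma)} : \ma\in\cutP\big)$, so $\langle \nabla_{\Gamma,\aa}, \nabla_{\Gamma,\bb}\rangle = \PMS(\nabla_{\Gamma,\aa}|\one|\sigma(\nabla_{\Gamma,\bb}))$ is computed, via \eqref{eq:homcomplex} and the contraction with the bar complexes $B^T_S$, by a planar Bar-Natan diagram on the doubled cut surface. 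Each region of $\Sigma$ contributes a ``triangle'' with a copy of $P_{a,a}$, $P_{b,b}$, $P_{c,c}$ (from $\nabla_{\Gamma,\aa}$) facing a corresponding triple (from $\sigma(\nabla_{\Gamma,\bb})$), and at each internal seam $\gamma$ the two sides are glued through a copy of $\Bproj$ (equivalently, a $\tBar$ pair, by Remark~\ref{rmk:purple boxes merge to roz}). The key point is that since $\aa\neq\bb$, there is at least one seam $\gamma\in\Gamma$ with $\aa(\gamma)\neq\bb(\gamma)$; near that seam the diagram contains, sandwiched between a $\Bproj_{\aa(\gamma)+\bb(\gamma)}$ and the projectors $P_{\aa(\gamma),\aa(\gamma)}$ on one side and $P_{\bb(\gamma),\bb(\gamma)}$ on the other, exactly the local configuration appearing on the left-hand side of \eqref{eq:magic} in Lemma~\ref{lem:CKcup}.

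First I would make precise the claim that the $i$-th and $j$-th tensor factors of $\langle \nabla_{\Gamma,\aa}, \nabla_{\Gamma,\bb}\rangle$ (the two regions adjacent to a bad seam $\gamma$) can be presented, using Remark~\ref{rem:rotatefactor} and pivotal sphericality, with the $(\gamma,\pm)$-labelled purple boxes facing each other; merging them via \eqref{eq:purpleprojector} produces a $\Bproj_{m+n}$ with $m=\bb(\gamma)$, $n=\aa(\gamma)$, capped on the two sides by $P_{n,n}$ (from $\nabla_{\Gamma,\aa}$) and $P_{m,m}$ (from $\nabla_{\Gamma,\bb}$), precisely as in the middle diagram of \eqref{eq:magic}. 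Then Lemma~\ref{lem:CKcup} gives that this local piece is homotopy equivalent to $0$ because $n=\aa(\gamma)\neq\bb(\gamma)=m$. Next I would propagate this local contractibility to the whole complex: the diagram $\langle \nabla_{\Gamma,\aa},\nabla_{\Gamma,\bb}\rangle = F(\tBar^{n_1}_{m_1},\dots,\tBar^{n_G}_{m_G})$ is obtained by applying a canopolis-type functor of the form \eqref{eq:SM} to a tensor product of bar complexes, one of which (the one for $\gamma$, after the above rewriting) becomes contractible once tensored with the adjacent $P$'s. Since $F$ is a dg functor built from horizontal and vertical composition in $\Ch(\bn)$, and such functors send contractible complexes in one input slot to contractible complexes (this is where homological perturbation, cf.~\cite{Markl}, is used, exactly as in the proof of Proposition~\ref{prop:coarsenhomotopyequiv}), we conclude $\langle \nabla_{\Gamma,\aa},\nabla_{\Gamma,\bb}\rangle\simeq 0$.

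The main obstacle I anticipate is bookkeeping rather than conceptual: one must be careful that after applying sphericality and the rotation isomorphisms of Remark~\ref{rem:rotatefactor} to bring the two purple boxes at $\gamma$ into the ``facing'' position, the resulting local picture genuinely matches the hypothesis of Lemma~\ref{lem:CKcup} — in particular that the projectors $P_{\aa(\gamma),\aa(\gamma)}$ and $P_{\bb(\gamma),\bb(\gamma)}$ coming from the two costandard objects are attached to the correct ends of $\Bproj_{\aa(\gamma)+\bb(\gamma)}$, and that the remaining (finitely many) strands and projectors from the rest of the two triangles play the role of the ambient diagram to which the functor $F$ is applied. One subtlety worth flagging explicitly is that $\sigma$ acts by the vertical reflection $r_y$ on all diagrams (Proposition~\ref{prop:equivop}), so the ``$\bb$-side'' triangles appear reflected; since $r_y(P_{m,m})\cong P_{m,m}$ and $r_y$ intertwines the relevant bar-complex symmetries (Proposition~\ref{prop:symmetries of B}), this only relabels the picture and does not affect the argument. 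Once the local identification with \eqref{eq:magic} is in place, the reduction via homological perturbation is routine and entirely parallel to arguments already carried out for the coarsening maps. $\qed$
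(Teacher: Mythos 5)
Your proposal is correct and follows essentially the same route as the paper: locate a seam $\gamma\in\Gamma$ where $\aa$ and $\bb$ differ, present the local picture at that seam as the projector-capped bottom projector of \eqref{eq:magic}, and invoke the $n\neq m$ case of Lemma~\ref{lem:CKcup}. The paper's proof is just a two-sentence version of this; your extra bookkeeping (sphericality/rotation to bring the purple boxes face to face, the action of $\sigma$ by $r_y$, and propagating the local contractibility through the canopolis functor) fills in details the paper leaves implicit.
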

\begin{proof}
Since the colorings agree on the boundary but are different, 
there must be an internal edge of $N_{\Gamma}$
(i.e.~a seam $\gamma \in \Gamma$) on which they differ. 
Using the graphical model from \S \ref{sec:graphicalmodel} for the symmetrized $\Hom$-pairing, 
and focusing on the seam $\gamma$ as in \eqref{constr:coarsening}, 
the vanishing now follows directly from \eqref{eq:magic}. 
\end{proof}

\begin{thm}
	\label{thm:pairing}
Retain the setup of Lemma \ref{lem:gen}. 
The objects $\nabla_{\Gamma,\aa}$ as $\aa$ ranges over all colorings with $\pp(\aa)=\pp$ 
generate $\CS^{\leftangle}(\Sigma,\mathbf{p},\Gamma)$ with respect to the
formation of angle-shaped one-sided twisted complexes 
and are pairwise orthogonal with respect to the symmetrized $\Hom$-pairing. 
Further, on the level of Euler characteristics, the self-pairing satisfies:
\begin{equation}\label{eq:self-pairing}
	\chi\left(\ip{\nabla_{\Gamma,\aa},\nabla_{\Gamma,\aa}}\right) =
\frac{\prod_{v \in V(N_\Gamma)} \chi\left(\Theta(a_v,b_v,c_v)\right)}{\prod_{e \in \Gamma} \chi\left(\Omega(a_e)\right)} \, .
\end{equation}
Here $(a_v,b_v,c_v)$ are the colors of the edges incident to the vertex $v \in V(N_\gamma)$, 
$a_e$ is the color of the internal edge $e \in \Gamma$, and we are using the shorthand:
\[
\Theta(a,b,c):=
\KhEval{ 
	\begin{tikzpicture}[anchorbase]
		\draw[thick,double] 
		(-1.75,.25) \ur (0,.85) \rd (1.75,.25)
		(-1.75,-.25) \dr (0,-.85) \ru (1.75,-.25)
		(-1.25,.25) \ur (-.75,.5) \rd (-.25,.25)
		(-1.25,-.25) \dr (-.75,-.5) \ru (-.25,-.25)
		(.25,.25) \ur (.75,.5) \rd (1.25,.25)
		(.25,-.25) \dr (.75,-.5) \ru (1.25,-.25)
		;
		\draw[thick, fill=white] (-.5,-.25) rectangle (0.5, .25);
		\draw[thick, fill=white] (-2,-.25) rectangle (-1, .25);
		\draw[thick, fill=white] (1,-.25) rectangle (2, .25);
		\node at (-1.5,0) {$P_{a,a}$};
		\node at (0,0) {$P_{b,b}$};
		\node at (1.5,0) {$P_{c,c}$};
	\end{tikzpicture}
} \, , \quad
\Omega(a):=
 \KhEval{ 
	\begin{tikzpicture}[anchorbase]
		\draw[thick, double] (.5,0) circle (.5);
		\draw[thick, fill=white] (-.5,-.25) rectangle (0.5, .25);
		\node at (0,0) {$P_{a,a}$};
	\end{tikzpicture}
}.
\]

\end{thm}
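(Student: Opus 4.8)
The plan is to assemble the theorem from pieces already in hand: generation and orthogonality are essentially restatements of Lemmas~\ref{lem:gen} and \ref{lem:orth}, so the substantive content is the self-pairing formula \eqref{eq:self-pairing}. First I would observe that by Definition~\ref{def:symmetrized hom pairing} and the graphical model of \S\ref{sec:graphicalmodel}, the complex $\ip{\nabla_{\Gamma,\aa},\nabla_{\Gamma,\aa}}$ is computed as $\PMS_{\Sigma,\Pi;\Gamma}(\nabla_{\Gamma,\aa}|\one|\sigma(\nabla_{\Gamma,\aa}))$, which by \eqref{eq:homcomplex} is a tensor product over the regions $D_i \in \Reg(\Sigma;\Gamma)$ of planar evaluations, glued along the cut seams via copies of $\tBar$. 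Since $\sigma$ acts by the vertical reflection $r_y$ (Proposition~\ref{prop:equivop}) and each $\nabla_{\Gamma,\aa}$ is built by applying $F_{\Gamma,\aa}$ to projectors $P_{a,a}$ on every half-edge, the reflected-and-stacked picture places, at each internal seam $\gamma$ of color $a$, the configuration appearing on the left-hand side of \eqref{eq:magic} with $m=n=a$, namely two pairs of $P_{a,a}$'s capped by a bottom projector $P_{2a,0}$ (equivalently, the purple-box/$\tBar$ pairing).

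Next I would invoke Lemma~\ref{lem:CKcup}: for $n=m=a$ this replaces the local seam configuration by $\PB_{C_a}(I_a)$, and Corollary~\ref{cor:inverseunknot} identifies its class---after computing Euler characteristics and using $[\Omega(a)] = [a+1]$ (which is the definition $E_a = \Hom(\one_a,P_{a,a})$ combined with $P(E_a)$ having Euler characteristic $[a+1]$)---as dividing by $\chi(\Omega(a))$, exactly as in Remark~\ref{rem:inverseunknot}, cf.~\eqref{eq:inverseunknot}. Meanwhile, each vertex $v \in V(N_\Gamma)$ contributes a local closed diagram which, after the seam simplifications, is precisely the theta-graph $\Theta(a_v,b_v,c_v)$ formed from the three projectors $P_{a_v,a_v}$, $P_{b_v,b_v}$, $P_{c_v,c_v}$. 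Since the construction is local---the tensor-product structure of \eqref{eq:homcomplex} and the far-commutativity/locality arguments used throughout \S\ref{s:coarsening}---the global Euler characteristic factorizes as a product of vertex contributions divided by a product of edge contributions. Here I would use Lemma~\ref{lem:CDangle} (multiplicativity of $\chi$ under $\otimes$) together with the fact that all complexes in sight are angle-shaped (Proposition~\ref{prop:cxangle}, Lemma~\ref{lem:CKcup}) so that the Euler characteristics are well-defined elements of $\Z\llbracket q\rrbracket[q^{-1}]$.

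For generation and orthogonality: generation is immediate from Lemma~\ref{lem:gen} (under the weak $\pp$-admissibility hypothesis carried over from Lemma~\ref{lem:gen}'s setup), and pairwise orthogonality $\ip{\nabla_{\Gamma,\aa},\nabla_{\Gamma,\bb}}\simeq 0$ for $\aa\neq\bb$ is exactly Lemma~\ref{lem:orth}. So the proof reduces to: (i) unwinding $\ip{\nabla_{\Gamma,\aa},\nabla_{\Gamma,\aa}}$ into the glued local picture; (ii) applying \eqref{eq:magic} and Corollary~\ref{cor:inverseunknot} at each internal seam; (iii) recognizing the vertex contributions as $\Theta$-graphs; (iv) assembling via multiplicativity of $\chi$.

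The main obstacle I anticipate is bookkeeping in step (iii)--(iv): carefully matching the cut-surface planar evaluation of \S\ref{sec:graphicalmodel}, after all the seam replacements, to a disjoint union of theta-graphs indexed by vertices, so that the decomposition of $\chi\left(\ip{\nabla_{\Gamma,\aa},\nabla_{\Gamma,\aa}}\right)$ as $\prod_v \chi(\Theta(a_v,b_v,c_v)) / \prod_e \chi(\Omega(a_e))$ is rigorous rather than merely heuristic. Concretely, one must check that the counit/projector-absorption that converts a $\tBar$-paired seam into the factor $1/\chi(\Omega(a_e))$ genuinely separates neighboring vertex contributions---this is where the idempotence $P_{a,a}\hComp P_{a,a}\simeq P_{a,a}$ and the splitting in Corollary~\ref{cor:inverseunknot} (which realizes the local complex as $\Bar(E_a)\otimes_{E_a\otimes E_a}(\cdots)$, i.e.~as a derived tensor product that commutes with the surrounding functor $F$) do the work. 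Given the locality of everything and the explicit formula \eqref{eq:inverseunknot}, I expect this to be a matter of care rather than a genuine new difficulty.
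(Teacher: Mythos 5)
Your proposal is correct and follows essentially the same route as the paper: generation and orthogonality are quoted from Lemmas~\ref{lem:gen} and \ref{lem:orth}, and the self-pairing formula is obtained by applying the $m=n$ case of \eqref{eq:magic} at each internal seam, invoking Remark~\ref{rem:inverseunknot} to trade each bottom projector for a cup-cap at the cost of $1/[a_e+1]=1/\chi(\Omega(a_e))$, and recognizing the remaining vertex contributions as theta graphs whose Euler characteristics multiply. The bookkeeping concerns you raise are real but are handled exactly as you suggest, via the locality of the planar evaluation and the splitting in Corollary~\ref{cor:inverseunknot}.
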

\begin{proof}
The first two statements summarize Lemma~\ref{lem:gen} and Lemma~\ref{lem:orth}.
The statement about the Euler characteristic of the self-pairing follows from
the same considerations as the proof of Lemma~\ref{lem:orth}, except that we use
the $m=n$ case of the homotopy equivalence \eqref{eq:magic} at each seam.
Consequently, Remark~\ref{rem:inverseunknot} shows that
we can remove the bottom projector corresponding to
each internal edge $e \in \Gamma$ of $N_{\Gamma}$ 
of the spin network and replace it with a cup-cap,
provided we compensate with a factor of $\frac{1}{[a_e+1]}$.
What remains is the Euler characteristic of a 
tensor product of evaluations of projector-colored theta graphs, 
one for each triangle in the triangulation or, equivalently, 
one for each vertex $v \in V(N_{\Gamma})$.
The result then follows since
$[\Omega(a)]$ is exactly the value of the closure of the $a$-strand Jones--Wenzl projector, 
which equals $[a+1]$.
\end{proof}

\begin{remark}
The pairing formula \eqref{eq:self-pairing} is reminiscent of various
hermitian pairings on Kauffman bracket skein algebras and
Turaev--Viro/Reshetikhin--Tuarev invariants of surfaces, 
which admit bases of spin networks, 
see e.g.~\cite[Theorem 4.11]{BHMV} and \cite[\S 2.2]{MR2387432}. 
We hence view 
Theorem \ref{thm:pairing} as justification for viewing 
the present work as a categorified analogue of these theories 
(for the surfaces we consider).

More precisely, such \emph{hermitian} pairings would coincide with 
the decategorification of the usual (\emph{un-symmetrized}) $\Hom$-pairing 
(which is sesquilinear with respect to grading shifts)
after tensoring with $\C$, and specializing $q$ to a root of unity. 
However, it is non-trivial to extend the ordinary $\Hom$-pairing on
$\sCat(\Sigma,\mathbf{p};\Gamma)$ to sufficiently infinite complexes to
allow for its evaluation (in \emph{both} arguments) on spin networks built from the projectors $P_{a,a}$.
In fact, we do not expect the analogous orthogonality statement in this setting. 
Instead, there is a second type of spin networks that are modeled on 
the dual projectors $P_{a,a}^{\vee}$ mentioned above in Remark \ref{rem:standardOb}.
These projectors generate a suitable category $\CS^{\rightangle}(\Sigma,\mathbf{p};\Gamma)$ 
of bounded below angle-shaped complexes, but which decategorify to the same
elements under the procedure outlined above. 
Theorem~\ref{thm:pairing} then expresses that the $\Hom$-pairing extends to a perfect pairing 
\[
\Hom\colon \CS^{\leftangle}(\Sigma,\mathbf{p};\Gamma) \times \CS^{\rightangle}(\Sigma,\mathbf{p};\Gamma) 
	\to \dgModen{\leftangle}{\cring}
\] 
with spin networks and dual spin networks forming respective 
generating sets that are dual with respect to the pairing.
\end{remark}

\begin{remark}
	\label{rem:genspin}
The results of Lemma \ref{lem:gen} and Theorem \ref{thm:pairing} hold 
without restricting to $\pp$-admissible triangulations, 
provided we work with an appropriate substitute for the category $\CS^{\leftangle}(\Sigma,\mathbf{p},\Gamma)$. 
In more detail, given an arbitrary triangulated surface $(\Sigma,\Pi;\Gamma)$ and an admissible coloring $\aa$, 
we can consider the dg category wherein the $\Hom$-space between (appropriate) tangles $T,S$ is given by 
the complex $\PMS_{\Sigma,\Pi;\Gamma}(T| (P_{\aa(\ma),\aa(\ma)})_{\ma \in \Pi} |S)$. 
The composition map now also involves the multiplication map $P_{a,a} \hComp P_{a,a} \to P_{a,a}$.
The aforementioned results now hold in the category of angle-shaped complexes 
(e.g.~with respect to the $L_{\Gamma,\aa}$) over this category.
\end{remark}

\bibliographystyle{plain}
\bibliography{pw}

\end{document}